\newcommand{\calM}{{\mathcal{M}}}
\newcommand{\calH}{{ H}}
\newcommand{\calL}{{\mathcal{L}}}
\newcommand{\R}{\mathbb{R}} 
\newcommand{\E}{\mathbb{E}}
\newcommand{\Vol}{\mathrm{Vol}}
\newtheorem{theorem}{Theorem}[section]
\newtheorem{proposition}[theorem]{Proposition}
\newtheorem{assumption}{Assumption}
\newtheorem{lemma}[theorem]{Lemma}
\theoremstyle{remark}
\newtheorem{remark}{Remark}
\newtheorem{conjecture*}{Conjecture}
\theoremstyle{plain}
\newcommand{\rev}[1]{\textcolor{cyan}{#1}}
\begin{document}


\title{Eigen-convergence of Gaussian kernelized graph Laplacian by manifold heat interpolation}

\author{
Xiuyuan Cheng\thanks{Department of Mathematics, Duke University. Email: xiuyuan.cheng@duke.edu}
~~~~~~~~~~~~~~~~
Nan Wu\thanks{Department of Mathematics and Department of Statistical Science, Duke University. Email: nan.wu@duke.edu }
}

\date{
}

\maketitle

\begin{abstract}

\noindent
We study the spectral convergence of graph Laplacians to the Laplace-Beltrami operator when the kernelized graph affinity matrix is constructed from $N$ random samples on a $d$-dimensional manifold in an ambient Euclidean space. By analyzing Dirichlet form convergence and constructing candidate approximate eigenfunctions via convolution with manifold heat kernel, we prove eigen-convergence with rates as $N$ increases. The best eigenvalue convergence rate is $N^{-1/(d/2+2)}$ (when the kernel bandwidth parameter $\epsilon \sim (\log N/ N)^{1/(d/2+2)}$) and the best eigenvector 2-norm convergence rate is $N^{-1/(d/2+3)}$ (when $\epsilon \sim (\log N/N)^{1/(d/2+3)}$). These rates hold up to a $\log N$-factor for finitely many low-lying eigenvalues of both un-normalized and normalized graph Laplacians. When data density is non-uniform, we prove the same rates for the density-corrected graph Laplacian, and we also establish new operator point-wise convergence rate and Dirichlet form convergence rate as intermediate results. Numerical results are provided to support the theory.

\end{abstract}

\noindent
{\small 
{\bf Keywords}:
Graph Laplacian, 
heat kernel, 
Laplace-Beltrami operator,
manifold learning,
Gaussian kernel,
spectral convergence
}

\

\noindent
\rev{
This updated arXiv version is to correct a typo in the condition of Theorem \ref{thm:refined-rates-rw-density-correct} in the published version: 
X. Cheng and N. Wu. ``Eigen-convergence of Gaussian kernelized graph Laplacian by manifold heat interpolation". Applied and Computational Harmonic Analysis, 61, 132-190 (2022).}

\rev{
Specifically, the assumption of density $p$ is as in  Assumption \ref{assump:M-p}(A2), instead of assuming $p$ uniform. Section \ref{sec:density-corrected} is to handle non-uniform density $p$, and the proved rates are same as in the density uniform case, see Table \ref{tab:theory-summary}.}

\section{Introduction}

 \begin{table}[t]
 \small
   \caption{
\label{tab:notations}
List of default notations
} \vspace{-4pt}
 \begin{minipage}[t]{0.462\linewidth}
 \begin{tabular}{  p{0.5cm}  p{5.9cm}   }
 \hline
 ${\calM}$ 	& $d$-dimensional manifold in $\R^D$  	  \\ 
 $p$			& data sampling density on ${\calM}$  \\
 $\Delta_{\calM}$ & Laplace-Beltrami operator, also as $\Delta$ 		\\	
$\mu_k$      &  population eigenvalue of $-\Delta$ \\
$\psi_k$       & population eigenfunctions of $-\Delta$ \\
$\lambda_k$       & empirical eigenvalue of graph Laplacian\\
$v_k$       	          & empirical eigenvector of graph Laplacian  \\
$\nabla_{\calM}$ & manifold gradient, also as $\nabla$ 		\\	
 $\calH_t$		& manifold heat kernel \\
 $Q_t$		& semi-group operator of manifold diffusion, $Q_t = e^{t \Delta}$ \\
 $X$      		& dataset points used for computing  $W$ 		\\
$N$ 			& number of samples in $X$		\\
$\epsilon$ 	&  kernel bandwidth parameter 		\\
$K_\epsilon$ 	& graph affinity kernel, 
				$W_{ij} = K_\epsilon(x_i,x_j)$,
				$K_\epsilon(x,y)=\epsilon^{-d/2}h( \frac{\| x-y\|^2}{\epsilon})$ \\
$h$       		& a function  $[0,\infty) \to \R$			\\
 $m_0$		& $m_0[h]:=\int_{\R^d} h(|u|^2) du$ 	\\
 $m_2$		& $m_2[h]:= \frac{1}{d}\int_{\R^d} |u|^2 h(|u|^2) du$ 	\\
 $W$ 		 & kernelized graph affinity matrix 			\\ 
  \hline
\end{tabular}
\end{minipage}
 \begin{minipage}{0.45\linewidth}
 \begin{tabular}{  p{0.5cm}  p{7.2cm}   }
 \hline
 $D$			& degree matrix of $W$, $D_{ii} = \sum_{j=1}^N W_{ij}$\\
 $L_{un}$ 				& un-normalized graph Laplacian \\
 $L_{rw}$				& random-walk graph Laplacian \\
  $ E_N$				& graph Dirichlet form  \\
  $ \rho_X$			& function evaluation operator, $\rho_X f = \{ f(x_i) \}_{i=1}^N$ \\
   $\tilde{W}$ 			& density-corrected affinity matrix, $\tilde{W} = D^{-1}W D^{-1}$ 			\\
$\tilde{D}$				& degree matrix of $\tilde{W}$\\
  \hline
\end{tabular}
 \begin{tabular}{  p{0.5cm}  p{7.2cm}   }
  \hline
 \multicolumn{2}{ c }{Asymptotic Notations} \\
 \hline
$O(\cdot)$ & 		$f = O(g)$: $|f| \le C |g|$ in the limit, $C> 0$, 
				$O_a(\cdot)$ declaring the constant dependence on $a$	\\ 
$\Theta(\cdot)$ 	& $f = \Theta(g)$: for $f$, $g \ge 0$, $C_1 g \le f \le C_2 g$ in the limit, $C_1, C_2 >0$ \\
$\sim$			& $f \sim g$ same as $f = \Theta(g)$								\\
$o(\cdot)$ 		& $f = o(g)$: for $g > 0$, $|f|/g \to 0$ in the limit \\
$\Omega(\cdot)$ 	& $f = \Omega(g)$: for $f, g > 0$, $f/g \to \infty$ in the limit \\ 
$\tilde{O}(\cdot)$ 	& $O(\cdot)$ multiplied another factor involving a log, defined every time used in text 	\\			
 \multicolumn{2}{p{7.6cm}}{ 
~ When the superscript $_{a}$ is omitted,  it declares that the constants are absolute ones.
 
~ $f = O( g_1, g_2) $  means that $f =O(|g_1|+|g_2| )$. 
				 }\\
\hline
\end{tabular}
\end{minipage}
\end{table}

Graph Laplacian matrices built from data samples are widely used in data analysis and machine learning.
The earlier works include Isomap \cite{balasubramanian2002isomap}, 
Laplacian Eigenmap \cite{belkin2003laplacian},
Diffusion Map \cite{coifman2006diffusion,talmon2013diffusion},
among others.
Apart from being a widely-used unsupervised learning method for clustering analysis and dimension reduction (see, e.g., the review papers \cite{van2009dimensionality, talmon2013diffusion}),
 graph Laplacian methods also drew attention via the application in semi-supervised learning \cite{nadler2009semi,el2016asymptotic,slepcev2019analysis,flores2019algorithms}.
Under the manifold setting, 
data samples are assumed to lie on low-dimensional manifolds embedded in a possibly high-dimensional ambient space.
A fundamental problem is the convergence of the graph Laplacian matrix to the manifold Laplacian operator in the large sample limit.
The operator point-wise convergence has been intensively studied and established in a series of works \cite{hein2005graphs,hein2006uniform,belkin2007convergence,coifman2006diffusion,singer2006graph},
and extended to variant settings,
such as different kernel normalizations \cite{marshall2019manifold,wormell2021spectral} 
and general class of kernels \cite{ting2011analysis,berry2016variable,cheng2020convergence}.
The eigen-convergence,
namely how the empirical eigenvalues and eigenvectors converge to the population eigenvalues and eigenfunctions of the manifold Laplacian,
is a more subtle issue and has been studied in \cite{belkin2007convergence,von2008consistency,burago2014graph,wang2015spectral,singer2016spectral,eldridge2017unperturbed} (among others) and recently in \cite{trillos2020error,calder2019improved,dunson2021spectral,calder2020lipschitz}.

The current work proves the eigen-convergence, 
specifically the consistency of eigenvalues and eigenvectors in 2-norm, 
for finitely many low-lying eigenvalues of the graph Laplacian constructed using Gaussian kernel from i.i.d. sampled manifold data.
The result covers the un-normalized and random-walk graph Laplacian when data density is uniform,
and the density-corrected graph Laplacian (defined  below) with non-uniformly sampled  data.
For the latter, we also prove new point-wise and Dirichlet form convergence rates as an intermediate result.
We overview the main results in Section \ref{subsec:overview} in the context of literature,
which are also  summarized in Table \ref{tab:theory-summary}.

The framework of our work follows the variational principle formulation of eigenvalues using the graph and manifold Dirichlet forms.
Dirichlet form-based approach to prove graph Laplacian eigen-convergence was firstly carried out in \cite{burago2014graph}
under a non-probabilistic setting. 
\cite{trillos2020error,calder2019improved} extended the approach under  the probabilistic setting, where $x_i$ are i.i.d. samples,
using optimal transport techniques. 
Our analysis follows the same form-based approach and differs from previous works in the following aspects:
Let  $\epsilon$ be the (squared) kernel bandwidth parameter corresponding to diffusion time,
$N$ the number of samples, 
and $d$ the manifold intrinsic dimensionality,
\vspace{5pt}

$\bullet$ Leveraging the observation in \cite{coifman2006diffusion,singer2006graph} 
that the bias error in the point-wise rate of graph Laplacian can be improved from $O(\sqrt{\epsilon})$ to $O(\epsilon)$ using a $C^2$ kernel function, 
we show that the improved point-wise rate 
{${\rm Err}_{ pt} = O\left( \epsilon, \sqrt{\frac{\log N}{N \epsilon^{d/2+1}}} \right)$}
of Gaussian kernelized graph Laplacian translates into an improved eigen-convergence rate
than using compactly supported kernels.
{Specifically, the eigenvector (2-norm) convergence rate is ${O}( (\log N /N)^{1/(d/2+3)})$, achieved at the optimal choice of $\epsilon \sim (\log N /N)^{1/(d/2+3)}$.}
\vspace{5pt}

$\bullet$ We show that the eigenvalue convergence rate
matches that of the Dirichlet form convergence rate {${\rm Err}_{ form} = O \left( \epsilon, \sqrt{\frac{\log N}{N \epsilon^{d/2}}} \right)$} in \cite{cheng2020convergence}, 
which is better than the point-wise rate ${\rm Err}_{ pt}$.
{This leads to an eigenvalue convergence rate of $ {O}( (\log N /N)^{1/(d/2+2)})$,
achieved at the optimal choice of $\epsilon \sim (\log N /N)^{1/(d/2+2)}$.
The optimal $\epsilon$ for eigenvalue and eigenvector estimation differs in order of $N$.}
\vspace{5pt}

$\bullet$ In obtaining the initial crude eigenvalue lower bound (LB), called Step 1 in below,
we develop a short proof using manifold  heat kernel to define the ``interpolation mapping'',
which constructs from a vector $v$ a smooth function $f$ on $\calM$.
The manifold variational form of $f$, defined via the heat kernel,
naturally relates to the graph Dirichlet form of $v$ when the graph affinity matrix is constructed using a Gaussian kernel. 
The analysis makes use of special properties of manifold heat kernel
and only holds when the graph affinity kernel locally approximates the heat kernel, like the Gaussian. 
This specialty of heat kernel has not been exploited in previous graph Laplacian analysis to obtain eigen-convergence rates.
\vspace{5pt}

Towards the eigen-convergence, our work also recaps and develops several intermediate results under weaker assumptions of the kernel function (i.e., non-Gaussian),
including an improved point-wise convergence rate of density-corrected graph Laplacian.
The density-corrected graph Laplacian, originally proposed in  \cite{coifman2006diffusion},
is an important variant of the kernelized graph Laplacian
where the affinity matrix is $\tilde{W}=D^{-1}WD^{-1}$. 
In applications, the data distribution $p$ is often not uniform on the manifold,
and then the standard graph Laplacian with $W$ recovers the Fokker-Planck operator (weighted Laplacian) 
with measure $p^2$,
which involves a drift term depending on $\nabla_\calM \log p$.
The density-corrected graph Laplacian, in contrast,
 recovers the Laplace-Beltrami operator consistently when $p$ satisfies certain regularity condition,
 and thus is useful in many applications. 
In this work, we first prove the point-wise convergence and Dirichlet form convergence of the density-corrected graph Laplacian with $\tilde{W}$, 
both matching those of the standard graph Laplacian, 
and this can be of independent interest.
Then the eigen-consistency result extends to such graph Laplacians (with Gaussian kernel function),
also achieving  the same rate as the standard graph Laplacian when $p$ is uniform.

In below, we give an overview of the theoretical results starting from assumptions,
and end the introduction section with some further literature review. 
In the rest of the paper, 
Section \ref{sec:prelim} gives preliminaries needed in the analysis.
Sections \ref{sec:step0}-\ref{sec:step23} develop the eigen-convergence of standard graph Laplacians,
both the un-normalized and the normalized (random-walk) ones.
Section \ref{sec:density-corrected} extends to density-corrected graph Laplacian,
and Section \ref{sec:exp} gives numerical results. 
We discuss possible extensions in the last section.

{\bf Notations}.
Default  and asymptotic notations like $O(\cdot)$, $\Omega(\cdot)$, $\Theta(\cdot)$, are listed in Table \ref{tab:notations}. 
In this paper, we treat constants which are determined by $h$, $\calM$, $p$ as absolute ones, including the intrinsic dimension $d$. 
We mainly track the number of samples $N$ and the kernel diffusion time parameter $\epsilon$,
and we may emphasize the constant dependence on $p$ or $\calM$ in certain circumstances, using the subscript notation like $O_{\calM}(\cdot)$.
All constant dependence can be tracked in the proof.

 \begin{table}[t]
\hspace{-20pt}
 \begin{centering}
\scriptsize
   \caption{
\label{tab:theory-summary}
Summary of theoretical results.
} 
\vspace{3pt}
 \begin{tabular}{  m{1.7cm} | m{1.5cm} m{1.5cm}  | c | m{1.4cm} | m{2.6cm} | m{2.7cm} }
 \hline
				&	\multicolumn{2}{c |}{$p$ uniform} 				      & $p$ non-uniform				& 	\multicolumn{2}{c|}{Needed assumptions } & \multirow{2}{*}{Error bound} \\ 			
\cline{5-6}
 				& 	$L_{un}$ with $W$         & 	$L_{rw}$ with $W$	      &	$\tilde{L}_{rw}$ with $\tilde{W}$	  	& on $h$ 	&	on $\epsilon$ ($\epsilon \to 0+$) &	\\
 \hline
\hspace{-5pt}\scriptsize{Eigenvalue UB}  	& 	Prop. \ref{prop:eigvalue-UB} 	& 	Prop. \ref{prop:eigvalue-UB-rw}		& 	Prop. \ref{prop:eigvalue-UB-rw-density-correct}			& Assump. \ref{assump:h-C2-nonnegative}  &   $\epsilon^{d/2} = \Omega( \frac{\log N}{N})$ &  form rate \\
\hspace{-5pt}\scriptsize{Crude eigenvalue LB}  & 	Prop. \ref{prop:eigvalue-LB-crude}& 	Prop. \ref{prop:eigvalue-LB-crude-rw} & 	Prop. \ref{prop:eigvalue-LB-crude-rw-density-correct}&  	Gaussian	&  $\epsilon^{d/2+2} > c_K \frac{\log N}{N}$ 	&	 $O(1)$\\
\hline
\hspace{-5pt}\scriptsize{Eigenvector convergence} 	& Prop. \ref{prop:step2}	& 	\centering{-}  & 	\centering{-} 	&	\multirow{3}{*}{Gaussian}	& \multirow{3}{*}{$\epsilon^{d/2+2} > c_K \frac{\log N}{N}$} & point-wise rate\\
\hspace{-5pt}\scriptsize{Eigenvalue convergence}  	& Prop. \ref{prop:step3}	& 	\centering{-}   & \centering{-} 	&	 					&								 & form rate\\
\hline
\vspace{2pt}
\multirow{4}{1.6cm}{\hspace{-5pt}\scriptsize{{Eigenvalue/vector combined} convergence }} \vspace{2pt}
													& \multirow{4}{*}{ Thm. \ref{thm:refined-rates}} & \multirow{4}{*}{ Thm. \ref{thm:refined-rates-rw}} & \multirow{4}{*}{ Thm. \ref{thm:refined-rates-rw-density-correct} }
													&   \multirow{4}{*}{Gaussian}	&\vspace{2pt} $\epsilon^{d/2+3} \sim \frac{\log N}{N}$ {(optimal order of $\epsilon$ to minimize ${\rm Err}_{pt}$)} & \vspace{2pt} Both $\lambda_k$ and $v_k$: $\tilde{O}(N^{-{1}/(d/2+3)})$	 \vspace{1pt}  \\
\cline{6-7}													
					&						 &							& 				& 	&  \vspace{2pt} $\epsilon^{d/2+2} \sim \frac{\log N}{N}$ 
																							{(optimal order of $\epsilon$ to minimize ${\rm Err}_{form}$)} \vspace{0pt} &  $\lambda_k: \tilde{O}(N^{-{1}/{(d/2+2)}})$, $v_k: \tilde{O}(N^{- {1}/{(d+4)}})$ \\
  \hline
\hspace{-5pt}Point-wise  convergence & 	\multicolumn{2}{c|}{Thm. \ref{thm:pointwise-rate-C2h} \cite{singer2006graph,cheng2020convergence}$^{*}$} &Thm. \ref{thm:pointwise-rate-dencity-correct}& Assump. \ref{assump:h-C2-nonnegative}
			& $\epsilon^{d/2+1} = \Omega( \frac{\log N}{N})$ & point-wise rate\\
\hspace{-5pt}Dirichlet form convergence & 	\multicolumn{2}{c|}{Thm. \ref{thm:form-rate} \cite{cheng2020convergence}$^{*}$} 				& Thm. \ref{thm:form-rate-density-correction}	&  Assump. \ref{assump:h-C2-nonnegative}
& $\epsilon^{d/2} = \Omega( \frac{\log N}{N})$	& form rate\\
\hline
\end{tabular}
\vspace{3pt}
\end{centering}

\begin{center}
\small
``form rate'' is ${\rm Err}_{ form} = O \left( \epsilon, \sqrt{\frac{\log N}{N \epsilon^{d/2}}} \right)$, ~~~
``point-wise rate'' is ${\rm Err}_{ pt} = O\left( \epsilon, \sqrt{\frac{\log N}{N \epsilon^{d/2+1}}} \right)$.
\end{center}

\begin{flushleft}
{\small
In the table, convergence of first $k_{max}$ eigenvalues and eigenvectors are concerned, where $k_{max}$ is fixed.
In the most right column, ``$\lambda_k$'' means the error of eigenvalue convergence, and ``$v_k$'' means the error of eigenvector convergence (in 2-norm). $\tilde{O}( \cdot)$ stands for the possible involvement of a factor of $(\log N)^\alpha$ for some $\alpha >0$. 
In the 2nd (3rd) column, the eigenvector and eigenvalue convergences are proved in Thm. \ref{thm:refined-rates-rw} (Thm. \ref{thm:refined-rates-rw-density-correct})
and are not written as separated propositions. 
$^{*}$The point-wise convergence and Dirichlet form convergence results of graph Laplacian with $W$
hold when $p$ satisfies Assump. \ref{assump:M-p}(A2), i.e., when $p$ is not uniform.
The Dirichlet form convergence with rate may hold when $h$ is not differentiable, e.g., when $h = {\bf 1}_{[0,1)}$, 
cf. Remark \ref{rk:indicator-h-form-rate}.
}\end{flushleft}
\end{table}

\subsection{Overview of main results}\label{subsec:overview}

We first introduce needed assumptions,
and then provide a technical overview of our analysis in Section \ref{subsec:interpolatoin} (Steps 0-1) and Section \ref{subsec:roadmap} (Steps 2-3), summarized as a roadmap at the end of the section.

\subsubsection{{Set-up and assumptions}}

The current paper inherits the probabilistic manifold data setting, namely,
the dataset $\{x_i\}_{i=1}^N$  consists of i.i.d. samples drawn from a distribution on $\calM$  with density $p$ satisfying  the following assumption:
\begin{assumption}[Smooth ${\calM}$ and  $p$]\label{assump:M-p}

(A1) ${\calM}$ is a $d$-dimensional compact connected  
 $C^{\infty}$ manifold (without boundary) 
isometrically embedded in $\mathbb{R}^{D}$.

(A2)
$p\in C^{\infty}(\calM)$ and uniformly bounded both from below and above, that is, $\exists p_{min}, \, p_{max} > 0$ s.t.
\[ 0< p_{min}  \le p(x) \le p_{max} < \infty, \quad\forall x\in{\calM}.
\]
\end{assumption}
\noindent
Suppose $\calM$ is embedded via $\iota$, and  when there is no danger of confusion, we use the same notation $x$ to denote $x\in {\calM}$ and $\iota(x)\in \mathbb{R}^D$.
We have the measure space $(\calM, dV)$:
when $\calM$ is orientable, $dV$ is the Riemann volume form; 
otherwise, $dV$ is the measure associated with the local volume form.
The smoothness of $p$ and $\calM$ fulfills many application scenarios, and possible extensions to less regular $\calM$  or $p$ are postponed.
Our analysis first addresses the basic case where  $p$ is uniform on $\calM$, i.e., $p = \frac{1}{\Vol(\calM)}$ and is a positive constant.
For non-uniform $p$ as in (A2), 
we adopt and analyze the density correction graph Laplacian
in Section \ref{sec:density-corrected}.
In both cases, the graph Laplacian recovers the Laplace-Beltrami operator $\Delta_\calM$. 
In below, we write $\Delta_{\calM}$ as $\Delta$,  $\nabla_{\calM}$ as $\nabla$.

Given $N$ data samples, 
the {\it graph affinity} matrix $W$ and the {\it degree matrix} $D$ are defined as
\[
W_{ij} = K_{\epsilon}(x_i, x_j),  
\quad
D_{ii} = \sum_{j = 1}^N W_{ij}.
\]
$W$ is real symmetric, typically $W_{ij} \ge 0$, and for the kernelized affinity matrix, $W_{ij} = K_\epsilon(x_i,x_j)$ where
\begin{equation}\label{eq:def-K-eps}
K_\epsilon(x,y) : = \epsilon^{-d/2} h \left(\frac{ \| x- y\|^2 }{\epsilon} \right),
\end{equation}
for a function $h: [0,\infty) \to \R$. 
The parameter $\epsilon > 0$ can be viewed as the ``time'' of  the diffusion process.
Some results in literature are written in terms of the parameter $\sqrt{\epsilon} > 0$,
which corresponds to the scale of the local distance  $\| x - y \|$ such that $h(\frac{ \| x- y\|^2 }{\epsilon})$  is of $O(1)$ magnitude.
Our results are written with respect to the time parameter $\epsilon$,
which  corresponds to the {\it squared}  local distance length scale. 

Our main result of graph Laplacian eigen-convergence 
 considers when the kernelized graph affinity is computed with 
\begin{equation}\label{eq:def-h-gaussian}
h( \xi) = \frac{1}{( 4 \pi )^{d/2}} e^{- \xi/ 4}, \quad \xi \in [0, \infty),
\end{equation}
we call such $h$ the {\it Gaussian} kernel function.
(The constant factor $( 4 \pi )^{-d/2}$ is included in the definition of $h$ for theoretical convenience, and may not be needed in algorithm, e.g., in the normalized graph Laplacian the constant factor is cancelled.)

The Gaussian $h$ belongs to a larger family of differentiable functions:

\begin{assumption}[Differentiable $h$]\label{assump:h-C2-nonnegative}
(C1) 
Regularity. $h$ is continuous on $[0,\infty)$,  $C^2$ on $(0, \infty)$. 
\\
(C2) Decay condition.  $\exists a, a_k >0$, s.t., $ |h^{(k)}(\xi )| \leq a_k e^{-a \xi}$ for all $\xi > 0$, 
$k=0, 1,2 $.
\\
(C3) {Non-negativity}. $h \ge 0$ on $[0, \infty)$. To exclude the case that $h \equiv 0$, assume $ \| h \|_\infty > 0 $. 
 \end{assumption}
 
 \noindent
 A summary of results with needed assumptions is provided  in Table \ref{tab:theory-summary},
 from which we can see that several important intermediate results, which can be of independent interest,
only require $h$ to satisfy Assumption \ref{assump:h-C2-nonnegative}  or weaker, including 

\vspace{5pt}
\noindent~
-  Point-wise convergence of graph Laplacians.

\vspace{2pt}
\noindent~
-  Convergence of the graph Dirichlet  form.

\vspace{2pt}
\noindent~
-  The eigenvalue upper bound (UB), which matches to the Dirichlet form convergence rate.
\vspace{5pt}

\noindent
The point-wise convergence and Dirichlet form convergence of standard graph Laplacian
only require a differentiable and decay condition of $h$ as originally taken in \cite{coifman2006diffusion},
and even without Assumption \ref{assump:h-C2-nonnegative}(C3) non-negativity.
Our analysis of density-corrected graph Laplacian assumes $W_{ij} \ge 0$,
and our main result of eigen-convergence needs $h$ to be Gaussian, 
 thus we include (C3) in Assumption \ref{assump:h-C2-nonnegative} to simplify exposition.
{The need of Gaussian $h$ shows up in proving the (initial crude) eigenvalue lower bound (LB), to be explained in below,
and it is  due to the fundamental connection between Gaussian kernel and the manifold heat kernel.}

\subsubsection{{Eigenvalue UB/LB and the interpolation mapping}}\label{subsec:interpolatoin}

{To explain these results and the difference in proving eigenvalue UB and LB, 
we start by introducing the notion of  {\it point-wise rate} and  {\it form rate}.
In the current paper, }

\vspace{2pt}
$\bullet$
Point-wise convergence of graph Laplacians
{is shown to have the rate of $O\left( \epsilon, \sqrt{\frac{\log N}{N \epsilon^{d/2+1}}} \right)$.
We call this rate the ``point-wise rate'', and denote by ${\rm Err}_{ pt}$.}
\vspace{2pt}

$\bullet$
Convergence of the graph Dirichlet  form $\frac{1}{\epsilon N^2}u^T (D-W) u$
applied to smooth manifold functions, i.e., $u = \{ f(x_i) \}_{i=1}^N$ for $f$ smooth on $\calM$,
{is shown to have the rate of $O\left( \epsilon, \sqrt{\frac{\log N}{N \epsilon^{d/2}}} \right)$.
We call this rate the ``form rate'', and denote by ${\rm Err}_{form}$.}
\vspace{5pt}

In literature,
the point-wise convergence of random-walk graph Laplacian $(I-D^{-1}W)$ with differentiable and decay $h$ was firstly shown to have rate
 $O(\epsilon, \sqrt{\frac{\log N}{N \epsilon^{d/2+1}}})$
in \cite{singer2006graph}.
The exposition in \cite{singer2006graph} was for Gaussian $h$ but the analysis therein extends directly to general $h$.
The Dirichlet form convergence with differentiable $h$ was shown to have rate $O(\epsilon, \sqrt{\frac{\log N}{N \epsilon^{d/2}}})$
in \cite{cheng2020convergence} via a V-statistic analysis.
\cite{cheng2020convergence} also derived point-wise rate for both the random-walk and the un-normalized graph Laplacian $(D-W)$.
The analysis in \cite{cheng2020convergence} 
was mainly developed for kernel with adaptive bandwidth, 
and higher order regularity of $h$ ($C^4$ instead of $C^2$) 
was assumed to handle the complication due to variable kernel bandwidth.
For the fixed-bandwidth kernel as in \eqref{eq:def-K-eps},
the analysis in \cite{cheng2020convergence} can be simplified to proceed under less restrictive conditions of $h$. 
We include more details in below when quoting these previous results, {which pave the way towards proving eigen-convergence.}

{Table \ref{tab:theory-summary} illustrates a difference between eigenvalue UB and LB analysis.}
Specifically, the eigenvalue UB holds for general differentiable $h$,
while the initial crude eigenvalue LB,
and consequently the final eigenvalue and eigenvector convergence rate, 
need $h $ to be Gaussian. 
This difference between eigenvalue UB and LB analysis
 is due to the subtlety of the variational principle approach in analyzing empirical eigenvalues.  
To be more specific,  by ``projecting'' the population eigenfunctions  to vectors in $\R^N$ and use as ``candidate'' eigenvectors in the variational form,
the  {Dirichlet form convergence rate}
 directly translates into a rate of eigenvalue UB (for fixed finitely many low-lying eigenvalues). 
{This is why the eigenvalue UB matches the form rate before any LB is derived, and we call this the ``Step 0'' of our analysis.}

The eigenvalue LB, however, is more difficult, as has been pointed out in \cite{burago2014graph}. 
In \cite{burago2014graph} and following works taking the variational principle approach, 
the LB analysis is by ``interpolating'' the empirical eigenvectors to be functions on $\calM$.
Unlike with the population eigenfunctions  which are known to be smooth,
there is less property of the empirical eigenvectors that one can use, 
and any regularity property of these discrete objects is usually non-trivial to obtain \cite{calder2020lipschitz}.
The interpolation mapping in \cite{burago2014graph} first assigns 
a point $x_i$ to a Voronoi cell $V_i$, 
assuming that $\{x_i\}_i$ forms an $\varepsilon$-net of $\calM$ to begin with (a non-probabilistic setting),
and this maps a vector $u$ to a piece-wise constant function $P^* u $ on $\calM$;
next, $P^* u $ is convolved with a kernel function which is compacted supported on a small geodesic ball, 
and this produces ``candidate'' eigenfunctions,
whose manifold differential Dirichlet form is upper bounded by the graph Dirichlet form of $u$, up to an error,
 through differential geometry calculations. 
Under the probabilistic setting of i.i.d. samples,
\cite{trillos2020error} constructed the mapping $P^*$ using a Wasserstein-$\infty$ optimal transport (OT) map, 
where the $\infty$-OT distance between the empirical measure $\frac{1}{N}\sum_i \delta_{x_i}$ and the population measure $p dV$ is bounded 
by constructing a Voronoi tessellation of $\calM$ when $d \ge 2$. 
This led to an overall eigen-convergence rate of $\tilde{O}( N^{-1/2d})$ in \cite{trillos2020error}
when $h$ is compactly supported and satisfies certain regularity conditions and $d \ge 2$,
the $\tilde{O}(\cdot)$ indicating a possible a factor of certain power of $\log N$.
A typical example is when $h$ is an indicator function $h = {\bf1}_{[0,1)}$, 
which is called ``$\varepsilon$-graph''  in computer science literature ($\varepsilon$ corresponds to $\sqrt{\epsilon}$ in our notation).
The approach was extended to $k$NN graphs in \cite{calder2019improved}, 
where the rate of eigenvalue and $2$-norm eigenvector convergence 
was also improved to match the point-wise rate of the epsilon-graph or $k$NN graph Laplacians,
leading to a rate of  $\tilde{O}(N^{- 1/(d + 4)})$ when 
${\epsilon}^{d/2+2} = \Omega  (\frac{\log N }{N})$.
The same rate was shown for $\infty$-norm consistency of eigenvectors in \cite{calder2020lipschitz},
combined with Lipschitz regularity analysis of empirical eigenvectors using advanced PDE tools.
{Eigenvalue consistency with degraded rate was obtained under the regime $\epsilon^{d/2} =\Omega( \frac{\log N}{N})$, which is very sparse graph just beyond graph connectivity threshold \cite{calder2019improved}.
}

In the current work,
we take a different approach for the interpolation mapping in the eigenvalue LB analysis.
Our method is based on manifold heat kernels,
and the analysis makes use of the fact that at short time and on small local neighborhoods, the heat kernel $\calH_t(x,y)$ can be approximated by
\begin{equation}\label{eq:def-Gt}
 G_{t} (x,y) :=
\frac{1}{( 4 \pi  t )^{d/2}} e^{- \frac{ d_\calM( x,y)^2 } { 4 t }},
\end{equation}
and consequently by $K_t(x,y)$ when $h$ is Gaussian as in \eqref{eq:def-h-gaussian}.
The first approximation $H_t \approx G_t$ is by classical results of elliptical operators on Riemannian manifolds, cf. Theorem \ref{thm:Heat-short-time}.
Next, we show that $G_t \approx K_t$ because $K_t$ replaces geodesic distance $d_\calM( x,y)$
with Euclidean distance $\| x-y\|$ in $G_t$,
and the two locally match by $d_\calM( x,y) = \| x- y\| + O(\| x- y\|^3)  $.
{(The constant in the big-O here depends on the second fundamental form, and by compactness of $\calM$ is universal for $x$. Similar universal constant in big-O holds throughout the paper.)}
These estimates allow us to construct interpolated $C^\infty(\calM)$ functions $I_r [v]$ from  discrete vector $v \in \R^N$ by convolving with the heat kernel at time $r = \frac{\epsilon \delta }{2}$, where $ 0 < \delta < 1$ is a fixed constant determined by the first $K=k_{max}+1$ low-lying population eigenvalues $\mu_k$ of $-\Delta$.
Specifically, $\delta$ is inversely proportional to the smallest eigen-gap between $\mu_k$ for $k \le K$
($\mu_k$  assumed to have single multiplicity in the first place, and then the result generalizes to greater than one multiplicity),
which is an $O(1)$ constant determined by $-\Delta$ and $K$.
Applying the variational principle to the operator $I-Q_t$, 
where $Q_t$ is the diffusion semi-group operator and $Q_t$'s spectrum is determined by that of $-\Delta$, 
allows to prove an initial eigenvalue LB smaller than half of the minimum first-$K$ eigen-gap.

{The step to derive $O(1)$ initial crude eigenvalue LB using manifold heat kernel interpolation mapping is called ``Step 1'' in our analysis.
While the interpolation mapping by convolving with a smooth kernel has been used in previous works \cite{burago2014graph,trillos2020error,calder2019improved}, using the manifold heat kernel plays a special role in the eigenvalue LB analysis, and this cannot be equivalently achieved by other choices of kernels (unless the kernel locally approximates the heat kernel, like the Gaussian kernel here). 
Specifically, Lemma \ref{lemma:qs2-UB} is proved using heat kernel properties (without using concentration of i.i.d. data samples),
and the lemma connects the continuous integral form of interpolated candidate eigenfunctions with the graph Dirichlet form.
}

\subsubsection{Road-map of analysis}\label{subsec:roadmap}

The previous subsection has explained Step 0 and 1 of our analysis. Here we summarize the rest of the analysis and provide a road-map.

After an $O(1)$ initial crude eigenvalue LB is obtained in Step 1,
we adopt the  ``bootstrap strategy'' from \cite{calder2019improved}, named as therein,
to obtain a refined (2-norm) eigenvector consistency rate to match to the  graph Laplacian point-wise convergence rate.
We call this ``Step 2''.
Note that the use of smooth kernel (like Gaussian) has an improved bias error in the point-wise rate than compactly supported kernel function,
and then consequently improves the eigen-convergence rate, see more in Remark \ref{rk:eigen-rate-indicator-h}.

Next, leveraging the eigenvector consistency proved in Step 2, we further improve the eigenvalue convergence to match the form rate, {which is better than the point-wise rate. 
We call this ``Step 3''.}
Then the refined eigenvalue LB matches the eigenvalue UB in rate.
In the process, the first $K$ many empirical eigenvalues are upper bounded to be $O(1)$,
which follows by the eigenvalue UB proved in the beginning.

In summary, our eigen-convergence analysis consists of the following four steps,
\begin{itemize}[leftmargin=10pt]
\item[-]
Step 0. 
Eigenvalue UB by the Dirichlet form convergence, matching to the form rate.

\item[-]
Step 1. 
Initial crude eigenvalue LB,
providing eigenvalue error up to the smallest first $K$ eigen-gap.

\item[-]
Step 2. $2$-norm consistency of eigenvectors, up to the point-wise rate.

\item[-]
Step 3. Refined eigenvalue consistency, up to the form rate. 

\end{itemize}

\noindent
Step 1 requires $h $  to be non-negative and currently only covers the Gaussian case. 
This  may be relaxed, 
since the proof only uses the approximation property of $h$, namely that $K_\epsilon \approx \calH_\epsilon$.
In this work, we restrict to the Gaussian case for simplicity and the wide use of Gaussian kernels in applications.

\subsection{More related works}

As we adopt a Dirichlet form-based analysis, 
the eigen-convergence result in the current paper is of the same type as in previous works using variational principle \cite{burago2014graph, trillos2020error, calder2019improved}.
In particular,  the rate concerns the convergence of the first $k_{max}$  many low-lying eigenvalues of the Laplacian,
where $k_{max}$ is a {\it fixed}  finite integer. 
The constants in the big-$O$ notations in the bounds are treated as $O(1)$,
and they depend on $k_{max}$ and these leading eigenvalues and eigenfunctions of the manifold Laplacian.
Such results are useful for applications where leading eigenvectors are the primary focus,
e.g., spectral clustering and dimension-reduced spectral embedding.
An alternative approach is to analyze functional operator consistency
\cite{belkin2007convergence,von2008consistency,singer2016spectral,shi2015convergence},
which may provide different eigen-consistency bounds, e.g.,
$\infty$-norm consistency of eigenvectors using compact embedding of Glivenko-Cantelli function classes \cite{dunson2021spectral}.

The current work considers noise-less data on $\calM$,
while the robustness of graph Laplacian against noise in data is important for applications. 
When manifold data vectors are perturbed by noise in the ambient space, \cite{el2016graph} showed that Gaussian kernel function $h$
has special property to make  kernelized graph Laplacian robust to noise (by a modification of diagonal entries). 
More recently, \cite{landa2021doubly} showed that bi-stochastic normalization can make 
the Gaussian kernelized graph affinity matrix robust to high dimensional heteroskedastic noise in data. 
These results suggest that Gaussian $h$ is a special and useful choice of kernel function for graph Laplacian methods. 

Meanwhile, bi-stochastically normalized graph Laplacian has been studied in \cite{marshall2019manifold},
where the point-wise convergence of the kernel integral operator to the manifold operator was proved.
The spectral convergence of bi-stochastically normalized graph Laplacian for data on hyper-torus was recently proved to be $O( N^{-1/(d/2+4) +o(1)})$ in  \cite{wormell2021spectral}.
 The density-corrected affinity kernel matrix $\tilde{W}= D^{-1}WD^{-1}$, which is analyzed in the current work, provides another normalization of the graph Laplacian which recovers the Laplace-Beltrami operator. 
 It would be interesting to explore the connections to these works and extend our analysis to bi-stochastically normalized graph Laplacians,
 which may have better properties of spectral convergence and noise-robustness.

\section{Preliminaries}\label{sec:prelim}

\subsection{Graph and manifold Laplacians}

We define the following moment constants of function $h$ satisfying Assumption \ref{assump:h-C2-nonnegative},
 \[
m_0 [ h ] := \int_{\R^d} h( \| u \|^2) du,
 \quad
  m_2 [ h ] :=  \frac{1}{d} \int_{\R^d} \| u \|^2 h( \| u \|^2) du,
  \quad
  \tilde{m}[h] :=\frac{m_2 [h]}{ 2 m_0[h]}.
 \]
By (C3), $h \ge 0$ and the case $h \equiv 0$ is excluded, thus $m_0[h], m_2[h] > 0$.
With Gaussian $h$ as in \eqref{eq:def-h-gaussian},
$ m_0 = 1$,
$ m_2 = 2$,
and $\tilde{m} = 1$.
Denote $m_2[h]$ and $m_0[h]$ by $m_2$ and $m_0$ for a shorthand notation, and
\begin{itemize}
\item
 The un-normalized graph Laplacian $L_{un}$ is defined as
\begin{equation}\label{eq:def-L-un}
L_{un} : = \frac{1}{ \frac{m_2}{2} p \epsilon N} (D-W).
\end{equation}
Note that the standard un-normalized graph Laplacian is usually $D-W$, and we divide by the constant $\frac{m_2}{2} p \epsilon N$ 
for the convergence of $L_{un}$ to $-\Delta$.

\item
The random-walk graph Laplacian $L_{rw}$ is defined as 
\begin{equation}\label{eq:def-L-rw}
L_{rw} : = \frac{1}{ \frac{m_2}{ 2 m_0} \epsilon } (I - D^{-1}W),
\end{equation}
with the constant normalization to ensure convergence to $-\Delta$.
\end{itemize}

\noindent
The matrix $L_{un}$ is real-symmetric, 
positive semi-definite (PSD), and the smallest eigenvalue is zero. 
Suppose eigenvalues of $L_{un}$ are $\lambda_k$, $k=1, 2, \cdots$, and sorted in ascending order, that is,
\[
0 = \lambda_1 (L_{un}) \le \lambda_2 (L_{un}) \le \cdots \le \lambda_N (L_{un}).
\] 
The $L_{rw} $ matrix is well-define when $D_i > 0$ for all $i$, 
which holds w.h.p. under the regime that $\epsilon^{d/2} =\Omega( \frac{\log N}{N}) $,
cf. Lemma \ref{lemma:Di-concen}.
We always work under the $\epsilon^{d/2} =\Omega( \frac{\log N}{N}) $ regime,
namely the connectivity regime.
Due to that $D^{-1}W$ is similar to $D^{-1/2}WD^{-1/2}$ which is PSD, 
$L_{rw} $ is also real-diagonalized and has $N$ non-negative real eigenvalues, sorted and denoted as 
$
0 = \lambda_1(L_{rw}) \le \lambda_2 (L_{rw}) \le \cdots \le \lambda_N(L_{rw})$.
We also have that, by the min-max variational formula for real-symmetric matrix, 
\begin{equation*}
\lambda_k(L_{un}) = \min_{ L \subset \R^N, \, dim(L) = k} \sup_{ v \in L, v \neq 0} 
 \frac{ v^T L_{un} v}{v^T v},
 \quad k=1,\cdots, N.
\end{equation*}
We define the {\it graph Dirichlet form} $E_N( u)$ for $u \in \R^N$ as 
\begin{equation}\label{eq:def-ENu}
E_N(u) = 
\frac{1}{\frac{m_2}{2}} 
\frac{1}{\epsilon N^2} u^T( D-W) u 
=
\frac{1}{\frac{m_2}{2}} 
 \frac{1}{2 \epsilon N^2}\sum_{i,j = 1}^N W_{i,j} (u_i - u_j)^2.
\end{equation}
By  \eqref{eq:def-L-un},
$E_N(u) = p 
\frac{1}{N} u^T L_{un} u$,
and thus
\begin{equation}\label{eq:lambdak-min-max}
\lambda_k (L_{un})
= \min_{ L \subset \R^N, \, dim(L) = k} \sup_{ v \in L, v \neq 0 } 
\frac{ 
E_N(v)}{  p  \frac{1}{N}   \| v \|^2 }, 
\quad k =1, \cdots, N.
\end{equation}
Similarly, we have 
\begin{equation}\label{eq:lambdak-rw-min-max}
\lambda_k (L_{rw})
= \min_{ L \subset \R^N, \, dim(L) = k} \sup_{ v \in L,  v \neq 0} 
\frac{ 
E_N(v)}{  \frac{1}{m_0}  \frac{1}{N^2}   v^T D v }, 
\quad k =1, \cdots, N.
\end{equation}

To introduce notations of manifold Laplacian,
we define inner-product in $H: =L^2(\calM ,dV) $ as
$\langle f, g \rangle : = \int_\calM f(x) g(x) dV(x)$, for $f ,g \in L^2(\calM ,dV)$.
We also use $\langle \cdot, \cdot \rangle_q$ to denote inner-product in 
$L^2( \calM, q dV)$, $qdV$ being a general measure on $\calM$ (not necessarily probability measure), 
that is 
$
\langle f, g \rangle_q : = \int_\calM f(x) g(x) q(x) dV(x)$, 
for $f ,g \in L^2(\calM , qdV)$.
For smooth connected compact manifold $\calM$, 
the (minus) manifold Laplacian-Beltrami operator $-\Delta$ has eigen-pairs $\{ \mu_k, \psi_k\}_{k=1}^\infty$, 
\[
0 = \mu_1 < \mu_2 \le  \cdots \le \mu_k \le \cdots,
\]
\[
-\Delta \psi_k = \mu_k \psi_k, 
\quad
\langle \psi_k, \psi_l \rangle = \delta_{k,l}, 
\quad
\psi_k \in C^\infty(\calM),
\quad k, l =1, 2, \cdots.
\]
The second eigenvalue $\mu_2 > 0 $ due to connectivity of $\calM$.
When $\mu_i = \cdots = \mu_{i+l-1} = \mu$ for some eigenvalue $\mu$ of $-\Delta$ having multiplicity $l$, 
the eigenfunctions $\psi_i , \cdots, \psi_{i+l-1}$ can be set to be an orthonormal basis of the $l$-dimensional eigenspace associated with $\mu$. 
Note that $\psi_k \in C^\infty(\calM)$ for generic smooth $\calM$.

 \subsection{Heat kernel on $\calM$}
 
We leverage the special property of Gaussian kernel in the ambient space $\R^D$ that it  locally approximates the manifold heat kernel on ${\calM}$. 
We start from the notations of manifold heat kernel.
Since $\calM$ is smooth compact (no-boundary), 
the Green's function of the heat equation on ${\calM}$ exists, namely the heat kernel $\calH_t(x,y)$ of $\calM$.  
We denote the heat diffusion semi-group operator as $Q_t$ which can be formally written as $Q_t = e^{ t \Delta}$, and
\[
Q_t f(x) = \int_{\calM} \calH_t(x,y) f(y)  dV(y), \quad \forall f \in L^2(\calM, dV).
\]
By that $Q_t$ is semi-group, we have the reproduce property
\[
\int_{\calM} \calH_t(x,y)  \calH_t( y,z) dV(y) = H_{2t} (x,z), \quad \forall x, z \in \calM, \quad \forall t > 0.
\]
Meanwhile, by the probability interpretation, 
\[
\int_{\calM} \calH_t(x,y)  dV(y) = 1, \quad \forall x\in \calM, \quad \forall t > 0.
\]
Using the eigenvalue and eigenfunctions $\{ \mu_k, \psi_k \}_k$ of $-\Delta$,
the heat kernel has the expansion representation 
$\calH_t(x,y) = \sum_{k=1}^\infty e^{-t \mu_k} \psi_k(x) \psi_k(y)$.
We will not use the spectral expansion of $\calH_t$  in our analysis, 
but only that $\psi_k$ are also eigenfunctions of $Q_t$, that is,
\begin{equation}\label{eq:Qt-eigen}
Q_t \psi_k = e^{-t \mu_k } \psi_k, \quad k=1,2,\cdots
\end{equation}

Next, we derive Lemma \ref{lemma:heat},
which characterizes two properties of the heat kernel $\calH_t$ at sufficiently short time:
First, on a local neighborhood 
on $\calM$, $H_t(x,y)$ can be approximated by $K_t(x,y)$ in the leading order,
where $K_t$ is defined as in \eqref{eq:def-K-eps} with Gaussian $h$;
Second, globally on the manifold  the heat kernel $H_t(x,y)$ has a sub-Gaussian decay.
These are based on classical results about heat kernel on Riemannian manifolds \cite{li1986parabolic,grigor1997gaussian,rosenberg1997laplacian,grigor2009heat},
summarized in the following theorem.

\begin{theorem}[Heat kernel parametrix and decay \cite{rosenberg1997laplacian,grigor1997gaussian}]
\label{thm:Heat-short-time}
Suppose $\calM$ is as in Assumption \ref{assump:M-p} (A1),
 and $m > d/2+2 $ is a positive integer.
Then there are  positive constants $t_0 < 1$, $\delta_0 < inj(\calM)$ i.e. the injective radius of $\calM$, 
and both $t_0$ and $\delta_0$ depend on $\calM$, and 

1) Local approximation: 
There are positive constants $C_1$, $C_2$  which depending on $\calM$, 
 and  $u_0, \cdots, u_m$ $\in C^{\infty}(\calM)$, where $u_0$ satisfies that 
 \[
 |u_0(x,y) -1| \le C_1 d_\calM(x,y)^2, \quad \forall y \in \calM, \, d_\calM( y,x) < \delta_0, 
 \]
 and $G_t$ is defined as in \eqref{eq:def-Gt},
 such that,  when $ t < t_0$, for any $x \in \calM$, 
\begin{equation}\label{eq:parametrix-m}
\left| \calH_t( x,y) - G_t(x,y) \left( \sum_{l=0}^m t^l u_l(x,y) \right) \right| \le C_2 t^{m-d/2+1}, 
\quad
\forall y \in \calM, \, d_\calM( y,x) < \delta_0.
\end{equation}

2) Global decay:
There is positive constant $C_3$ depending on $\calM$ such that, when $ t < t_0$, 
\begin{equation}\label{eq:H-decay}
\calH_t( x,y ) \le  C_3 t^{-d/2} e^{- \frac{ d_\calM( x,y)^2}{ 5 t}},
\quad 
\forall x, y \in \calM.
\end{equation}
\end{theorem}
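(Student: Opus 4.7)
The plan is to invoke the classical parametrix construction together with a Gaussian upper bound on compact Riemannian manifolds; the content is to verify that the constants can be taken uniform in $x \in \calM$ and that the remainder has the stated form. For part (1), I work in geodesic normal coordinates around $x$, valid inside a ball of radius $\mathrm{inj}(\calM)$, and postulate the formal short-time expansion $\calH_t(x,y) \sim G_t(x,y) \sum_{l \ge 0} t^l u_l(x,y)$. Matching powers of $t$ in the heat equation $(\partial_t - \Delta_y)\calH_t = 0$ forces the $u_l$ to solve a hierarchy of transport equations along radial geodesics emanating from $x$. The leading one yields $u_0(x,y) = \theta(x,y)^{-1/2}$, where $\theta$ is the density of $dV$ with respect to Lebesgue measure in those normal coordinates; since $\theta$ is smooth with $\theta(x,x)=1$ and vanishing radial derivative at $x$, Taylor expansion gives $|u_0(x,y) - 1| \le C_1 d_\calM(x,y)^2$ with $C_1$ controlled by a uniform bound on sectional curvature. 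The higher $u_l$ are then constructed recursively and are smooth on the geodesic ball.

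For the remainder estimate, define the truncated parametrix $P_t^{(m)}(x,y) := \chi(d_\calM(x,y)/\delta_0)\, G_t(x,y) \sum_{l=0}^m t^l u_l(x,y)$ with a smooth cutoff $\chi$, and compute
\[
(\partial_t - \Delta_y) P_t^{(m)} = G_t(x,y)\cdot t^m (\Delta_y u_m)(x,y) + R_t^{\mathrm{cut}}(x,y),
\]
where $R_t^{\mathrm{cut}}$ is supported at distance $\ge c\delta_0$ from $x$ and is therefore exponentially small in $1/t$. Duhamel's identity
\[
\calH_t(x,\cdot) - P_t^{(m)}(x,\cdot) = \int_0^t Q_{t-s}\bigl[(\partial_s - \Delta_y) P_s^{(m)}(x,\cdot)\bigr]\, ds,
\]
together with the uniform bounds $\sup_y G_s(x,y) \le C s^{-d/2}$, smoothness of $u_m$, and contraction of $Q_{t-s}$ in $L^\infty$, then integrates to give the $O(t^{m-d/2+1})$ bound in (1). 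The hypothesis $m > d/2 + 2$ is exactly what makes this remainder strictly smaller in $t$ than the $G_t$ scale as $t \to 0^+$, which is the form needed downstream when the parametrix is differenced against $K_t$.

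For part (2), I would invoke the classical Li--Yau / Grigor'yan Gaussian upper bound on complete Riemannian manifolds with Ricci curvature bounded from below: for every $\eta > 0$ there exists $C = C(\calM,\eta)$ such that
\[
\calH_t(x,y) \le C\, t^{-d/2} \exp\!\bigl(- d_\calM(x,y)^2 / ((4+\eta) t)\bigr), \quad x,y \in \calM,\ t > 0.
\]
Taking $\eta = 1$ so that $4+\eta = 5$, and absorbing the $t$-independent prefactor into a single constant $C_3$ by using $t < t_0 < 1$ and compactness of $\calM$, yields \eqref{eq:H-decay}. An alternative route is to combine the on-diagonal Weyl-type bound $\calH_t(x,x) \le C t^{-d/2}$ with a Davies-type integrated maximum principle to get the same Gaussian tail.

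The one genuine obstacle is uniformity of the constants $t_0,\delta_0, C_1, C_2, C_3$ in $x \in \calM$. The coefficients $u_l$ depend on curvature tensors of $\calM$ at $x$ through the transport equations, and the parametrix only lives inside the injectivity radius, so one must invoke compactness to get uniform bounds on curvature and $\mathrm{inj}(\calM) \ge 2\delta_0$. The cutoff has to be inserted carefully so that the boundary terms do not dominate the interior $t^m \Delta_y u_m$ contribution; once that bookkeeping is done, the estimate \eqref{eq:parametrix-m} follows with constants depending only on $\calM$, and \eqref{eq:H-decay} follows by a direct quotation.
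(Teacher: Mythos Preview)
Your proposal is correct and follows essentially the same route as the paper: parametrix construction of the $u_l$ via transport equations, a Duhamel-type representation of the remainder, and integration of an $s^{m-d/2}$ bound to obtain $O(t^{m-d/2+1})$, with Part~2 a direct quotation of the Li--Yau/Grigor'yan Gaussian upper bound combined with the on-diagonal estimate. The only cosmetic difference is that the paper quotes Rosenberg's convolution-series identity $\calH_t - P_m = Q_m * P_m$ (with $|Q_m(s,\cdot,\cdot)| \le C s^{m-d/2}$ from Lemma~3.18 there) and bounds $\int_\calM G_s\,dV \le C_6$, whereas you use the heat-semigroup Duhamel $\calH_t - P_t^{(m)} = -\int_0^t Q_{t-s}[(\partial_s - \Delta_y)P_s^{(m)}]\,ds$ and $L^\infty$-contraction of $Q_{t-s}$; both yield the same integral $\int_0^t s^{m-d/2}\,ds$.
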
 

\noindent
Part 1) is by the classical parametrix construction of heat kernel on $\calM$, see e.g. Chapter 3 of \cite{rosenberg1997laplacian},
and Part 2) follows the classical upper bound of heat kernel by Gaussian estimate dating back to 60s \cite{aronson1967bounds,grigor2009heat}.
We include a proof of the theorem in Appendix \ref{app:proofs-prelim} for completeness. 

The theorem directly gives to the following lemma (proof in Appendix \ref{app:proofs-prelim}),
which is useful for our construction of interpolation mapping using heat kernel. 
We denote by $B_\delta(x)$ the Euclidean ball in $\R^D$ centered at point $x$ of radius $\delta$.

\begin{lemma}\label{lemma:heat}
Suppose $\calM$ is as in Assumption \ref{assump:M-p} (A1),
and $t \to 0+$. Let $\delta_t := \sqrt{ 6(10 + \frac{d}{2}) t \log{\frac{1}{t}}}$,
and  $K_t(x,y)$ be with Gaussian kernel $h$, i.e.,
$K_t(x,y) = (4 \pi t)^{-d/2} e^{ - \|x - y\|^2/4t}$.
Then there is positive constant $\epsilon_0$ depending on $\calM$
such that, when $t < \epsilon_0$, for any $ x \in \calM$,
\begin{eqnarray}
& \calH_t ( x,y)  = K_t( x,y) (1 + {O}( t  (\log  t^{-1})^2) ) + O(t^3), 
\quad
\forall y \in B_{\delta_t}(x) \cap \calM,
\label{eq:H-eps-local}\\
& \calH_t(x,y)  =  O( t^{10}), 
\quad \forall y \notin B_{\delta_t}(x) \cap \calM,
\label{eq:H-eps-truncate} \\
& \calH_t(x,y)  = O( t^{-d/2}), 
\quad \forall  x, y \in \calM. \label{eq:H-global-boundedness}
\end{eqnarray}
The constants in big-$O$ in all the equations
 only depend on $\calM$ and are uniform for all $x$.
\end{lemma}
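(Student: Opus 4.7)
The plan is to extract all three estimates from Theorem \ref{thm:Heat-short-time}: the parametrix from part 1 drives the local approximation, while the global Gaussian bound from part 2 controls both the tail and the pointwise bound. The specific constant $6(10+d/2)$ hard-coded in $\delta_t$ is engineered precisely so that outside the Euclidean ball $B_{\delta_t}(x)$ the Gaussian decay beats $t^{-d/2}$ by a factor of $t^{10}$.

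For \eqref{eq:H-eps-local}, I would fix an integer $m > d/2+2$ once, which makes the parametrix remainder $O(t^{m-d/2+1}) = O(t^3)$. Since the embedding is isometric, $\|x-y\| \le d_\calM(x,y)$, and compactness plus smoothness of $\calM$ give a reverse Lipschitz estimate $d_\calM(x,y) \le C\|x-y\|$ when $\|x-y\|$ is small. Hence for $y \in B_{\delta_t}(x) \cap \calM$ and $t$ small enough, $d_\calM(x,y) < \delta_0$, so Theorem \ref{thm:Heat-short-time}(1) applies. Using $u_0(x,y) = 1 + O(d_\calM(x,y)^2) = 1 + O(t\log t^{-1})$ together with the uniform boundedness of $u_1,\ldots,u_m$ on compact $\calM$, the parametrix sum reduces to $1 + O(t\log t^{-1})$. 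For the ratio $G_t/K_t$, the standard second-fundamental-form expansion of a smooth isometric embedding yields $d_\calM(x,y)^2 - \|x-y\|^2 = O(\|x-y\|^4)$ locally, so on $B_{\delta_t}(x)$ this difference is $O(\delta_t^4) = O(t^2(\log t^{-1})^2)$, giving
\[
\frac{G_t(x,y)}{K_t(x,y)} = \exp\!\left(\frac{\|x-y\|^2 - d_\calM(x,y)^2}{4t}\right) = 1 + O(t(\log t^{-1})^2).
\]
Multiplying the two expansions and absorbing the additive $O(t^3)$ parametrix error yields \eqref{eq:H-eps-local}.

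For \eqref{eq:H-eps-truncate}, Theorem \ref{thm:Heat-short-time}(2) combined with $\|x-y\| \le d_\calM(x,y)$ forces, whenever $y \notin B_{\delta_t}(x)$,
\[
\calH_t(x,y) \le C_3 t^{-d/2} e^{-\delta_t^2/(5t)} = C_3 t^{-d/2} t^{6(10+d/2)/5} = O(t^{12 + d/10}) = O(t^{10}).
\]
The pointwise bound \eqref{eq:H-global-boundedness} follows immediately by discarding the exponential factor. The technical crux is \eqref{eq:H-eps-local}, where one must simultaneously track three distinct error scales --- the parametrix additive remainder $O(t^3)$, the $u_0$ quadratic correction of size $O(t\log t^{-1})$, and the quartic geodesic--Euclidean discrepancy of size $O(t(\log t^{-1})^2)$ --- and verify that the dominant multiplicative error comes from the $G_t$-to-$K_t$ conversion. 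The tail and pointwise bounds are essentially direct consequences of part 2 once the exponent in $\delta_t$ is matched to the target $t^{10}$ rate.
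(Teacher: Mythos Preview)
Your proposal is correct and follows essentially the same argument as the paper's proof: both fix $m$ so that the parametrix remainder in Theorem~\ref{thm:Heat-short-time}(1) is $O(t^3)$, expand the multiplicative factor as $u_0(x,y)+O(t)=1+O(t\log t^{-1})$, convert $G_t$ to $K_t$ via the quartic estimate $d_\calM(x,y)^2-\|x-y\|^2=O(\|x-y\|^4)$ to produce the dominant $O(t(\log t^{-1})^2)$ error, and read off \eqref{eq:H-eps-truncate}--\eqref{eq:H-global-boundedness} directly from the Gaussian upper bound \eqref{eq:H-decay}. The only cosmetic differences are that the paper writes the $G_t/K_t$ step via the mean value theorem rather than the exponential ratio, and phrases the Euclidean-to-geodesic comparison as ``the $\delta_t$-Euclidean ball is contained in the $1.1\delta_t$-geodesic ball'' rather than invoking a reverse-Lipschitz constant.
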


\section{Eigenvalue upper bound}\label{sec:step0}

In this section, we consider uniform $p$ on $\calM$,
and standard graph Laplacians $L_{un}$ and $L_{rw}$
with the kernelized affinity matrix $W$, $W_{ij} = K_\epsilon(x_i, x_j)$ defined as in \eqref{eq:def-K-eps}.
We show the eigenvalue UB for general differentiable $h$ satisfying Assumption \ref{assump:h-C2-nonnegative}, not necessarily Gaussian.

\subsection{Un-normalized graph Laplacian {eigenvalue UB}}

{We now derive Step 0 for $L_{un}$, the result being summarized in the following proposition.}

\begin{proposition}[Eigenvalue UB of $L_{un}$]
\label{prop:eigvalue-UB}
Under Assumption \ref{assump:M-p}(A1),
$p$ being uniform on $\calM$,
 and Assumption \ref{assump:h-C2-nonnegative}.
For fixed $K \in \mathbb{N}$, 
if as $N \to \infty$, $\epsilon \to 0+ $ and $\epsilon^{d/2} = \Omega(  \frac{\log N}{N} ) $, then for sufficiently large $N$, 
w.p. $> 1-4 K^2 N^{-10}$,
\[
\lambda_k (L_{un})
\le \mu_k + O \left(\epsilon ,  \sqrt{ \frac{\log N}{N  \epsilon^{d/2} } } \right) ,
 \quad k=1,\cdots, K.
\]
\end{proposition}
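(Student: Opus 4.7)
The plan is to apply the min–max characterization \eqref{eq:lambdak-min-max} with the $k$-dimensional trial subspace
$$ L_k := \mathrm{span}\{\rho_X\psi_1,\dots,\rho_X\psi_k\}\subset\R^N $$
obtained by sampling the first $k$ population eigenfunctions of $-\Delta$. It suffices to show that, on an event of the claimed probability, every $v=\sum_{j=1}^k c_j\rho_X\psi_j\in L_k$ satisfies
$$ \frac{E_N(v)}{p\,\|v\|^2/N}\ \le\ \mu_k + O\!\left(\epsilon,\sqrt{\tfrac{\log N}{N\epsilon^{d/2}}}\right), $$
since then \eqref{eq:lambdak-min-max} immediately yields the stated upper bound on $\lambda_k(L_{un})$. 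I will establish this by comparing the discrete $k\times k$ mass matrix $G_{jl}:=\tfrac{1}{N}(\rho_X\psi_j)^T(\rho_X\psi_l)$ and the discrete stiffness matrix $S_{jl}:=B_N(\rho_X\psi_j,\rho_X\psi_l)$, where $B_N$ is the symmetric bilinear form polarizing $E_N$, against their continuum counterparts.

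For $G$, the variables $\psi_j(x_i)\psi_l(x_i)$ are i.i.d.\ and uniformly bounded for fixed $j,l\le K$, with mean $p\delta_{jl}$ by uniform density $p=1/\Vol(\calM)$ and $L^2(dV)$-orthonormality of $\{\psi_j\}$; Bernstein concentration then gives $G_{jl}=p\delta_{jl}+O(\sqrt{\log N/N})$ with failure probability $\le 2N^{-10}$ per pair. For $S$, I apply the Dirichlet form rate (Theorem~\ref{thm:form-rate}) to each of $\psi_j$, $\psi_l$, $\psi_j+\psi_l\in C^\infty(\calM)$ and polarize, using the continuum limit $E_N(\rho_X f)\to p^2\int_\calM|\nabla f|^2\,dV$ (a consequence of uniform $p$ and the normalization in \eqref{eq:def-ENu}) together with $\int_\calM\nabla\psi_j\cdot\nabla\psi_l\,dV=\mu_j\delta_{jl}$. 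This yields
$$ S_{jl}=p^2\mu_j\delta_{jl}+O\!\left(\epsilon,\sqrt{\tfrac{\log N}{N\epsilon^{d/2}}}\right), $$
with constants depending only on fixed $C^2$-data of the finite family $\{\psi_j\pm\psi_l\}_{j,l\le K}$. A union bound over the $O(K^2)$ concentration events controls the total failure probability by the claimed $4K^2N^{-10}$.

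The final step is a short linear-algebra computation. Writing $v=\sum_j c_j\rho_X\psi_j$, the Rayleigh quotient equals $c^T S c/(p\,c^T G c)$, and the estimates above give $\|S-p^2\mathrm{diag}(\mu_1,\dots,\mu_k)\|=O(\eta)$ and $\|pG-p^2 I_k\|=O(\eta)$ with $\eta=O(\epsilon,\sqrt{\log N/(N\epsilon^{d/2})})$ (the slower mass-matrix rate $\sqrt{\log N/N}$ is dominated by the form rate under $\epsilon^{d/2}=\Omega(\log N/N)$, which also keeps $G$ positive-definite for $N$ large). A standard perturbation bound for generalized Rayleigh quotients then gives $\sup_{c\ne 0}c^T S c/(p\,c^T G c)\le\mu_k+O(\eta)$, and plugging into \eqref{eq:lambdak-min-max} completes the argument. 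The main obstacle, modest here because this is the ``easy'' UB direction, is to verify that the constants implicit in Theorem~\ref{thm:form-rate} depend only on fixed $C^2$-data of the test functions, so that all $O(K^2)$ polarized off-diagonal terms $S_{jl}$ share the same rate uniformly in $j,l\le K$; this follows directly from the V-statistic/Bernstein proof of the form rate in \cite{cheng2020convergence}.
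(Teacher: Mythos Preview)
Your proposal is correct and follows essentially the same route as the paper: the min--max principle with the trial subspace spanned by $\rho_X\psi_1,\dots,\rho_X\psi_k$, concentration of the mass matrix (the paper's Lemma~\ref{lemma:rhoX-isometry-whp}) via Bernstein, and the Dirichlet form rate (Theorem~\ref{thm:form-rate}) applied to $\psi_j$ and $\psi_j\pm\psi_l$ via polarization (the paper's Lemma~\ref{lemma:form-rate-psi}), then a union bound over the $O(K^2)$ events. The only cosmetic differences are that the paper works with $u_k=\tfrac{1}{\sqrt{p}}\rho_X\psi_k$ (the same span, since $p$ is constant) and writes out the Rayleigh-quotient estimate by hand rather than invoking a matrix perturbation statement; also, the constants in Theorem~\ref{thm:form-rate} depend on $C^4$- rather than $C^2$-data of the test functions, but since $K$ is fixed this is absorbed into the absolute constants exactly as you say.
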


\noindent
The proposition holds when the population eigenvalues $\mu_k$ have more than 1 multiplicities,
as long as they are sorted in an ascending order. 
The proof is by constructing a $k$-dimensional subspace $L$ in \eqref{eq:lambdak-min-max}
spanned by vectors in $\R^N$ which are produced by evaluating the population eigenfunctions $\psi_k$ at the $N$ data points.
{The proof is given in the end of this subsection after we introduce a few needed middle-step results.}

Given $X = \{x_i\}_{i=1}^N$, 
define the function evaluation operator $\rho_X$ applied to  $f: \calM \to \R$ as
\[
\rho_X: C(\calM) \to \R^N, \quad
\rho_X f = (f(x_1), \cdots, f(x_N)).
\]
We will use $u_k = \frac{1}{\sqrt{p}} \rho_X \psi_k$  as ``candidate'' approximate eigenvectors.  
To analyze $E_N ( \frac{1}{\sqrt{p}} \rho_X \psi_k)$, 
the following result from \cite{cheng2020convergence} shows that it converges to the differential Dirichlet form
\[
p^{-1} \langle \psi_k, (-\Delta) \psi_k \rangle_{p^2} = p \mu_k
\]
with the form rate.
The result is for general smooth $p$ and weighted Laplacian $\Delta_q$,
which is defined  as
$\Delta_q :=  \Delta +   \frac{\nabla q}{q } \cdot \nabla$
for measure $qdV$ on $\calM$.
$\Delta_q$ is reduced to $\Delta$ when $q$ is uniform.

\begin{theorem}[Theorem  {3.4} in \cite{cheng2020convergence}]
\label{thm:form-rate}
Under Assumptions \ref{assump:M-p} and \ref{assump:h-C2-nonnegative}, 
as $N \to \infty$, $ \epsilon \to 0+$, $ \epsilon^{d/2 } = \Omega( \frac{ \log N}{N})$,
then for any $f \in C^{\infty} ({\calM})$, 
 when $N$ is sufficiently large,
w.p. $> 1- 2N^{-10}$,
\[
 E_N( \rho_X f )
= \langle f, -\Delta_{p^{2}} f \rangle_{p^{2}}
+ O_{p,f} \left( \epsilon \right) 
+ O \left(  \sqrt{  \frac{  \log N    }{ N \epsilon^{d/2  }}  \int_{\calM}  |\nabla f |^4 p^{2}  }\right).
\]
The constant in $O_{p,f}(\cdot)$ depends on the $C^4$ norm of $p$ and $f$ on $\calM$,
and that in $O(\cdot)$ is an absolute one. 
\end{theorem}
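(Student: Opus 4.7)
The plan is to express the graph Dirichlet form as a degenerate V-statistic in the data and analyze its mean (bias) and its deviation from the mean (variance/concentration) separately. By \eqref{eq:def-ENu},
\[
E_N(\rho_X f) = \frac{1}{N^2}\sum_{i,j=1}^N \phi_\epsilon(x_i,x_j),
\qquad
\phi_\epsilon(x,y) := \frac{1}{m_2\epsilon}K_\epsilon(x,y)\bigl(f(x)-f(y)\bigr)^2,
\]
a symmetric kernel whose diagonal $i=j$ contribution vanishes. The target expression $\langle f,-\Delta_{p^2}f\rangle_{p^2}$ equals $\int_\calM|\nabla f|^2 p^2\,dV$ by integration by parts on the closed manifold $\calM$, so it suffices to show that $\E\phi_\epsilon(x_1,x_2)=\int_\calM|\nabla f|^2 p^2\,dV+O_{p,f}(\epsilon)$ and that the empirical mean concentrates.

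For the bias, I would fix $x$ and study $I_\epsilon(x):=\int_\calM K_\epsilon(x,y)(f(x)-f(y))^2 p(y)\,dV(y)$. Truncating the integral to a geodesic ball of radius $\sim\sqrt{\epsilon\log(1/\epsilon)}$ is justified by the decay bound $|h|\le a_0 e^{-a\xi}$ in Assumption \ref{assump:h-C2-nonnegative}(C2), with a residue of $O(\epsilon^M)$ for any $M$. Inside the ball I would pass to normal coordinates $u$ at $x$, Taylor-expand $f(y)-f(x)=\nabla f(x)^{\top}u+\tfrac{1}{2}u^{\top}\mathrm{Hess}\,f(x)\,u+O(|u|^3)$, expand $p(y)$ and the Riemannian volume element $\sqrt{\det g(u)}$ to third order, and use the isometric embedding identity $\|x-y\|^2=|u|^2+O(|u|^4)$ so that the kernel $K_\epsilon$ is replaced by its tangent-space Gaussian up to controlled error. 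Computing the moments $m_0[h]$, $m_2[h]$ and their higher-order analogs, and observing that the odd-order terms in $u$ vanish by symmetry, yields $I_\epsilon(x)=m_2\epsilon|\nabla f(x)|^2 p(x)+O(\epsilon^2)$, with the $O(\epsilon^2)$ constant depending on $\|f\|_{C^4}$, $\|p\|_{C^4}$, and the extrinsic curvature of $\calM$. Multiplying by $p(x)$, integrating, and dividing by $m_2\epsilon$ give the desired $O_{p,f}(\epsilon)$ bias.

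For the concentration, I would apply a Bernstein-type inequality for V-statistics (Hoeffding decomposition plus Bernstein, or directly the Arcones--Gin\'e bound). Two inputs are required. The sup bound $\|\phi_\epsilon\|_\infty=O(\epsilon^{-d/2})$ follows because the exponential decay in (C2) makes $h(\xi)\,\xi$ uniformly bounded, which converts the factor $(f(x)-f(y))^2=O(\|x-y\|^2)$ into an extra $\epsilon$ that cancels the prefactor. The second-moment bound $\E\phi_\epsilon(x_1,x_2)^2=O\bigl(\epsilon^{-d/2}\int_\calM|\nabla f|^4 p^2\,dV\bigr)$ follows by the same local expansion applied to $K_\epsilon^2(f(x)-f(y))^4$, using $\int h(|v|^2)^2 dv<\infty$. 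Feeding these into the V-statistic Bernstein inequality with deviation level $t\asymp\sqrt{\log N/(N\epsilon^{d/2})\cdot\int|\nabla f|^4 p^2}+\log N/(N\epsilon^{d/2})$ produces the stated probability $1-2N^{-10}$ bound; the additive term is dominated by the square-root term under the standing assumption $\epsilon^{d/2}=\Omega(\log N/N)$.

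The main obstacle is the bias step, because the $O(\epsilon)$ remainder has to absorb three independent sources of distortion simultaneously: higher-order terms in the Taylor expansion of $f$, the Riemannian volume correction, and the mismatch between ambient Euclidean and intrinsic geodesic distance inside $K_\epsilon$. Tracking all three to one order beyond the leading term, and verifying that the potentially $O(\epsilon^{1/2})$ cross-terms vanish by odd-parity symmetry so that the genuine correction is $O(\epsilon^2)$ before dividing by $m_2\epsilon$, is what forces the $C^4$ dependence in the constant and what produces the weighted Laplacian $\Delta_{p^2}$ (rather than a drifted operator) as the limit. Once the bias expansion is in place, the V-statistic representation and the concentration step are fairly mechanical.
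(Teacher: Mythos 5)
Your proposal is correct and follows essentially the same route as the paper, which itself just walks through the proof of Theorem 3.5 in \cite{cheng2020convergence}: write $E_N(\rho_X f)$ as a V-statistic, compute the $O_{p,f}(\epsilon)$ bias of $\E V_{ij}$ by local expansion of the kernel integral, bound $|V_{ij}|$ by $O(\epsilon^{-d/2})$ and its second moment by $O(\epsilon^{-d/2}\int_\calM |\nabla f|^4 p^2)$, and conclude by decoupling plus Bernstein under $\epsilon^{d/2}=\Omega(\log N/N)$. The only cosmetic slip is calling the V-statistic ``degenerate'' (its Hajek projection is generically nonzero), but your concentration argument does not rely on degeneracy, so nothing breaks.
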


\begin{proof}[Proof of Theorem \ref{thm:form-rate}]
The proof is by a going through of the proof of Theorem {3.4} of \cite{cheng2020convergence} under the simplified situation when $\beta = 0$ (no normalization of the estimated density is involved). 
Specifically,
the proof uses the concentration of the $V$-statistics $V_{ij}:= \frac{1}{\epsilon} K_\epsilon(x_i, x_j) (f(x_i) -f(x_j))^2$.
The expectation of $\E V_{ij}$, $i \neq j$, equals
$\frac{1}{\epsilon} \int_{\calM} \int_{\calM} K_\epsilon(x,y) (f(x)-f(y))^2 p(x)p(y) dV(x) dV(y)
= m_2[h] \langle f, -\Delta_{p^2} f \rangle_{p^2} + O_{p,f}(\epsilon)$.
Meanwhile, $ | V_{ij}|$ is bounded by $O(\epsilon^{-d/2})$, and the variance of the $V_{ij}$ can also be bounded by $O(\epsilon^{-d/2})$
with the constant as in the theorem,
following the calculation in the proof of Theorem {3.4} in \cite{cheng2020convergence}.
The concentration of $\frac{1}{N(N-1)}\sum_{i,j =1}^N V_{ij}$ at $\E V_{ij}$ then follows by the decoupling of the $V$-statistics,
and it gives the high probability bound in the theorem. 

Note that the results in  \cite{cheng2020convergence} are proved  under the assumption that $h$ to be $C^4$ rather than $C^2$,
that is, requiring Assumption \ref{assump:h-C2-nonnegative}(C1)(C2) to hold for up to 4-th derivative of $h$.
This is because $C^4$ regularity of $h$ is used to  handle complication of  the adaptive bandwidth in the other analysis in  \cite{cheng2020convergence}.
With the fixed bandwidth kernel $K_\epsilon(x,y)$ as defined in \eqref{eq:def-K-eps},
$C^2$ regularity suffices, as originally assumed in \cite{coifman2006diffusion}.
\end{proof}

\begin{remark}[{Relaxation of Assumption \ref{assump:h-C2-nonnegative}}]
\label{rk:non-nagativity-not-needed}
Since the proof only involves the computation of moments of the $V$-statistic, 
it is possible to relax  Assumption \ref{assump:h-C2-nonnegative}(C3) non-negativity of $h$
and replace with certain non-vanishing conditions on $m_0[h]$ and $m_2[h]$,
e.g., as in \cite{coifman2006diffusion} and Assumption {A.3} in \cite{cheng2020convergence}.
Since the non-negativity of $W_{ij}$ is used in other places in the paper,
and our eigenvalue LB needs $h$ to be Gaussian,
we adopt  the non-negativity of $h$ in Assumption \ref{assump:h-C2-nonnegative} for simplicity. 
The $C^4$ regularity of $f$ may also be relaxed, and the constant in $O_{p,f}(\cdot)$ may be improved accordingly.
These extensions are  not further pursued here. 
\end{remark}
\vspace{3pt}

\begin{remark}[{Dirichlet form convergence with compactly supported $h$}]
\label{rk:indicator-h-form-rate}
{The ``epsilon-graph'' corresponds to construct graph affinity using the indicator function kernel  $h= {\bf 1}_{[0,1)}$. Note that the ``epsilon'' stands for the scale of local distance and thus is the $\sqrt{\epsilon}$ here,
because our $\epsilon$ is ``time''.}
When $h = {\bf 1}_{[0,1)}$, 
using the same method as in the proof of Lemma 8 in \cite{coifman2006diffusion},
one can verify that (proof in Appendix \ref{app:proofs-step0}), for $i \neq j$,
\begin{equation}\label{eq:bias-error-remark-indicator-h}
\E V_{ij} = m_2 [h] \langle f, -\Delta_{p^2} f \rangle_{p^2} + O_{p,f}(\epsilon),
\quad  f \in C^{\infty}(\calM).
 \end{equation}
The boundedness and variance of $V_{ij}$ are again  bounded by $O(\epsilon^{-d/2})$,
and thus  the Dirichlet form convergence with 
$h = {\bf 1}_{[0,1)}$ has the same rate $O(\epsilon, \sqrt{\frac{\log N}{N \epsilon^{d/2}}})$ as in Theorem \ref{thm:form-rate}.
This firstly implies that the eigenvalue UB also has the same rate,
following the same proof of Proposition \ref{prop:eigvalue-UB}. 
The final eigen-convergence rate also depends on the point-wise rate of the graph Laplacian,
see more in Remark \ref{rk:eigen-rate-indicator-h}.
\end{remark}
\vspace{10pt}

In Theorem \ref{thm:form-rate} and in below, 
the $\log N$ factor in the variance error bound 
is due to the concentration argument.
Throughout the paper, the classical Bernstein inequality Lemma \ref{lemma:bern} is intensively used.

To proceed, recall the definition of $E_N(u)$ as in \eqref{eq:def-ENu},
we define the bi-linear form for $u,v \in \R^N$ as 
\[
B_N(u,v) := \frac{1}{4}( E_N(u+v) - E_N(u-v) ) = \frac{1}{m_2/2}\frac{1}{\epsilon N^2} u^T( D-W) v, 
\]
which is symmetric, i.e., $B_N(u,v)= B_N(v,u)$,
and $B_N(u,u) = E_N(u)$.
The following lemma characterizes the forms $E_N$ and $B_N$ applied to $\rho_X \psi_k$, proved in Appendix \ref{app:proofs-step0}.

\begin{lemma}\label{lemma:form-rate-psi}
Under Assumption \ref{assump:M-p} (A1),
$p$ being uniform on $\calM$,
and  Assumption \ref{assump:h-C2-nonnegative}.
As $N \to \infty$, $ \epsilon \to 0+$, $ \epsilon^{d/2 } N = \Omega( \log N)$.
For fixed $K$,
when $N$ is sufficiently large, w.p. $> 1 -  2 K^2 N^{-10}$,
\begin{equation}\label{eq:form-rate-psi}
\begin{split}
 E_N( \frac{1}{\sqrt{p}} \rho_X \psi_k) 
 & = p \mu_k + O(\epsilon ) + O \left(  \sqrt{  \frac{  \log N    }{ N \epsilon^{d/2  }}   }\right),
\quad k = 1, \cdots, K, \\
 B_N( \frac{1}{\sqrt{p}} \rho_X \psi_k, \frac{1}{\sqrt{p}} \rho_X \psi_l) 
 & = O(\epsilon ) + O \left(  \sqrt{  \frac{  \log N    }{ N \epsilon^{d/2  }}   }\right),
 \quad k \neq l, \, 1 \le k,l \le K.
\end{split}
\end{equation}
\end{lemma}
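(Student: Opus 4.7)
The plan is to deduce Lemma \ref{lemma:form-rate-psi} from Theorem \ref{thm:form-rate} by a direct computation that exploits the uniformity of $p$, together with a polarization argument for the bilinear form. Since $p$ is constant on $\calM$, we have $\nabla p = 0$, so the weighted Laplacian reduces to $\Delta_{p^2} = \Delta$, and for any $f \in C^\infty(\calM)$ the Dirichlet form on the right-hand side of Theorem \ref{thm:form-rate} becomes $\langle f, -\Delta f\rangle_{p^2} = p^2 \langle f, -\Delta f\rangle$. Likewise the variance integral $\int_\calM |\nabla f|^4 p^2$ is just $p^2 \int_\calM |\nabla f|^4$, which is $O(1)$ in $N, \epsilon$ for fixed $f$; its dependence on $k,l$ is absorbed into the big-$O$ constants in the lemma.

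For the diagonal statement, I would apply Theorem \ref{thm:form-rate} with $f = p^{-1/2}\psi_k$, noting $f \in C^\infty(\calM)$. Using $\langle \psi_k, -\Delta \psi_k\rangle = \mu_k$ and pulling out the constant $p^{-1/2}$, the leading term evaluates to
\[
\langle p^{-1/2}\psi_k, -\Delta (p^{-1/2}\psi_k)\rangle_{p^2} = p^{-1} \cdot p^2 \mu_k = p \mu_k,
\]
yielding the first line of \eqref{eq:form-rate-psi} with error $O(\epsilon, \sqrt{\log N/(N\epsilon^{d/2})})$ on a high-probability event $A_k$ with $\Pr(A_k^c) \le 2N^{-10}$.

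For the off-diagonal statement, I would use the polarization identity
\[
B_N(u,v) = \tfrac14\bigl(E_N(u+v) - E_N(u-v)\bigr),
\]
so that with $u = p^{-1/2}\rho_X\psi_k$, $v = p^{-1/2}\rho_X\psi_l$, one has $u \pm v = \rho_X f_\pm$ where $f_\pm = p^{-1/2}(\psi_k \pm \psi_l) \in C^\infty(\calM)$. Applying Theorem \ref{thm:form-rate} to each of $f_+, f_-$, and using orthonormality $\langle \psi_k, \psi_l\rangle = \delta_{kl}$, the leading terms are
\[
\langle f_\pm, -\Delta f_\pm\rangle_{p^2} = p(\mu_k + \mu_l),
\]
which are equal and therefore cancel upon taking the difference $E_N(\rho_X f_+) - E_N(\rho_X f_-)$. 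What remains is the error term, giving the second line of \eqref{eq:form-rate-psi}.

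Finally I would take a union bound over the $K$ diagonal applications and the $2\binom{K}{2} \le K^2$ polarization applications of Theorem \ref{thm:form-rate}; each contributes failure probability at most $2N^{-10}$, so the total bad event has probability at most $2K^2 N^{-10}$, as claimed. There is no real obstacle here beyond bookkeeping, the only things to check carefully being that (i) constants depending on $\|f_\pm\|_{C^4}$ and $\int|\nabla f_\pm|^4$ are $O_{p,k,l}(1)$ and thus absorbed into the big-$O$ since $K$ is fixed, and (ii) the probability bound from Theorem \ref{thm:form-rate} is uniform across the finitely many smooth test functions used.
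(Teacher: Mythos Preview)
Your proposal is correct and follows essentially the same approach as the paper: apply Theorem \ref{thm:form-rate} to the $K^2$ test functions $\psi_k$ and $\psi_k \pm \psi_l$ (the constant factor $p^{-1/2}$ is immaterial since $p$ is uniform), use polarization for the off-diagonal terms, and take a union bound over the $K^2$ events. The paper's proof is identical in substance, with the same leading-term computations and the same probability accounting.
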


We need to show the linear independence of the vectors $\rho_X \psi_1, \cdots, \rho_X \psi_{K}$ such that they span a $K$-dimensional subspace in $\R^N$.
This holds w.h.p. at large $N$, by the following lemma showing the near-isometry of the projection mapping $\rho_X$, proved in Appendix \ref{app:proofs-step0}.

\begin{lemma}\label{lemma:rhoX-isometry-whp}
Under Assumption \ref{assump:M-p} (A1),
$p$ being uniform on $\calM$.
For fixed $K$, 
when $N$ is sufficiently large, w.p. $> 1 -  2 K^2 N^{-10}$,
\begin{equation}\label{eq:uk-near-orthonormal}
\begin{split}
\frac{1}{N } \| \frac{1}{\sqrt{p}} \rho_X \psi_k \|^2 
& = 1 + O( \sqrt{\frac{\log N}{N}}), \, 1 \le k \le K; \\
\frac{1}{N }  ( \frac{1}{\sqrt{p}} \rho_X \psi_k)^T ( \frac{1}{\sqrt{p}} \rho_X \psi_l) 
& = O( \sqrt{\frac{\log N}{N}}), \, k\neq l, \, 1 \le k,l \le K.
\end{split}
\end{equation}
\end{lemma}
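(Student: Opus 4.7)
The plan is to apply Bernstein's inequality (Lemma \ref{lemma:bern}) to the scalar empirical averages
\[
\frac{1}{N}\left(\frac{1}{\sqrt{p}}\rho_X \psi_k\right)^T \left(\frac{1}{\sqrt{p}}\rho_X \psi_l\right) = \frac{1}{N}\sum_{i=1}^N \frac{1}{p}\psi_k(x_i)\psi_l(x_i),
\]
and then take a union bound over the finitely many pairs $(k,l)$ with $1\le k,l \le K$.

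First, since $p$ is the uniform density on $\calM$, namely $p = 1/\Vol(\calM)$ is a positive constant, for each pair $(k,l)$ the random variables
\[
Z_i^{(k,l)} := \frac{1}{p}\psi_k(x_i)\psi_l(x_i), \quad i=1,\dots,N,
\]
are i.i.d. with mean
\[
\E Z_i^{(k,l)} = \frac{1}{p}\int_\calM \psi_k\psi_l \, p\,dV = \langle \psi_k,\psi_l\rangle = \delta_{kl},
\]
which is exactly the target value on the right-hand side of \eqref{eq:uk-near-orthonormal}. Because $\calM$ is compact and each $\psi_k\in C^\infty(\calM)$, $|Z_i^{(k,l)}|$ is uniformly bounded by a constant $M_{k,l}$ depending only on $p$ and on $\|\psi_k\|_\infty,\|\psi_l\|_\infty$, and similarly $\mathrm{Var}(Z_i^{(k,l)})$ is bounded by a constant $\sigma^2_{k,l}$ depending on the same quantities.

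Next, for each fixed pair $(k,l)$ I would invoke Bernstein's inequality: there exist constants $C_{k,l}>0$ such that for any $t>0$,
\[
\Pr\!\left[\left| \frac{1}{N}\sum_{i=1}^N Z_i^{(k,l)} - \delta_{kl}\right| > t\right] \le 2\exp\!\left(-\frac{N t^2}{2\sigma^2_{k,l}+ \tfrac{2}{3}M_{k,l}\,t}\right).
\]
Choosing $t = C\sqrt{\log N/N}$ with $C$ depending on $M_{k,l},\sigma_{k,l}$ large enough makes the right-hand side at most $2 N^{-10}$, which yields the bound $O(\sqrt{\log N/N})$ in \eqref{eq:uk-near-orthonormal} for this pair. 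Since $K$ is fixed (treated as $O(1)$), the constants $M_{k,l},\sigma_{k,l}$ can be absorbed into the big-$O$ constant.

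Finally, applying the union bound over all $K^2$ ordered pairs $(k,l)$, $1\le k,l\le K$, gives failure probability at most $2K^2 N^{-10}$, establishing both displayed estimates simultaneously on the same good event. There is no real obstacle here; the only mild subtlety is keeping track of the dependence on $K$ and on $\|\psi_k\|_\infty$, both of which are $O(1)$ under the convention that $K$ is fixed and the eigenfunctions of $-\Delta$ on smooth compact $\calM$ are smooth and hence bounded. The linear independence of $\rho_X\psi_1,\dots,\rho_X\psi_K$, needed to produce a $K$-dimensional subspace in \eqref{eq:lambdak-min-max}, then follows immediately on the good event from the near-identity $K\times K$ Gram matrix $\frac{1}{N}(\rho_X\psi_k/\sqrt{p})^T(\rho_X\psi_l/\sqrt{p})$ being invertible for $N$ large enough.
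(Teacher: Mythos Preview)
Your proposal is correct and follows essentially the same approach as the paper: apply Bernstein's inequality to bounded i.i.d.\ random variables built from the smooth eigenfunctions, then union bound over the $K^2$ pairs. The only cosmetic difference is that the paper applies Bernstein to $\frac{1}{N}\|\rho_X f\|^2$ with $f = p^{-1/2}\psi_k$ and $f = p^{-1/2}(\psi_k \pm \psi_l)$ and recovers the cross terms via the polarization identity, whereas you apply Bernstein directly to the products $\frac{1}{p}\psi_k(x_i)\psi_l(x_i)$; both routes give the same $K^2$ events and the same probability bound.
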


{Given these estimates, we are ready to prove Proposition \ref{prop:eigvalue-UB}.}

\begin{proof}[Proof of Proposition \ref{prop:eigvalue-UB}]
For fixed $K$,
consider the intersection of both good events in Lemma \ref{lemma:form-rate-psi}
and \ref{lemma:rhoX-isometry-whp},
which happens w.p. $> 1- 4K^2 N^{-10}$ with large enough $N$.
Let $u_k = \frac{1}{\sqrt{p}} \rho_X \psi_k$, 
by \eqref{eq:uk-near-orthonormal}, 
the set  $\{ u_1, \cdots, u_K\}$ is linearly independent.

For any $1 \le k \le K$,
let $L = \text{Span}\{ u_1, \cdots, u_k\}$, then $dim(L) = k$.
By \eqref{eq:lambdak-min-max}, to show the UB of $\lambda_k$ as in the proposition, it suffices to show that
\[
\sup_{ v \in L, \|v\|^2 = N} \frac{1}{p} E_N(v) 
\le \mu_k + O(\epsilon ) + O \left(  \sqrt{  \frac{  \log N    }{ N \epsilon^{d/2  }}   }\right).
\]
For any $v \in L$,  $\|v\|^2 = N$,
there are $c_j$, $1 \le j \le k $, such that 
$v = \sum_{j=1}^k c_j u_j$.
By \eqref{eq:uk-near-orthonormal},
\[
1 = \frac{1}{N} \|v\|^2 
= \sum_{j=1}^k c_j^2 (1 + O(   \sqrt{\frac{\log N}{N}} ) ) 
+ \sum_{j\neq l, j,l =1}^k |c_j | | c_l |  O(   \sqrt{\frac{\log N}{N}} )
= \| c \|^2 ( 1+ O( K \sqrt{ \frac{\log N}{ N}}) ),
\]
thus $\| c\|^2 = 1 + O( \sqrt{\frac{\log N }{N }})$.
 Meanwhile, 
 $E_N( v) = E_N( \sum_{j=1}^k c_j u_j)
 = \sum_{j,l = 1}^k c_j c_l B_N( u_j, u_l)$,
 and by \eqref{eq:form-rate-psi},
\begin{align}
E_N( v) 
& = \sum_{j=1}^k c_j^2 \left( p \mu_j + O(\epsilon ,   \sqrt{  \frac{  \log N    }{ N \epsilon^{d/2  }}   } ) \right) 
+ \sum_{j\neq l, j,l=1}^k |c_j | | c_l | O(\epsilon ,  \sqrt{  \frac{  \log N    }{ N \epsilon^{d/2  }}   }  )   \nonumber \\
& = p \sum_{j=1}^k \mu_j c_j^2 + K \| c \|^2   O(\epsilon ,  \sqrt{  \frac{  \log N    }{ N \epsilon^{d/2  }}   } )
 \le \| c \|^2  \left\{ p \mu_k +  O(\epsilon ,  \sqrt{  \frac{  \log N    }{ N \epsilon^{d/2  }}   } )  \right\},
\label{eq:UB-ENv}
\end{align}
where since $K$ is fixed integer, we incorporate it into the big-$O$. 
Also, $\mu_k \le \mu_K = O(1)$, and then 
\[
\frac{1}{p}E_N( v) 
\le \left( 1 + O( \sqrt{\frac{\log N }{N }}) \right)
\left\{ \mu_k + O(\epsilon) + O \left(  \sqrt{  \frac{  \log N    }{ N \epsilon^{d/2  }}   }\right)
\right\}
= \mu_k + O(\epsilon) + O \left(  \sqrt{  \frac{  \log N    }{ N \epsilon^{d/2  }}   }\right),
\]
which finishes the proof.
\end{proof}

\subsection{Random-walk graph Laplacian {eigenvalue UB}}

We fist establish a concentration argument of $D_i$ in the following lemma,
which shows that $D_i >0$ w.h.p., by that  $\frac{1}{N}D_i$ concentrates at the value of  $m_0 p > 0$.
Consequently, $\frac{1}{N^2} u^T D u$ also concentrates and the deviation is uniformly bounded for all $u \in \R^N$,
which will be used in analyzing  \eqref{eq:lambdak-rw-min-max}.

\begin{lemma}\label{lemma:Di-concen}
Under Assumption \ref{assump:M-p}(A1), $p$ uniform,  and Assumption \ref{assump:h-C2-nonnegative}.
Suppose as $N \to 0$, $ \epsilon \to 0+$ and $ \epsilon^{d/2} = \Omega( \frac{\log N}{N})$.
Then, when $N$ is large enough, w.p. $> 1- 2N^{-9}$, 

1) The degree $D_i$ concentrates for all $i$, namely, 
\begin{equation}\label{eq:concen-Di}
\frac{1}{N}D_i =  m_0 p + O \left( \epsilon, \sqrt{\frac{\log N}{ N \epsilon^{d/2}}} \right), 
\quad \forall i=1, \cdots, N.
\end{equation}

2) The from $\frac{1}{N^2} u^T D u $ concentrates for all $u$, namely,
\begin{equation}\label{eq:concen-uDu}
\frac{1}{N^2} u^T D u 
=  \frac{1}{N} \|u\|^2 \left( m_0 p + O \left( \epsilon, \sqrt{\frac{\log N}{ N \epsilon^{d/2}}} \right) \right),
 \quad
 \forall u \in \R^N.
\end{equation}
The constants in big-O in \eqref{eq:concen-Di} and \eqref{eq:concen-uDu}
are determined by $(\calM, h)$
and uniform for all $i$ and $u$.
\end{lemma}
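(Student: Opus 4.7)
The plan is to reduce Part 1) to a standard bias-plus-variance decomposition of each $D_i/N$, conditional on $x_i$, via Bernstein's inequality plus a union bound over $i$; Part 2) then follows essentially for free from the uniform-in-$i$ bound in Part 1), since $u^\top D u = \sum_i D_i u_i^2$ is diagonal in $D$.

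\textbf{Part 1).} Fix $i$ and condition on $x_i$. Split
$\frac{1}{N} D_i = \frac{1}{N} K_\epsilon(x_i,x_i) + \frac{1}{N}\sum_{j\neq i} K_\epsilon(x_i, x_j).$
The self-term is at most $h(0)/(N\epsilon^{d/2})$, which under $\epsilon^{d/2} = \Omega(\log N/N)$ is dominated by $\sqrt{\log N/(N\epsilon^{d/2})}$ and is absorbed into the claimed error. For the off-diagonal average, the conditional mean is
$\E[K_\epsilon(x_i, x_j)\mid x_i] = \int_\calM K_\epsilon(x_i, y)\, p\, dV(y) = m_0\, p + O_\calM(\epsilon),$
by the standard short-time kernel expansion in a normal coordinate chart at $x_i$: truncate the integral to a geodesic ball (using the exponential decay (C2) of $h$), change to normal coordinates, expand the volume form to second order, and extend back to $\R^d$; this bias is uniform in $x_i \in \calM$. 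The same expansion gives the moment bounds $|K_\epsilon(x_i, x_j)|\le \|h\|_\infty\epsilon^{-d/2}$ and $\E[K_\epsilon(x_i, x_j)^2\mid x_i] = O(\epsilon^{-d/2})$. Applying Lemma \ref{lemma:bern} conditionally to the $N-1$ i.i.d.\ samples $K_\epsilon(x_i, x_j)$, and using that in the assumed regime the Bernstein tail is dominated by its sub-Gaussian part, yields
$\bigl| \tfrac{1}{N-1}\sum_{j\neq i} K_\epsilon(x_i,x_j) - \E[K_\epsilon(x_i,x_j)\mid x_i] \bigr| \le C\sqrt{\log N / (N\epsilon^{d/2})}$
with conditional probability $\ge 1 - O(N^{-11})$ for a sufficiently large absolute constant $C$. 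Removing the conditioning and taking a union bound over $i=1,\ldots,N$ gives \eqref{eq:concen-Di} with probability $\ge 1 - O(N^{-10})$, which implies the stated bound $> 1 - 2N^{-9}$.

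\textbf{Part 2).} On the good event of Part 1), for every $u \in \R^N$,
$\frac{1}{N^2} u^\top D u = m_0 p \cdot \frac{\|u\|^2}{N} + \frac{1}{N}\sum_{i=1}^N \left( \frac{D_i}{N} - m_0 p \right) u_i^2,$
and the remainder is bounded in absolute value by $\max_i |D_i/N - m_0 p| \cdot \|u\|^2 / N = O(\epsilon,\sqrt{\log N/(N\epsilon^{d/2})})\cdot \|u\|^2 / N$. This yields \eqref{eq:concen-uDu} uniformly in $u$ on the same event, hence with the same probability.

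\textbf{Main obstacle.} There is essentially no obstacle beyond the usual bias/variance bookkeeping. The only technical point worth checking is the uniform-in-$x_i$ bias $\int_\calM K_\epsilon(x_i,y)\, p\, dV(y) = m_0 p + O_\calM(\epsilon)$, which is the same kind of short-time manifold kernel expansion that underlies Theorem \ref{thm:pointwise-rate-C2h} and Theorem \ref{thm:form-rate}, and the need to absorb the diagonal contribution $K_\epsilon(x_i,x_i)$, which is controlled precisely because we are in the connectivity regime $\epsilon^{d/2} = \Omega(\log N /N)$. Uniformity in $u$ in Part 2) is free because $u^\top D u$ depends on $u$ only through the squares $u_i^2$.
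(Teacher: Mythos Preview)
Your proposal is correct and follows essentially the same approach as the paper: split $D_i/N$ into the diagonal term (absorbed by the variance error in the connectivity regime) and the off-diagonal conditional average, compute the bias $\int_\calM K_\epsilon(x_i,y)\,p\,dV(y)=m_0 p+O(\epsilon)$, bound $|Y_j|$ and $\E Y_j^2$ by $\Theta(\epsilon^{-d/2})$, apply Bernstein conditionally on $x_i$, then union bound over $i$; Part 2) follows immediately from the uniform bound on $D_i/N$ since $u^\top D u=\sum_i D_i u_i^2$. The only cosmetic difference is the exponent in the per-$i$ tail probability, which does not affect the final statement.
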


\noindent
Part 2) immediately follows from Part 1), the latter being proved by standard concentration argument of independent sum and a union bound for $N$ events. 
With Lemma \ref{lemma:Di-concen}, the proof of the following proposition is similar to that of Proposition \ref{prop:eigvalue-UB},
and the difference lies in handling the denominator of the Rayleigh quotient in  \eqref{eq:lambdak-rw-min-max}.
The proofs of Lemma \ref{lemma:Di-concen} and Proposition \ref{prop:eigvalue-UB-rw} are in  Appendix \ref{app:proofs-step0}.

\begin{proposition}[Eigenvalue UB of $L_{rw}$]
\label{prop:eigvalue-UB-rw}
Suppose $\calM$, $p$ uniform, $h$, $K$, $\mu_k$,  and $\epsilon$
are under the same condition as in Proposition \ref{prop:eigvalue-UB},
then for sufficiently large $N$, 
w.p. $> 1- 2 N^{-9} - 4 K^2 N^{-10}$,  $D_i >0$ for all $i$, and 
\[
\lambda_k  (L_{rw})
\le \mu_k + O \left(\epsilon , \sqrt{ \frac{\log N}{N  \epsilon^{d/2} } } \right),
 \quad k=1,\cdots, K.
\]
\end{proposition}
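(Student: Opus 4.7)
The plan is to mimic the proof of Proposition \ref{prop:eigvalue-UB} for $L_{un}$, choosing the same candidate subspace $L = \mathrm{Span}\{u_1,\ldots,u_k\}$ with $u_j = \frac{1}{\sqrt{p}}\rho_X \psi_j$, and then to absorb the extra difficulty caused by the $v^T D v$ denominator in the variational formula \eqref{eq:lambdak-rw-min-max} by invoking Lemma \ref{lemma:Di-concen}. Since $p$ is uniform, Lemmas \ref{lemma:form-rate-psi} and \ref{lemma:rhoX-isometry-whp} apply verbatim and give, on an event of probability $>1-4K^2 N^{-10}$, both the near-orthonormality of $\{u_1,\ldots,u_K\}$ (so that $\dim L=k$) and the form/bilinear-form estimates on $E_N(u_j)$ and $B_N(u_j,u_l)$.

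First, I would intersect this event with the high-probability event of Lemma \ref{lemma:Di-concen}(2), which holds with probability $>1-2N^{-9}$ and guarantees both $D_i>0$ (so $L_{rw}$ is well-defined) and the uniform concentration
\[
\frac{1}{N^2} v^T D v = \frac{1}{N}\|v\|^2\left(m_0 p + O\left(\epsilon,\sqrt{\frac{\log N}{N\epsilon^{d/2}}}\right)\right), \qquad \forall v\in\R^N.
\]
By a union bound the intersection has probability $>1-2N^{-9}-4K^2 N^{-10}$, matching the statement.

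Next, for any $v=\sum_{j=1}^k c_j u_j \in L$, the \emph{numerator} calculation is identical to the proof of Proposition \ref{prop:eigvalue-UB}: expanding $E_N(v)=\sum_{j,l}c_jc_l B_N(u_j,u_l)$ and applying \eqref{eq:form-rate-psi} yields
\[
E_N(v) \le \|c\|^2\left\{p\mu_k + O\left(\epsilon,\sqrt{\frac{\log N}{N\epsilon^{d/2}}}\right)\right\},
\]
while the near-orthonormality \eqref{eq:uk-near-orthonormal} gives $\frac{1}{N}\|v\|^2 = \|c\|^2(1+O(\sqrt{\log N/N}))$. The \emph{denominator} is where the proof departs: the uniform concentration of $v^T D v$ above converts the denominator in \eqref{eq:lambdak-rw-min-max} into
\[
\frac{1}{m_0}\frac{1}{N^2} v^T D v = \frac{1}{N}\|v\|^2 \cdot p\left(1 + O\left(\epsilon,\sqrt{\tfrac{\log N}{N\epsilon^{d/2}}}\right)\right) = \|c\|^2\, p\left(1+O\left(\epsilon,\sqrt{\tfrac{\log N}{N\epsilon^{d/2}}}\right)\right),
\]
where the error absorbs the $\sqrt{\log N/N}$ term from the isometry. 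Dividing numerator by denominator, the $\|c\|^2 p$ factors cancel and, since $\mu_k=O(1)$, the multiplicative perturbation $(1+O(\epsilon,\sqrt{\log N/(N\epsilon^{d/2})}))^{-1}$ contributes only an additive error of the same order, giving
\[
\sup_{v\in L,v\neq 0}\frac{E_N(v)}{\frac{1}{m_0 N^2} v^T D v} \le \mu_k + O\left(\epsilon,\sqrt{\frac{\log N}{N\epsilon^{d/2}}}\right),
\]
and the min-max formula \eqref{eq:lambdak-rw-min-max} then yields the claimed UB on $\lambda_k(L_{rw})$.

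There is no real obstacle beyond what Lemma \ref{lemma:Di-concen} already resolves; the only subtlety is that the concentration of $v^T D v$ must be \emph{uniform in $v$}, which is why Part 2 of Lemma \ref{lemma:Di-concen} is required rather than pointwise concentration at a fixed test vector. Given that uniform control, the argument is a direct translation of Proposition \ref{prop:eigvalue-UB} with the quadratic form $p\tfrac{1}{N}\|v\|^2$ replaced by its $D$-weighted analogue.
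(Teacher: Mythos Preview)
Your proposal is correct and matches the paper's own proof essentially line for line: the paper also takes $u_j=\frac{1}{\sqrt{p}}\rho_X\psi_j$, intersects the good events of Lemmas~\ref{lemma:Di-concen}, \ref{lemma:rhoX-isometry-whp}, and \ref{lemma:form-rate-psi}, reuses the numerator bound \eqref{eq:UB-ENv} from Proposition~\ref{prop:eigvalue-UB}, and handles the denominator via Lemma~\ref{lemma:Di-concen} Part~2 to get $\frac{1}{m_0 N^2}v^T D v = \|c\|^2 p\,(1+O(\epsilon,\sqrt{\log N/(N\epsilon^{d/2})}))$. Your identification of the need for the \emph{uniform-in-$v$} concentration is exactly the point the paper relies on.
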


\section{Eigenvalue crude lower bound in Step 1}\label{sec:step1}

In this section, we  prove $O(1)$ eigenvalue LB  in Step 1,
 first for $L_{un}$,
and then the proof for $L_{rw}$ is similar.

We consider for $t > 0$ the operator $\calL_t $ on $H = L^2( \calM,  dV  )$ defined as
\[
\calL_t := I - Q_t,
\quad
\calL_t f(x) = f(x) - \int_{\calM} \calH_t(x,y) f(y) dV(y), 
\quad f \in H.
\]
The semi-group operator $Q_t$ is  Hilbert-Schmidt, compact, and has eigenvalues and eigenfunctions as in \eqref{eq:Qt-eigen}.
Thus, the operator $\calL_t$ is self-adjoint and PSD, and has
\[
\calL_t \psi_k = (1-e^{-t \mu_k}) \psi_k, \quad k = 1, 2, \cdots
\]
For any $t>0$, the eigenvalues  
$ \{ 1-e^{-t \mu_k} \}_k$
are ascending from 0 and have limit point 1.
We denote $\| f \|^2 = \langle f, f \rangle$ for $f \in H$.
By the variational principle,  we have that when $t>0$, for any $k$,
\begin{equation}\label{eq:eig-Lt-minmax}
1-e^{-t \mu_k}
= \inf_{L \subset H, \, dim(L) = k } \sup_{f \in L, \, \|f\|^2 \neq 0} 
\frac{ \langle f ,  \calL_t f  \rangle}{\langle f, f \rangle }.
\end{equation}
For the first result, we  assume that $\mu_k$ are all of multiplicity 1 for simplicity.
When population eigenvalues have greater than one multiplicity,
the result extends by considering eigenspace rather than eigenvectors in the standard way,
see  Remark \ref{rk:multiplicity}.

\subsection{Un-normalized graph Laplacian {eigenvalue crude LB}}

{We now derive Step 1 for $L_{un}$, the result being summarized in the following proposition.}

\begin{proposition}[Initial crude eigenvalue LB of $L_{un}$]
\label{prop:eigvalue-LB-crude}
Under Assumption \ref{assump:M-p} (A1),
suppose $p$ is uniform on $\calM$,
and  $h$ is Gaussian.
For fixed $k_{max} \in \mathbb{N}$, $K = k_{max}+1$,
suppose $0 = \mu_1  <\cdots < \mu_{K} < \infty$ are all of single multiplicity,
and define 
\begin{equation}\label{eq:def-gamma-K}
\gamma_K : = \frac{1}{2} \min_{1 \le k \le k_{max}} (\mu_{k+1} - \mu_k),
\end{equation}
$\gamma_K > 0$ and is a fixed constant. 
Then there is a absolute constant $c_K$ determined by $\calM$  and $k_{max}$
(specifically, $c_K = c (\frac{\mu_K}{\gamma_K})^{d/2} \gamma_K^{-2}$, where $c$ is a constant depending on $\calM$),
such that,
if as $N \to \infty$, $\epsilon \to 0+ $,
 and  $\epsilon^{d/2+2} >  c_K  \frac{\log N}{N} $, then for sufficiently large $N$, 
w.p. $> 1 - 4 K^2 N^{-10} -4 N^{-9}$,
\[
\lambda_k  (L_{un})
> \mu_k - \gamma_K,
 \quad k=2,\cdots, K.
\]
\end{proposition}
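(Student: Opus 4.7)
The plan is a proof by contradiction via a heat-kernel interpolation that transports the graph variational problem onto $H = L^2(\calM, dV)$, where the spectrum of $\calL_t := I - Q_t$ is exactly $\{1 - e^{-t\mu_k}\}$. Suppose $\lambda_k(L_{un}) \leq \mu_k - \gamma_K$ for some $k \in \{2,\ldots,K\}$. Fix $\delta \in (0,1)$ depending on $\gamma_K$ and $\mu_K$, set $r := \delta\epsilon/2$ and $t := \epsilon - 2r = (1-\delta)\epsilon$, and define
$$I_r: \R^N \to C^\infty(\calM), \qquad I_r[v](x) := \frac{1}{N}\sum_{i=1}^N v_i\, \calH_r(x, x_i).$$
The semigroup identity $\calH_r * \calH_r = \calH_{2r}$ gives, for $f := I_r[v]$ and any $s \geq 0$,
$$\langle f, Q_s f\rangle = \frac{1}{N^2}\sum_{i,j=1}^N v_iv_j\, \calH_{2r+s}(x_i,x_j).$$
Thus $\langle f,f\rangle$ and $\langle f, \calL_t f\rangle$ are discrete quadratic forms in $v$ with kernels $\calH_{2r}$ and $\calH_{2r}-\calH_\epsilon$, respectively, and $\calH_\epsilon$ is precisely the time at which Lemma \ref{lemma:heat} yields the sharp local approximation $\calH_\epsilon(x_i,x_j) \approx K_\epsilon(x_i,x_j) = W_{ij}$.

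Using Lemma \ref{lemma:heat} to replace $\calH_\tau$ by $K_\tau$ at both $\tau = \epsilon$ and $\tau = \delta\epsilon$, combined with the identity $v^T(D-W)v = \epsilon N^2 E_N(v)$ (since $m_2 = 2$ for Gaussian) and the degree concentration $\frac{1}{N^2}v^T D v \approx p\|v\|^2/N$ of Lemma \ref{lemma:Di-concen} (invoked at both bandwidths; $m_0 = 1$), I obtain, up to errors to be controlled,
$$\langle f, \calL_t f\rangle \approx \epsilon E_N(v) - \delta\epsilon\, E_N^{(\delta\epsilon)}(v) \leq \epsilon E_N(v), \qquad \langle f,f\rangle \approx p\|v\|^2/N,$$
where $E_N^{(\delta\epsilon)}$ denotes the graph Dirichlet form at bandwidth $\delta\epsilon$ and its non-negativity furnishes the upper bound for free. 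Let $L_k := \mathrm{span}(v_1,\ldots,v_k) \subset \R^N$ be the span of the first $k$ orthonormal eigenvectors of $L_{un}$; by assumption $E_N(v)/(p\|v\|^2/N) \leq \lambda_k \leq \mu_k - \gamma_K$ on $L_k$. The norm comparison forces $I_r$ to be injective on $L_k$, so $M_k := I_r(L_k)$ is a $k$-dimensional subspace of $H$ on which the Rayleigh quotient of $\calL_t$ is at most $t(\mu_k - \gamma_K) + \mathrm{err}$. The min--max formula \eqref{eq:eig-Lt-minmax} then forces $1 - e^{-t\mu_k} \leq t(\mu_k - \gamma_K) + \mathrm{err}$, and since $1 - e^{-t\mu_k} \geq t\mu_k - t^2\mu_k^2/2$, dividing by $t$ yields
$$\gamma_K \leq \tfrac{1}{2}\,t\mu_k^2 + \mathrm{err}/t.$$
Taking $\delta$ a fixed constant in $(0,1)$ depending on $\gamma_K, \mu_K$ and $\epsilon$ small makes the first term $< \gamma_K/4$, while the hypothesis $\epsilon^{d/2+2} > c_K \log N/N$ with $c_K \propto (\mu_K/\gamma_K)^{d/2}\gamma_K^{-2}$ is exactly what ensures $\mathrm{err}/t < \gamma_K/4$, producing the contradiction.

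The hard part will be the quantitative, uniform-in-$v$ control of the error in the approximation $\langle f, \calL_t f\rangle \approx \epsilon E_N(v)$ on $L_k$. Three error sources must be tracked together: (i) the local parametrix error $O(\tau(\log\tau^{-1})^2)$ from $\calH_\tau \approx K_\tau$ on $B_{\delta_\tau}(x_i)$, which after summation against $v_iv_j$ bounded by $\tfrac{1}{2}(v_i^2+v_j^2)$ becomes a multiplicative error on the numerator; (ii) the tail $O(\tau^{10})$ for pairs outside the local ball, easily absorbed by the Gaussian decay; (iii) the concentration errors of order $O(\sqrt{\log N/(N\epsilon^{d/2})})$ of the degree and kernel quadratic forms. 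After division by $t \sim \epsilon$, the concentration term is the dominant one and dictates the threshold $\epsilon^{d/2+2} > c_K\log N/N$; the parametrix terms, carrying an extra factor of $\tau$ that interacts with the logarithmic factor, are kept small by the tuning of $\delta$ relative to the eigen-gap $\gamma_K$. Once this crude lower bound is in hand, Steps 2 and 3 bootstrap it into the eigenvector (point-wise-rate) and eigenvalue (form-rate) consistency bounds stated in the main theorems.
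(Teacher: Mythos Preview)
Your approach is essentially the paper's own: heat-kernel interpolation $I_r$ with $2r=\delta\epsilon$, min--max for $\calL_t = I-Q_t$ at $t=(1-\delta)\epsilon$, the split $q_s = q_s^{(0)} - q_s^{(2)}$, and the three error sources you name. The contradiction framing is equivalent to the paper's direct inequality chain.

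One slip to fix: your own line $\langle f,\calL_t f\rangle \approx \epsilon E_N(v) - \delta\epsilon\,E_N^{(\delta\epsilon)}(v) \le \epsilon E_N(v)$ gives a factor $\epsilon$, not $t$, in the numerator bound. Hence the min--max yields $1-e^{-t\mu_k}\le \epsilon(\mu_k-\gamma_K)+\mathrm{err}$, and after using $1-e^{-t\mu_k}\ge t\mu_k - \tfrac12 t^2\mu_k^2$ and dividing, the left side is $\gamma_K - \delta\mu_k$ (plus lower order), \emph{not} $\gamma_K$. This is exactly why $\delta$ must be taken as a specific fraction of $\gamma_K/\mu_K$ (the paper sets $\delta = 0.5\,\gamma_K/\mu_K$), and the resulting $\delta^{-d/2}$ in the concentration bound at bandwidth $\delta\epsilon$ is what produces the $(\mu_K/\gamma_K)^{d/2}$ factor in $c_K$. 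Your proposal alludes to a $\delta$ depending on $\gamma_K,\mu_K$ but the displayed inequality hides this dependence.

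A minor citation correction: the degree concentration you need for $q^{(0)}_{\delta\epsilon}$ and $q^{(0)}_{\epsilon}$ is the heat-kernel version (Lemma~\ref{lemma:qs0-concen}), not Lemma~\ref{lemma:Di-concen} for $K_\epsilon$; the two agree up to the parametrix error, but the correct tool is the former. Finally, the paper invokes the eigenvalue UB (Proposition~\ref{prop:eigvalue-UB}) to secure $\lambda_k\le 1.1\mu_K$, which is why the stated probability carries the $4K^2N^{-10}$ term; your contradiction hypothesis $\lambda_k\le \mu_k-\gamma_K$ also supplies such a bound on $L_k$, so your argument is self-contained there, but you should be aware that matching the stated good event requires the UB event as well.
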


{We prove Proposition \ref{prop:eigvalue-LB-crude} in the end of this subsection after we introduce heat kernel interpolation and establish the needed lemmas.} 

Suppose $\{ \lambda_k, v_k\}_{k=1}^K$ are eigenvalue and eigenvectors of $L_{un}$, to construct a test function $f_k$ on $\calM$ from the vector $v_k$, we define the 
{\it interpolation  mapping} 
(the terminology ``interpolation'' is inherited from \cite{burago2014graph})
by the heat kernel with diffusion time $r$, $0 < r < \epsilon $ to be determined. 
Specifically, define
\[
I_r [ u ](x):= \frac{1}{N} \sum_{j=1}^N  u_j \calH_r( x, x_j), 
\quad I_r: \R^N \to C^\infty(\calM),
\]
and then for any $t > 0$, 
\begin{equation}
{ \langle I_r [u] ,  Q_t I_r [u] \rangle} 
 = \frac{1}{N^2} \sum_{i,j=1}^N u_i u_j \calH_{2r+t}(x_i, x_j), 
\quad
{\langle I_r [u],  I_r [u] \rangle} 
 = \frac{1}{N^2} \sum_{i,j=1}^N u_i u_j \calH_{2r}(x_i, x_j). 
\end{equation}
We define the quadratic form
\[
q_s(u) := \frac{1}{N^2} \sum_{i,j=1}^N u_i u_j \calH_{s}(x_i, x_j),
\quad s > 0, \quad u \in \R^N. 
\]
We also define $q_s^{(0)}$ and  $q_s^{(2)}$ as below, and then for any $u \in \R^N$,
$q_s(u)  = q^{(0)}_{s}(u) - q^{(2)}_{s}(u)$, where
\begin{align}
q^{(0)}_{s}(u) 
 := \frac{1}{N} \sum_{i=1}^N u_i^2 \left(  \frac{1}{N}  \sum_{j=1}^N \calH_{s}(x_i, x_j) \right), 
\quad
q^{(2)}_{s}(u)
 :=  \frac{1}{2} \frac{1}{N^2} \sum_{i,j=1}^N \calH_s(x_i,x_j) (u_i - u_j)^2
\end{align}
We will show that $q^{(0)}_{s}(u) \approx p \frac{1}{N}\|u\|^2$ by concentration of the independent sum $\frac{1}{N}  \sum_{j=1}^N \calH_{s}(x_i, x_j)$;
$ q^{(2)}_{s}(u) \ge 0$ by definition, and will be $  O(s)$ when $u$ is an eigenvector with $\|u \|^2 = N$.

\begin{lemma}\label{lemma:qs0-concen}
Under Assumption \ref{assump:M-p} (A1),
$p$ being uniform on $\calM$.
Suppose as $N \to 0$, $s \to 0+$ and $s^{d/2} = \Omega( \frac{\log N}{N})$.
Then, when $N$ is large enough, w.p. $> 1- 2N^{-9}$, 
\[ 
 q^{(0)}_{s}(u) =  \frac{1}{N} \|u\|^2 \left( p + O_\calM(\sqrt{\frac{\log N}{ N s^{d/2}}}) \right),
 \quad
 \forall u \in \R^N.
\]
The notation $O_\calM(\cdot)$
indicates that the constant depends on $\calM$ and is uniform for all $u$.
\end{lemma}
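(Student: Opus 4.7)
The plan is to reduce the lemma to a uniform-in-$i$ concentration bound on the inner average $A_i := \frac{1}{N}\sum_{j=1}^N \calH_s(x_i, x_j)$ around the constant $p$, and then pull the uniform error out of the weighted sum $\frac{1}{N}\sum_i u_i^2 A_i$. Note that once $A_i = p + O_\calM(\sqrt{\log N/(Ns^{d/2})})$ holds simultaneously for all $i$ (on a single high-probability event), the desired factorization is immediate for every $u \in \R^N$ because the error bound is independent of $u$. So the whole content of the proof lies in the uniform concentration of the $A_i$.

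First, I would compute the expectation. Conditioning on $x_i$, the samples $x_j$ for $j \neq i$ are i.i.d.\ with uniform density $p = 1/\Vol(\calM)$. By the probability interpretation of the heat kernel, $\int_\calM \calH_s(x_i,y)\,dV(y) = 1$, so the conditional mean of each off-diagonal term is exactly $p$. The diagonal term $\calH_s(x_i,x_i)/N$ is $O_\calM(s^{-d/2}/N)$ by the global bound \eqref{eq:H-global-boundedness}, and under the assumed regime $Ns^{d/2} = \Omega(\log N)$ this is dominated by the target error $\sqrt{\log N/(Ns^{d/2})}$, so it can be absorbed.

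Next, I would apply Bernstein's inequality (Lemma \ref{lemma:bern}) conditionally on $x_i$. Each summand $\calH_s(x_i,x_j)$ is bounded by $B = O_\calM(s^{-d/2})$ from \eqref{eq:H-global-boundedness}, and its variance is at most $\E[\calH_s(x_i,x_j)^2] \le B \cdot \E[\calH_s(x_i,x_j)] = O_\calM(s^{-d/2})$. Picking the deviation threshold $t_0 = C\sqrt{\log N/(Ns^{d/2})}$ with $C = C(\calM)$ sufficiently large, and using that the regime $Ns^{d/2} = \Omega(\log N)$ forces $t_0 = O(1)$ so that the variance term dominates the linear term in the Bernstein exponent, one obtains a conditional tail probability at most $2N^{-10}$. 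Integrating out $x_i$ keeps the same unconditional bound, and a union bound over $i = 1,\dots,N$ yields that, with probability at least $1 - 2N^{-9}$,
\[
A_i = p + O_\calM\!\left(\sqrt{\frac{\log N}{Ns^{d/2}}}\right), \qquad \forall\, i = 1,\dots,N.
\]
On this event, for every $u \in \R^N$,
\[
q^{(0)}_s(u) \;=\; \frac{1}{N}\sum_{i=1}^N u_i^2 \, A_i \;=\; \frac{1}{N}\|u\|^2 \left(p + O_\calM\!\left(\sqrt{\frac{\log N}{Ns^{d/2}}}\right)\right),
\]
which is the claim.

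There is no substantive obstacle here: the argument is a routine Bernstein estimate plus a union bound, and the uniformity in $u$ comes for free from the linearity of $q^{(0)}_s$ in the weights $u_i^2$. The only care is to record that the Bernstein parameters (mean $p$, variance and $L^\infty$ bound both $O_\calM(s^{-d/2})$) follow cleanly from $\int_\calM \calH_s(x,y)\,dV(y) = 1$ together with the global heat-kernel estimate \eqref{eq:H-global-boundedness} of Lemma \ref{lemma:heat}, and that the constants depend only on $\calM$.
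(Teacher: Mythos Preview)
Your proposal is correct and follows essentially the same route as the paper: reduce to uniform concentration of $A_i = (D_s)_i$ around $p$ via Bernstein conditioned on $x_i$, then union bound over $i$. The only cosmetic difference is that the paper bounds the variance using the reproducing property $\E[\calH_s(x_i,x_j)^2] = p\,\calH_{2s}(x_i,x_i) = O_\calM(s^{-d/2})$, whereas you use the crude estimate $\E[Y_j^2] \le B\,\E[Y_j]$; both give the same order and the rest is identical.
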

\begin{proof}[Proof of Lemma \ref{lemma:qs0-concen}]
By definition, 
$ q^{(0)}_{s}(u) = \frac{1}{N} \sum_{i=1}^N u_i^2 (D_s)_i$, 
where $
( D_s)_i :=  \frac{1}{N}  \sum_{j=1}^N \calH_s(x_i, x_j)$,
and $\{ ( D_s)_i \}_{i=1}^N$ are $N$ positive valued random variables. 
It suffices to show that with large enough $N$, w.p. indicated in the lemma,
\begin{equation}\label{eq:concen-Ds}
(D_s)_i =  p + O_\calM(\sqrt{\frac{\log N}{ N s^{d/2}}}), \quad \forall i=1, \cdots, N.
\end{equation}
This can be proved using concentration argument, similar as in the proof of Lemma \ref{lemma:Di-concen} 1),
where we use the boundedness of the heat kernel \eqref{eq:H-global-boundedness} in Lemma \ref{lemma:heat}.
The proof of \eqref{eq:concen-Ds} is given in Appendix \ref{app:proofs-step1}. 
Note that \eqref{eq:concen-Ds} is a property of the r.v. $\calH_s(x_i, x_j)$ only,
which is irrelevant to the vector $u$. Thus the threshold of large $N$ in the lemma
and the constant in big-$O$ depend on $\calM$ and are uniform for all $u$. 
\end{proof}

\begin{lemma}\label{lemma:qs2-UB}
Under Assumption \ref{assump:M-p} ($p$ can be non-uniform),
$h$ being Gaussian, 
 let $0 < \alpha < 1$ be a fixed constant.
Suppose $\epsilon \to 0 +$ as $N \to \infty$, then with sufficiently small $\epsilon$, for any realization of $X$, 
\begin{equation}\label{eq:q2-eps-UB}
0 \le q^{(2)}_{ \epsilon}(u) 
= \left( 1 + {O}( \epsilon  (\log \frac{1}{ \epsilon})^2) \right)  \frac{u^T(D-W) u}{N^2} 
 + \frac{\| u \|^2}{N}  O(\epsilon^{3}),
\quad 
\forall u \in \R^N,
\end{equation}
and
\begin{equation}\label{eq:q2-alphaeps-UB}
0 \le q^{(2)}_{ \alpha \epsilon}(u) 
\le 1.1 \alpha^{-d/2}  \frac{u^T(D-W) u}{N^2} + \frac{\| u \|^2}{N}  O(\epsilon^{3}),
\quad 
\forall u \in \R^N.
\end{equation}
The constants in big-$O$ only depend on $\calM$ and are uniform for all $u$ and $\alpha$.
\end{lemma}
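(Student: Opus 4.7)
\textbf{Proof plan for Lemma \ref{lemma:qs2-UB}.}
The nonnegativity $0 \le q^{(2)}_s(u)$ is immediate from $\calH_s \ge 0$ and $(u_i - u_j)^2 \ge 0$, so the entire content will be the upper bounds. My plan is to apply Lemma \ref{lemma:heat} inside the double sum defining $q^{(2)}_s(u)$, splitting pairs $(i,j)$ according to whether $x_j$ lies in $B_{\delta_t}(x_i) \cap \calM$ or not, with $t = \epsilon$ for \eqref{eq:q2-eps-UB} and $t = \alpha\epsilon$ for \eqref{eq:q2-alphaeps-UB}. On near pairs I would use the local parametrix to replace $\calH_t(x_i,x_j)$ by $K_t(x_i,x_j)$ up to small multiplicative and additive errors; on far pairs I would control both $\calH_t$ and $K_t$ directly by their tail bounds.

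For part \eqref{eq:q2-eps-UB}, on near pairs the local estimate \eqref{eq:H-eps-local} gives $\calH_\epsilon(x_i,x_j) = (1 + O(\epsilon(\log \epsilon^{-1})^2))\, K_\epsilon(x_i,x_j) + O(\epsilon^3)$. On far pairs \eqref{eq:H-eps-truncate} yields $\calH_\epsilon(x_i,x_j) = O(\epsilon^{10})$, and the Gaussian $K_\epsilon$ is also super-polynomially small at radius $\delta_\epsilon = \sqrt{6(10+d/2)\epsilon\log\epsilon^{-1}}$ because $e^{-\delta_\epsilon^2/(4\epsilon)}$ is a large positive power of $\epsilon$; consequently the same identity continues to hold uniformly over all pairs $(i,j)$ for small enough $\epsilon$. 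Substituting and using $\tfrac{1}{2}\sum_{i,j} K_\epsilon(x_i,x_j)(u_i-u_j)^2 = u^T(D-W)u$, the main term would produce the $(1+O(\epsilon(\log\epsilon^{-1})^2))\,u^T(D-W)u/N^2$ piece, while the additive $O(\epsilon^3)$ residual, summed against $(u_i-u_j)^2$ and controlled via $\sum_{i,j}(u_i - u_j)^2 \le 2N\|u\|^2$, would supply the $\|u\|^2 O(\epsilon^3)/N$ correction.

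For part \eqref{eq:q2-alphaeps-UB}, the same splitting works at $t = \alpha\epsilon$ with radius $\delta_{\alpha\epsilon}$. The key comparison is $K_{\alpha\epsilon}(x,y) = \alpha^{-d/2}(4\pi\epsilon)^{-d/2}e^{-\|x-y\|^2/(4\alpha\epsilon)} \le \alpha^{-d/2} K_\epsilon(x,y)$, since $\alpha<1$ makes the exponent more negative. Combined with the local parametrix prefactor $(1 + O(\alpha\epsilon(\log(\alpha\epsilon)^{-1})^2))$, this ratio can be absorbed into $1.1\alpha^{-d/2}$ once $\epsilon$ is small enough relative to the fixed $\alpha$, and the far-pair contribution together with the $O((\alpha\epsilon)^3) \subseteq O(\epsilon^3)$ residual are handled exactly as in part \eqref{eq:q2-eps-UB}.

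\textbf{Main obstacle.} I do not anticipate a conceptual obstruction — the argument is largely careful bookkeeping of error terms split across near and far pairs. The one point that will need care is confirming that the implicit constants remain uniform in $u$ and, for part \eqref{eq:q2-alphaeps-UB}, uniform in $\alpha$: uniformity in $u$ follows because the estimates in Lemma \ref{lemma:heat} depend only on $\calM$ and the base point (not on the weights $u_i$), and uniformity in $\alpha\in(0,1)$ follows since $(\alpha\epsilon)^3 \le \epsilon^3$ and because $\alpha$ is a fixed constant, so allowing the small-$\epsilon$ threshold to depend on $\alpha$ is permitted.
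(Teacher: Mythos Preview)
Your proposal is correct and follows essentially the same route as the paper's proof: both split the double sum into near/far pairs via Lemma~\ref{lemma:heat}, apply the local parametrix \eqref{eq:H-eps-local} on near pairs and the tail bounds \eqref{eq:H-eps-truncate} on far pairs, control the additive residual using $\sum_{i,j}(u_i-u_j)^2 \le 2N\|u\|^2$, and for \eqref{eq:q2-alphaeps-UB} invoke the pointwise Gaussian comparison $K_{\alpha\epsilon}\le \alpha^{-d/2}K_\epsilon$ before absorbing the $1+\tilde O(\epsilon)$ prefactor into the constant $1.1$. The only minor refinement is that the small-$\epsilon$ threshold need not actually depend on $\alpha$, since $\alpha\epsilon(\log(1/\alpha\epsilon))^2 = O(\epsilon(\log(1/\epsilon))^2)$ uniformly for $\alpha\in(0,1)$, which is how the paper obtains constants uniform in $\alpha$.
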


\begin{proof}[Proof of Lemma \ref{lemma:qs2-UB}]
For any $u \in \R^N$,
$q^{(2)}_{ \epsilon}(u) = \frac{1}{2} \frac{1}{N^2} \sum_{i,j=1}^N \calH_\epsilon (x_i,x_j) (u_i - u_j)^2 \ge 0$.
Since $\epsilon = o(1)$, take $t$ in Lemma \ref{lemma:heat} to be $\epsilon$,
when $\epsilon < \epsilon_0$, the three equations hold.
By \eqref{eq:H-eps-truncate},
truncate at an $\delta_\epsilon = \sqrt{ 6(10 + \frac{d}{2}) \epsilon \log{\frac{1}{\epsilon}}}$ Euclidean ball, 

\begin{align*}
q^{(2)}_{ \epsilon}(u) 
= \frac{1}{2} \frac{1}{N^2} \sum_{i,j=1}^N \calH_\epsilon (x_i,x_j) {\bf 1}_{\{ x_j \in B_{\delta_\epsilon}(x_i) \}}
 (u_i - u_j)^2 
 +  O(\epsilon^{10}) \frac{1}{2} \frac{1}{N^2} \sum_{i,j=1}^N  (u_i - u_j)^2.
\end{align*}
By that $\frac{1}{N^2} \sum_{i,j=1}^N (u_i - u_j)^2 \le \frac{2}{N} \| u \|^2$,
and apply \eqref{eq:H-eps-local} with the short hand that $\tilde{O}(\epsilon) $ stands for $ {O}( \epsilon  (\log \frac{1}{ \epsilon})^2) $, 
\begin{align*}
q^{(2)}_{ \epsilon}(u) 
& = \frac{1}{2} \frac{1}{N^2} \sum_{i,j=1}^N 
\left(   K_\epsilon( x_i,x_j) (1 + \tilde{O}( \epsilon)) + O(\epsilon^3) \right) 
{\bf 1}_{\{ x_j \in B_{\delta_\epsilon}(x_i) \}}
 (u_i - u_j)^2 
 +  O(\epsilon^{10}) \frac{\| u \|^2}{N} \\
 & = (1 + \tilde{O}( \epsilon) )
 \frac{1}{2} \frac{1}{N^2} \sum_{i,j=1}^N   K_\epsilon( x_i,x_j) {\bf 1}_{\{ x_j \in B_{\delta_\epsilon}(x_i) \}}   (u_i - u_j)^2 
 +  O(\epsilon^{3}) \frac{\| u \|^2}{N}.
\end{align*}
By the truncation argument for $K_\epsilon (x_i,x_j)$, we have that 
\begin{equation}\label{eq:W-form-truncate}
 \frac{1}{2} \frac{1}{N^2} \sum_{i,j=1}^N K_\epsilon (x_i,x_j) {\bf 1}_{\{  x_j \in B_{\delta_\epsilon}(x_i) \}} (u_i - u_j)^2
 =  \frac{u^T(D-W)u}{N^2}  + \frac{\| u \|^2}{N}  O(\epsilon^{10}).
\end{equation}
Putting together, we have
\[
q^{(2)}_{ \epsilon}(u) 
= (1 + \tilde{O}( \epsilon) )
\left( \frac{u^T(D-W)u}{N^2}  + \frac{\| u \|^2}{N}  O(\epsilon^{10})
\right)
 +  O(\epsilon^{3}) \frac{\| u \|^2}{N},
\]
which proves \eqref{eq:q2-eps-UB}.

To prove \eqref{eq:q2-alphaeps-UB}, since $\alpha < 1$ is a fixed positive constant, $ 0 < \alpha \epsilon < \epsilon < \epsilon_0$,
we then apply Lemma \ref{lemma:heat} with $t$ therein being $\alpha \epsilon$.
With a truncation at $\delta_{\alpha \epsilon}$-Euclidean ball, 
and by \eqref{eq:H-eps-local},
\begin{align*}
q^{(2)}_{\alpha \epsilon}(u) 
& = \frac{1}{2} \frac{1}{N^2} \sum_{i,j=1}^N 
\left(   K_{\alpha \epsilon}( x_i,x_j) (1 + \tilde{O}( \alpha \epsilon)) + O( \alpha^3 \epsilon^3) \right)
{\bf 1}_{\{ x_j \in B_{\delta_{\alpha\epsilon}}(x_i) \}}
 (u_i - u_j)^2 
 +  \frac{\| u \|^2}{N}  O(\epsilon^{10}) \\
 &=
 (1 + \tilde{O}( \epsilon))
   \frac{1}{2} \frac{1}{N^2} \sum_{i,j=1}^N 
 K_{\alpha \epsilon}( x_i,x_j) 
{\bf 1}_{\{ x_j \in B_{\delta_{\alpha\epsilon}}(x_i) \}}
 (u_i - u_j)^2 
 +  \frac{\| u \|^2}{N}  O(\epsilon^{3}).
\end{align*}
Suppose $\epsilon$ is sufficiently small such that $1+\tilde{O}(\epsilon)$ is less than 1.1.
Note that 
\begin{equation}\label{eq:K-alphaeps-K-eps}
K_{\alpha \epsilon}(x,y)  = \frac{1}{(4 \pi \alpha \epsilon)^{d/2}} e^{-\frac{ \| x - y\|^2}{4 \alpha \epsilon}}
\le \frac{1}{\alpha^{d/2}} \frac{1}{(4 \pi  \epsilon)^{d/2}} e^{-\frac{ \| x - y \|^2}{4 \epsilon}}
= \alpha^{-d/2} K_{\epsilon} (x,y),
\quad \forall x,y \in \calM,
\end{equation}
then, by that ${\bf 1}_{\{ x_j \in B_{\delta_{\alpha\epsilon}}(x_i) \}} \le {\bf 1}_{\{ x_j \in B_{\delta_{\epsilon}}(x_i) \}}$,
and again with \eqref{eq:W-form-truncate},
\begin{align*}
q^{(2)}_{\alpha \epsilon}(u) 
& \le
1.1 \frac{1}{2} \frac{1}{N^2} \sum_{i,j=1}^N 
\alpha^{-d/2}  K_{\epsilon} (x_i,x_j)
{\bf 1}_{\{ x_j \in B_{\delta_{\epsilon}}(x_i) \}}
 (u_i - u_j)^2 
 +  \frac{\| u \|^2}{N}  O(\epsilon^{3}) \\
& = 
1.1  \alpha^{-d/2} 
\left( 
\frac{u^T(D-W)u}{N^2}  + \frac{\| u \|^2}{N}  O(\epsilon^{10})
 \right)
 +  \frac{\| u \|^2}{N}  O(\epsilon^{3}),
\end{align*}
and this  proves \eqref{eq:q2-alphaeps-UB}.
\end{proof}

{We are ready to prove Proposition \ref{prop:eigvalue-LB-crude}.}

\begin{proof}[Proof of Proposition \ref{prop:eigvalue-LB-crude}]

For fixed $k_{max}$, since $\gamma_K < \mu_K$, define
\begin{equation}\label{eq:def-detla-const}
\delta := \frac{0.5 \gamma_K}{\mu_K} < 0.5,
\end{equation}
$\delta > 0$ and is a fixed constant determined by $\calM$ and $k_{max}$.
For $\epsilon > 0$, let
\[
r: = \frac{\delta \epsilon}{2}, 
\quad
t = \epsilon - 2r = (1-\delta) \epsilon.
\]
For $L_{un} v_k = \lambda_k v_k$,  where $v_k$ are normalized s.t.
\begin{equation}\label{eq:vk-normalize-un}
\frac{1}{N} v_k^T v_l = \delta_{kl}, \quad 1 \le k,l \le N,
\end{equation}
let $f_k = I_r [ v_k ]$, $k=1, \cdots, K$, then $f_k \in C^\infty(\calM) \subset H$.
Because $\epsilon^{d/2+2} > c_K \frac{\log N}{N}$, and $\epsilon = o(1)$,
 $\epsilon^{d/2} = \Omega( \frac{\log N }{N} )$.
Thus, under the assumption of the current proposition,
the condition needed in Proposition \ref{prop:eigvalue-UB} is satisfied, 
and then when $N$ is sufficiently large,
there is an event $E_{UB}$ which happens w.p. $> 1- 4K^2 N^{-10}$, under which
\begin{equation}\label{eq:lambdak-UB-hold}
\lambda_k \le \mu_k + 0.1 \mu_K \le 1.1 \mu_K, \quad 1 \le k \le K.
\end{equation}
We first show that $\{ f_j\}_{j=1}^K$ are linearly independent by considering $\langle f_k, f_l \rangle$.
By definition, for $1 \le k \le K$,
\[
\langle f_k, f_k \rangle = q_{2r}( v_k) = q^{(0)}_{\delta \epsilon }( v_k) - q^{(2)}_{\delta \epsilon }( v_k),
\]
and for $k \neq l$, $1 \le k , l \le K$,
\[
\langle (f_k \pm f_l), (f_k \pm f_l) \rangle = q_{2r}( v_k \pm v_l) 
= q^{(0)}_{\delta \epsilon }( v_k \pm v_l ) - q^{(2)}_{\delta \epsilon }( v_k \pm v_l).
\]
Because $s = \delta \epsilon$, under the condition of the proposition, $s$ satisfies the condition in   Lemma \ref{lemma:qs0-concen},
and thus, 
with sufficiently large $N$,
there is an event $E^{(0)}$ which happens w.p. $> 1- 2N^{-9}$, under which
\[
q^{(0)}_{\delta \epsilon }( v_k) 
 = p+ O( \sqrt{\frac{\log N}{ N \epsilon^{d/2}}}), \quad 1 \le k \le K;
  \quad
q^{(0)}_{\delta \epsilon }( v_k \pm v_l) 
 = 2p + O( \sqrt{\frac{\log N}{ N \epsilon^{d/2}}}), \quad k \neq l,  1 \le k,l \le K,
\]
where we used that the factor $\delta^{-d/2}$ is a fixed constant.
Meanwhile, applying \eqref{eq:q2-alphaeps-UB} in Lemma \ref{lemma:qs2-UB} where $\alpha = \delta$, 
and note that 
\[
 \frac{v_k^T(D-W) v_k}{N^2} = p \epsilon \lambda_k;
 \quad
 \frac{ (v_k \pm v_l)^T(D-W) (v_k \pm v_l)}{N^2} = p \epsilon (\lambda_k + \lambda_l), 
 \quad k \neq l, 1 \le k, l \le K,
\]
we have that
\[
\begin{split}
q^{(2)}_{\delta \epsilon }( v_k) 
& =
O( \delta^{-d/2} ) p \epsilon   \lambda_k +  O(\epsilon^{3}) , 
 \quad 1 \le k \le K, \\
q^{(2)}_{\delta \epsilon }( v_k \pm v_l) 
&  
= O( \delta^{-d/2})   p \epsilon  ( \lambda_k + \lambda_l)   + 2  O(\epsilon^{3}) ,
 \quad k \neq l,
\end{split}
\]
and by that $\lambda_k, \, \lambda_l \le 1.1 \mu_K $ which is a fixed constant, so is $\delta$, 
 we have that 
\begin{equation}\label{eq:q2-delta-eps-vkvl}
q^{(2)}_{\delta \epsilon }( v_k) = O( \epsilon), \quad 1 \le k \le K;
\quad
q^{(2)}_{\delta \epsilon }( v_k \pm v_l)= O(\epsilon), \quad k \neq l, \, 1\le k, l \le K.
\end{equation}
Putting together, we have that 
\begin{equation}\label{eq:fk-near-iso}
\begin{split}
\langle f_k, f_k \rangle 
&= p + O( \sqrt{\frac{\log N}{ N \epsilon^{d/2}}} ,  \epsilon), \quad 1 \le k \le K, \\
\langle f_k, f_l \rangle 
& = \frac{1}{4}( q_{\delta \epsilon}( v_k + v_l)  - q_{\delta \epsilon}( v_k - v_l) )
= O( \sqrt{\frac{\log N}{ N \epsilon^{d/2}}}  , \epsilon),
\quad k \neq l, \, 1\le k, l \le K.
\end{split}
\end{equation}
This proves linear independence of  $\{ f_j\}_{j=1}^K$ when $N$ is large enough,
since $O( \sqrt{\frac{\log N}{ N \epsilon^{d/2}}} , \epsilon) = o(1)$. 
 
We consider first $K$ eigenvalues of $\calL_t$, $t = (1-\delta) \epsilon$.
For each $2 \le k \le K$, let $L_k = \text{Span}\{ f_1, \cdots, f_k\}$ be a $k$-dimensional subspace in $H$,
then by \eqref{eq:eig-Lt-minmax}, 
\begin{equation}\label{eq:eigen-relation-2}
1 - e^{-(1-\delta) \epsilon \mu_k } 
\le  
 \sup_{ f \in L_k, \, \|f\|^2 \neq 0} 
\frac{ \langle f ,  \calL_t f  \rangle}{\langle f, f \rangle} 
= \frac{ \langle f ,  f  \rangle -   \langle f ,  Q_t f  \rangle }{\langle f, f \rangle}.
\end{equation}
For any $f \in L_k$, $ \|f\|^2 \neq 0$,  there is $c \in \R^k$, $c \neq 0$, such that $ f = \sum_{j=1}^k c_j f_j $.
Thus
\[
f = \sum_{j=1}^k c_j I_r [v_j] 
=  I_r  [  \sum_{j=1}^k c_j v_j] = I_r [v], 
\quad
v : = \sum_{j = 1}^k c_j v_j.
 \]
 Because $v_j$ are orthogonal, $\| v_j \|^2 = N$,  
we have that
\[
\frac{\| v \|^2}{N} = \| c \|^2,
\quad
\frac{ v^T (D-W) v}{ N^2} = \sum_{j=1}^k c_j^2 ( p \epsilon \lambda_j ) \le \lambda_k p \epsilon \| c \|^2.
\]
By definition, 
$\langle f ,  f  \rangle = q_{\delta \epsilon}( v)$, 
and 
$ \langle f ,  Q_t f  \rangle = q_{\epsilon}(v)$.

We first upper bound the numerator of the r.h.s. of \eqref{eq:eigen-relation-2}.
By that $q^{(2)}_{\delta \epsilon}( v) \ge 0$, 
\begin{align}
 \langle f ,  f  \rangle  -  \langle f ,  Q_t f  \rangle
& = q_{\delta \epsilon}( v) - q_{\epsilon}(v) 
 =  q^{(0)}_{\delta \epsilon}( v) - q^{(2)}_{\delta \epsilon}( v) - q^{(0)}_{\epsilon}(v) +q^{(2)}_{\epsilon}(v)  \nonumber \\
& \le (  q^{(0)}_{\delta \epsilon}( v) - q^{(0)}_{\epsilon}(v) ) + q^{(2)}_{\epsilon}(v).
\label{eq:proof-numerator-1}
\end{align}
We have already obtained the good event $E^{(0)}$ when applying Lemma \ref{lemma:qs0-concen} with $s= \delta \epsilon$.
We apply the lemma again to $s = \epsilon$, which gives that with sufficiently large $N$  
there is an event $E^{(1)}$ which happens $w.p. > 1- 2 N^{-9}$, 
and then under $E^{(0)} \cap E^{(1)}$, 
\begin{equation}\label{eq:q0-delta-eps-E0-E1}
q^{(0)}_{ \delta \epsilon } ( v) =  \| c \|^2 ( p + O_\calM( \sqrt{ \delta^{-d/2} \frac{\log N}{N  \epsilon^{d/2}} })), 
\quad
q^{(0)}_{ \epsilon } ( v) =  \| c \|^2 ( p + O_\calM( \sqrt{ \frac{\log N}{N  \epsilon^{d/2}} })).
\end{equation}
We track the constant dependence here: the constant in $O_\calM(\cdot)$ in Lemma \ref{lemma:qs0-concen} is only depending on $\calM$ (and not on $K$),
thus we  use the notation $O_{\calM}(\cdot)$ in \eqref{eq:q0-delta-eps-E0-E1}
and below to emphasize that the constant is $\calM$-dependent only and independent from $K$. 
Then \eqref{eq:q0-delta-eps-E0-E1} gives that 
\[
 q^{(0)}_{\delta \epsilon}( v) - q^{(0)}_{\epsilon}(v) 
 = \| c \|^2  \delta^{-d/4}  O_\calM \left(\sqrt{ \frac{\log N}{N  \epsilon^{d/2}} } \right).
\]
The UB of $q^{(2)}_{\epsilon}(v)$ follows from \eqref{eq:q2-eps-UB} in Lemma \ref{lemma:qs2-UB},
with the shorthand that $\tilde{O}(\epsilon) $ stands for $ {O}( \epsilon  (\log \frac{1}{ \epsilon})^2) $,
\[
q^{(2)}_{\epsilon}(v) = \frac{ v^T (D-W) v}{ N^2}  (1 + \tilde{O}(\epsilon)) + \| c \|^2 O( \epsilon^{3})
\le \epsilon \| c \|^2 (  \lambda_k p  (1 + \tilde{O}(\epsilon)) + O( \epsilon^{2}) ).
\]
Thus, \eqref{eq:proof-numerator-1} continues as
\begin{equation}\label{eq:numerator-UB}
\langle f ,  f  \rangle  -  \langle f ,  Q_t f  \rangle
\le  \epsilon \| c \|^2 \left( 
 \lambda_k p  (1 + \tilde{O}(\epsilon)) + O( \epsilon^{2})  
 +  \delta^{-d/4} O_\calM (  \frac{1}{\epsilon}\sqrt{ \frac{\log N}{N  \epsilon^{d/2}} })
 \right).  
\end{equation}

Next we lower bound the denominator $\langle f, f \rangle$. 
Here we use \eqref{eq:q2-alphaeps-UB} in Lemma \ref{lemma:qs2-UB}, which gives that 
\[
0 \le q^{(2)}_{\delta \epsilon}( v)
  \le \Theta( \delta^{-d/2} )  \frac{ v^T (D-W) v}{ N^2}  + \| c \|^2 O(\epsilon^{3})
\le  \epsilon  \| c \|^2   \left( \lambda_k p \Theta(\delta^{-d/2}) + O(\epsilon^{2}) \right).
\]
Note that we assume under event $E_{UB}$ so that the eigenvalue UB \eqref{eq:lambdak-UB-hold} holds,
thus 
$\lambda_k p \Theta(\delta^{-d/2}) + O(\epsilon^{2}) = O(1)$.
 Together with that $\delta$ is a fixed constant, we have that 
\[
 q^{(2)}_{\delta \epsilon}( v)  =  \| c \|^2  O( \epsilon ).
\]
Then, again under $E^{(1)}$, 
\[
\langle f, f \rangle 
= q^{(0)}_{\delta \epsilon}( v) - q^{(2)}_{\delta \epsilon}( v) 
= \| c \|^2 \left ( p + O( \sqrt{ \delta^{-d/2} \frac{\log N}{N  \epsilon^{d/2}} })  - O( \epsilon ) \right) 
\ge \| c \|^2 \left ( p - O( \epsilon,  \sqrt{ \frac{\log N}{N  \epsilon^{d/2}} })  \right).
\]

Putting together with \eqref{eq:numerator-UB}, and by that $\lambda_k \le 1.1 \mu_K$, we have that
\[
\frac{\langle f ,  f  \rangle  -  \langle f ,  Q_t f  \rangle}{ \langle f, f \rangle}
\le 
\frac{ \epsilon  \left( 
 \lambda_k p   +   \tilde{O}(\epsilon) +  \delta^{-d/4}O_\calM (  \frac{1}{\epsilon}\sqrt{ \frac{\log N}{N  \epsilon^{d/2}} })
 \right)}
 {    p - O( \epsilon,  \sqrt{ \frac{\log N}{N  \epsilon^{d/2}} })   }
 \le 
 \epsilon  \left( 
 \lambda_k    +   \tilde{O}(\epsilon) +   \frac{C}{\epsilon}\sqrt{ \frac{\log N}{N  \epsilon^{d/2}} }
 \right),
\]
where 
$C = c(\calM) \delta^{-d/4}$, and $c(\calM)$ is a constant only depending on $\calM$.
We set
\begin{equation*}
c_K 
: = ( \frac{ C}{0.1 \gamma_K  })^2
= ( \frac{ c(\calM) }{0.1   })^2  \delta^{-d/2} \gamma_K^{-2},
\end{equation*} 
and since we assume $\epsilon^{d/2+2} > c_K \frac{\log N}{N}$ in the current proposition, 
we have that 
$\frac{C}{\epsilon}\sqrt{ \frac{\log N}{N  \epsilon^{d/2}}} <  0.1 \gamma_K  $.
Then, comparing to l.h.s. of \eqref{eq:eigen-relation-2}, we have that
\[
1 - e^{-(1-\delta) \epsilon \mu_k } 
\le
\frac{\langle f ,  f  \rangle  -  \langle f ,  Q_t f  \rangle }{ \langle f, f \rangle}
\le 
\epsilon   \left( 
 \lambda_k    +   \tilde{O}(\epsilon) +   0.1 \gamma_K
 \right).
\]
By the relation that $1- e^{-x} \ge x - x^2$ for any $x \ge 0$,
$1 - e^{-(1-\delta) \epsilon \mu_k }  \ge \epsilon (1-\delta) \left(   \mu_k - (1-\delta) \epsilon \mu_k^2  \right)$,
and when $\epsilon$ is sufficiently small s.t.  $ \epsilon \mu_k^2 \le \epsilon (1.1 \mu_K)^2 < 0.1 \gamma_K$, 
\[
1 - e^{-(1-\delta) \epsilon \mu_k } 
\ge 
\epsilon (1-\delta)  \left(  \mu_k - 0.1 \gamma_K  \right) > 0.
 \]
Noting that for $k \ge 2 $, 
$\mu_k \ge \mu_2 \ge 2 \gamma_K > 0$, because $\mu_1 =0$. 
Thus, 
when 
$\epsilon$ is sufficiently small and the $\tilde{O}(\epsilon) $ term is less than $0.1 \gamma_K$, 
under the good events $E^{(1)} \cap E_{UB}$, which happens w.p. $> 1- 4K^2 N^{-10} - 4 N^{-9}$, we have that
\[
0 < (1-\delta) ( \mu_k - 0.1 \gamma_K  )
\le 
 \lambda_k    +   \tilde{O}(\epsilon) +   0.1 \gamma_K
  <  \lambda_k    + 0.2 \gamma_K.
 \]
 Recall that  by definition \eqref{eq:def-detla-const}, 
 $\delta \mu_K = 0.5 \gamma_K$, 
 then $\delta \mu_k \le \delta \mu_K = 0.5 \gamma_K$,
 also $ 0 < \delta < 0.5$.
Re-arranging the terms gives that
 $ \mu_k < \lambda_k + 0.8 \gamma_K$.
 This can be verified for all $ 2 \le k \le K$,
 and  note that the good event $E^{(1)}$ is w.r.t. $X$, 
and  $E_{UB}$ is constructed for fixed $k_{max}$,
and none is for specific $k \le K$. 
\end{proof}

\subsection{Random-walk graph Laplacian {eigenvalue crude LB}}

The counterpart result of random-walk graph Laplacian is the following proposition.
It replaces Proposition \ref{prop:eigvalue-UB} with Proposition \ref{prop:eigvalue-UB-rw} in obtaining the eigenvalue UB in the analysis,
and consequently the high probability differs slightly.

\begin{proposition}[Initial crude eigenvalue LB of $L_{rw}$]
\label{prop:eigvalue-LB-crude-rw}
Under the same condition and setting 
of $\calM$, $p$ being uniform, $h$ being Gaussian,
and $k_{max}$, $\mu_k$, $\epsilon$ same
as in Proposition \ref{prop:eigvalue-LB-crude}.
Then, for sufficiently large $N$, 
w.p.$> 1- 4K^2 N^{-10} - 6N^{-9}$, 
$
\lambda_k  (L_{rw})
> \mu_k - \gamma_K$,
 for $k=2,\cdots, K$.
\end{proposition}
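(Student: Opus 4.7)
The plan is to adapt the proof of Proposition~\ref{prop:eigvalue-LB-crude} line by line, the only essential change arising from the fact that eigenvectors of $L_{rw}$ are $D$-orthogonal rather than Euclidean-orthonormal. Let $\{\lambda_k, v_k\}_{k=1}^K$ be eigenpairs of $L_{rw}$. Since $L_{rw}$ with Gaussian $h$ (so $m_2/(2m_0) = 1$) is similar to the symmetric matrix $\epsilon^{-1}(I - D^{-1/2}WD^{-1/2})$, one may choose the $v_k$ such that $\frac{1}{N}\|v_k\|^2 = 1$ and $v_j^T D v_l = 0$ exactly for $j\neq l$. Rewriting the eigen-equation as $(D-W)v_k = \epsilon \lambda_k D v_k$ then yields the cross-term-free identities
\begin{equation*}
v_k^T(D-W) v_l = \epsilon \lambda_l \, v_k^T D v_l = 0, \quad k\neq l, \qquad
v_k^T(D-W)v_k = \epsilon \lambda_k \, v_k^T D v_k.
\end{equation*}
By Lemma~\ref{lemma:Di-concen} part 2, on an event $E_D$ of probability $> 1-2N^{-9}$, $u^T D u/N^2 = \frac{1}{N}\|u\|^2 (p + O(\epsilon, \sqrt{\log N/(N\epsilon^{d/2})}))$ uniformly in $u$, so $v_k^T(D-W)v_k/N^2 = p\epsilon\lambda_k (1 + o(1))$, in place of the exact identity $v_k^T(D-W)v_k/N^2 = p\epsilon\lambda_k$ used in the un-normalized case.

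With this substitution in hand, I would reuse the construction of Proposition~\ref{prop:eigvalue-LB-crude} verbatim: set $\delta = 0.5\gamma_K/\mu_K$, $r = \delta\epsilon/2$, $t = (1-\delta)\epsilon$, define $f_k = I_r[v_k]$, and apply the variational inequality \eqref{eq:eig-Lt-minmax} to the $k$-dimensional subspace $L_k = \text{Span}\{f_1,\dots,f_k\}$. For any $v = \sum_{j=1}^k c_j v_j$ and $f = I_r[v]$, the $D$-orthogonality gives the clean expansion $v^T(D-W)v/N^2 = \sum_j c_j^2 \epsilon\lambda_j(v_j^T D v_j/N^2) \le p\epsilon\lambda_k \|c\|^2 (1+o(1))$, which, combined with the crude eigenvalue UB $\lambda_k \le 1.1\mu_K$ from Proposition~\ref{prop:eigvalue-UB-rw} (holding on an event $E_{UB}$ of probability $> 1 - 4K^2N^{-10} - 2N^{-9}$), feeds Lemma~\ref{lemma:qs2-UB} as before; two applications of Lemma~\ref{lemma:qs0-concen} (events $E^{(0)}$, $E^{(1)}$, each of probability $> 1-2N^{-9}$) then control $q^{(0)}_{\delta\epsilon}$ and $q^{(0)}_\epsilon$.

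The one genuinely new estimate is that of the Euclidean cross-products $v_j^T v_l$, which vanish identically in the un-normalized case but here are only approximately zero. I would extract this from the exact identity $v_j^T D v_l = 0$ by applying Lemma~\ref{lemma:Di-concen} part 2 to $u = v_j \pm v_l$: equating the two expressions for $u^T D u/N^2$ and using $\frac{1}{N}\|v_j\|^2 = \frac{1}{N}\|v_l\|^2 = 1$ forces $\frac{1}{N} v_j^T v_l = O(\epsilon, \sqrt{\log N/(N\epsilon^{d/2})})$ on $E_D$, hence $\frac{1}{N}\|v\|^2 = \|c\|^2(1+o(1))$ under the regime $\epsilon^{d/2+2} > c_K\log N/N$. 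The remaining bounds on the numerator and denominator of the Rayleigh quotient $(\langle f,f\rangle - \langle f, Q_t f\rangle)/\langle f,f\rangle$ then mirror Proposition~\ref{prop:eigvalue-LB-crude}, and comparison with $1 - e^{-(1-\delta)\epsilon\mu_k} \ge \epsilon(1-\delta)(\mu_k - 0.1\gamma_K)$ yields $\mu_k < \lambda_k + 0.8\gamma_K < \lambda_k + \gamma_K$ for $2 \le k \le K$. The probability count absorbs the extra $2N^{-9}$ already built into Proposition~\ref{prop:eigvalue-UB-rw}, giving the stated $1 - 4K^2N^{-10} - 6N^{-9}$.

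The main obstacle, beyond routine bookkeeping, is to propagate the multiplicative factor $1 + O(\epsilon, \sqrt{\log N/(N\epsilon^{d/2})})$---which now sits everywhere a factor of $p$ appeared in the un-normalized proof---through both the numerator and denominator without eroding the $0.1\gamma_K$ safety margin; the assumption $\epsilon^{d/2+2} > c_K \log N/N$ is precisely what keeps all such errors uniformly $o(\gamma_K)$.
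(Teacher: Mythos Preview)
Your proposal is correct and follows essentially the same approach as the paper: adapt the heat-interpolation argument of Proposition~\ref{prop:eigvalue-LB-crude}, handling the $D$-orthogonality of the $L_{rw}$ eigenvectors via the degree concentration Lemma~\ref{lemma:Di-concen}. The only cosmetic difference is the choice of normalization: the paper sets $\frac{1}{N^2}v_k^T D v_l = \delta_{kl}$, making $\frac{1}{N^2}v^T D v = \|c\|^2$ exact and then invoking Lemma~\ref{lemma:Di-concen} once to recover $\frac{1}{N}\|v\|^2 = \frac{\|c\|^2}{p}(1+o(1))$, whereas you set $\frac{1}{N}\|v_k\|^2 = 1$ and control the Euclidean cross-products $v_j^T v_l$ separately. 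Both routes work; the paper's is slightly more economical since applying Lemma~\ref{lemma:Di-concen} directly to $v$ (rather than to each $v_j \pm v_l$) already gives $\frac{1}{N}\|v\|^2 \approx \|c\|^2$ without isolating the cross terms.
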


The proof is similar to that of Proposition \ref{prop:eigvalue-LB-crude} and left to Appendix \ref{app:proofs-step1}.
The difference lies in that the empirical eigenvectors $v_k$ are $D$-orthonormal rather than orthonormal,
and the degree concentration Lemma \ref{lemma:Di-concen} is used to relate $\frac{\|v\|^2}{N}$ with $\frac{1}{N^2} v^T D v$
for arbitrary vector $v$.

\section{Steps 2-3 and eigen-convergence}\label{sec:step23}

\begin{figure}[t]
\captionsetup{width=0.9\linewidth}
\centering
\includegraphics[height=.14\linewidth]{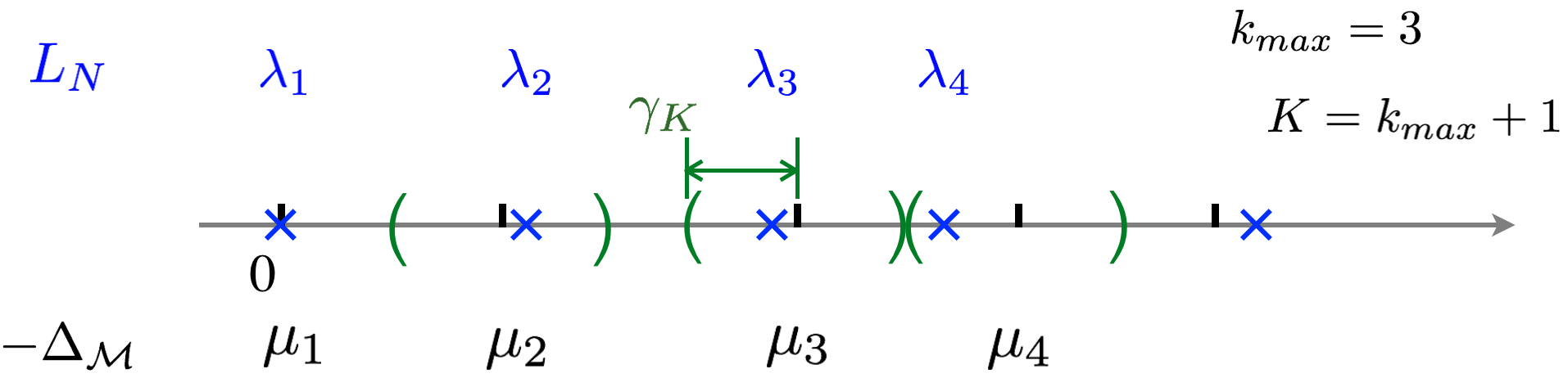} 
\caption{
\scriptsize
Population eigenvalues $\mu_k$ of $-\Delta$, 
and empirical eigenvalues $\lambda_k$ of graph Laplacian matrix $L_N$,
$L_N$ can be $L_{un}$ or $L_{rw}$.
The positive integer $k_{max}$ is fixed,
and the constant $\gamma_K$ is half of the minimum first-$K$ eigen-gaps, defined as in \eqref{eq:def-gamma-K}.
Eigenvalue UB and initial LB are proved for $k \le K$, which guarantees \eqref{eq:eigen-stay-away}.
Extending to greater than one multiplicity by defining $\gamma_K$ as in \eqref{eq:def-gamma-M-multiplicity}.
}
\label{fig:crude}
\end{figure}

In this section, we obtain eigen-convergence rate of $L_{un}$ and $L_{rw}$ from the initial crude eigenvalue bound in Step 1.
We first derive the Steps 2-3 for $L_{un}$,
and the proof for $L_{rw}$ is similar.

\subsection{Step 2 eigenvector consistency}

In Step 1, the crude bound of eigenvalue (the UB already matches the form rate, the LB is crude) gives that for fixed $k_{max}$
and at large $N$, each $\lambda_k$  will fall into the interval $(\mu_k - \gamma_K, \mu_k + \gamma_K)$, 
where $\gamma_K$ is less than half of the smallest eigenvalue gaps $(\mu_2 - \mu_1)$, $\cdots$, $ (\mu_{k_{max}+1} - \mu_{k_{max}})$,
illustrated in Fig. \ref{fig:crude}.
This means that $\lambda_k$ is separated from neighboring $\mu_{k-1}$ and $\mu_{k+1}$ by an $O(1)$ distance away.
This $O(1)$ initial separation is enough for proving eigenvector consistency up to the point-wise rate,
which is a standard argument, see e.g. proof of Theorem 2.6 part 2) in \cite{calder2019improved}. 
In below we provide an informal explanation and then the formal statement in Proposition \ref{prop:step2}, with a proof for completeness. 

We first give an  illustrative informal derivation.
Take $k=2$ for example, let $L_N = L_{un}$, $L_N u_k = \lambda_k u_k$, and we want to show that $ u_2$ and $\rho_X \psi_2$ are aligned.
\[
r_2: = L_N (\rho_X \psi_2) - \rho_X (-\Delta) \psi_2 \in \R^N, 
\quad 
r_2(i) = L_N (\rho_X \psi_2) (x_i) - (-\Delta) \psi_2 (x_i),
\]
the point-wise convergence of graph Laplacian gives $L^\infty$ bound of the residual vector $r_2$, suppose $\|r_2\|_2 \le \varepsilon \| \rho_X \psi_2 \|_2$. Meanwhile, 
for any $l = 1, 3, \cdots, N$,
the crude bound of eigenvalues $\lambda_3$ gives that 
\[ 
\lambda_3 > \mu_2 + \gamma_K,
\]
where $\gamma_K > 0$ is an $O(1)$ constant determined by $k_{max}$ and $\calM$.
Because empirical eigenvalues are sorted, $\lambda_l$ for $l \ge 3$ are also $\gamma_K$ away from $\mu_2$. 
As a result, 
\[
| \lambda_l - \mu_2  | > \gamma_K > 0, 
\quad l \neq 2, \quad 1 \le l \le N.
\]
Then we use the relation that  for each $l \neq 2$, 
$u_l^T  r_2 =  u_l^T(L_N (\rho_X \psi_2) - \mu_2 \rho_X  \psi_2) = 
(\lambda_l  - \mu_2) u_l^T(\rho_X \psi_2)$,
which gives that 
\[
| u_l^T(\rho_X \psi_2 ) | = \frac{ | u_l^T  r_2 |}{|\lambda_l  - \mu_2| } 
\le \frac{ \varepsilon  }{ \gamma_K} \| u_l\|_2 \| \rho_X \psi_2 \|_2.
\]
This shows that $\rho_X \psi_2 $ has $O(\varepsilon)$ alignment with all the other eigenvectors than $u_2$,
and since $\{ u_1, \cdots, u_N \}$ are orthogonal basis in $\R^N$,
this guarantees $1-O(\varepsilon)$ alignment between  $\rho_X \psi_2 $ and $u_2$.

To proceed, we use the point-wise rate of graph Laplacian with $C^2$ kernel $h$ as in the next theorem.
The analysis of point-wise convergence was given in \cite{singer2006graph} and \cite{cheng2020convergence}:
The original theorem in \cite{singer2006graph} considers the normalized graph Laplacian
$(I -D^{-1}W)$. The analysis is similar for $(D-W)$ and leads to the same rate, 
which was derived in \cite{cheng2020convergence} under the setting of variable kernel bandwidth. 
These previous works consider a fixed point $x_0$ on $\calM$,
and since the concentration result has exponentially high probability,
it directly gives  the version of uniform error bound at every data point $x_i$,
which is needed here.

\begin{theorem}[\cite{singer2006graph,cheng2020convergence}]\label{thm:pointwise-rate-C2h}
Under Assumptions \ref{assump:M-p} and \ref{assump:h-C2-nonnegative},
if as $N \to \infty$, $\epsilon \to 0+ $,
$\epsilon^{d/2+1} = \Omega( \frac{\log N}{N} ) $,
then for any $f \in C^4(\calM)$,

1) When $N$ is large enough, w.p. $> 1-4 N^{-9}$,
\[
 \frac{1}{\epsilon \frac{m_2}{2 m_0} } \left( (I -D^{-1}W) (\rho_X f)  \right) _i
= - \Delta_{p^2} f(x_i) + \varepsilon_i, 
\quad 
\sup_{1 \le i \le N} |\varepsilon_i| = O(\epsilon ) + O( \sqrt{\frac{\log N}{N \epsilon^{d/2+1}}} ).
\]

2) When $N$ is large enough, w.p. $> 1-2N^{-9}$,
\[
\frac{1}{\epsilon \frac{m_2}{2} p(x_i) N } \left( (D-W) (\rho_X f)\right)_i 
= - \Delta_{p^2} f(x_i) + \varepsilon_i, 
\quad 
\sup_{1 \le i \le N} |\varepsilon_i| = O(\epsilon ) + O( \sqrt{\frac{\log N}{N \epsilon^{d/2+1}}} ).
\]

The constants in the big-O notations depend on $\calM$, $p$ and the $C^4$ norm of  $f$. 
\end{theorem}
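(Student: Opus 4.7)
The plan is to prove both 1) and 2) by the standard two-step decomposition: a deterministic bias analysis giving the $O(\epsilon)$ error, followed by a concentration/variance analysis giving the $O(\sqrt{\log N/(N\epsilon^{d/2+1})})$ error, both made uniform in $i$ by a union bound. Throughout, I treat $x_i$ as fixed and view $\{x_j\}_{j\ne i}$ as $N-1$ i.i.d. samples from $p\,dV$, then union-bound over $i$.

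For the bias, I fix a base point $x\in\calM$ and analyze
\[
G_\epsilon f(x) := \int_\calM K_\epsilon(x,y) f(y) p(y)\, dV(y).
\]
Using normal coordinates at $x$ and the fact that $\|x-y\|^2 = d_\calM(x,y)^2 + O(d_\calM(x,y)^4)$, one expands $K_\epsilon(x,y)$ and the volume element in powers of $\sqrt{\epsilon}$. Because $h\in C^2$ with exponential decay, the standard Coifman--Lafon / Singer calculation (exactly as in \cite{coifman2006diffusion,singer2006graph}) yields
\[
G_\epsilon f(x) = m_0[h]\, p(x) f(x) + \tfrac{m_2[h]}{2}\epsilon\, \bigl(p(x)\Delta f(x) + 2\nabla p(x)\cdot\nabla f(x) + f(x)\Delta p(x)\bigr) + O_{p,f}(\epsilon^2),
\]
and analogously for $G_\epsilon 1(x) = m_0[h] p(x) + \tfrac{m_2[h]}{2}\epsilon\Delta p(x) + O_{p}(\epsilon^2)$. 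Dividing $G_\epsilon f$ by $G_\epsilon 1$, the leading terms rearrange to $m_0[h] p(x) f(x)$ and $\tfrac{m_2[h]}{2}\epsilon$ times $p(x)\Delta_{p^2}f(x)$ (this is exactly the algebraic identity that makes the un-normalized and random-walk normalizations each recover $\Delta_{p^2}$). Thus after the prescribed normalizations, the bias in both 1) and 2) is $O_{p,f}(\epsilon)$ uniformly for $x$ on the compact $\calM$, with constant controlled by the $C^4$ norms of $p$ and $f$.

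For the variance, fix $i$ and view $D_i = \sum_j K_\epsilon(x_i,x_j)$ and $(W\rho_X f)_i = \sum_j K_\epsilon(x_i,x_j) f(x_j)$ as sums of $N-1$ i.i.d.\ random variables (the $j=i$ term is lower order). Each summand is bounded by $\|K_\epsilon\|_\infty = O(\epsilon^{-d/2})$ and has variance $\E[K_\epsilon(x_i,\cdot)^2 f^2] = O(\epsilon^{-d/2})$ by the usual $\int\epsilon^{-d}h^2(\|u\|^2/\epsilon) du = \epsilon^{-d/2}\int h^2$ computation. Bernstein's inequality (Lemma \ref{lemma:bern}) then gives, with probability $\geq 1 - 2N^{-10}$,
\[
\tfrac{1}{N} D_i = \E[K_\epsilon(x_i,x)] + O\bigl(\sqrt{\log N/(N\epsilon^{d/2})}\bigr),
\]
and similarly for $\tfrac{1}{N}(W\rho_X f)_i$. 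Dividing by $\epsilon$ (which is the normalization in the theorem) converts this $\epsilon^{d/2}$ into $\epsilon^{d/2+1}$ in the denominator, yielding the stated $\sqrt{\log N/(N\epsilon^{d/2+1})}$ error at a fixed $i$. A union bound over $i=1,\dots,N$ costs only a factor of $N$ in the failure probability, so w.p.\ $\geq 1 - 2N^{-9}$ the bound holds simultaneously for all $i$.

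For part 2), subtracting $N^{-1}D_i f(x_i)$ from $N^{-1}(W\rho_X f)_i$ and dividing by $\tfrac{m_2}{2}p(x_i)\epsilon$ combines the two bias+variance estimates directly. For part 1), the ratio $(D^{-1}W\rho_X f)_i$ requires dividing one concentrated quantity by another; one handles this by writing $(D^{-1}W\rho_X f)_i = f(x_i) + \epsilon\cdot(\text{bias}+\text{noise})/(N^{-1}D_i)$ and using that $N^{-1}D_i$ concentrates around $m_0[h]p(x_i) > 0$ (essentially Lemma \ref{lemma:Di-concen}), so the denominator is bounded below by $\Theta(1)$ w.h.p.\ under the connectivity regime $\epsilon^{d/2+1} = \Omega(\log N/N)$, after which the errors in numerator and denominator combine to give the stated rate. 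The only mildly delicate point is that the variance bound on the numerator in case 1) must be carried out for the centered variable $K_\epsilon(x_i,\cdot)(f(\cdot)-f(x_i))$, which has variance $O(\epsilon^{-d/2+1})$ due to the extra factor $(f(x_j)-f(x_i))^2 = O(\|x_i-x_j\|^2)$ in the effective support of $K_\epsilon$; this is what ultimately produces the $\epsilon^{d/2+1}$ (rather than $\epsilon^{d/2+2}$) in the final denominator under division by $\epsilon$. No step here is genuinely hard; the main bookkeeping obstacle is tracking the exact powers of $\epsilon$ through the normalization and the cancellation that produces $\Delta_{p^2}$, which is a purely deterministic manifold calculation already carried out in \cite{coifman2006diffusion,singer2006graph,cheng2020convergence}.
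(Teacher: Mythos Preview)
Your overall architecture (bias expansion giving $O(\epsilon)$, Bernstein concentration, union bound over $i$) matches the paper's. But there is a genuine gap in the variance bookkeeping.

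In the middle paragraph you concentrate $\frac{1}{N}D_i$ and $\frac{1}{N}(W\rho_X f)_i$ separately, each with fluctuation $O(\sqrt{\log N/(N\epsilon^{d/2})})$, and then assert that ``dividing by $\epsilon$ \ldots\ converts this $\epsilon^{d/2}$ into $\epsilon^{d/2+1}$ in the denominator.'' This is an arithmetic slip: $\frac{1}{\epsilon}\sqrt{\log N/(N\epsilon^{d/2})}=\sqrt{\log N/(N\epsilon^{d/2+2})}$, a factor $\epsilon^{-1/2}$ worse than the claimed rate. You cannot reach the $\epsilon^{d/2+1}$ denominator by combining separate concentration bounds and then subtracting.

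You do identify the correct mechanism in your final paragraph --- concentrate the \emph{centered} summand $K_\epsilon(x_i,\cdot)(f(\cdot)-f(x_i))$ directly, whose second moment is $O(\epsilon^{-d/2+1})$ because of the extra $(f(y)-f(x_i))^2=O(\|x_i-y\|^2)$ on the effective support --- but you flag this as needed ``only'' for case 1). It is equally indispensable for case 2): the paper proves part 2) precisely by setting $H_j:=\frac{1}{\epsilon}K_\epsilon(x_i,x_j)(f(x_j)-f(x_i))$ and applying Bernstein with $\E H_j^2=O(\epsilon^{-d/2-1})$. Your description of part 2) as ``subtracting\ldots combines the two bias+variance estimates directly'' yields only $O(\sqrt{\log N/(N\epsilon^{d/2+2})})$ unless you redo the concentration on the centered variable. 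Once you apply the centered-variable analysis uniformly to both parts, your proof coincides with the paper's.
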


\noindent
Note that Theorem \ref{thm:pointwise-rate-C2h} holds for non-uniform $p$,
while in our eigen-convergence analysis of graph Laplacian with $W$ in below,
we only use the result when $p$ is uniform.
Meanwhile, similar to Theorem \ref{thm:form-rate},
Assumption \ref{assump:h-C2-nonnegative}(C3) may be relaxed
for Theorem \ref{thm:pointwise-rate-C2h} to hold,
cf. Remark \ref{rk:non-nagativity-not-needed}.

\begin{proof}[Proof of Theorem \ref{thm:pointwise-rate-C2h}]
Consider the $N$ events such that $\varepsilon_i $ is less than the error bound.
For each of the $i$-th event, condition on $x_i$, 
Theorem {3.8} in \cite{cheng2020convergence} can be directly used to show that the event holds
w.p. $> 1-4N^{-10}$ for the case 1) random-walk graph Laplacian.
For the case 2) un-normalized graph Laplacian,
adopting the same technique of Theorem {3.6} in  \cite{cheng2020convergence} 
proves the same rate as for the fixed-bandwidth kernel,
and gives that the event holds 
w.p. $> 1-2N^{-10}$.
Specifically, the proof is by showing the concentration of the 
$\frac{1}{\epsilon N} \sum_{j=1}^N K_\epsilon (x_i, x_j) (f(x_j) - f(x_i))$, which is an independent summation condition on $x_i$.
The r.v. $H_j : = \frac{1}{\epsilon}K_\epsilon (x_i, x_j) (f(x_j) - f(x_i))$, $j \neq i$,
has expectation $\E H_j = \frac{m_2}{2} p(x_i) \Delta_{p^2} f(x_i) + O_{f,p}(\epsilon)$,
and $\E H_j^2$ can be shown to be bounded by $\Theta(\epsilon^{-d/2-1})$,
and $|H_j|$ is also bounded by  $\Theta(\epsilon^{-d/2-1})$,
following the same calculation as in the proof of Theorem {3.6} in \cite{cheng2020convergence}.
This shows that the bias error is $O(\epsilon)$, and the variance error is $O( \sqrt{\frac{\log N}{N \epsilon^{d/2+1}}} )$, by classical Bernstein.
Same as in Theorem \ref{thm:form-rate},
$C^2$ regularity and decay up to 2nd derivative of $h$ are enough here.

Strictly speaking, the analysis in \cite{cheng2020convergence}  is for the ``$\frac{1}{N-1}\sum_{j \neq i, j= 1}^N$'' summation
and not the  ``$\frac{1}{N}\sum_{j \neq i, j= 1}^N$'' one here.
However, 
the difference between $\frac{1}{N-1}$ and $\frac{1}{N}$ only introduces an $O(\frac{1}{N})$ relative error and is of higher order,
and the $i =j$ term cancels out in the summation of $(D-W) \rho_X f$.
{In proving this large deviation bound at $x_i$,
the needed threshold for large $N$ is determined by $(\calM, f, p)$ and uniform for $x_i$.}
Then,  
{when $N$ exceeds a threshold uniform for all $x_i$},
by the independence of the $x_i$'s, the $i$-th event holds  
{w.p.$>1-4N^{-10}$ and $>1-2N^{-10}$
for cases 1) and 2) respectively.}
The current theorem, in both 1) and 2), follows by a union bound. 
\end{proof}

We are ready for Step 2 for the unnormalized graph Laplacian $L_{un} = \frac{1}{\epsilon \frac{m_2}{2} p N }(D-W)$.
Here we consider eigenvectors normalized to have 2-norm 1, i.e.,
$L_{un} u_k = \lambda_k u_k$, $u_k^T u_l = \delta_{kl}$,
and we compare $u_k$ to
\begin{equation}\label{eq:def-phik}
\phi_k : = \frac{1}{\sqrt{p N}}  \rho_X \psi_k \in \R^N, 
\end{equation}
where  $\psi_k$ are population eigenfunctions which are orthonormal in $H=L^2(\calM, dV)$, same as above.

\begin{proposition}\label{prop:step2}
Under Assumption \ref{assump:M-p}(A1),
$p$ being uniform on $\calM$, 
and $h$ is Gaussian,
for fixed $k_{max} \in \mathbb{N}$, $K = k_{max}+1$,
 assume that the eigenvalues $\mu_k$ for $k \le K$ are all single multiplicity,
 and $\gamma_K > 0$ as defined in \eqref{eq:def-gamma-K}, the constant $c_K$ as in Proposition \ref{prop:eigvalue-LB-crude}.
If as $N \to \infty$, $\epsilon \to 0+ $,
$\epsilon^{d/2+2} > c_K  \frac{\log N}{N}  $,
then for sufficiently large $N$, 
w.p. $> 1 - 4 K^2 N^{-10} - (2 K+4) N^{-9}$,
there exist scalars $\alpha_k \neq 0$, actually $|\alpha_k| =1 + o(1)$, such that 
\[
 \| u_k - \alpha_k \phi_k \|_2  = O \left(\epsilon , \sqrt{\frac{\log N}{N \epsilon^{d/2+1}}}  \right),
 \quad 1 \le k \le k_{max}.
\]
\end{proposition}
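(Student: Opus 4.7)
The plan is to follow the residual-plus-spectral-gap sketch given just above the proposition and make it quantitative. The two ingredients are the uniform point-wise rate from Theorem \ref{thm:pointwise-rate-C2h}(2), which controls how well $\phi_k$ satisfies the eigenvalue equation for $L_{un}$, and the eigenvalue bounds of Step 0 (Proposition \ref{prop:eigvalue-UB}) and Step 1 (Proposition \ref{prop:eigvalue-LB-crude}), which together pin $\lambda_l$ a fixed distance $\gamma_K$ away from $\mu_k$ for every $l \neq k$. Writing $\phi_k$ in the orthonormal eigenbasis of $L_{un}$ and combining these two ingredients via Parseval then gives the off-$u_k$ coefficients as $O(\eta_N/\gamma_K)$, where $\eta_N = O(\epsilon, \sqrt{\log N/(N\epsilon^{d/2+1})})$ is the point-wise rate.

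Concretely, I would work on the intersection of the good events of Propositions \ref{prop:eigvalue-UB} and \ref{prop:eigvalue-LB-crude}, together with the $K$ events obtained by applying Theorem \ref{thm:pointwise-rate-C2h}(2) to each of $f = \psi_1, \ldots, \psi_K$; a union bound produces the failure probability $4K^2 N^{-10} + (2K+4) N^{-9}$ stated in the proposition. Since $p$ is uniform, $-\Delta_{p^2}\psi_k = \mu_k \psi_k$, so the residual $r_k := L_{un}\phi_k - \mu_k \phi_k \in \R^N$ has entries $(pN)^{-1/2}\varepsilon_i^{(k)}$ with $\sup_i|\varepsilon_i^{(k)}| = O(\eta_N)$, hence $\|r_k\|_2 = O(\eta_N)$. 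Expanding $\phi_k = \sum_l c_{kl} u_l$ in the orthonormal eigenbasis of $L_{un}$ and taking inner product of $L_{un}\phi_k = \mu_k\phi_k + r_k$ with each $u_l$ gives $c_{kl}(\lambda_l - \mu_k) = u_l^T r_k$, whence
\begin{equation*}
\sum_{l\neq k} c_{kl}^2 (\lambda_l - \mu_k)^2 \;\le\; \|r_k\|_2^2 \;=\; O(\eta_N^2).
\end{equation*}
Lemma \ref{lemma:rhoX-isometry-whp} (already inside the good event) yields $\|\phi_k\|_2^2 = 1 + O(\sqrt{\log N/N})$. Once the uniform gap $|\lambda_l - \mu_k| \ge \gamma_K$ is established, it follows that $\sum_{l\neq k} c_{kl}^2 = O(\eta_N^2)$, then $c_{kk}^2 = 1 - o(1)$, and setting $\alpha_k := 1/c_{kk}$ gives $u_k - \alpha_k\phi_k = -\alpha_k\sum_{l\neq k} c_{kl}\, u_l$, so $\|u_k - \alpha_k\phi_k\|_2 = O(\eta_N)$ with $|\alpha_k| = 1+o(1)$.

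The main technical point is proving $|\lambda_l - \mu_k| \ge \gamma_K$ for \emph{every} $l \neq k$, not just $l \le K$. For $l \in \{1,\ldots,K\}\setminus\{k\}$ I combine Proposition \ref{prop:eigvalue-UB} ($\lambda_l \le \mu_l + o(\gamma_K)$) and Proposition \ref{prop:eigvalue-LB-crude} ($\lambda_l > \mu_l - \gamma_K$) with the population-side gap $|\mu_l - \mu_k| \ge 2\gamma_K$ that is built into the definition \eqref{eq:def-gamma-K} of $\gamma_K$; for $l > K$ the monotonicity of the $\lambda_l$'s together with the crude LB at $l=K$ gives $\lambda_l \ge \lambda_K > \mu_K - \gamma_K \ge \mu_k + \gamma_K$ for every $k \le k_{max}$. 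This is precisely the role that Step 1 plays in the bootstrap: the point-wise rate alone cannot separate $\lambda_l$ from $\mu_k$ on either side, and the Step 0 form-rate UB only covers one side, so the $O(1)$ crude LB is essential to close the argument. Everything else is routine concentration and bookkeeping over the fixed finite index set.
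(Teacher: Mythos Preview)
Your proposal is correct and matches the paper's proof essentially line for line: the paper likewise intersects the good events of Propositions \ref{prop:eigvalue-UB} and \ref{prop:eigvalue-LB-crude} with the $K$ applications of Theorem \ref{thm:pointwise-rate-C2h}(2), derives $\|L_{un}\phi_k - \mu_k\phi_k\|_2 \le \text{Err}_{pt}$, establishes the gap \eqref{eq:eigen-stay-away} exactly as you outline (including the monotonicity argument for $l>K$), bounds $\|P_{S_k^\perp}\phi_k\|_2 \le \text{Err}_{pt}/\gamma_K$ via the same Parseval identity you write, and sets $\alpha_k = 1/(u_k^T\phi_k)$. Your explicit case split for the gap and the bookkeeping of the failure probability are spot on.
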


\begin{proof}[Proof of Proposition \ref{prop:step2}]
The proof uses the same approach as that of Theorem 2.6 part 2) in \cite{calder2019improved},
and since our setting is different, we include a proof for completeness.

When $k =1$, we always have
$\lambda_1 = \mu_1 =0$, 
$u_1$ is the constant vector $u_1 = \frac{1}{\sqrt{N}} \mathbf{1}_N $,
and $\psi_1 $ is the constant function, and thus $\phi_1 = u_1$ up to a sign. 
Under the condition of the current proposition, 
the assumptions of Proposition \ref{prop:eigvalue-LB-crude} are satisfied,
and because $\epsilon^{d/2+2} > c_K  \frac{\log N}{N}  $ implies that  $\epsilon^{d/2+1} = \Omega(  \frac{\log N}{N}  )$,
the assumptions of Theorem \ref{thm:pointwise-rate-C2h} 2) are also satisfied. 
We apply  Theorem \ref{thm:pointwise-rate-C2h} 2) to the $K$ functions $\psi_1, \cdots, \psi_K$.
By a union bound,
we have that when $N$ is large enough, w.p. $> 1-2K N^{-9}$,
$\| L_{un} \phi_k - \mu_k \phi_k \|_\infty 
= \frac{1}{\sqrt{pN}} (O(\epsilon) + O( \sqrt{\frac{\log N}{N \epsilon^{d/2+1}}} ))$
for $2 \le k \le K$.
By that $\| v \|_2 \le \sqrt{N} \| v \|_\infty$ for any $ v \in \R^N$, 
this gives that there is $\text{Err}_{pt} > 0$,
\begin{equation}\label{eq:pontwise-rate-2norm-bound}
\| L_{un} \phi_k - \mu_k \phi_k \|_2 \le  \text{Err}_{pt},
\quad 2 \le k \le K,
\quad \text{Err}_{pt} = O(\epsilon) + O( \sqrt{\frac{\log N}{N \epsilon^{d/2+1}}} ).
\end{equation}
The constants in big-O depend on first $K$ eigenfunctions and are absolute ones because $K$ is fixed. 
Applying Proposition \ref{prop:eigvalue-LB-crude}, and consider the intersection with the good event in Proposition \ref{prop:eigvalue-LB-crude},
we have  for each $2 \le k \le K$, $|\mu_k - \lambda_k |< \gamma_K$.
By definition of $\gamma_K$ as in \eqref{eq:def-gamma-K}, 
\begin{equation}\label{eq:eigen-stay-away}
\min_{1 \le j \le N, \, j \neq k} | \mu_k - \lambda_j | > \gamma_K > 0,
\quad 2 \le k \le k_{max}.
\end{equation}
For each $ k \le k_{max}$, let $S_k = \text{Span}\{ u_k \}$ be the 1-dimensional subspace in $\R^N$, and let $S_k^\perp$  be its orthogonal complement. 
We will show that $\|P_{S_k^\perp} \phi_k \|_2$ is  small.
By definition, 
$P_{S_k^\perp} \mu_k  \phi_k = \sum_{j\neq k, j=1}^N  \mu_k (u_j^T \phi_k) u_j$,
and meanwhile,
$P_{S_k^\perp} L_{un} \phi_k = \sum_{j\neq k, j=1}^N  (u_j^T L_{un} \phi_k) u_j = \sum_{j\neq k, j=1}^N  \lambda_j (u_j^T  \phi_k) u_j$.
Subtracting the two gives that 
$P_{S_k^\perp} ( \mu_k  \phi_k - L_{un} \phi_k )
= \sum_{j\neq k, j=1}^N  (\mu_k - \lambda_j) (u_j^T \phi_k) u_j$.
By that $u_j$ are orthonormal vectors, and \eqref{eq:eigen-stay-away},
\[
\| P_{S_k^\perp} ( \mu_k  \phi_k - L_{un} \phi_k ) \|_2^2 
= \sum_{j\neq k, j=1}^N  (\mu_k - \lambda_j)^2 (u_j^T \phi_k)^2
\ge \gamma_K^2 \sum_{j\neq k, j=1}^N   (u_j^T \phi_k)^2
= \gamma_K^2 \|   P_{S_k^\perp} \phi_k \|_2^2.
\]
Then, combined with \eqref{eq:pontwise-rate-2norm-bound}, we have that 
$\gamma_K \|   P_{S_k^\perp} \phi_k \|_2 
\le \| P_{S_k^\perp} ( \mu_k  \phi_k - L_{un} \phi_k ) \|_2
\le \|   \mu_k  \phi_k - L_{un} \phi_k  \|_2 \le \text{Err}_{pt}$, 
namely,
$\| P_{S_k^\perp} \phi_k \|_2  \le \frac{\text{Err}_{pt}}{\gamma_K } $.

By definition,
$P_{S_k^\perp} \phi_k  = \phi_k - (u_k^T \phi_k) u_k$,  where $\| u_k \|_2 = 1$.
 Note that $\phi_k $ are unit vectors up to an $O( \sqrt{ \frac{\log N}{N} })$ error:
Because the good event in Proposition \ref{prop:eigvalue-LB-crude} is under that in the eigenvalue UB Proposition \ref{prop:eigvalue-UB},
and specifically that of Lemma \ref{lemma:rhoX-isometry-whp}. 
Thus  \eqref{eq:uk-near-orthonormal} holds, which means that 
$| \| \phi_k \|^2 - 1 | \le \text{Err}_{norm}$, $1 \le k \le K$, 
where
$\text{Err}_{norm}= O( \sqrt{ \frac{\log N}{N} })$.
Then, one can verify that 
\begin{equation}\label{eq:uk-phik-align}
| u_k^T \phi_k | = 1 + O( \text{Err}_{norm}, \text{Err}_{pt}^2) = 1+o(1),
\end{equation}
and then we set $\alpha_k = \frac{1}{ u_k^T \phi_k}$,
and have that 
\[
\| \alpha_k \phi_k - u_k \|_2 
= \frac{ O( \text{Err}_{pt}  )}{ |u_k^T \phi_k|} 
\le \frac{ O( \text{Err}_{pt}  )}{ 1- O(\text{Err}_{norm}, \text{Err}_{pt}^2)}  
= O( \text{Err}_{pt}  ) (1+ O(\text{Err}_{norm}, \text{Err}_{pt}^2)) = O(\text{Err}_{pt}).
\]
The bound holds for each $k \le k_{max}$.
\end{proof}

\subsection{Step 3: refined eigenvalue LB }

{We now derive Step 3 for $L_{un}$, the result being summarized in the following proposition.}

\begin{proposition}\label{prop:step3}
Under the same condition of Proposition \ref{prop:step2},
$k_{max}$  is fixed.
Then, for sufficiently large $N$, with the same indicated high probability, 
\[
 | \mu_k - \lambda_k | =
  O \left( \epsilon, \, \sqrt{\frac{\log N}{N \epsilon^{d/2}}} \right),
 \quad 1 \le k \le k_{max}.
\]
\end{proposition}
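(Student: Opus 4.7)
\textbf{Proposal for the proof of Proposition~\ref{prop:step3}.}
Since the matching upper bound $\lambda_k\le \mu_k+O(\epsilon,\sqrt{\log N/(N\epsilon^{d/2})})$ is already in hand from Proposition~\ref{prop:eigvalue-UB}, the plan is to prove only the refined lower bound $\lambda_k\ge \mu_k-O(\text{form rate})$, and then combine with the UB. The idea is to upgrade Step~2's two-norm control $\|u_k-\alpha_k\phi_k\|_2\le\text{Err}_{pt}$ (where $\text{Err}_{pt}=O(\epsilon,\sqrt{\log N/(N\epsilon^{d/2+1})})$) into a sharp \emph{Rayleigh-quotient} estimate, using that the bilinear form $B_N$ applied to the smooth test vector $\phi_k$ enjoys the better form rate $\text{Err}_{form}=O(\epsilon,\sqrt{\log N/(N\epsilon^{d/2})})$.

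Concretely, normalize $\|u_k\|_2=1$ so that $\lambda_k=u_k^TL_{un}u_k=(N/p)E_N(u_k)$, and write $u_k=\alpha_k\phi_k+r_k$ with $r_k\perp u_k$ (the latter is built into the choice $\alpha_k=1/(u_k^T\phi_k)$ in the proof of Proposition~\ref{prop:step2}). Expand
\[
\lambda_k \;=\; \alpha_k^2\,\phi_k^TL_{un}\phi_k \;+\; 2\alpha_k\,\phi_k^TL_{un}r_k \;+\; r_k^TL_{un}r_k.
\]
For the leading term, Lemma~\ref{lemma:form-rate-psi} (form rate applied to $\rho_X\psi_k$) together with the near-isometry \eqref{eq:uk-near-orthonormal} gives $\phi_k^TL_{un}\phi_k=(N/p)E_N(\phi_k)=\mu_k+O(\text{Err}_{form})$, and combining with $\alpha_k^2=1+O(\text{Err}_{pt}^2,\sqrt{\log N/N})$ and $\mu_k=O(1)$ yields
\[
\alpha_k^2\,\phi_k^TL_{un}\phi_k \;=\; \mu_k+O(\text{Err}_{form})+O(\text{Err}_{pt}^2).
\]
For the cross term I switch to the pointwise rate: by Theorem~\ref{thm:pointwise-rate-C2h}(2), $L_{un}\phi_k=\mu_k\phi_k+\rho_k$ with $\|\rho_k\|_2\le\text{Err}_{pt}$. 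Using $r_k\perp u_k$ and $u_k=\alpha_k\phi_k+r_k$, one computes $\alpha_k\phi_k^Tr_k=-\|r_k\|^2=O(\text{Err}_{pt}^2)$; hence $\phi_k^TL_{un}r_k=\mu_k\phi_k^Tr_k+\rho_k^Tr_k=O(\text{Err}_{pt}^2)$ after Cauchy--Schwarz on $\rho_k^Tr_k$. Dropping the nonnegative residual $r_k^TL_{un}r_k\ge 0$ (this is the key use of the PSD property of $L_{un}$) gives
\[
\lambda_k \;\ge\; \mu_k-O(\text{Err}_{form})-O(\text{Err}_{pt}^2).
\]

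The last step is to absorb $\text{Err}_{pt}^2$ into $\text{Err}_{form}$. The bandwidth regime $\epsilon^{d/2+2}>c_K\log N/N$ assumed in Proposition~\ref{prop:step2} forces
\[
\frac{\log N}{N\epsilon^{d/2+1}} \;=\; \epsilon\cdot\frac{\log N}{N\epsilon^{d/2+2}} \;<\; \epsilon/c_K,
\]
so $\text{Err}_{pt}^2=O(\epsilon)=O(\text{Err}_{form})$, and the desired two-sided bound follows after intersecting with the good events from Steps~0--2.

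\textbf{Main obstacle.} Naively, the squared pointwise error $\text{Err}_{pt}^2$ carries a variance part $\log N/(N\epsilon^{d/2+1})$ that is worse than the form-rate variance $\sqrt{\log N/(N\epsilon^{d/2})}$ by a factor of $\epsilon^{-1/2}$, so it is not obvious that the argument closes. The resolution is precisely the bandwidth hypothesis $\epsilon^{d/2+2}>c_K\log N/N$ inherited from Step~1, which makes $\text{Err}_{pt}^2=O(\epsilon)$ and hence absorbable into the bias part of the form rate. A related subtlety is that the cross term $\phi_k^TL_{un}r_k$ must be shown to be $O(\text{Err}_{pt}^2)$ rather than merely $O(\text{Err}_{pt})$; this requires exploiting the orthogonality $r_k\perp u_k$ (giving $\phi_k^Tr_k=-\|r_k\|^2/\alpha_k$) to kill the potentially dominant $\mu_k\phi_k^Tr_k$ contribution, rather than estimating it by Cauchy--Schwarz alone.
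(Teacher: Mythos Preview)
Your proposal is correct and uses the same key ingredients as the paper (the form rate for $\phi_k^TL_{un}\phi_k$, the point-wise residual $\|L_{un}\phi_k-\mu_k\phi_k\|_2\le\text{Err}_{pt}$, the Step~2 bound $\|r_k\|_2=O(\text{Err}_{pt})$, and the absorption $\text{Err}_{pt}^2=O(\epsilon)$ via $\epsilon^{d/2+2}>c_K\log N/N$), but the algebraic packaging differs. The paper does not expand the Rayleigh quotient $\lambda_k=u_k^TL_{un}u_k$; instead it uses the one-line identity
\[
u_k^T\bigl(L_{un}\phi_k-\mu_k\phi_k\bigr)=(\lambda_k-\mu_k)\,u_k^T\phi_k,
\]
substitutes $u_k=\alpha_k\phi_k+\varepsilon_k$ on the left, and bounds the two resulting pieces by $O(\text{Err}_{form})$ and $O(\text{Err}_{pt}^2)$ respectively. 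This yields the \emph{two-sided} bound $|\lambda_k-\mu_k|=O(\text{Err}_{form})$ directly, without invoking the PSD property of $L_{un}$, without the orthogonality computation $\phi_k^Tr_k=-\|r_k\|^2/\alpha_k$, and without appealing separately to the Step~0 upper bound. Your route arrives at the same place but does a little more work: you need PSD to drop $r_k^TL_{un}r_k$, you need the orthogonality trick to kill $\mu_k\phi_k^Tr_k$, and you need Proposition~\ref{prop:eigvalue-UB} for the other inequality. Both arguments are sound; the paper's is marginally more economical.
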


\begin{proof}[Proof of Proposition \ref{prop:step3}]
We inherit the notations in the proof of Proposition \ref{prop:step2}.
Again $\mu_1 = \lambda_1 =0$.
For $2 \le k  \le k_{max}$, note that
\begin{equation}\label{eq:eigen-eqn-step3}
u_k^T( L_{un} \phi_k - \mu_k \phi_k) = (\lambda_k - \mu_k)  u_k^T \phi_k,
\end{equation}
and meanwhile, we have shown that 
$u_k = \alpha_k \phi_k+ \varepsilon_k$, where
$\alpha_k = 1+o(1)$ and $ \| \varepsilon_k \|_2 = O(\text{Err}_{pt})$.
Thus the l.h.s. of \eqref{eq:eigen-eqn-step3} equals
\[
(\alpha_k  \phi_k+ \varepsilon_k)^T( L_{un} \phi_k - \mu_k \phi_k) 
= \alpha_k ( \phi_k^T L_{un} \phi_k   - \mu_k \| \phi_k \|_2^2) + \varepsilon_k^T ( L_{un} \phi_k - \mu_k \phi_k)
=: \textcircled{1} +  \textcircled{2}.
\]
By definition of $\phi_k$,
$\phi_k^T L_{un} \phi_k = \frac{1}{pN} (\rho_X \psi_k)^T L_{un} (\rho_X \psi_k)
= \frac{1}{p^2} E_N( \rho_X \psi_k)$.
The good event in Proposition \ref{prop:step2} is under the good event $E_{UB}$,
under which Lemma \ref{lemma:form-rate-psi} and Lemma \ref{lemma:rhoX-isometry-whp} hold. Then by \eqref{eq:form-rate-psi}, 
$ E_N(  \rho_X \psi_k) 
 = p^2 \mu_k + O(\epsilon ,  \sqrt{  \frac{  \log N    }{ N \epsilon^{d/2  }}   } )$;
By \eqref{eq:uk-near-orthonormal}, 
$ \| \phi_k \|^2  = 1 + O( \sqrt{\frac{\log N}{N}})$.
Putting together, and by that $\alpha_k = 1+o(1) = O(1)$,
\[
 \textcircled{1}  = 
 \alpha_k ( 
 \phi_k^T L_{un} \phi_k  - \mu_k \|\phi_k\|_2^2 )
 = O(1)
\left(  \mu_k + O(\epsilon ,  \sqrt{  \frac{  \log N    }{ N \epsilon^{d/2  }}   } ) - \mu_k(  1 + O( \sqrt{\frac{\log N}{N}})) \right)
 = O(\epsilon ,  \sqrt{  \frac{  \log N    }{ N \epsilon^{d/2  }}   } ).
\]
Meanwhile, by \eqref{eq:pontwise-rate-2norm-bound}, 
$\| L_{un} \phi_k - \mu_k \phi_k \|_2  \le \text{Err}_{pt}$, 
and then
\[
|  \textcircled{2}  | 
\le \| \varepsilon_k \|_2 \| L_{un} \phi_k - \mu_k \phi_k \|_2  =  O(\text{Err}_{pt}^2) .
\]
Because $\epsilon^{d/2+2} > c_K \frac{\log N}{ N} $ for some $c_K > 0$,
$\frac{ \log N}{N \epsilon^{d/2+1}} = \epsilon \frac{ \log N}{N \epsilon^{d/2+2}} <  \frac{\epsilon}{ c_K }$, thus
$\text{Err}_{pt} = O( \epsilon + \sqrt{\frac{ \log N}{N \epsilon^{d/2+1}} }) = O(  \sqrt{\epsilon})$,
 and then 
 $\textcircled{2}   =
 O(\text{Err}_{pt}^2)  = O(\epsilon)$.
 Back to \eqref{eq:eigen-eqn-step3}, we have that 
 \[
 | \lambda_k - \mu_k |  | u_k^T \phi_k | =  |    \textcircled{1}    + \textcircled{2}   |  = O(\epsilon ,  \sqrt{  \frac{  \log N    }{ N \epsilon^{d/2  }}   } ) + O(\epsilon),
 \]
and  by \eqref{eq:uk-phik-align}, $|u_k^T \phi_k| = 1+o(1)$,
thus
$| \lambda_k - \mu_k |  
 =  \frac{ |    \textcircled{1}    + \textcircled{2}   |   }{ 1 + o(1) }  
 = O(  |    \textcircled{1}    + \textcircled{2}   |  )
 = O(\epsilon ,  \sqrt{  \frac{  \log N    }{ N \epsilon^{d/2  }}   } )$.
The above  holds for all  $ k \le k_{max}$.
\end{proof}

\subsection{Eigen-convergence rate}

We are ready to prove the main theorems on eigen-convergence of graph Laplacians,
when $p$ is uniform and the kernel function $h$ is Gaussian. 

\begin{theorem}[eigen-convergence of $L_{un}$]
\label{thm:refined-rates}
Under Assumption \ref{assump:M-p} (A1),
$p$ is uniform on $\calM$,
and $h$ is Gaussian.
For $k_{max} \in \mathbb{N}$ fixed, 
assume that the eigenvalues $\mu_k$ for $k \le K:= k_{max}+1$ are all single multiplicity,
and the constant $c_K$ as in Proposition \ref{prop:eigvalue-LB-crude}.
Consider first $k_{max}$ eigenvalues and eigenvectors of $L_{un}$,
$L_{un} u_k = \lambda_k u_k$, $u_k^T u_l = \delta_{kl}$,
and the vectors $\phi_k$ are defined as in \eqref{eq:def-phik}.
If as $N \to \infty$, $\epsilon \to 0+ $,
$\epsilon^{d/2+2} > c_K \frac{\log N}{N}  $,
then for sufficiently large $N$, 
w.p. $> 1 - 4 K^2 N^{-10} - (2 K+4) N^{-9}$, 
\begin{equation}\label{eq:eigenvalue-form-rate}
 | \mu_k - \lambda_k | =
  O \left( \epsilon, \, \sqrt{\frac{\log N}{N \epsilon^{d/2}}} \right),
  \quad
  1 \le k \le k_{max},
\end{equation}
and  there exist scalars $\alpha_k \neq 0$, actually $|\alpha_k| =1 + o(1)$, such that 
\begin{equation}\label{eq:eigenvector-pt-rate}
 \| u_k - \alpha_k \phi_k \|_2  = O \left( \epsilon , \sqrt{\frac{\log N}{N \epsilon^{d/2+1}}} \right),
\quad   1 \le k \le k_{max}.
\end{equation}
\end{theorem}

\begin{remark}[{Choice of $\epsilon$ and overall rates}]
\label{rk:eigen-rate-gaussian-h}
{The eigen-convergence bounds \eqref{eq:eigenvalue-form-rate} and \eqref{eq:eigenvector-pt-rate} are provided in the combined form of $\epsilon$ and $N$,
as long as the condition $\epsilon =o(1)$ and $\epsilon^{d/2+2} > c_K \log N/N$ holds. 
The bias error in both cases is $O(\epsilon)$, and the variance error has a different inverse power of $\epsilon$ 
($-d/4$ and $-d/4-1/2$ respectively).
The eigenvalue convergence \eqref{eq:eigenvalue-form-rate} achieves the form rate 
${\rm Err}_{form} = O \left( \epsilon, \, \sqrt{\frac{\log N}{N \epsilon^{d/2}}} \right)$,
which is the rate of the Dirichlet form convergence, cf. Theorem \ref{thm:form-rate}.
The (2-norm) eigenvector convergence \eqref{eq:eigenvector-pt-rate} achieves the point-wise rate
${\rm Err}_{pt} =  O \left( \epsilon , \sqrt{\frac{\log N}{N \epsilon^{d/2+1}}} \right)$,
which is the rate of point-wise convergence of graph Laplacian, cf. Theorem \ref{thm:pointwise-rate-C2h}.
}

{The different powers of $\epsilon$ lead to different optimal choice of $\epsilon$, in order of $N$, to achieve the best overall rates for eigenvalue and eigenvector convergence respectively. 
Specifically, 
\begin{itemize}
\item
The optimal choice of $\epsilon$ to minimize ${\rm Err}_{form} $ is 
when $\epsilon = ( c' \frac{\log N}{N} )^{1/(d/2+2)} $ for $c' > c_K$
(which is also the smallest order of $\epsilon$ allowed by the theorem).
This choice leads to
\[
 | \mu_k - \lambda_k | = {O} \left(  ( { \log N }/{N}) ^{{1}/{(d/2+2)}} \right) = \tilde{O}( N^{-1/(d/2+2)} ),
 \quad 1 \le k \le k_{max},
\]
which is the best overall rate of eigenvalue convergence by our theory. 
We use $\tilde{O}(\cdot)$ to denote the involvement of certain factor of $\log N$.
In this case, 
$  \| u_k - \alpha_k \phi_k \|_2  = {O}(  (\frac{ \log N }{N}) ^{{1}/{(d+4)}})$.
\item
The optimal choice of $\epsilon$ to minimize ${\rm Err}_{pt}$ is when $\epsilon \sim (\log N/N)^{{1}/{(d/2+3)}}$,
which leads to
\[
  \| u_k - \alpha_k \phi_k \|_2  
  = {O} \left(  ({ \log N }/{N}) ^{{1}/{(d/2+3)}} \right) = \tilde{O}( N^{- {1}/{(d/2+3)} } ),
  \quad 1 \le k \le k_{max},
\]
which is the best overall rate of eigenvector convergence. 
In this case, $ | \mu_k - \lambda_k | = \tilde{O}( N^{- {1}{(d/2+3)}})$.
\end{itemize}
We can see that the overall rate of eigenvalue convergence achieves the best overall rate of form convergence $\tilde{O}( N^{-1/(d/2+2)} )$,
and that of eigenvector (2-norm) convergence achieves the best overall rate of point-wise convergence $\tilde{O}( N^{-1/(d/2+3)} )$,
at the optimal $\epsilon$  for each convergence respectively.  
}
\end{remark}

\begin{proof}[Proof of Theorem \ref{thm:refined-rates}]
Under the condition of the theorem, 
the eigenvector and eigenvalue error bounds have been proved in Proposition \ref{prop:step2} and Proposition \ref{prop:step3}. 
For the two specific asymptotic scaling of $\epsilon$,
the rate follows from the bounds involving both $\epsilon$ and $N$.
\end{proof}

\begin{remark}[{Comparison to compactly supported $h$}]
\label{rk:eigen-rate-indicator-h}
For $h= {\bf 1}_{[0,1)}$ (see also Remark \ref{rk:indicator-h-form-rate}),
the point-wise convergence of graph Laplacian is known to have the rate as
$\text{Err}_{pt, ind} =   O \left( \sqrt{\epsilon}, \, \sqrt{\frac{\log N}{N \epsilon^{d/2+1}}} \right)$,
see \cite{hein2005graphs,belkin2007convergence,singer2006graph,calder2019improved} among others.
While our way of Step 1 cannot be applied to such $h$, 
 \cite{calder2019improved} covered this case when $d \ge 2$,
and provided the eigenvalue and eigenvector consistency up to $\text{Err}_{pt, ind}$ 
when $\epsilon^{d/2+2} = \Omega( \frac{ \log N}{N})$.
The scaling $\epsilon^{d/2+2} = \tilde{\Theta} ( N^{-1} )$ is the optimal one to balance the bias and variance errors in $\text{Err}_{pt, ind}$,
and then it gives the overall error rate as $\tilde{O}(N^{-1/(d+4)})$,
which agrees with the eigen-convergence rate in \cite{calder2019improved}.
Here $\tilde{O}(\cdot)$ and  $\tilde{\Theta}(\cdot)$ indicate that the constant is possibly multiplied by a factor of certain power of $\log N$.
Meanwhile, we note that, {if following our approach of using  the Dirichlet form convergence rate,
the eigenvalue consistency can be improved to be squared namely $\tilde{O}(N^{-1/(d/2+2)})$ when $\epsilon = \tilde{\Theta}(N^{-1/(d/2+2)})$}.
Specifically, 
by Remark \ref{rk:indicator-h-form-rate}, the Dirichlet form convergence with indicator $h$ is  
$\text{Err}_{form, ind}= O( \epsilon, \sqrt{\frac{\log N}{N \epsilon^{d/2}}})$.
Then, once the initial crude eigenvalue LB is established, 
in Step 2,
the eigenvector 2-norm consistency can be shown to be $\text{Err}_{pt, ind}$.
In Step 3, the eigenvalue consistency for the first $k_{max}$ eigenvalues can be shown to be $O(\text{Err}_{form, ind}, \text{Err}_{pt, ind}^2) =O( \epsilon, \sqrt{\frac{\log N}{N \epsilon^{d/2}}}) $.
This would imply the  eigenvalue convergence rate  of $\tilde{O}(N^{-1/(d/2+2)})$ under the regime where $\epsilon = \tilde{\Theta}(N^{-1/(d/2+2)})$, while the eigenvector consistency remains $\tilde{O}(N^{-1/(d+4)})$.
{Compared to Remark \ref{rk:eigen-rate-gaussian-h}, these rates are the same as Gaussian kernel when setting $\epsilon = \tilde{\Theta}(N^{-1/(d/2+2)} )$
(the optimal order to minimize the eigenvalue rate which is ${\rm Err}_{form}$). However, using Gaussian kernel allows to obtain a better rate for eigenvector convergence, namely $\tilde{O}(N^{-1/(d/2+3)})$, by setting $\epsilon \sim  \tilde{\Theta}(N^{-1/(d/2+3)})$ (the optimal order to minimize the eigenvector convergence rate which is ${\rm Err}_{pt}$). 
This improved eigenvector (2-norm) rate is due to the improved point-wise rate of smooth kernel ${\rm Err}_{pt}$ than that of the indicator kernel  $\text{Err}_{pt,ind}$, and specifically, the bias error is $O(\epsilon)$ 
instead of $O(\sqrt{\epsilon})$.}
\end{remark}
\vspace{5pt}

\begin{remark}[{Extension to larger eigenvalue multiplicity}]
\label{rk:multiplicity}
The result extends  when the population eigenvalues $\mu_k$ have multiplicity greater than one.
Suppose we consider $0 =  \mu^{(1)}  < \mu^{(2)} < \cdots < \mu^{(M)} < \cdots $, 
which are distinct eigenvalues, and $\mu^{(m)}$ has multiplicity $l_m \ge 1$. 
Then let $k_{max} = \sum_{m=1}^M l_m$, $K = \sum_{m=1}^{M+1} l_m$, $\mu_K = \mu^{(M+1)}$,
and $\{ \mu_k, \psi_k \}_{k=1}^K$ are sorted eigenvalues and associated eigenfunctions.  
Step 0. eigenvalue UB holds, since Proposition \ref{prop:eigvalue-UB} does not require single multiplicity. 
In Step 1, 
the only place in Proposition \ref{prop:eigvalue-LB-crude}
 where single multiplicity of $\mu_k$ is used is in the definition of $\gamma_K$.
Then, 
by changing to 
\begin{equation}\label{eq:def-gamma-M-multiplicity}
\gamma^{(M)} = \frac{1}{2} \min_{1 \le m \le M} (\mu^{(m+1)} - \mu^{(m)}) > 0,
\end{equation}
and  defining $\delta = 0.5 \frac{\gamma^{(M)}}{\mu_K}$, $0< \delta < 0.5 $ is a positive constant depending on $\calM$ and $K$,  
Proposition \ref{prop:eigvalue-LB-crude} proves that $| \lambda_k  - \mu^{(m)}| <\gamma^{(M)} $ for all 
$k \le K$, i.e. $m \le M+1$.
This allows to extend Step 2 Proposition \ref{prop:step2} by considering the projection $P_{S^\perp}$
where the subspace in $\R^N$ is spanned by eigenvectors whose eigenvalues $\lambda_k$ approaches $\mu_k = \mu^{(m)}$,
similar as in the original proof of Theorem 2.6 part 2) in \cite{calder2019improved}. 
Specifically,
suppose $\mu_i = \cdots = \mu_{i+l_m-1} = \mu^{(m)}$,  $2 \le m \le M$,
let $S^{(m)} = \text{Span} \{ u_i, \cdots, u_{i+l_m-1}\}$, and the index set $I_m:= \{ i, \cdots, i+l_m-1\}$.
For eigenfunction $\psi_k$, $k \in I_m$,
then  $\mu_k =  \mu^{(m)}$,
similarly as in the proof of Proposition \ref{prop:step2}, 
one can verify that 
\[
\| P_{(S^{(m)})^\perp} ( \mu_k \phi_k - L_{un} \phi_k ) \|_2^2 
= \sum_{j\notin I_m}  (\mu_k - \lambda_j)^2 (u_j^T  \phi_k )^2
\ge (\gamma^{(M)})^2 \sum_{j\notin  I_m}   (u_j^T \phi_k)^2
= (\gamma^{(M)})^2 \|   P_{(S^{(m)})^\perp} \phi_k \|_2^2,
\]
which gives that 
$\|  \phi_k -  P_{S^{(m)}} \phi_k \|_2
= \|   P_{(S^{(m)})^\perp} \phi_k \|_2
\le \frac{1}{\gamma^{(M)}}\text{Err}_{pt}$, for all $k \in I_m$.
By that $\{ \phi_k\}_{k=1}^K$ are near orthonormal with large $N$ (Lemma \ref{lemma:rhoX-isometry-whp}),
this proves that there exists an $l_m$-by-$l_m$ orthogonal transform $Q_m$,
and $|\alpha_k| = 1+o(1)$,
such that
$ \| u_k - \alpha_k \phi_k'  \|_2 
= O(\text{Err}_{pt})  = O(\epsilon , \sqrt{\frac{\log N}{N \epsilon^{d/2+1}}} )$, $k \in I_m$,
where
$[ \phi_k' ]_{k \in I_m}= [ \phi_k ]_{k \in I_m} Q_m  $,
and the notation $[v_j]_{j \in J}$ stands for the $N$-by-$|J|$ matrix formed by concatenating the vectors $v_j$ as columns.
This proves consistency of empirical eigenvectors $u_k$ up to the point-wise rate for $k \le k_{max}$.
Finally, Step 3 Proposition \ref{prop:step3} extends by considering \eqref{eq:eigen-eqn-step3} for 
$u_k$ and $\phi_k'$, 
making use of $ \| u_k - \alpha_k \phi_k'  \|_2 = O(\text{Err}_{pt}) $,
the Dirichlet form convergence of $E_N(\rho_X \psi_k)$ (Lemma \ref{lemma:form-rate-psi}),
and that $\{ \phi_k' \}_{k \in I_m}$ is transformed from $\{ \phi_k \}_{k \in I_m}$  by an orthogonal matrix $Q_m$.
\end{remark}
\vspace{5pt}

To address the eigen-convergence of $L_{rw}$, we define the $D/N$-weighted 2-norm as  
\[
 \| u \|_{\frac{D}{N}}^2 = \frac{1}{N} u^T D u, 
 \]
 and recall that eigenvectors of $L_{rw}$ are $D$-orthogonal. 
The following theorem is the counterpart of Theorem \ref{thm:refined-rates}
for $L_{rw}$, obtaining the same rates.

\begin{theorem}[eigen-convergence of $L_{rw}$]
\label{thm:refined-rates-rw}
Under the same condition and setting of $\calM$, $p$ being uniform, $h$ being Gaussian,
and $k_{max}$, K, $\mu_k$, $\epsilon$ same
as in Theorem \ref{thm:refined-rates}.
Consider first $k_{max}$ eigenvalues and eigenvectors of $L_{rw}$,
$L_{rw} v_k = \lambda_k v_k$, 
$ v_k^T D v_l = \delta_{kl} Np$, 
i.e. $\| v_k \|_{\frac{D}{N}}^2 = p$,
and the vectors $\phi_k$ defined as in \eqref{eq:def-phik}.
Then, for sufficiently large $N$, 
w.p. $> 1 - 4 K^2 N^{-10} - (4 K+ 6) N^{-9}$,
$\| v_k\|_2 = 1+o(1)$,
and the same bound 
of $ | \mu_k - \lambda_k |$  and $ \| v_k - \alpha_k \phi_k \|_2$
as in Theorem \ref{thm:refined-rates} hold for $ 1 \le k \le k_{max}$,
with certain scalars $\alpha_k$  satisfying $|\alpha_k| = 1+o(1)$,
\end{theorem}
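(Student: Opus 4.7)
The plan is to mirror the four-step program used to prove Theorem~\ref{thm:refined-rates}, but carried out in the degree-weighted inner product natural to $L_{rw}$, and then to transfer all estimates back to the standard 2-norm via the uniform degree concentration of Lemma~\ref{lemma:Di-concen}. Step~0 (eigenvalue UB) and Step~1 (initial crude LB) are already supplied by Propositions~\ref{prop:eigvalue-UB-rw} and~\ref{prop:eigvalue-LB-crude-rw}; under their combined good event, $|\lambda_k-\mu_k|<\gamma_K$ for $k\le K$, so the analogue of \eqref{eq:eigen-stay-away} holds, namely $|\mu_k-\lambda_j|>\gamma_K$ for every $j\neq k$ with $k\le k_{max}$. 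Lemma~\ref{lemma:Di-concen} further guarantees $D_i/N=p(1+o(1))$ uniformly in $i$ (since $m_0=1$ for Gaussian $h$), so $\|v\|_{D/N}^2=p(1+o(1))\|v\|_2^2$ for every $v\in\R^N$. In particular, from $\|v_k\|_{D/N}^2=p$ one reads off $\|v_k\|_2=1+o(1)$.

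For Step~2, the key algebraic observation is that although $L_{rw}$ is not symmetric, the matrix $DL_{rw}=\frac{1}{\tilde m\epsilon}(D-W)$ is. Hence for any $j\ne k$,
\begin{equation*}
v_j^T D\bigl(L_{rw}\phi_k-\mu_k\phi_k\bigr)=(L_{rw}v_j)^T D\phi_k-\mu_k v_j^T D\phi_k=(\lambda_j-\mu_k)\,v_j^T D\phi_k,
\end{equation*}
which is the $D$-weighted substitute for \eqref{eq:eigen-eqn-step3}. Apply Theorem~\ref{thm:pointwise-rate-C2h}(1) to each $\psi_k$, $k\le K$, to get $\|L_{rw}\phi_k-\mu_k\phi_k\|_\infty$, and hence $\|\cdot\|_2$, controlled by $\mathrm{Err}_{pt}=O(\epsilon,\sqrt{\log N/(N\epsilon^{d/2+1})})$. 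Then repeat the projection argument of Proposition~\ref{prop:step2} with $S_k=\mathrm{Span}\{v_k\}$ and the orthogonal-complement projection taken with respect to the $D$-weighted inner product: the $D$-weighted orthogonality of $\{v_j\}$ yields
\begin{equation*}
\|P^D_{S_k^\perp}(\mu_k\phi_k-L_{rw}\phi_k)\|_{D/N}^2\ge \gamma_K^2\,\|P^D_{S_k^\perp}\phi_k\|_{D/N}^2,
\end{equation*}
and degree concentration converts the $D/N$-norms to $2$-norms up to factors $p(1+o(1))$. The near-orthonormality of $\{\phi_k\}$ from Lemma~\ref{lemma:rhoX-isometry-whp} then produces scalars $\alpha_k$ with $|\alpha_k|=1+o(1)$ and $\|v_k-\alpha_k\phi_k\|_2=O(\mathrm{Err}_{pt})$, which is the desired eigenvector rate.

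For Step~3, testing the residual against $v_k$ gives $v_k^T D(L_{rw}\phi_k-\mu_k\phi_k)=(\lambda_k-\mu_k)v_k^T D\phi_k$, and we expand $v_k=\alpha_k\phi_k+\varepsilon_k$ with $\|\varepsilon_k\|_2=O(\mathrm{Err}_{pt})$ as in the proof of Proposition~\ref{prop:step3}. The quadratic term $\phi_k^T D L_{rw}\phi_k=\frac{1}{\tilde m\epsilon}\phi_k^T(D-W)\phi_k$ reduces, after inserting the definition of $\phi_k$ and $E_N$, to $\frac{m_0 N}{p}E_N(\rho_X\psi_k)$, which by Lemma~\ref{lemma:form-rate-psi} equals $Np\mu_k+O(\epsilon,\sqrt{\log N/(N\epsilon^{d/2})})$. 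Similarly $\phi_k^T D\phi_k=Np(1+o(1))$ by degree concentration and Lemma~\ref{lemma:rhoX-isometry-whp}. The residual $\varepsilon_k^T D(L_{rw}\phi_k-\mu_k\phi_k)$ is $O(N\cdot\mathrm{Err}_{pt}^2)=O(N\epsilon)$ since $\mathrm{Err}_{pt}=O(\sqrt\epsilon)$ under the bandwidth regime $\epsilon^{d/2+2}>c_K\log N/N$, and $v_k^T D\phi_k=Np(1+o(1))$. Dividing through yields the form rate for $|\lambda_k-\mu_k|$.

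The main obstacle (and really the only non-routine issue) is bookkeeping of the factors $D/N$ against $p$ with uniform $(1+o(1))$ error, coupled with the fact that the natural symmetric object is $D L_{rw}$ rather than $L_{rw}$ itself; once the identity $v_j^T D(L_{rw}\phi_k-\mu_k\phi_k)=(\lambda_j-\mu_k)v_j^T D\phi_k$ is in place, the rest of Steps~2--3 is a translation of the $L_{un}$ proof. The final high-probability count combines the bad events of Propositions~\ref{prop:eigvalue-UB-rw} and~\ref{prop:eigvalue-LB-crude-rw} with the $2N^{-9}$ event from Lemma~\ref{lemma:Di-concen} and $2KN^{-9}$ from applying Theorem~\ref{thm:pointwise-rate-C2h}(1) to $K$ functions, yielding the claimed $1-4K^2N^{-10}-(4K+6)N^{-9}$. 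The extension to eigenvalues of multiplicity $>1$ proceeds exactly as in Remark~\ref{rk:multiplicity}, replacing $S_k$ by the $D$-orthogonal span of the corresponding empirical eigenvectors.
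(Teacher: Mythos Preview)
Your proposal is correct and follows essentially the same route as the paper's proof: the paper also works in the $D$-weighted inner product, uses the identity $L_{rw}^T D v_j = \lambda_j D v_j$ (your ``$DL_{rw}$ is symmetric'' observation) to run the projection argument of Step~2 on $(D/N)^{1/2}\phi_k$, converts between $\|\cdot\|_{D/N}$ and $\|\cdot\|_2$ via Lemma~\ref{lemma:Di-concen}, and carries out Step~3 exactly as you describe. One minor bookkeeping slip: Theorem~\ref{thm:pointwise-rate-C2h}(1) costs $4N^{-9}$ per function, not $2N^{-9}$, and the Lemma~\ref{lemma:Di-concen} event is already contained in the good event of Proposition~\ref{prop:eigvalue-LB-crude-rw}; the correct tally is $6N^{-9}$ from Proposition~\ref{prop:eigvalue-LB-crude-rw} plus $4KN^{-9}$ from the $K$ applications of Theorem~\ref{thm:pointwise-rate-C2h}(1), giving the stated $(4K+6)N^{-9}$.
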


The extension to when $\mu_k$ has greater than 1 multiplicity is possible, similarly as in Remark \ref{rk:multiplicity}. 
The proof of $L_{rw}$ uses almost the same method as for $L_{un}$, and the difference is that $v_k$ are no longer orthonormal but $D$-orthogonal. 
This is handled by that $\| u \|_2^2$ and $ \frac{1}{p} \| u\|_{D/N}^2$ agrees in relative error up to the form rate, due to the concentration of $D_i/N$ (Lemma \ref{lemma:Di-concen}). 
The detailed proof is left to Appendix \ref{app:proofs-step23}.

\section{Density-corrected graph Laplacian}\label{sec:density-corrected}

We consider $p$ as in Assumption \ref{assump:M-p}(A2).
The density-corrected graph Laplacian is  defined as \cite{coifman2006diffusion}
\[
\tilde{L}_{rw} = \frac{1}{ \frac{m_2}{ 2 m_0} \epsilon} (I - \tilde{D}^{-1}\tilde{W}), 
\quad 
\tilde{W}_{ij} = \frac{W_{ij}}{D_i D_j}, 
\quad
\tilde{D}_{ii} =\sum_{j=1}^N \tilde{W}_{ij},
\]
where $W_{ij} = K_\epsilon(x_i, x_j)$ as before, and $D$ is the degree matrix of $W$. 
The density-corrected graph Laplacian recovers Laplace-Beltrami operator when $p$ is not uniform.
In this section,
we extend the theory of point-wise convergence,
Dirichlet form convergence,
and eigen-convergence to such graph Laplacian.

\subsection{Point-wise convergence of $\tilde{L}_{rw}$}

This subsection proves Theorem \ref{thm:pointwise-rate-dencity-correct},
which shows that 
the point-wise rate of $\tilde{L}_{rw}$ is same as that of $L_{rw}$ without the density-correction.
The result is for general differentiable $h$ satisfying Assumption \ref{assump:h-C2-nonnegative},
which can be of independent interest.

We first establish the counterpart of Lemma \ref{lemma:Di-concen} 
about the concentration of all $\frac{1}{N}D_i = \frac{1}{N} \sum_{j=1}^N W_{ij}$ when $p$ is not uniform.
The deviation bound is uniform for all $i$ and has an bias error at $O(\epsilon^2)$.

\begin{lemma}\label{lemma:Di-concen-eps2}
Under Assumptions \ref{assump:M-p} and \ref{assump:h-C2-nonnegative}, 
suppose as $N \to \infty$, $\epsilon \to 0+ $, $\epsilon^{d/2} = \Omega( \frac{\log N}{N} ) $.
Then,

1) When $N$ is large enough, w.p. $> 1- 2 N^{-9}$,  $D_i > 0$ for all $i$ s.t. $\tilde{W}$ is well-defined, and 
\begin{equation}\label{eq:degree-D-concen-eps2}
\frac{1}{N} D_i 
= m_0 \tilde{p}_\epsilon(x_i) +  O \left( \epsilon^2, \sqrt{ \frac{\log N}{N \epsilon^{d/2}} }\right),
\quad \tilde{p}_\epsilon := p + \tilde{m} \epsilon ( \omega p + \Delta p),
\quad 
1 \le i \le N.
\end{equation}
where $\omega \in C^{\infty}(\calM)$ is determined by manifold extrinsic coordinates,
and $\tilde{m}[h] = \frac{m_2[h]}{2  m_0[h]}$.

2) When $N$ is large enough, w.p. $> 1- 4 N^{-9}$, $\tilde{D}_i > 0$ for all $i$ s.t. $ \tilde{L}_{rw}$ is well-defined, and 
\begin{equation}\label{eq:denominator}
\sum_{j=1}^N W_{i j} \frac{ 1}{  D_j} 
 =  1+  O \left(\epsilon,  \sqrt{ \frac{\log N}{N \epsilon^{d/2}} } \right) , 
 \quad 
1 \le i \le N.
\end{equation}

The constants in big-$O$ in parts 1) and 2)  depend on ($\calM, p)$, and are uniform for all $ i$.
\end{lemma}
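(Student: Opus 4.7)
The plan is to reduce both parts to independent-sum concentration via Bernstein's inequality, after extracting the correct asymptotic bias through the standard manifold kernel expansion. For Part 1), I would fix $i$, condition on $x_i$, and write $\frac{1}{N}D_i = \frac{1}{N}\sum_{j\neq i}W_{ij} + \frac{1}{N}K_\epsilon(x_i,x_i)$; the diagonal contribution is $O(N^{-1}\epsilon^{-d/2})$ and is absorbed into the variance error under the assumption $\epsilon^{d/2} = \Omega(\log N/N)$. Conditional on $x_i$ the summands are i.i.d.\ with
\[
\E[W_{ij} \mid x_i] = \int_{\calM} K_\epsilon(x_i, y)\, p(y)\, dV(y),
\]
and Assumption \ref{assump:h-C2-nonnegative}(C2) bounds both $|W_{ij}|$ and $\E[W_{ij}^2 \mid x_i]$ by $O(\epsilon^{-d/2})$. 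Thus Lemma \ref{lemma:bern} yields deviation $O(\sqrt{\log N/(N\epsilon^{d/2})})$ with probability $\ge 1-2N^{-10}$ for each $i$, and a union bound over the $N$ choices gives the uniform bound \eqref{eq:degree-D-concen-eps2}. Positivity of $D_i$ follows because $m_0 \tilde p_\epsilon(x_i) \ge m_0 p_{\min}/2 > 0$ for sufficiently small $\epsilon$, so $\tilde W$ is well defined.

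The bias estimate $m_0 \tilde p_\epsilon(x_i) + O(\epsilon^2)$ is obtained by expanding the kernel integral in normal coordinates $y = \exp_{x_i}(u)$: Taylor-expanding $p(y)\,dV(y)$ to order four, using $\|x_i - y\|^2 = |u|^2 + O(|u|^4)$ from the isometric embedding, and integrating against the rotationally symmetric profile $h(|u|^2/\epsilon)$. The odd-degree monomials in $u$ vanish by symmetry, the leading correction collects an intrinsic $\Delta p$ term together with an extrinsic $\omega p$ term (where $\omega \in C^\infty(\calM)$ is determined by the second fundamental form of the embedding), and the remainder is $O_{\calM,p}(\epsilon^2)$; this is the standard computation found, e.g., in \cite{coifman2006diffusion,singer2006graph}. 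Repackaging with $\tilde m = m_2/(2m_0)$ produces exactly $m_0 \tilde p_\epsilon(x_i)$.

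For Part 2), I would condition on $x_i$ and substitute Part 1) into the denominator: uniformly in $j$, on the good event,
\[
\frac{1}{D_j} = \frac{1}{N m_0 \tilde p_\epsilon(x_j)} \cdot \frac{1}{1+\eta_j}, \qquad |\eta_j| = O\Bigl(\epsilon,\ \sqrt{\tfrac{\log N}{N\epsilon^{d/2}}}\Bigr),
\]
using $\tilde p_\epsilon \ge p_{\min}/2$. Hence
\[
\sum_{j=1}^N \frac{W_{ij}}{D_j} = \frac{1}{N m_0}\sum_{j=1}^N \frac{K_\epsilon(x_i,x_j)}{\tilde p_\epsilon(x_j)}\bigl(1 + O(\max_j|\eta_j|)\bigr).
\]
The leading sum concentrates via a second Bernstein application (the summand is $O(\epsilon^{-d/2})$ pointwise and in second moment, since $p/\tilde p_\epsilon \in C^\infty(\calM)$ is uniformly bounded and bounded away from zero) around its conditional expectation $\frac{1}{m_0}\int_{\calM} K_\epsilon(x_i,y)(p/\tilde p_\epsilon)(y)\,dV(y)$. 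Applying the first-order kernel expansion to the smooth function $f = p/\tilde p_\epsilon = 1 + O(\epsilon)$ gives that this expectation equals $1 + O(\epsilon)$. Combining with the multiplicative uniform factor $1+O(\max_j|\eta_j|)$ and a union bound over $i$ delivers \eqref{eq:denominator}; positivity of $\tilde D_i$ then follows from $\tilde D_i = D_i^{-1}\sum_j W_{ij}/D_j$ since both factors are bounded away from zero. The main technical difficulty is pushing the expansion in Part 1) to the next-to-leading order and identifying the corrected density $\tilde p_\epsilon$ that absorbs the $O(\epsilon)$ bias; once this is achieved, Part 2) only requires the first-order expansion and a routine second concentration.
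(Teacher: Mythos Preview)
Your proposal is correct and follows essentially the same approach as the paper: condition on $x_i$, split off the diagonal term, use the second-order kernel expansion for the bias in Part 1), and apply Bernstein plus a union bound; then in Part 2) plug in the Part 1) estimate, pull out the deterministic density factor, and run a second Bernstein argument. The only cosmetic difference is that in Part 2) the paper factors out $m_0 p(x_j)$ rather than $m_0\tilde p_\epsilon(x_j)$, which makes the conditional expectation collapse exactly to $\int_{\calM} K_\epsilon(x_i,y)\,dV(y)=m_0+O(\epsilon)$ without needing to analyze $p/\tilde p_\epsilon$; your route works equally well since $p/\tilde p_\epsilon = 1+O(\epsilon)$ uniformly, but the paper's choice is slightly cleaner.
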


\noindent
The proof is left to Appendix \ref{app:proofs-density-corrected}. The following theorem proves the point-wise rate of $\tilde{L}_{rw}$.

\begin{theorem}\label{thm:pointwise-rate-dencity-correct}
Under Assumptions \ref{assump:M-p} and \ref{assump:h-C2-nonnegative}, 
if as $N \to \infty$, $\epsilon \to 0+ $,
$\epsilon^{d/2+1} = \Omega( \frac{\log N}{N} ) $,
then for any $f \in C^4(\calM)$,  
when $N$ is large enough, w.p. $> 1- 8 N^{-9}$,
\[
\frac{1}{\epsilon \frac{m_2}{2 m_0} } (I - \tilde{D}^{-1} \tilde{W}) (\rho_X f) (x_i )
= - \Delta f(x_i) + \varepsilon_i, 
\quad 
\sup_{1 \le i \le N} |\varepsilon_i| = O(\epsilon ) + O \left( \sqrt{\frac{\log N}{N \epsilon^{d/2+1}}} \right).
\]
The constants in the big-O notation depend on $\calM$, $p$ and the $C^4$ norm of  $f$. 
\end{theorem}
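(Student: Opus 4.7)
The plan is to write the operator as a ratio
\begin{equation*}
(I - \tilde D^{-1}\tilde W)(\rho_X f)(x_i)
= \frac{\sum_{j=1}^N (W_{ij}/D_j)\bigl(f(x_i) - f(x_j)\bigr)}{\sum_{j=1}^N W_{ij}/D_j}
\end{equation*}
and to handle the numerator and denominator separately. The denominator is already controlled by Lemma \ref{lemma:Di-concen-eps2} part 2, which gives uniformly in $i$ the value $1 + O(\epsilon, \sqrt{\log N/(N\epsilon^{d/2})})$; at the end this only contributes a multiplicative factor $1 + o(1)$ that preserves the stated error bound. For the numerator I would substitute $D_j/N = m_0 \tilde{p}_\epsilon(x_j) + r_j$ from Lemma \ref{lemma:Di-concen-eps2} part 1, with $\sup_j |r_j| = O(\epsilon^2, \sqrt{\log N/(N\epsilon^{d/2})})$, and absorb the remainder using that $\frac{1}{N}\sum_j W_{ij}|f(x_i) - f(x_j)| = O(\sqrt{\epsilon})$ with high probability (since $W_{ij}$ effectively localizes at $\|x_i - x_j\| \lesssim \sqrt{\epsilon \log(1/\epsilon)}$ and $f$ is Lipschitz). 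This reduces the analysis to the clean conditional sum
\begin{equation*}
S_i := \frac{1}{N}\sum_{j=1}^N H_j, \qquad H_j := \frac{K_\epsilon(x_i,x_j)}{m_0\,\tilde{p}_\epsilon(x_j)}\bigl(f(x_i) - f(x_j)\bigr),
\end{equation*}
to be analyzed conditional on $x_i$.

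For the variance error I would apply Bernstein's inequality to $S_i$ conditional on $x_i$, exactly as in the proof of Theorem \ref{thm:pointwise-rate-C2h}. The factor $(f(x_i) - f(x_j))^2 = O(\|x_i - x_j\|^2)$ gains an extra $\epsilon$ over the kernel-only bound, so that both $|H_j|$ and $\mathbb{E}[H_j^2 \mid x_i]$ are controlled by $\Theta(\epsilon^{-d/2+1})$. Bernstein then gives $S_i = \mathbb{E}[S_i \mid x_i] + O(\sqrt{\log N/(N\epsilon^{d/2-1})})$ with exponentially high probability, and a union bound over $i$ and subsequent division by $\epsilon\, m_2/(2m_0)$ produces the claimed variance error $O(\sqrt{\log N/(N\epsilon^{d/2+1})})$.

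The main obstacle, and the whole point of the density correction, is the bias computation
\begin{equation*}
\mathbb{E}[S_i \mid x_i] = \int_{\calM} \frac{K_\epsilon(x_i, y)\,p(y)}{m_0\,\tilde{p}_\epsilon(y)}\bigl(f(x_i) - f(y)\bigr)\,dV(y) = -\epsilon\,\frac{m_2}{2m_0}\,\Delta f(x_i) + O(\epsilon^2),
\end{equation*}
in which the drift term $\nabla p/p \cdot \nabla f$ that would normally appear (yielding $\Delta_{p^2}$ rather than $\Delta$ in the standard random-walk case) must vanish. I would set $\varphi(y) := p(y)/\tilde{p}_\epsilon(y) = 1 - \tilde{m}\epsilon\bigl(\omega(y) + \Delta p(y)/p(y)\bigr) + O(\epsilon^2)$ and split the integral linearly as $(f(x_i)/m_0)\int K_\epsilon(x_i,y)\varphi(y)\,dV(y)$ minus $(1/m_0)\int K_\epsilon(x_i,y) f(y)\varphi(y)\,dV(y)$. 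Applying the standard kernel expansion $\int K_\epsilon(x,y)\,g(y)\,dV(y) = m_0 g(x) + m_0\tilde{m}\epsilon(\omega g + \Delta g)(x) + O(\epsilon^2)$ (the same identity underlying Lemma \ref{lemma:Di-concen-eps2} part 1) to both integrals, the $O(1)$ terms cancel, the $\omega$-terms cancel between the $\omega\varphi$ piece of one expansion and the $\omega$ hidden inside $\varphi$, and the $\Delta p/p$ terms cancel between the two integrals. The only surviving contribution is $-m_0\tilde{m}\epsilon\,\Delta f(x_i)$, coming from the $\varphi\,\Delta f$ piece of $\Delta(f\varphi) = \varphi\Delta f + 2\nabla f\cdot\nabla\varphi + f\Delta\varphi$ at leading order (the other two pieces are $O(\epsilon)$ inside $\epsilon\cdot(\cdot)$, hence $O(\epsilon^2)$). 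Dividing by $m_0$ and by $\epsilon m_2/(2m_0) = \epsilon\tilde{m}$ converts this into $-\Delta f(x_i)$, and combining with the denominator factor completes the proof.
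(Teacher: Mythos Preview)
Your proposal is correct and matches the paper's approach almost step for step: ratio representation, Lemma~\ref{lemma:Di-concen-eps2} for the denominator and for replacing $D_j/N$ by $m_0\tilde{p}_\epsilon(x_j)$ in the numerator, the $O(\sqrt{\epsilon})$ bound on $\frac{1}{N}\sum_j W_{ij}|f(x_i)-f(x_j)|$ (which the paper establishes as its own concentration claim) to absorb the remainder, then Bernstein for the variance and the $p/\tilde{p}_\epsilon$ expansion for the bias. One small slip: the pointwise bound on $|H_j|$ is $O(\epsilon^{-d/2})$ (or $O(\epsilon^{-d/2+1/2})$ via $|f(x_i)-f(x_j)|\le L_f\|x_i-x_j\|$ together with the kernel's sub-Gaussian decay), not $\Theta(\epsilon^{-d/2+1})$, but this is harmless since the variance bound $\nu_Y=\Theta(\epsilon^{-d/2+1})$ is what drives the Bernstein deviation and the assumption $\epsilon^{d/2+1}=\Omega(\log N/N)$ still places you in the sub-Gaussian regime with the corrected $L_Y$.
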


The theorem slightly improves the point-wise convergence rate of $O(\epsilon,  \sqrt{\frac{\log N}{N \epsilon^{d/2+2}}})$ in \cite{singer2016spectral}.
It is proved using the same techniques as the analysis of point-wise convergence of $L_{rw}$ in \cite{singer2006graph,cheng2020convergence},
and we include a proof  for completeness here.

\begin{proof}[Proof of Theorem \ref{thm:pointwise-rate-dencity-correct}]
By definition,
\begin{equation}\label{eq:point-wise-density-correct}
- \frac{1}{\epsilon \frac{m_2}{2 m_0} } (I - \tilde{D}^{-1} \tilde{W}) (\rho_X f) (x_i )
= \frac{1}{\epsilon \frac{m_2}{2 m_0} }
\frac{\sum_{j=1}^N W_{ij} \frac{f(x_j) - f(x_i)}{D_j}}{\sum_{j=1}^N W_{ij} \frac{1}{D_j}}.
\end{equation}
The proof of Lemma \ref{lemma:Di-concen-eps2} has constructed  two good events $E_1$ and $E_2$
($E_1$ is for Part 1) to hold, Part 2) assumes $E_1$ and $E_2$), 
such that with large enough $N$, $E_1 \cap E_2$  happens w.p. $>1-4N^{-9}$, under which 
$D_i$, $\tilde{D}_i > 0$ for all $i$, $\tilde{W}$ and $\tilde{L}_{rw}$ are well-defined, 
and equations 
\eqref{eq:degree-D-concen-eps2}, 
\eqref{eq:degree-D-concen-rel}, and \eqref{eq:denominator} hold.
\eqref{eq:denominator} provides the concentration of the denominator of the r.h.s. of \eqref{eq:point-wise-density-correct}.
We now consider the numerator. 
Note that, with sufficiently small $\epsilon$, $\tilde{p}_\epsilon$  is uniformly bounded from below by $O(1)$ constant $p_{min}'$.
This is because $\omega, p \in C^\infty( \calM)$, $\calM$ is compact, then $(\omega p + \Delta p)$ is uniformly bounded, 
and meanwhile $p$ is uniformly bounded from below.
Thus, under $E_1$, 
\[
\frac{1}{N}\sum_{j=1}^N W_{i j} \frac{ f(x_j )- f(x_{i})}{  \frac{1}{N}D_j} 
= \frac{1}{N}\sum_{j=1}^N  \frac{ W_{i j} ( f(x_j )- f(x_{i}))}{  m_0 \tilde{p}_\epsilon (x_j) ( 1+ \varepsilon_j)},
\quad \max_{1 \le j \le N} |\varepsilon_j| = O(\epsilon^2,  \sqrt{ \frac{\log N}{N \epsilon^{d/2}} }),
\]
and the equation equals
\begin{align*}
\frac{1}{N}\sum_{j=1}^N  \frac{ W_{i j} ( f(x_j )- f(x_{i}))}{  m_0 \tilde{p}_\epsilon (x_j) } ( 1+ \varepsilon_j')
& = \frac{1}{N}\sum_{j=1}^N  \frac{ W_{i j} ( f(x_j )- f(x_{i}))}{  m_0 \tilde{p}_\epsilon (x_j) } 
+ \frac{1}{N}\sum_{j=1}^N  \frac{ W_{i j} ( f(x_j )- f(x_{i}))}{  m_0 \tilde{p}_\epsilon (x_j) } \varepsilon_j' \\
&
=: \textcircled{1} + \textcircled{2},
\quad \max_{1 \le j \le N} |\varepsilon_j'| = O(\epsilon^2,  \sqrt{ \frac{\log N}{N \epsilon^{d/2}} })
\end{align*}
and we analyze the two terms respectively.

To bound $| \textcircled{2}|$,  we use $W_{ij} \ge 0$ and again that $\tilde{p}_\epsilon (x) \ge p_{min}' > 0 $ to have 
\begin{align*}
|  \textcircled{2}|
 \le \frac{1}{N} \sum_{j=1}^N  \frac{ W_{i j} | f(x_j )- f(x_{i})| }{  m_0 \tilde{p}_\epsilon (x_j) } |\varepsilon_j'| 
 \le  
 \frac{ \max_{1 \le j \le N} |\varepsilon_j'| }{m_0 p_{min}'} 
  \cdot  \frac{1}{N} \sum_{j=1}^N  { W_{i j} | f(x_j )- f(x_{i})| }.
\end{align*}
We claim that, {for large enough $N$,}
w.p. $> 1- 2N^{-9}$, and we call this good event $E_3$, under which
\begin{equation}\label{eq:W-abs-diff-f-concen}
\frac{1}{N} \sum_{j=1}^N  { W_{i j} | f(x_j )- f(x_{i})| } = O(\sqrt{\epsilon}),
\quad 1 \le i \le N,
\end{equation}
and the proof is in below. 
With \eqref{eq:W-abs-diff-f-concen}, under $E_3$, $| \textcircled{2}|$ can be bounded by
\begin{equation}\label{eq:circle2-whp}
| \textcircled{2}| = 
(\max_{1 \le j \le N} |\varepsilon_j'| ) O(\sqrt{\epsilon}) 
=O(\epsilon^2,  \sqrt{ \frac{\log N}{N \epsilon^{d/2}} })  O(\sqrt{\epsilon})
=O(\epsilon^{5/2},  \sqrt{ \frac{\log N}{N \epsilon^{d/2-1}} }).
\end{equation}

The analysis of $\textcircled{1}$ uses concentration of independent sum again. Condition on $x_i$ and consider
\[
\textcircled{1}' = 
\frac{1}{  N-1}\sum_{ j \neq i, j=1}^N   K_\epsilon(x_i, x_j) \frac{  f(x_j )- f(x_{i}) }{  \tilde{p}_\epsilon (x_j) }
=: \frac{1}{  N-1}\sum_{ j \neq i, j=1}^N  Y_j,
\]
and we have
$\textcircled{1} = \frac{1}{m_0}(1-\frac{1}{N}) \textcircled{1}'$.
Due to uniform boundedness of $\tilde{p}_\epsilon$ from below by $p_{min}' > 0$,
$|Y_j|$ are bounded by $L_Y = \Theta(\epsilon^{-d/2})$.
We claim that the expectation (proof in below)
\begin{equation}\label{eq:exp-Yj-ptrate-density-correct-term1}
\E Y_j 
= \int_{\calM} K_\epsilon( x_i, y)  \frac{ f(y) p(y) }{ \tilde{p}_\epsilon(y)}  dV(y)
 -  f(x_i) \int_{\calM} K_\epsilon( x_i, y)\frac{ p(y) }{ \tilde{p}_\epsilon(y)}  dV(y)
=  \frac{m_2}{2} \epsilon \Delta f (x_i) 
 + O(\epsilon^2).
\end{equation}
The variance of $Y_j$ is bounded by
\begin{align*}
\E Y_j^2 
& =   \int_{\calM} K_\epsilon(x_i, y)^2 \left( \frac{  f(y)- f(x_{i}) }{  \tilde{p}_\epsilon (y) } \right)^2 p(y) dV(y) \\
& \le \frac{1}{p_{min}'^2}
\int_{\calM} K_\epsilon(x_i, y)^2 \left(  f(y)- f(x_{i})  \right)^2 p(y) dV(y)
\le \nu_Y = \Theta_{f,p}( \epsilon^{-d/2+1}),
\end{align*}
which follows the same derivation as in the proof of the point-wise convergence of $L_{rw}$ without density-correction, 
cf. Theorem \ref{thm:pointwise-rate-C2h} 1),
and can be directly  verified by a similar calculation as in \eqref{eq:calc-inside-ball}.
We attempt at the large deviation bound at $\Theta( \sqrt{\frac{\log N}{N} \nu_Y } ) \sim (\frac{\log N}{N \epsilon^{d/2-1}})^{1/2}$
which is of small order than $\frac{\nu_Y}{L_Y} = \Theta(\epsilon)$
under the theorem condition that $\epsilon^{d/2+1}=\Omega(\frac{\log N}{N})$.
Thus the classical Bernstein gives that for large enough $N$,
{where the threshold is determined by $(\calM, f, p)$ and uniform for $x_i$,}
w.p. $> 1-2N^{-10}$,
\[
\textcircled{1}' =  \E Y_j + O(   \sqrt{\frac{\log N}{N} \nu_Y } ) 
= \frac{m_2}{2} \epsilon \Delta f (x_i) 
 + O(\epsilon^2) + O( \sqrt{ \frac{\log N}{N \epsilon^{d/2-1}}} ),
\]
and as a result,
\begin{equation}\label{eq:circle1-whp}
\textcircled{1} = \tilde{m} \epsilon \Delta f (x_i) 
 + O(\epsilon^2) + O( \sqrt{ \frac{\log N}{N \epsilon^{d/2-1}}} ).
\end{equation}
By a union bound over the events needed at $N$ points, we have that \eqref{eq:circle1-whp}
holds at all $x_i$ under a good event $E_4$ which happens w.p. $>1-2N^{-9}$.

Putting together, under $E_3$ and $E_4$, by \eqref{eq:circle2-whp} and \eqref{eq:circle1-whp},
at all $x_i$,
\begin{align*}
\frac{1}{\epsilon} \sum_{j=1}^N W_{i j} \frac{ f(x_j )- f(x_{i})}{ D_j} 
& = \tilde{m}  \Delta f (x_i) 
 + O(\epsilon) + O( \sqrt{ \frac{\log N}{N \epsilon^{d/2+1}}} )
+O(\epsilon^{3/2},  \sqrt{ \frac{\log N}{N \epsilon^{d/2+1}} }) \\
& =\tilde{m}  \Delta f (x_i) 
 + O(\epsilon, \sqrt{ \frac{\log N}{N \epsilon^{d/2+1}}} ).
\end{align*}
Combined with \eqref{eq:denominator}, under $E_1, E_2, E_3, E_4$,
\begin{align*}
\frac{1}{\epsilon \tilde{m}}  \frac{ \sum_{j=1}^N W_{i j} \frac{ f(x_j )- f(x_{i})}{ D_j} }{ \sum_{j=1}^N W_{i j} \frac{ 1}{  D_j} }
& = 
\frac{  \Delta f (x_i)  + O(\epsilon, \sqrt{ \frac{\log N}{N \epsilon^{d/2+1}}} )}
{1+  O(\epsilon,  \sqrt{ \frac{\log N}{N \epsilon^{d/2}} })} 
 = \Delta f (x_i)  + O(\epsilon, \sqrt{ \frac{\log N}{N \epsilon^{d/2+1}}} ).
\end{align*}
It remains to establish \eqref{eq:W-abs-diff-f-concen} and \eqref{eq:exp-Yj-ptrate-density-correct-term1} 
to finish the proof of the theorem. 
\\

\underline{Proof of  \eqref{eq:W-abs-diff-f-concen}}:
Define r.v. $Y_j = W_{i j} | f(x_j )- f(x_{i})|$ and condition on $x_i$, for $j \neq i$,
$\E Y_j = \int_{\calM} K_\epsilon( x_i, y) | f(y) - f(x_i) | p(y) dV(y)$. 
Let $\delta_\epsilon = \sqrt{ (\frac{d+10}{a}) \epsilon \log {\frac{1}{\epsilon}}} $, 
for any $x\in \calM$,
$K_\epsilon(x,y) = O(\epsilon^{10})$ when $y \notin B_{\delta_\epsilon}(x)$, then
\begin{align*}
& \int_{\calM} K_\epsilon( x, y) | f(y) - f(x) | p(y) dV(y) \\
& = \int_{B_{\delta_{\epsilon}}(x)} K_\epsilon( x, y) | f(y) - f(x) | p(y) dV(y)  
+ O(\epsilon^{10}) \| f\|_\infty \|p\|_\infty \\ 
& \le \int_{B_{\delta_{\epsilon}}(x)} K_\epsilon( x, y)  ( \| \nabla f\|_\infty \| y - x\| ) p(y) dV(y)  
+ O_{f,p}(\epsilon^{10})\\
&= O_{f,p}(\sqrt{\epsilon}) + O_{f,p}(\epsilon^{10}) = O( \sqrt{ \epsilon}).
\end{align*}
The $O_{f,p}(\sqrt{\epsilon})$ is obtained because $\|p\|_\infty$,  $\| \nabla f\|_\infty$  are finite constants, and 
\begin{align}
& \frac{1}{\sqrt{\epsilon}} \int_{B_{\delta_{\epsilon}}(x)} K_\epsilon( x, y)  \| y - x\| dV(y)  
= \int_{B_{\delta_{\epsilon}}(x)} \epsilon^{-d/2} h(\frac{\|x - y\|^2}{\epsilon}  )\frac{\| y - x\|}{\sqrt{\epsilon}} dV(y)   \nonumber \\
&  ~~~
\le \int_{B_{\delta_{\epsilon}}(x)} \epsilon^{-d/2} a_0 e^{-a \frac{\|x-y\|^2}{\epsilon}}\frac{\| y - x\|}{\sqrt{\epsilon}} dV(y) \nonumber  \\
& ~~~
\le \int_{ \| u\| < 1.1 \delta_\epsilon, \, u\in \R^d}  a_0 e^{- \frac{a}{1.1} \| u\|^2}  \frac{ \| u \|}{0.9} (1+ O( \|u \|^2))du  
= O(1),
\label{eq:calc-inside-ball}
\end{align}
where $u \in \R^d$ is the projected coordinates in the tangent plane $T_{x}(\calM)$,
and the comparison of $\|x- y\|_{\R^D}$ to $\| u \|$ (namely $ 0.9 \|x -y\|_{\R^D} < \|u\| < 1.1 \|x -y\|_{\R^D}$)
and the volume comparison (namely $dV(y) = (1+O( \| u\|^2)) du$)
hold when $\delta_\epsilon < \delta_0(\calM)$ which is a constant depending on $\calM$, see e.g. Lemma A.1 in \cite{cheng2020convergence}.

Meanwhile, 
$|Y_j| $ is bounded by $L_Y = \|f\|_\infty \Theta(\epsilon^{-d/2})$,
and the variance of $Y_j$ is bounded by $\E Y_j^2$ and then bounded by $\nu_Y =\Theta(\epsilon^{-d/2 + 1})$,
by a similar calculation as in \eqref{eq:calc-inside-ball}.
We attempt at the large deviation bound at $\Theta( \sqrt{\frac{\log N}{N} \nu_Y } ) \sim (\frac{\log N}{N \epsilon^{d/2-1}})^{1/2}$
which is of small order than $\frac{\nu_Y}{L_Y} = \Theta(\epsilon)$
under the theorem condition that $\epsilon^{d/2+1}=\Omega(\frac{\log N}{N})$.
Thus, {for each $i$, 
when $N$ is enough
where the threshold is determined by $(\calM, f, p)$ and uniform for $x_i$,}
w.p. $> 1- 2N^{-10}$, 
\[
\frac{1}{N-1} \sum_{j\neq i} Y_j = \E Y_j + O( \sqrt{\frac{\log N}{ N \epsilon^{d/2-1}}})
 =  O(\sqrt{\epsilon}) 
  + o(\epsilon)
 = O(\sqrt{\epsilon}).
\]
{The $j=i$ term in \eqref{eq:W-abs-diff-f-concen} equals zero.}
By the same argument of independence of $x_i$ from $\{ x_j \}_{j \neq i}$
and the union bound over $N$ events, we have proved \eqref{eq:W-abs-diff-f-concen}.
\\

\underline{Proof of \eqref{eq:exp-Yj-ptrate-density-correct-term1}}:
Note that
\[
\frac{p}{\tilde{p}_\epsilon} 
= \frac{1}{1+ \epsilon \tilde{m} (\omega + \frac{\Delta p }{p})} 
= 1 -  \epsilon \tilde{m} (\omega + \frac{\Delta p }{p}) + \epsilon^2 r_\epsilon
=1 -  \epsilon r_1 + \epsilon^2 r_\epsilon, 
\]
where $r_1 :=  \tilde{m} (\omega + \frac{\Delta p }{p})$ is a deterministic function, $r_1 \in C^{\infty}(\calM)$;
$r_\epsilon \in C^{\infty}(\calM)$, and $\| r_\epsilon\|_\infty = O(1)$ when $\epsilon$ is less than some $O(1)$ threshold
due to that $\| \omega + \frac{\Delta p }{p} \|_\infty = O(1) $.
Then,
\begin{align*}
& \int_{\calM} K_\epsilon( x_i, y)  \frac{ f p }{ \tilde{p}_\epsilon}(y)  dV(y) 
= \int_{\calM} K_\epsilon( x_i, y) f(y)(1 - \epsilon r_1 +  \epsilon^2 r_\epsilon)(y) dV(y) \\
& ~~~
= \int_{\calM} K_\epsilon( x_i, y) f(y)  dV(y) 
- \epsilon   \int_{\calM} K_\epsilon( x_i, y) (f r_1) (y) dV(y) 
+  \epsilon^2 \int_{\calM} K_\epsilon( x_i, y) (f r_\epsilon)(y)dV(y)  \\
& ~~~
= \left( m_0 f(x_i) +  \frac{m_2}{2} \epsilon (\omega f +\Delta f) (x_i) + O(\epsilon^2) \right) 
-  \epsilon \left( m_0 fr_1( x_i) +  O(\epsilon) \right) + O(\epsilon^2)  \\
& ~~~
=  m_0 f(x_i) +  \frac{m_2}{2} \epsilon (\omega f +\Delta f - \frac{1}{\tilde{m}} fr_1) (x_i) 
 + O(\epsilon^2),  
\end{align*}
and taking $f=1$ gives  that
\[
\int_{\calM} K_\epsilon( x_i, y)  \frac{ p }{ \tilde{p}_\epsilon}(y)  dV(y) 
=  m_0 +  \frac{m_2}{2} \epsilon (\omega - \frac{1}{\tilde{m}} r_1) (x_i) 
 + O(\epsilon^2).
\]
Putting together and subtracting the two terms in \eqref{eq:exp-Yj-ptrate-density-correct-term1}
proves that 
$\E Y_j = 
 \frac{m_2}{2} \epsilon \Delta f (x_i) 
 + O(\epsilon^2)$.
\end{proof}

\subsection{Dirichlet form convergence of density-corrected graph Laplacian}

The graph Dirichlet form of density-corrected graph Laplacian is defined as 
\begin{equation}\label{eq:def-tildeENu}
\tilde{E}_N(u):=\frac{1}{ \frac{m_2}{  2 m_0^2}\epsilon } u^T( \tilde{D} - \tilde{W}) u 
= \frac{1}{ \frac{m_2}{   m_0^2}\epsilon } \sum_{i,j=1}^N \tilde{W}_{i,j} (u_i - u_j)^2 
= \frac{1}{ \frac{m_2}{   m_0^2}\epsilon } \sum_{i,j=1}^N W_{i,j} \frac{ (u_i - u_j)^2  }{D_i D_j}.
\end{equation}
We establish the counter part of Theorem \ref{thm:form-rate}, which achieves the same form rate.
The theorem is for general differentiable $h$, which can be of independent interest.

\begin{theorem}
\label{thm:form-rate-density-correction}
Under Assumptions \ref{assump:M-p} and \ref{assump:h-C2-nonnegative}, 
if as $N \to \infty$, $ \epsilon \to 0+$, $ \epsilon^{d/2 } N = \Omega( \log N)$,
then for any $f \in C^{\infty} ({\calM})$, 
 when $N$ is sufficiently large,
w.p. $> 1- 2 N^{-9}-2 N^{-10}$,
\[
\tilde{E}_N( \rho_X f )
= \langle f, -\Delta f \rangle
+ O_{p,f} \left( \epsilon,  \sqrt{  \frac{  \log N    }{ N \epsilon^{d/2  }} } \right).
\]
\end{theorem}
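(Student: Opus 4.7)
The plan is to reduce the claim to a V-statistic concentration bound on a cleaner form by first pulling out the degree fluctuations using Lemma~\ref{lemma:Di-concen-eps2}. The non-negativity $W_{ij} \geq 0$ from Assumption~\ref{assump:h-C2-nonnegative}(C3) makes every term in~\eqref{eq:def-tildeENu} non-negative, which is crucial: it lets me factor the multiplicative error from $D_i$ uniformly across the sum. Applying Lemma~\ref{lemma:Di-concen-eps2}(1), w.p. $> 1 - 2 N^{-9}$ we have $\tfrac{1}{N} D_i = m_0 \tilde{p}_\epsilon(x_i)(1 + \eta_i)$ with $\max_i |\eta_i| = O(\epsilon^2, \sqrt{\log N/(N \epsilon^{d/2})})$, and $\tilde{p}_\epsilon$ uniformly bounded below by some $p_{\min}' > 0$ for small $\epsilon$. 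This immediately yields
\[
\tilde{E}_N(\rho_X f) = \bigl(1 + O(\max_i |\eta_i|)\bigr)\, \tilde{E}_N^{0}(\rho_X f), \qquad
\tilde{E}_N^{0}(u) := \frac{1}{m_2 \epsilon N^2}\sum_{i,j=1}^N \frac{W_{ij}\,(u_i - u_j)^2}{\tilde{p}_\epsilon(x_i)\,\tilde{p}_\epsilon(x_j)},
\]
reducing the problem to analyzing the V-statistic $\tilde{E}_N^{0}(\rho_X f) = \tfrac{1}{N^2}\sum_{i,j} V_{ij}$ with $V_{ij} := (m_2\epsilon)^{-1} W_{ij}(f(x_i)-f(x_j))^2 / (\tilde{p}_\epsilon(x_i)\tilde{p}_\epsilon(x_j))$.

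Next I would compute $\mathbb{E} V_{ij}$ for $i \neq j$ and then establish concentration. Substituting $p(y)/\tilde{p}_\epsilon(y) = 1 + O(\epsilon)$ in each of the two arguments gives
\[
\mathbb{E} V_{ij} = \frac{1}{m_2 \epsilon}\int_{\calM}\int_{\calM} K_\epsilon(x,y)(f(x)-f(y))^2 \bigl(1 + O(\epsilon)\bigr)\, dV(x)\, dV(y),
\]
and the uniform-measure Dirichlet expansion $\tfrac{1}{\epsilon}\int_\calM\int_\calM K_\epsilon(x,y)(f(x)-f(y))^2 dV(x) dV(y) = m_2 \langle f, -\Delta f\rangle + O(\epsilon)$, obtained by the same Taylor-in-tangent-coordinates calculation underlying the proof of Theorem~\ref{thm:form-rate} but with reference measure $dV \otimes dV$, delivers the leading term $\langle f, -\Delta f\rangle$. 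The $O(\epsilon)$ piece in $p/\tilde{p}_\epsilon$ integrates against an $O(\epsilon)$-magnitude Dirichlet integrand, contributing only $O(\epsilon)$ after the $1/\epsilon$ prefactor; hence $\mathbb{E} V_{ij} = \langle f, -\Delta f\rangle + O_{p,f}(\epsilon)$. For the deviation, using Assumption~\ref{assump:h-C2-nonnegative}(C2) and the pointwise bound $(f(x_i)-f(x_j))^2 \leq \|\nabla f\|_\infty^2 \|x_i-x_j\|^2$ together with uniform positivity of $\tilde{p}_\epsilon$, one bounds $|V_{ij}|$ and $\mathrm{Var}(V_{ij})$ by $O(\epsilon^{-d/2})$ (the quadratic vanishing absorbs one $\epsilon^{-1}$). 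The V-statistic decoupling plus Bernstein argument from the proof of Theorem~\ref{thm:form-rate} then gives $\tilde{E}_N^{0}(\rho_X f) = \langle f, -\Delta f\rangle + O(\epsilon, \sqrt{\log N / (N \epsilon^{d/2})})$ w.p. $> 1 - 2 N^{-10}$. Combining on the intersection of good events, since $\tilde{E}_N^{0}(\rho_X f) = O_f(1)$ the multiplicative perturbation from the first step becomes an additive error of the same order, proving the stated bound.

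The main obstacle is verifying that the specific choice $\tilde{p}_\epsilon = p + \tilde{m}\epsilon(\omega p + \Delta p)$ produces exactly $-\Delta$ as the limit, rather than a weighted Laplacian carrying a drift $\nabla\log p \cdot \nabla f$. The key point is that dividing by $D_i D_j$ effectively changes the reference measure from $p^2\, dV \otimes dV$ (which produces $\langle f, -\Delta_{p^2} f\rangle_{p^2}$ for the uncorrected form $E_N$) to $dV \otimes dV$ at leading order, for which the symmetric integrand $K_\epsilon(x,y)(f(x)-f(y))^2$ integrates to $m_2 \epsilon \langle f, -\Delta f\rangle$ with no drift. The calibration of $\tilde{p}_\epsilon$ in Lemma~\ref{lemma:Di-concen-eps2} is precisely what makes $p(x)p(y)/(\tilde{p}_\epsilon(x)\tilde{p}_\epsilon(y)) = 1 + O(\epsilon)$ uniformly, and confirming that this $O(\epsilon)$ residue (after the $1/\epsilon$ prefactor) is the sole source of bias is the subtle step linking the degree-concentration rate to the form-convergence rate.
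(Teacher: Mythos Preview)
Your proposal is correct and follows essentially the same two-step strategy as the paper: (i) use the degree concentration from Lemma~\ref{lemma:Di-concen-eps2} together with non-negativity of $W_{ij}$ to factor the degree fluctuations out of $\tilde{E}_N$ uniformly, reducing to a clean V-statistic; (ii) analyze that V-statistic by the same expectation/boundedness/variance route as in Theorem~\ref{thm:form-rate}.

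The one cosmetic difference is that the paper (via Lemma~\ref{lemma:tildeENu-V-stat}) replaces $\tfrac{1}{N}D_i$ by $m_0\, p(x_i)$ rather than $m_0\,\tilde{p}_\epsilon(x_i)$, absorbing the $O(\epsilon)$ correction $\tilde{p}_\epsilon - p$ into the multiplicative error at the outset. This makes the V-statistic expectation slightly cleaner, since $p(x)p(y)/(p(x)p(y)) = 1$ exactly, so one integrates $K_\epsilon(x,y)(f(x)-f(y))^2$ directly against $dV\otimes dV$ without needing your $p/\tilde{p}_\epsilon = 1 + O(\epsilon)$ step. Your version is equally valid; the extra $O(\epsilon)$ you carry through the expectation is harmless for the reason you give. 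A small technical point: the inequality $(f(x_i)-f(x_j))^2 \le \|\nabla f\|_\infty^2 \|x_i - x_j\|^2$ holds only with geodesic distance on the right, but this does not affect the $|V_{ij}| = O(\epsilon^{-d/2})$ bound because when $\|x_i - x_j\|$ is not small the exponential decay of $h$ dominates.
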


\begin{proof}[Proof of Theorem \ref{thm:form-rate-density-correction}] 
By definition \eqref{eq:def-tildeENu},
\[
\tilde{E}_N( \rho_X f ) 
=  \frac{1}{ \frac{m_2}{   m_0^2}\epsilon } \frac{1}{N^2} \sum_{i,j=1}^N W_{i,j} \frac{ (f(x_i) - f(x_j))^2  }{ \frac{D_i}{N} \frac{D_j}{N}}.
\]
The following lemma (proved in  Appendix \ref{app:proofs-density-corrected}) 
makes use of the concentration of $D_i/N$ to reduce the graph Dirichlet form to be a V-statistics up to a relative error at the form rate.

\begin{lemma}\label{lemma:tildeENu-V-stat}
Under the good event in  Lemma \ref{lemma:Di-concen-eps2} 1), 
\[
\tilde{E}_N(  u )
= 
\left( \frac{1}{  m_2[h] \epsilon } 
\frac{1}{N^2} \sum_{i,j=1}^N W_{i,j} \frac{ (u_i -  u_j )^2  }{  p(x_i)p(x_j) } \right)
\left(1 + O(\epsilon,  \sqrt{ \frac{\log N}{N \epsilon^{d/2}} }) \right),
\quad  \forall u \in \R^N,
\]
and the constant in big-$O$ is determined by $(\calM, p)$ and uniform for all $u$.
\end{lemma}

We consider under the good event in Lemma \ref{lemma:Di-concen-eps2}  1),
which is called $E_1$ and  happens w.p. $> 1- 2 N^{-9}$.
Then applying Lemma \ref{lemma:tildeENu-V-stat} with $u = \rho_X f$, we have that 
\begin{equation}
\tilde{E}_N( \rho_X f )
= 
\left\{ \frac{1}{  m_2 \epsilon } 
\frac{1}{N^2} \sum_{i,j=1}^N W_{i,j} \frac{ (f(x_i) - f(x_j))^2  }{  p(x_i)p(x_j) } \right\}
(1 + O(\epsilon,  \sqrt{ \frac{\log N}{N \epsilon^{d/2}} }) )
=:  \textcircled{3} (1 + O(\epsilon,  \sqrt{ \frac{\log N}{N \epsilon^{d/2}} }) )
\label{eq:form-pf-1}
\end{equation}
The term
 $\textcircled{3}$ in \eqref{eq:form-pf-1} equals $\frac{1}{N^2} \sum_{i,j=1}^N V_{i,j} $,
where
$V_{i,j} : = \frac{1}{  m_2 \epsilon }  K_\epsilon( x_i, x_j) \frac{ (f(x_i) - f(x_j))^2  }{  p(x_i)p(x_j) }$,
and $ V_{i,i} =0$.
We follow the same approach as in the proof of Theorem {3.4} in \cite{cheng2020convergence} to analyze this V-statistic,
and show  that  (proof in Appendix \ref{app:proofs-density-corrected})
\begin{equation}\label{eq:proof-form-density-correct-Vstat}
\{\text{ $\textcircled{3}$ in \eqref{eq:form-pf-1} }\}
 = 
   \langle f, - \Delta f \rangle + O_{f,p}(\epsilon, \sqrt{ \frac{\log N }{N \epsilon^{d/2}} }) .
\end{equation}
Back to \eqref{eq:form-pf-1}, we have shown that under $E_1 \cap E_3$, 
\begin{align*}
\tilde{E}_N( \rho_X f )
& =  \textcircled{3} (1 + O(\epsilon,  \sqrt{ \frac{\log N}{N \epsilon^{d/2}} }) ) 
 = \left(    \langle f, - \Delta f \rangle + O(\epsilon, \sqrt{ \frac{\log N }{N \epsilon^{d/2}} }) \right)
(1 + O(\epsilon,  \sqrt{ \frac{\log N}{N \epsilon^{d/2}} }) ) \\
& = \langle f, - \Delta f \rangle + O(\epsilon,  \sqrt{ \frac{\log N }{N \epsilon^{d/2}} }),
\end{align*}
and the constant in big-$O$ depends on $\calM$, $f$ and $p$. 
\end{proof}

\subsection{Eigen convergence of $\tilde{L}_{rw}$}

In this subsection, let $\lambda_k$ be eigenvalues of $\tilde{L}_{rw}$ and $v_k$ the associated eigenvectors. 
By \eqref{eq:def-tildeENu}, recall that $\tilde{m} =\frac{m_2}{ 2 m_0}$,
the analogue of \eqref{eq:lambdak-rw-min-max} is the following

\begin{equation}\label{eq:lambdak-rw-density-correct}
\lambda_k
= \min_{ L \subset \R^N, \, dim(L) = k} \sup_{ v \in L,  v \neq 0} 
\frac{ \frac{1}{\epsilon \tilde{m} } v^T(\tilde{D}-\tilde{W})v}{   v^T \tilde{D} v }
= \frac{  \frac{1}{m_0 } \tilde{E}_N(v) }{   v^T \tilde{D} v },
\quad 1 \le k \le N.
\end{equation}
The methodology is same as before,
with  a main difference  in the definition of the heat interpolation mapping with weights $p(x_j)$ as in \eqref{eq:def-tilde-Ir}.
This gives to the $p$-weighted quadratic form $\tilde{q}_s(u)$ defined in \eqref{eq:def-tilde-qs}, 
for which we derive the concentration argument of for $\tilde{q}^{(0)}_s$ in \eqref{tildeq0-u-E0'}
and the upper bound of $\tilde{q}^{(2)}_s$ in Lemma \ref{lemma:qs2-UB-density-correct}.
The other difference is that the $\tilde{D}$-weighted 2-norm is considered because the eigenvectors are $\tilde{D}$-orthogonal. 
All the proofs of the Steps 0-3 {and Theorem \ref{thm:refined-rates-rw-density-correct}} are  left to Appendix \ref{app:proofs-density-corrected}.
\vspace{5pt}

\noindent
\underline{Step 0}. 
We first establish eigenvalue UB based on
Lemma \ref{lemma:Di-concen-eps2} and the form convergence in Theorem \ref{thm:form-rate-density-correction}.

\begin{proposition}[Eigenvalue UB of $\tilde{L}_{rw}$]
\label{prop:eigvalue-UB-rw-density-correct}
Under Assumptions \ref{assump:M-p} and \ref{assump:h-C2-nonnegative},
for fixed $K \in \mathbb{N}$, 
Suppose $0 = \mu_1  <\cdots < \mu_{K} < \infty$ are all of single multiplicity.
If as $N \to \infty$, $\epsilon \to 0+$, and $\epsilon^{d/2} = \Omega( \frac{\log N}{N} ) $, 
then for sufficiently large $N$, 
w.p. $> 1- 4 N^{-9} - 4 K^2 N^{-10}$,  $\tilde{L}_{rw}$ is well-defined, and 
\[
\lambda_k  
\le \mu_k + O \left(\epsilon, \sqrt{ \frac{\log N}{N  \epsilon^{d/2} } } \right) ,
 \quad k=1,\cdots, K.
\]
\end{proposition}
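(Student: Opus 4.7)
The plan is to mirror the strategy used for Propositions \ref{prop:eigvalue-UB} and \ref{prop:eigvalue-UB-rw}, now applied to the variational characterization \eqref{eq:lambdak-rw-density-correct} of the density-corrected graph Laplacian. Since the target operator is the unweighted Laplace--Beltrami $-\Delta$, the $\psi_k$ are orthonormal in $L^2(\calM,dV)$, and I take as candidates the \emph{unscaled} vectors $u_k := \rho_X \psi_k \in \R^N$ for $k=1,\dots,K$. Let $\tilde{B}_N(u,v):=\tfrac14(\tilde{E}_N(u+v)-\tilde{E}_N(u-v))$ be the polarization of the form \eqref{eq:def-tildeENu}. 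Applying Theorem \ref{thm:form-rate-density-correction} to each of $\psi_k$ and $\psi_k\pm\psi_l$, $k<l\le K$, and union-bounding over the $O(K^2)$ events, one obtains with probability $>1-O(K^2)N^{-9}$,
\[
\tilde{E}_N(u_k)=\mu_k+\mathrm{err},\qquad \tilde{B}_N(u_k,u_l)=\mathrm{err}\quad(k\neq l),
\]
where $\mathrm{err}=O(\epsilon,\sqrt{\log N/(N\epsilon^{d/2})})$, using $\langle \psi_k,-\Delta\psi_l\rangle=\mu_k\delta_{kl}$.

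For the denominator $v^T\tilde{D}v$, I would invoke Lemma \ref{lemma:Di-concen-eps2}: parts (1) and (2) together hold with probability $>1-4N^{-9}$, and combining $\tilde{D}_i=(\sum_j W_{ij}/D_j)/D_i$ with \eqref{eq:degree-D-concen-eps2} and \eqref{eq:denominator} yields the uniform-in-$i$ identity
\[
\tilde{D}_i=\frac{1}{N m_0\,p(x_i)}\bigl(1+O(\epsilon,\sqrt{\log N/(N\epsilon^{d/2})})\bigr),
\]
so that for any $v\in\R^N$,
\[
v^T\tilde{D}v=\frac{1+O(\epsilon,\sqrt{\log N/(N\epsilon^{d/2})})}{N m_0}\sum_{i=1}^N\frac{v_i^2}{p(x_i)}.
\]
Specializing to $v=\sum_{j=1}^k c_j u_j$, the average $\tfrac{1}{N}\sum_i v_i^2/p(x_i)$ expands as a linear combination of $K^2$ empirical means $\tfrac{1}{N}\sum_i\psi_j(x_i)\psi_l(x_i)/p(x_i)$ of bounded i.i.d.\ random variables with expectation $\int\psi_j\psi_l\,dV=\delta_{jl}$ and $O(1)$ variance. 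A Bernstein bound plus union over the $K^2$ pairs gives, with probability $>1-2K^2N^{-10}$, $\tfrac{1}{N}\sum_i v_i^2/p(x_i)=\|c\|^2(1+O(\sqrt{\log N/N}))$, whence $v^T\tilde{D}v=(\|c\|^2/m_0)(1+o(1))$.

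On the intersection of these good events (total failure probability $\le 4N^{-9}+4K^2N^{-10}$ after slightly generous counting), the $\tilde{D}$-near-orthogonality just established makes $u_1,\dots,u_K$ linearly independent, so for each $k\le K$ the subspace $L_k:=\mathrm{Span}\{u_1,\dots,u_k\}$ is $k$-dimensional. For $v=\sum_{j=1}^k c_j u_j$, bilinear expansion together with the numerator estimates gives $\tilde{E}_N(v)\le\|c\|^2(\mu_k+O(\epsilon,\sqrt{\log N/(N\epsilon^{d/2})}))$, exactly as in the chain \eqref{eq:UB-ENv}. Dividing the numerator $\tilde{E}_N(v)/m_0$ in \eqref{eq:lambdak-rw-density-correct} by the denominator $(\|c\|^2/m_0)(1+o(1))$, supremizing over $v\in L_k$, and invoking the min-max principle produces $\lambda_k\le\mu_k+O(\epsilon,\sqrt{\log N/(N\epsilon^{d/2})})$, as claimed. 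The main bookkeeping challenge is matching the empirical denominator $v^T\tilde{D}v$ to the correct continuum norm: because $\{\psi_k\}$ are orthonormal with respect to $dV$ rather than $p\,dV$, the natural target is the $p^{-1}$-weighted empirical sum, and degrading the $O(\epsilon)$ relative error in the $\tilde{D}_i$ expansion to $O(\sqrt{\epsilon})$ would immediately collapse the eigenvalue UB from the form rate down to the slower point-wise rate, so one must carefully exploit the cancellation in \eqref{eq:denominator} against the $\tilde{p}_\epsilon$-level concentration in \eqref{eq:degree-D-concen-eps2}.
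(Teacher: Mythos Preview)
Your proposal is correct and follows essentially the same approach as the paper: candidate vectors $u_k=\rho_X\psi_k$, form-rate estimates for $\tilde{E}_N$ and $\tilde{B}_N$ via Theorem~\ref{thm:form-rate-density-correction}, and a denominator estimate obtained by combining \eqref{eq:degree-D-concen-eps2} and \eqref{eq:denominator} into $\tilde{D}_i=(Nm_0p(x_i))^{-1}(1+\mathrm{err})$ followed by Bernstein concentration of the $p^{-1}$-weighted empirical inner products $\tfrac1N\sum_i\psi_j(x_i)\psi_l(x_i)/p(x_i)$ around $\delta_{jl}$. The paper packages the denominator argument into a separate lemma (Lemma~\ref{lemma:uk-isometry-whp}) and is slightly more careful in the probability bookkeeping---the $2N^{-9}$ contribution from the degree-concentration event $E_1$ is shared across all $K^2$ applications of Theorem~\ref{thm:form-rate-density-correction} rather than multiplied, which is how one arrives at exactly $4N^{-9}+4K^2N^{-10}$ rather than $O(K^2)N^{-9}$---but your ``slightly generous counting'' remark indicates you see this.
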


\noindent
\underline{Step 1}. 
Eigenvalue crude LB. We prove with the $p$-weighted interpolation mapping defined as
\begin{equation}\label{eq:def-tilde-Ir}
\tilde{I}_r [u] = \frac{1}{N} \sum_{j=1}^N \frac{u_j}{ p(x_j)} H_r( x, x_j) = I_r [ \tilde{u}],
\quad \tilde{u}_i = u_i/p(x_i).
\end{equation}
Then, same as before, 
$\langle \tilde{I}_r [u], \tilde{I}_r  [u] \rangle = q_{\delta \epsilon} (\tilde{u})$,
and 
$\langle \tilde{I}_r [u], Q_t \tilde{I}_r  [u] \rangle = q_{ \epsilon} (\tilde{u})$,
where 
for $s > 0$,
\begin{equation}\label{eq:def-tilde-qs}
\begin{split}
\tilde{q}_s( u)  
& := \frac{1}{N^2} \sum_{i,j=1}^N \frac{ \calH_s(x_i, x_j) }{p(x_i) p(x_j) }u_i u_j
 = q_s( \tilde{u})   
= \tilde{q}^{(0)}_s(u) - \tilde{q}^{(2)}_s(u), \\
\tilde{q}^{(0)}_s(u)  
& := \frac{1}{N} \sum_{i=1}^N u_i^2 \left(  \frac{1}{N} \sum_{j=1}^N  \frac{ \calH_s(x_i, x_j) }{p(x_i) p(x_j) } \right), 
\quad
\tilde{q}^{(2)}_s(u) 
 := \frac{1}{2 N^2} \sum_{i,j=1}^N\frac{ \calH_s(x_i, x_j) }{p(x_i) p(x_j) } (u_i - u_j)^2.
\end{split} 
\end{equation}

\begin{proposition}[Initial crude eigenvalue LB of $\tilde{L}_{rw}$]
\label{prop:eigvalue-LB-crude-rw-density-correct}
Under Assumption \ref{assump:M-p},
$h$ is Gaussian.
For fixed $k_{max} \in \mathbb{N}$, $K = k_{max}+1$,
and $\mu_k$, $\epsilon$ and  $N$ satisfy the same condition as in Proposition \ref{prop:eigvalue-LB-crude},
where the definition of $c_K$ is the same except that $c$ is a constant depending on $(\calM,p)$.
Then, for sufficiently large $N$,  w.p.$> 1- 4K^2 N^{-10} - 8N^{-9}$, 
$\lambda_k > \mu_k - \gamma_K$,
 for $k=2,\cdots, K$.
\end{proposition}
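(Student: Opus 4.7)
The plan is to adapt the heat-kernel interpolation argument of Proposition~\ref{prop:eigvalue-LB-crude} to the density-corrected setting, with two main modifications: (i) the eigenvectors $v_k$ are $\tilde{D}$-orthogonal rather than unit-norm, and (ii) the interpolation mapping $\tilde{I}_r$ carries a $1/p(x_j)$ weight on each sample in order to compensate for non-uniform sampling. As before, fix $\delta = \tfrac{0.5\,\gamma_K}{\mu_K} \in (0,0.5)$, set $r = \delta\epsilon/2$ and $t = (1-\delta)\epsilon$, and take the test functions $\tilde{f}_k = \tilde{I}_r[v_k] \in C^\infty(\calM) \subset \calH$. The subspace $L_k = \mathrm{Span}\{\tilde{f}_1, \ldots, \tilde{f}_k\}$ is fed into the variational formula \eqref{eq:eig-Lt-minmax} for the operator $I-Q_t$, so that for $f = \tilde{I}_r[v]$ with $v = \sum_j c_j v_j$, we have $\langle f,f\rangle = \tilde{q}_{\delta\epsilon}(v)$ and $\langle f, Q_t f\rangle = \tilde{q}_\epsilon(v)$.

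First I would prove an analog of Lemma~\ref{lemma:qs0-concen} for $\tilde{q}^{(0)}_s$: conditional on $x_i$, the inner sum $\tfrac{1}{N}\sum_j \calH_s(x_i,x_j)/p(x_j)$ has expectation $\int_\calM \calH_s(x_i,y)\,dV(y) = 1$ by the probability interpretation of the heat kernel, so combining the global bound $\calH_s = O(s^{-d/2})$ from Lemma~\ref{lemma:heat}, a Bernstein concentration, and a union bound over $i$ yields, uniformly in $u$,
\[
 \tilde{q}^{(0)}_s(u) = \frac{1}{N}\sum_{i=1}^N \frac{u_i^2}{p(x_i)} \Bigl( 1 + O_\calM\bigl(\sqrt{\tfrac{\log N}{N s^{d/2}}}\bigr)\Bigr).
\]
Next I would establish the analog of Lemma~\ref{lemma:qs2-UB}: for Gaussian $h$, Lemma~\ref{lemma:heat} lets one replace $\calH_s$ by $K_s$ on a $\delta_s$-ball up to a $(1+\tilde{O}(s))$ factor plus $O(s^3)$, and Lemma~\ref{lemma:Di-concen-eps2} lets one replace $1/(p(x_i)p(x_j))$ by $N^2 m_0^2/(D_i D_j)$ uniformly in $u$ up to a relative error of the form rate. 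Together these give
\[
 0 \le \tilde{q}^{(2)}_\epsilon(u) = (1+\tilde{O}(\epsilon))\,\tfrac{m_2}{2 m_0}\,\epsilon\,\tilde{E}_N(u) + \|u\|_{\tilde{D}/N}^{2}\,O(\epsilon^3),
\]
and an analogous bound at scale $\alpha\epsilon$ with a $1.1\,\alpha^{-d/2}$ prefactor. Substituting $v_k$ and using the identity $\tilde{E}_N(v_k) = m_0\lambda_k\,v_k^T \tilde{D} v_k$ from the variational formula \eqref{eq:lambdak-rw-density-correct}, together with the eigenvalue UB $\lambda_k \le 1.1\mu_K$ from Proposition~\ref{prop:eigvalue-UB-rw-density-correct}, gives $\tilde{q}^{(2)}_{\delta\epsilon}(v_k) = O(\epsilon)$ and $\tilde{q}^{(2)}_\epsilon(v_k) = \epsilon\lambda_k \,v_k^T\tilde{D}v_k\,(1+\tilde{O}(\epsilon)) + O(\epsilon^3)$, paralleling \eqref{eq:q2-delta-eps-vkvl}.

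With these two ingredients in hand, the rest of the argument runs line-by-line as in Proposition~\ref{prop:eigvalue-LB-crude}. The $\tilde{D}$-orthogonality of $\{v_j\}$ and the degree concentration $N\tilde{D}_i = (m_0 p(x_i))^{-1}(1+o(1))$ (obtained from Lemma~\ref{lemma:Di-concen-eps2} parts~1) and~2)) allow one to translate $\|c\|^2$ into $\tfrac{1}{m_0}\cdot\tfrac{1}{N}\sum_i v(i)^2/p(x_i)$ up to the form rate, matching the leading order of $\tilde{q}^{(0)}_{\delta\epsilon}(v)$. One then upper-bounds $\langle f,f\rangle - \langle f, Q_t f\rangle \le (\tilde{q}^{(0)}_{\delta\epsilon}(v) - \tilde{q}^{(0)}_\epsilon(v)) + \tilde{q}^{(2)}_\epsilon(v)$ and lower-bounds $\langle f,f\rangle = \tilde{q}^{(0)}_{\delta\epsilon}(v) - \tilde{q}^{(2)}_{\delta\epsilon}(v)$; dividing and comparing with $1 - e^{-(1-\delta)\epsilon\mu_k} \ge \epsilon(1-\delta)(\mu_k - 0.1\gamma_K)$ yields, provided $c_K = (c(\calM,p)/0.1)^2\delta^{-d/2}\gamma_K^{-2}$ is chosen so that $(C/\epsilon)\sqrt{\log N/(N\epsilon^{d/2})} < 0.1\gamma_K$, the desired bound $\lambda_k > \mu_k - \gamma_K$ for $k=2,\ldots,K$.

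The main obstacle is the uniform-in-$u$ conversion between the weighted pointwise form $\tilde{q}^{(2)}_\epsilon(u)$ and the graph Dirichlet form $\tilde{E}_N(u)$: unlike in the uniform-$p$ case of Lemma~\ref{lemma:qs2-UB}, here the weights $1/(p(x_i)p(x_j))$ must be swapped for empirical factors $N^2 m_0^2/(D_i D_j)$ while preserving a clean bound over all $u$, which is precisely what the uniform $D_i$-concentration of Lemma~\ref{lemma:Di-concen-eps2} provides. Once this swap is carried out and the relation $\tilde{E}_N(v_k) = m_0 \lambda_k v_k^T\tilde{D}v_k$ is used, the remainder of the proof is a direct transcription of the uniform-density case.
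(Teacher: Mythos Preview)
Your proposal is correct and follows essentially the same approach as the paper: the $p$-weighted interpolation map $\tilde{I}_r$, the concentration of $\tilde{q}^{(0)}_s$ via the heat-kernel integral $\int_\calM \calH_s(x_i,y)\,dV(y)=1$, the upper bound on $\tilde{q}^{(2)}_s$ obtained by combining Lemma~\ref{lemma:heat} with the degree-concentration swap $\tfrac{1}{p(x_i)p(x_j)} \leftrightarrow \tfrac{N^2 m_0^2}{D_i D_j}$ (this is the paper's Lemma~\ref{lemma:tildeENu-V-stat} and Lemma~\ref{lemma:qs2-UB-density-correct}), and then the same variational comparison with $I-Q_t$. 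One small point: the multiplicative correction in your displayed bound for $\tilde{q}^{(2)}_\epsilon(u)$ should read $1+\tilde{O}(\epsilon)+O(\sqrt{\log N/(N\epsilon^{d/2})})$ rather than just $1+\tilde{O}(\epsilon)$, since the degree swap contributes the form-rate term; you note this in the surrounding text, and it is harmless for the final estimate because both corrections are $o(1)$.
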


\noindent
\underline{Steps 2-3}.
We prove eigenvector consistency and refined eigenvalue convergence rate. Define 
\begin{equation}\label{eq:def-u-tildeD-norm}
\| u \|_{\tilde{D}}^2 : = \sum_{i=1}^N u_i^2 \tilde{D}_i, \quad \forall u \in \R^N.
\end{equation}
The proof uses same techniques as before, and the differences are in handling the $\tilde{D}$-orthogonality of the eigenvectors 
and using the concentration arguments in Lemma \ref{lemma:Di-concen-eps2}.
Same as before, extension to when $\mu_k$ has greater than 1 multiplicity is possible (Remark \ref{rk:multiplicity}).

\begin{theorem}[eigen-convergence of $\tilde{L}_{rw}$]
\label{thm:refined-rates-rw-density-correct}
\rev{Under Assumption \ref{assump:M-p},}
$h$ being Gaussian,
and $k_{max}$, $K$, $\mu_k$, $\epsilon$ same
as in Theorem \ref{thm:refined-rates},
where the definition of $c_K$ is the same except that $c$ is a constant depending on $(\calM,p)$.
Consider first $k_{max}$ eigenvalues and eigenvectors of $\tilde{L}_{rw}$,
$\tilde{L}_{rw} v_k = \lambda_k v_k$, 
and $v_k$ are normalized s.t. 
$ N \| v_k\|_{\tilde{D}}^2 =1 $.
Define for $1 \le k\le K$,
\[
\tilde{\phi}_k := \rho_X \left(  \frac{ 1}{ \sqrt{ N } } \psi_k \right).
\]
Then, for sufficiently large $N$, 
w.p.$> 1- 4K^2 N^{-10} - ( 4K + 8)N^{-9}$, 
$ \| v_k \|_2 = \Theta(1)$, and 
the same bounds as in Theorem \ref{thm:refined-rates} hold
for $ | \mu_k - \lambda_k |$  and $ \| v_k - \alpha_k \tilde{\phi}_k \|_2$, for $ 1 \le k \le k_{max}$,
with certain scalars $\alpha_k$  satisfying $|\alpha_k| = 1+o(1)$,
\end{theorem}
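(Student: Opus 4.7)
The plan is to carry out Steps 2 and 3 of the four-step framework, now in the $\tilde{D}$-weighted inner product $\langle u,v\rangle_{\tilde{D}} := u^T \tilde{D} v$, in which $\tilde{L}_{rw}$ is self-adjoint. Steps 0 and 1 are already delivered by Propositions \ref{prop:eigvalue-UB-rw-density-correct} and \ref{prop:eigvalue-LB-crude-rw-density-correct}: on their joint good event, each $\lambda_k$ ($k \le K$) lies in $(\mu_k - \gamma_K, \mu_k + \gamma_K)$, so $|\lambda_j - \mu_k| \ge \gamma_K$ whenever $j \ne k$ and $k \le k_{max}$. Extensions to higher-multiplicity $\mu_k$ will follow the subspace-projection variant in Remark \ref{rk:multiplicity}.

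For Step 2, I would first apply the point-wise convergence Theorem \ref{thm:pointwise-rate-dencity-correct} to each $\psi_k$, $1 \le k \le K$; via a union bound and $\|\cdot\|_2 \le \sqrt{N}\|\cdot\|_\infty$, this yields
\[
\| \tilde{L}_{rw} \tilde{\phi}_k - \mu_k \tilde{\phi}_k \|_2 \le \mathrm{Err}_{pt} = O\bigl(\epsilon, \sqrt{\log N / (N\epsilon^{d/2+1})}\bigr).
\]
Since the $v_j$ are $\tilde{D}$-orthogonal with $\|v_j\|_{\tilde{D}}^2 = 1/N$, expanding the residual in the $\tilde{D}$-orthonormal basis $\{\sqrt{N}\, v_j\}_j$ and exploiting the eigen-gap exactly as in Proposition \ref{prop:step2} gives
\[
\Bigl\|\,\tilde{\phi}_k - N\langle\tilde{\phi}_k, v_k\rangle_{\tilde{D}}\, v_k \Bigr\|_{\tilde{D}} \le \frac{1}{\gamma_K}\| \tilde{L}_{rw} \tilde{\phi}_k - \mu_k \tilde{\phi}_k \|_{\tilde{D}}.
\]
Lemma \ref{lemma:Di-concen-eps2} (parts 1 and 2) controls $N\tilde{D}_i = (m_0 p(x_i))^{-1}(1 + O(\epsilon, \sqrt{\log N/(N\epsilon^{d/2})}))$ uniformly in $i$, so $\|\cdot\|_{\tilde{D}}$ and the $p$-weighted Euclidean norm $(Nm_0)^{-1/2}(\sum_i u_i^2/p(x_i))^{1/2}$ agree with relative error at the form rate. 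Together with a Bernstein concentration for $(1/N)\sum_i \psi_k(x_i)^2/p(x_i) = 1 + O(\sqrt{\log N/N})$---the weighted analogue of Lemma \ref{lemma:rhoX-isometry-whp}---this converts the $\tilde{D}$-norm bound back to the Euclidean $\|\cdot\|_2$, delivers $\|v_k\|_2 = \Theta(1)$, and produces scalars $|\alpha_k| = 1 + o(1)$ with $\| v_k - \alpha_k \tilde{\phi}_k \|_2 = O(\mathrm{Err}_{pt})$.

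For Step 3, I would expand the identity
\[
\langle v_k, \tilde{L}_{rw} \tilde{\phi}_k - \mu_k \tilde{\phi}_k \rangle_{\tilde{D}} = (\lambda_k - \mu_k) \langle v_k, \tilde{\phi}_k\rangle_{\tilde{D}}
\]
by writing $v_k = \alpha_k \tilde{\phi}_k + \varepsilon_k$ with $\|\varepsilon_k\|_2 = O(\mathrm{Err}_{pt})$. The principal term $\alpha_k(\tilde{\phi}_k^T \tilde{D}\tilde{L}_{rw}\tilde{\phi}_k - \mu_k \tilde{\phi}_k^T\tilde{D}\tilde{\phi}_k)$ reduces by the definition \eqref{eq:def-tildeENu} to $\alpha_k(\tilde{E}_N(\tilde{\phi}_k)/m_0 - \mu_k \tilde{\phi}_k^T\tilde{D}\tilde{\phi}_k)$; Theorem \ref{thm:form-rate-density-correction} gives $\tilde{E}_N(\tilde{\phi}_k) = (\mu_k + O(\mathrm{Err}_{form}))/N$, and Lemma \ref{lemma:Di-concen-eps2} combined with concentration of $\sum_i \psi_k(x_i)^2$ gives $\tilde{\phi}_k^T\tilde{D}\tilde{\phi}_k = (Nm_0)^{-1}(1+O(\mathrm{Err}_{form}))$, so the principal term is $O(\mathrm{Err}_{form}/N)$ with $\mathrm{Err}_{form} = O(\epsilon, \sqrt{\log N/(N\epsilon^{d/2})})$. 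The remainder $\langle \varepsilon_k, \tilde{L}_{rw}\tilde{\phi}_k - \mu_k \tilde{\phi}_k\rangle_{\tilde{D}}$ is controlled via Cauchy--Schwarz in $\|\cdot\|_{\tilde{D}}$ and the uniform bound on $\tilde{D}_i$ by $O(\mathrm{Err}_{pt}^2/N) = O(\epsilon/N)$, using $\epsilon^{d/2+2} \ge c_K \log N/N$ to absorb the variance part of $\mathrm{Err}_{pt}$. Dividing by $|\langle v_k, \tilde{\phi}_k\rangle_{\tilde{D}}| = \Theta(1/N)$ yields the claimed form-rate bound on $|\lambda_k - \mu_k|$.

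The main technical obstacle is the careful interaction of weights. The density correction changes the limit operator to $-\Delta$ rather than $-\Delta_{p^2}$, introduces a $1/p$ factor into $N\tilde{D}_i$ via Lemma \ref{lemma:Di-concen-eps2}, places the natural eigenvector orthogonality in the $\tilde{D}$-inner product rather than the Euclidean one, while the statement of the theorem and the point-wise residual are measured in $\|\cdot\|_2$. Every norm conversion and cross-inner-product manipulation must be executed at the form rate, which is precisely what the uniform-in-$i$, relative-form-rate control of $\tilde{D}_i$ from Lemma \ref{lemma:Di-concen-eps2} provides.
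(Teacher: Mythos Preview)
Your proposal is correct and follows essentially the same approach as the paper's proof: work in the $\tilde{D}$-inner product where $\tilde{L}_{rw}$ is self-adjoint and the eigenvectors are orthogonal, use Lemma \ref{lemma:Di-concen-eps2} to pass between $\|\cdot\|_{\tilde{D}}$, the $p^{-1}$-weighted norm, and $\|\cdot\|_2$ at the form rate, apply Theorem \ref{thm:pointwise-rate-dencity-correct} for Step 2 and Theorem \ref{thm:form-rate-density-correction} for Step 3, with the same $(\lambda_k-\mu_k)\langle v_k,\tilde{\phi}_k\rangle_{\tilde{D}}$ identity and $v_k=\alpha_k\tilde{\phi}_k+\varepsilon_k$ decomposition. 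The paper makes the norm-conversion step explicit via the chain $\|u\|_{\tilde{D}}^2 \asymp \|u\|_{p^{-1}}^2 \asymp N^{-1}\|u\|_2^2$ (using $p_{min}\le p\le p_{max}$) and records $\|\tilde{\phi}_k\|_{\tilde{D}}^2 = N^{-1}(1+o(1))$ through the concentration of $N^{-1}\sum_i \psi_k(x_i)^2/p(x_i)$, which is exactly the weighted Bernstein step you outline.
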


\begin{figure}
\captionsetup{width=0.9\linewidth}
\hspace{-40pt}
\begin{subfigure}{0.5 \textwidth}
\includegraphics[trim =  40 0 40 0, clip, height=.48\textwidth]{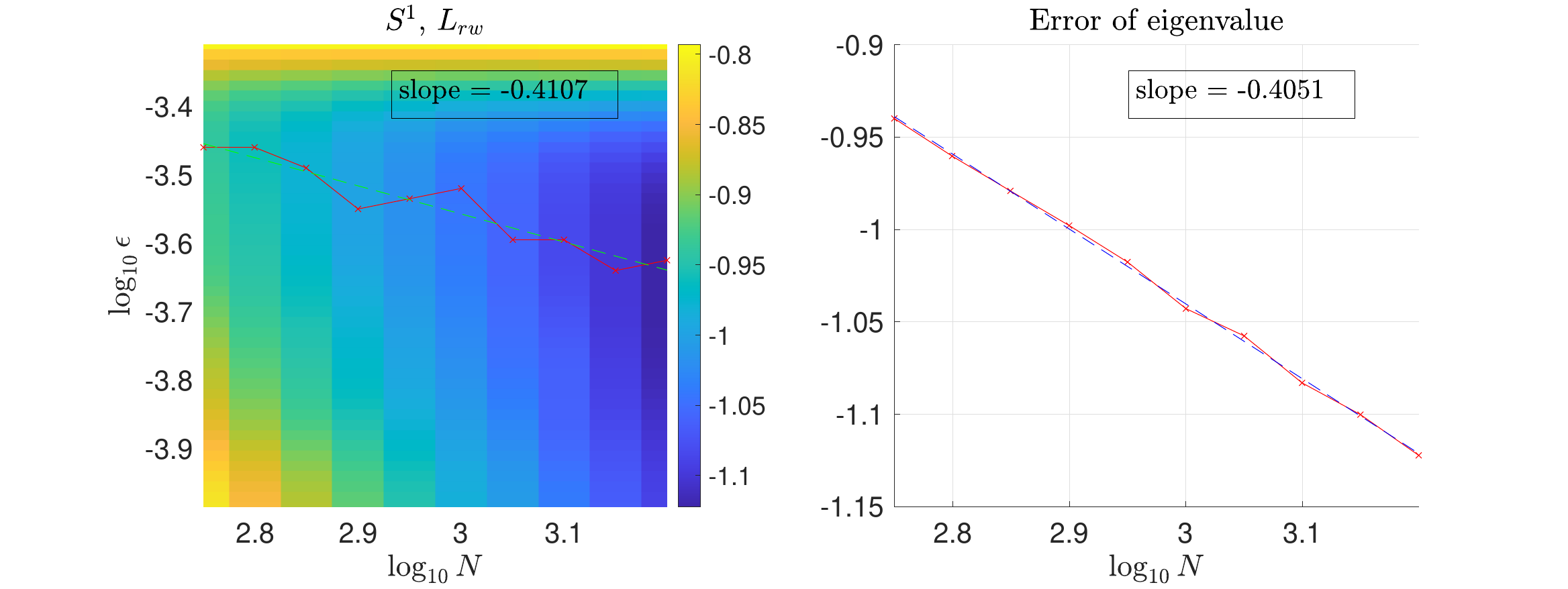} 
\caption{}
\end{subfigure}
\hspace{20pt}
\begin{subfigure}{0.5 \textwidth}
\includegraphics[trim =  40 0 40 0, clip, height=.48\linewidth]{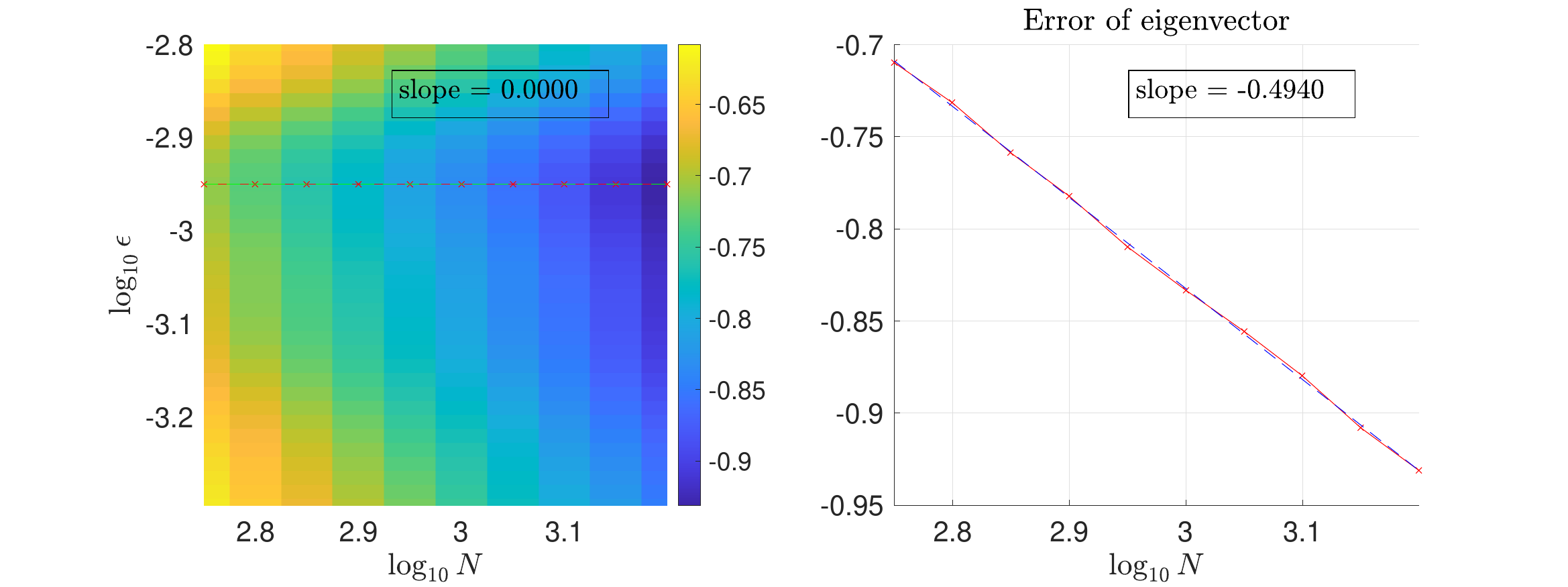} 
\caption{}
\end{subfigure}

\caption{
\scriptsize
Data points are sampled uniformly on $S^1$ embedded in $\R^4$.
(a) The eigenvalue relative error $\text{RelErr}_{\lambda}$, 
visualized (in $\log_{10}$) as a field on a grid of ($\log_{10}$)   $N$ and $\epsilon$,
$k_{max} = 9$.
The red curve on the left plot indicates the post-selected optimal $\epsilon$ which minimizes the error,
and that minimal error as a function of $N$ is plotted on the right in log-log scale. 
(b) Same plot as (a) for eigenvector relative error  $\text{RelErr}_{v}$.
The relative errors are defined in \eqref{eq:RelErr-eig}.
The empirical errors are averaged over 500 runs of experiments,
and the log error values are smoothed over the grid for better visualization.
Plots of the raw values are shown in Fig. \ref{fig:S1-Lrw-nosmooth}.
}
\label{fig:S1-Lrw}
\end{figure}

\begin{figure}[t]
\captionsetup{width=0.9\linewidth}
\hspace{-40pt}
\begin{subfigure}{0.5 \textwidth}
\includegraphics[trim =  40 0 40 0, clip, height=.48\textwidth]{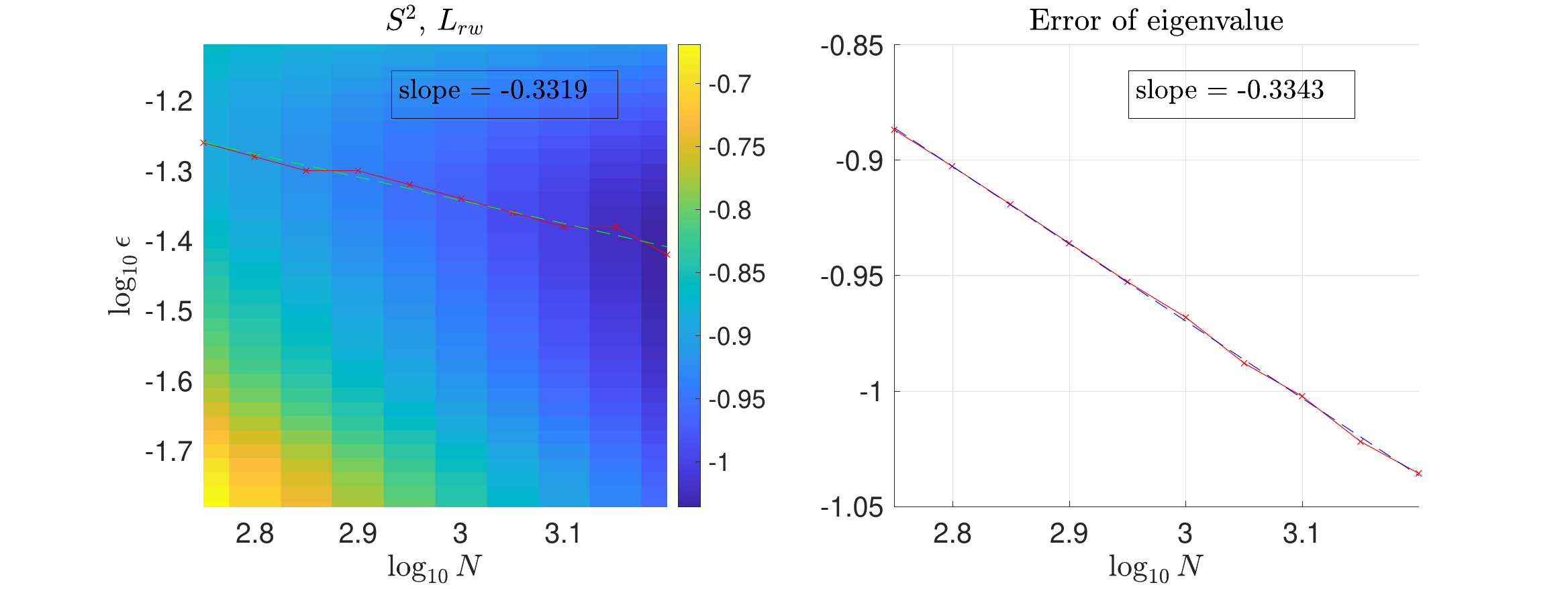} 
\caption{}
\end{subfigure}
\hspace{20pt}
\begin{subfigure}{0.5 \textwidth}
\includegraphics[trim =  40 0 40 0, clip, height=.48\linewidth]{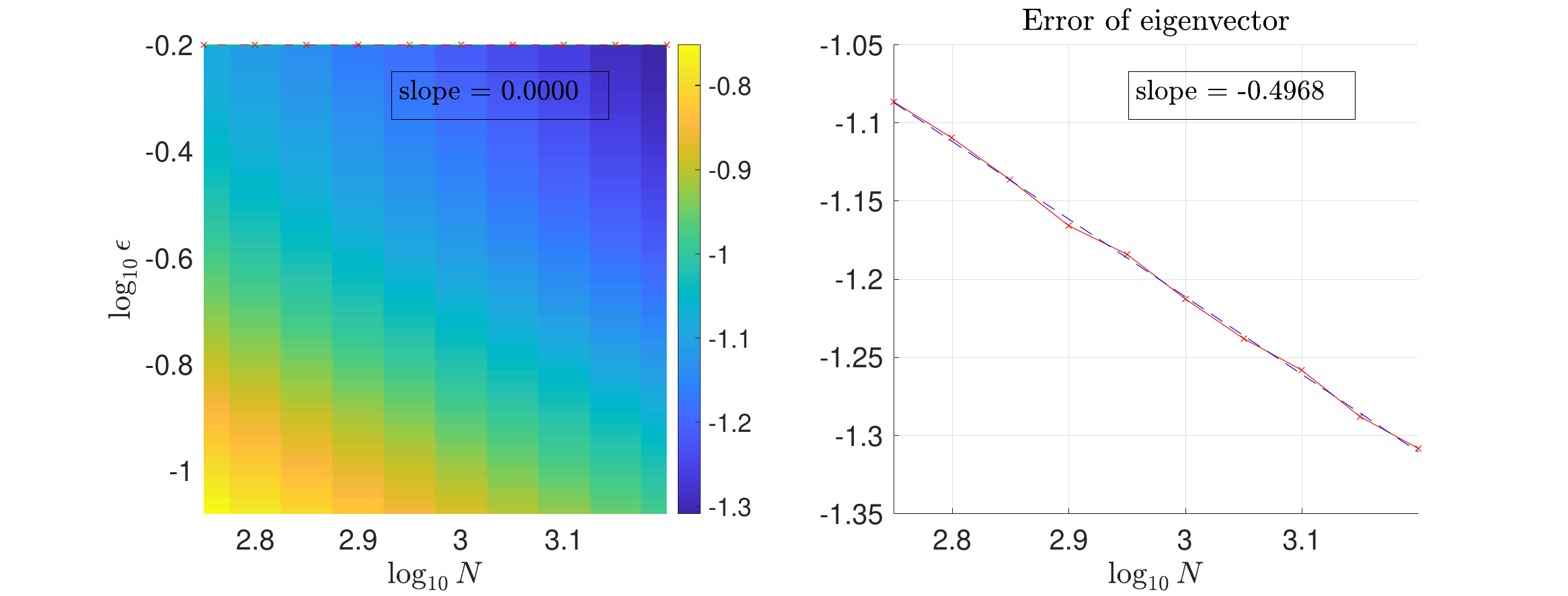} 
\caption{}
\end{subfigure}
\caption{
\scriptsize
Data points are sampled uniformly on $S^2$ embedded in $\R^3$,
same plots as Fig. \ref{fig:S1-Lrw}. 
$k_{max} = 9$,
and the plots of raw values are shown in Fig. \ref{fig:S2-Lrw-nosmooth}.
}
\label{fig:S2-Lrw}
\end{figure}

\section{Numerical experiments}\label{sec:exp}

In this section gives numerical results of point-wise convergence and eigen-convergence of graph Laplacians built from simulated manifold data.
Codes are released at
\url{https://github.com/xycheng/eigconvergence_gaussian_kernel}.

\subsection{Eigen-convergence of $L_{rw}$}

We test on two simulated datasets, which are uniformly sampled on $S^1$ (embedded in $\R^4$, the formula is in Appendix \ref{app:numerics})
and unit sphere $S^2$ (embedded in $\R^3$). 
For both datasets,  we compute over an increasing number of samples $N = \{ 562,  \cdots, 1584 \}$
and  a range of values of $\epsilon $, 
where the grid points of both $N$ and $\epsilon$ are evenly spaced in log scale.
For each value of $N$ and $\epsilon$, we generate $N$ data points, construct the  kernelized matrix
$W_{ij}=K_\epsilon(x_i, x_j)$ as defined in  \eqref{eq:def-K-eps} with Gaussian $h$,
 and compute the first 10 eigenvalues $\lambda_k $ and eigenvectors $v_k$ of $L_{rw}$.
The errors are computed by 
\begin{equation}\label{eq:RelErr-eig}
\text{RelErr}_{\lambda} = \sum_{k=2}^{k_{max}}  \frac{|\lambda_k - \mu_k|}{\mu_k},
\quad
\text{RelErr}_{v} = \sum_{k=2}^{k_{max}}  \frac{ \| v_k - \phi_k \|_2}{ \| \phi_k \|_2},
\end{equation}
where $\phi_k $ is as defined by \eqref{eq:def-phik}. 
 The experiment is repeated for 500 replicas from which the averaged empirical errors are computed. 
For the data on $S^1$, $\epsilon = \{ 10^{-2.8}, \cdots, 10^{-4} \}$.
The manifold (in first 3 coordinates) is illustrated in Fig. \ref{fig:S1-tildeLrw-pt}(a) but the density is uniform here.
See more details  in Appendix \ref{app:numerics}.
For the data on $S^2$, 
$\epsilon = \{ 10^{-0.2}, \cdots, 10^{-1.8} \}$.
These ranges are chosen so that the minimal error over $\epsilon$  for each $N$ are observed, at least for $\text{RelErr}_{\lambda}$. 
Note that for $S^1$, the population eigenvalues starting from $\mu_2$ are of multiplicity 2,
and for $S^2$, the multiplicities are  3, 5, $\cdots$.

The results are shown in Figures \ref{fig:S1-Lrw} and \ref{fig:S2-Lrw}.
For data on $S^1$, 
Fig. \ref{fig:S1-Lrw} (a) shows that $\text{RelErr}_{\lambda}$ as a function of $N$ (with post-selected best $\epsilon$)
shows a convergence order of about $N^{- 0.4 }$,
which is consistent with the theoretical bound of $N^{-1/(d/2+2)}$ in Theorem \ref{thm:refined-rates-rw}, since $d=1$ here. 
In the left plot of colored field, 
 the log error values are smoothed over the grid of $N$ and $\epsilon$, 
 and the best $\epsilon$ scales with $N$ as about $N^{-0.4}$.
The empirical scaling of optimal $\epsilon$ is less stable to observe:
depending on the level of smoothing,
the slope of $\log_{10} \epsilon$ varies between -0.2 and -0.5 (the left plot),
while the slope for best (log) error is always about -0.4 (the right plot). 
The result without smoothing is shown in Fig. \ref{fig:S1-Lrw-nosmooth}.
The eigenvector error in  Fig. \ref{fig:S1-Lrw}(b)  shows an order of  about $N^{-0.5}$, which is better than the theoretical prediction. 
For the data on $S^2$,
the eigenvalue convergence shows an order of about $N^{-0.33}$,
in agreement with the theoretical rate of $N^{-1/(d/2+2)}$ when $d=2$.
The eigenvector error again shows an order of about $N^{-0.5}$ which is better than theory.
The small error of eigenvector estimation at very large value of $\epsilon$ may be due to the symmetry of the simple manifolds $S^1$ and $S^2$. 
In both experiments, the eigenvector estimation prefers a much larger value of $\epsilon$ than the eigenvalue estimation, 
which is consistent with the theory.

\begin{figure}[t]
\captionsetup{width=0.9\linewidth}
\hspace{-30pt}
\begin{subfigure}{0.24 \textwidth}
\includegraphics[trim =  10 0 10 0, clip, height=.94\textwidth]{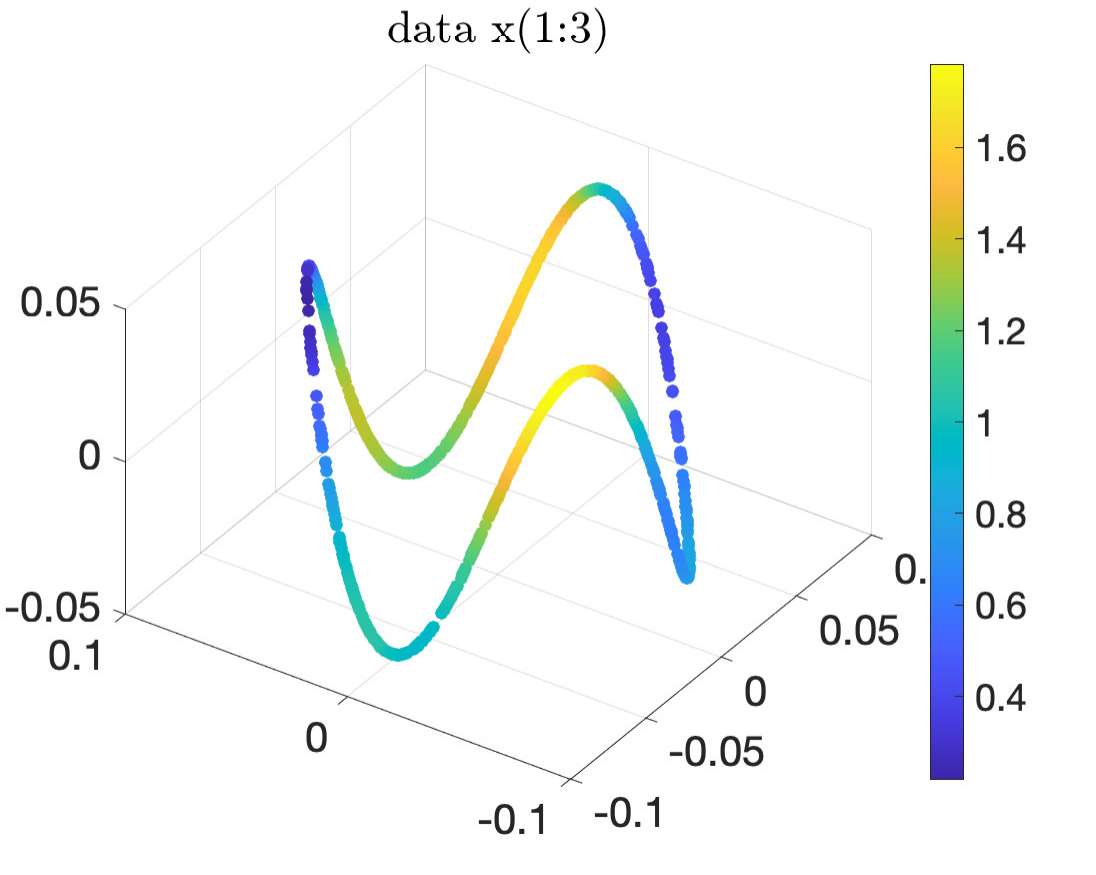} 
\caption{}
\end{subfigure}
\hspace{5pt}
\begin{subfigure}{0.24 \textwidth}
\includegraphics[trim =  10 0 10 0, clip, height=.93\linewidth]{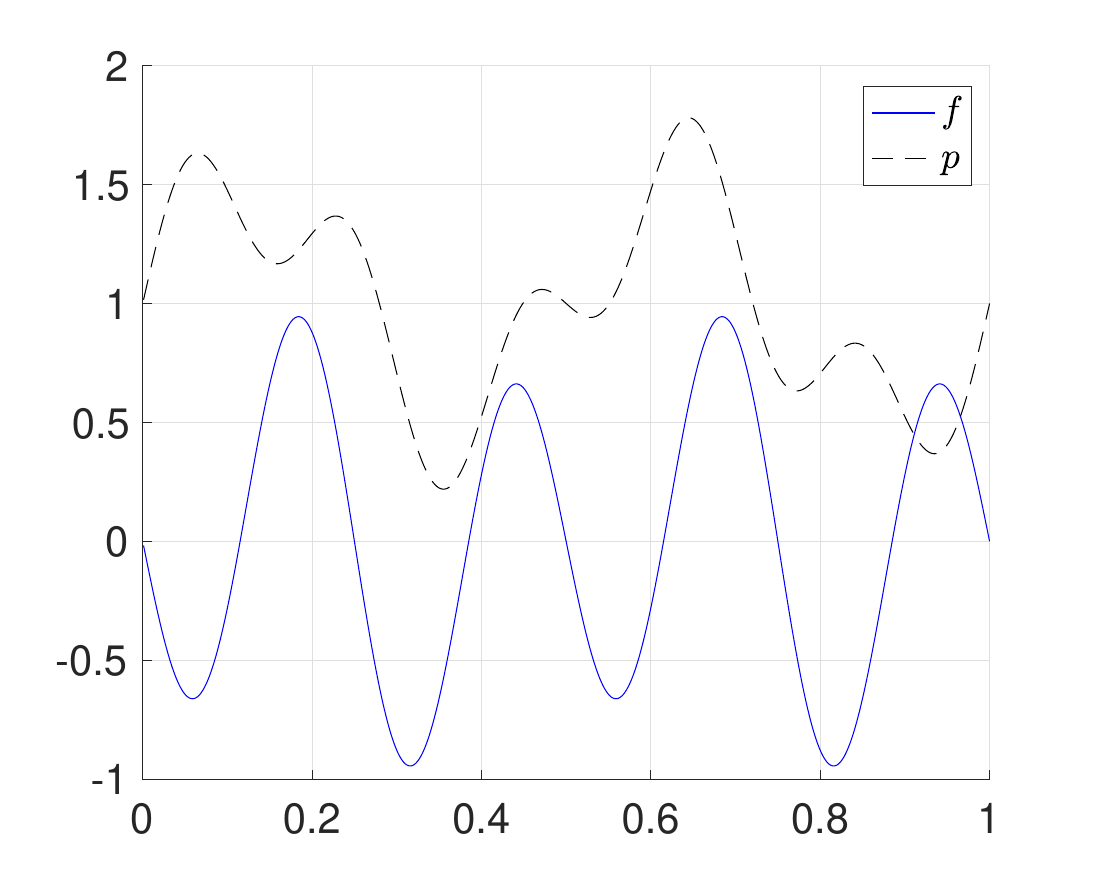} 
\caption{}
\end{subfigure}
\hspace{5pt}
\begin{subfigure}{0.24 \textwidth}
\includegraphics[trim =  10 0 10 0, clip, height=.93\linewidth]{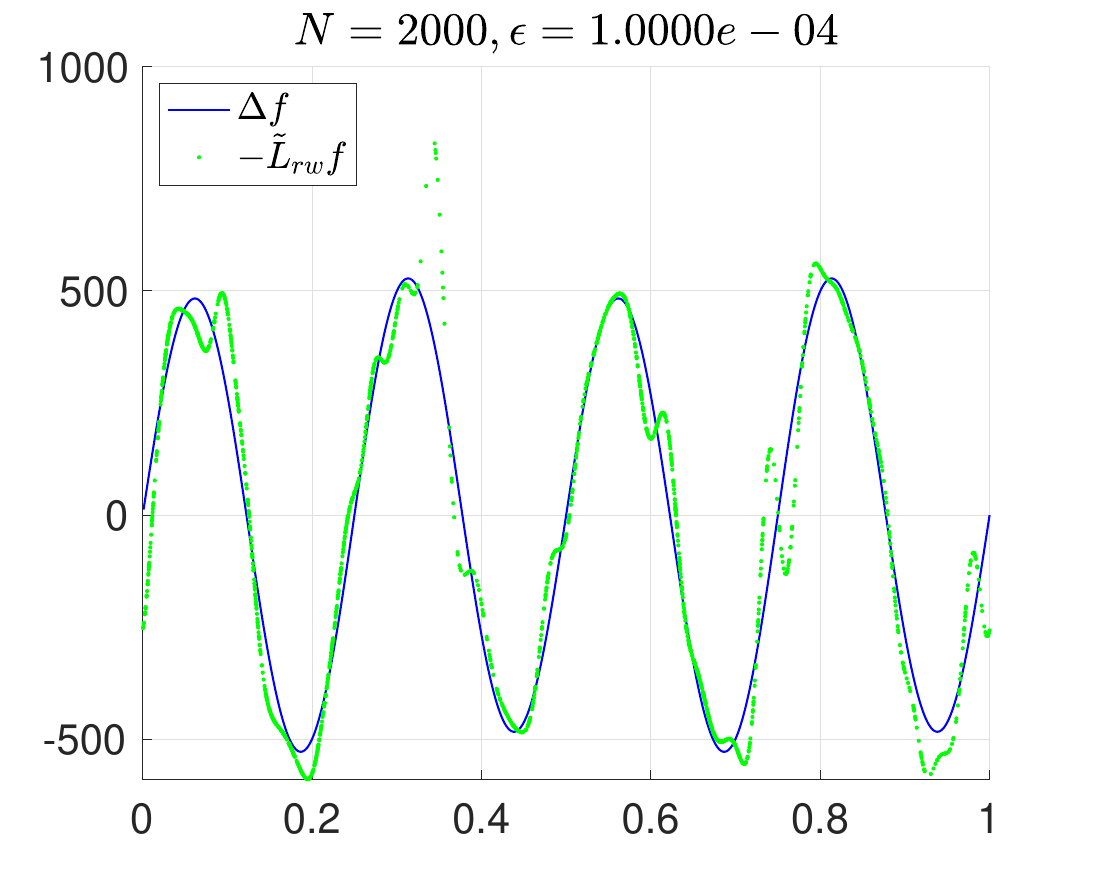} 
\caption{}
\end{subfigure}
\hspace{5pt}
\begin{subfigure}{0.249 \textwidth}
\includegraphics[trim =  40 0 10 0, clip, height=.96\textwidth]{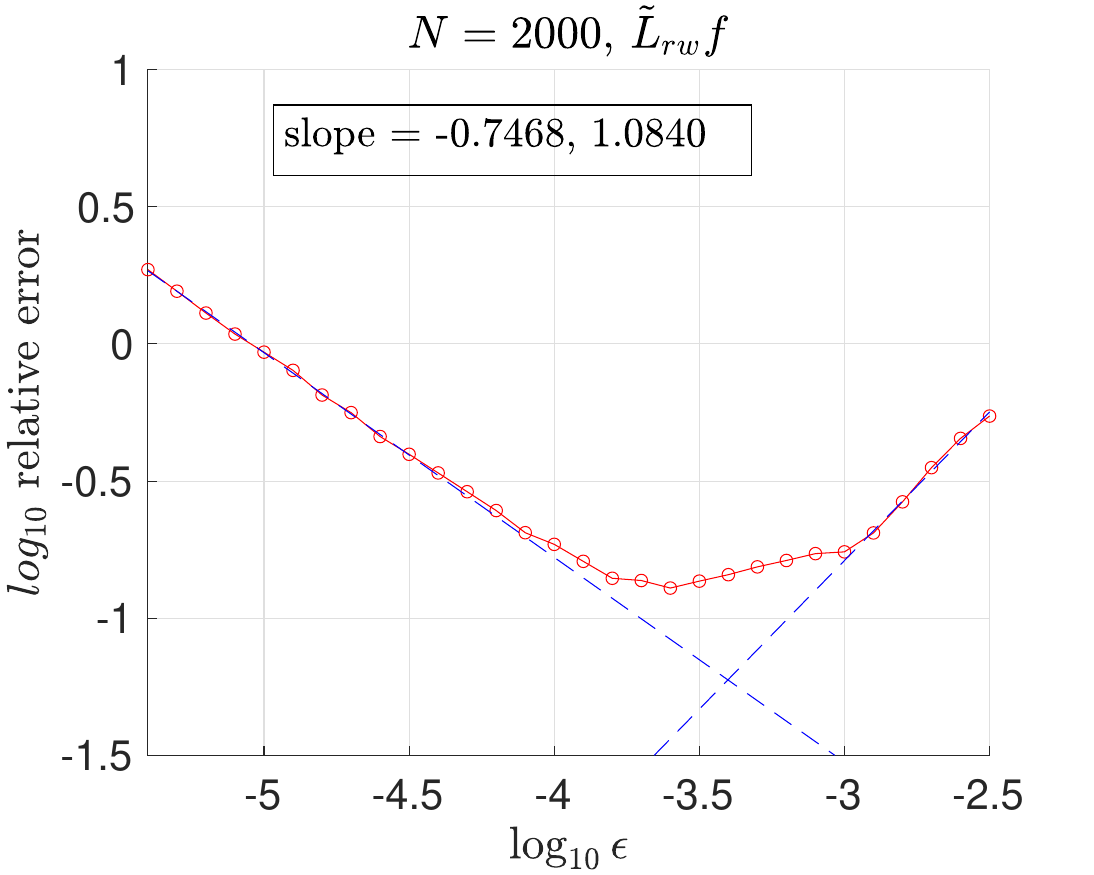} 
\caption{}
\end{subfigure}
\caption{
\scriptsize
(a) Random sampled data on $S^1$ embedded in $\R^4$, the first 3 coordinates are shown, and colored by the density. 
(b) Density $p$ and the test function $f$ plotted as a function of intrinsic coordinate (arc-length) on $[0,1)$ of $S^1$. 
(c) One realization of $\tilde{L}_{rw} (\rho_X f)$ plotted in comparison with the true function of $\rho_X (\Delta f)$.
(d) Log relative error $\log_{10} \text{RelErr}_{pt}$, as defined in \eqref{eq:def-err-pt}, computed over a range of values of $\epsilon$, 
averaged over 50 runs of repeated experiments. 
The two fitted lines show the approximate scaling of $\text{RelErr}_{pt}$ at small $\epsilon$, where variance error dominates, and at large $\epsilon$, where bias error dominates. 
}
\label{fig:S1-tildeLrw-pt}
\end{figure}

\begin{figure}
\captionsetup{width=0.9\linewidth}
\hspace{-40pt}
\begin{subfigure}{0.5 \textwidth}
\includegraphics[trim =  40 0 40 0, clip, height=.48\textwidth]{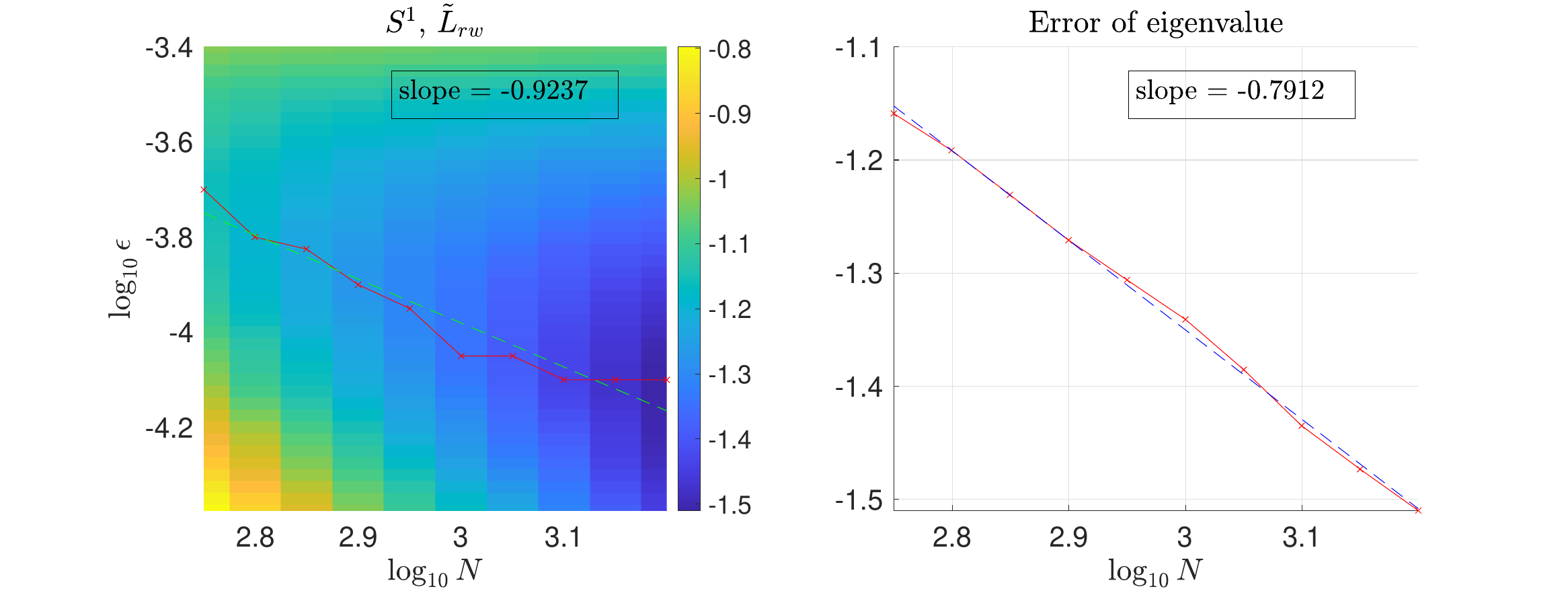} 
\caption{}
\end{subfigure}
\hspace{20pt}
\begin{subfigure}{0.5 \textwidth}
\includegraphics[trim =  40 0 40 0, clip, height=.48\linewidth]{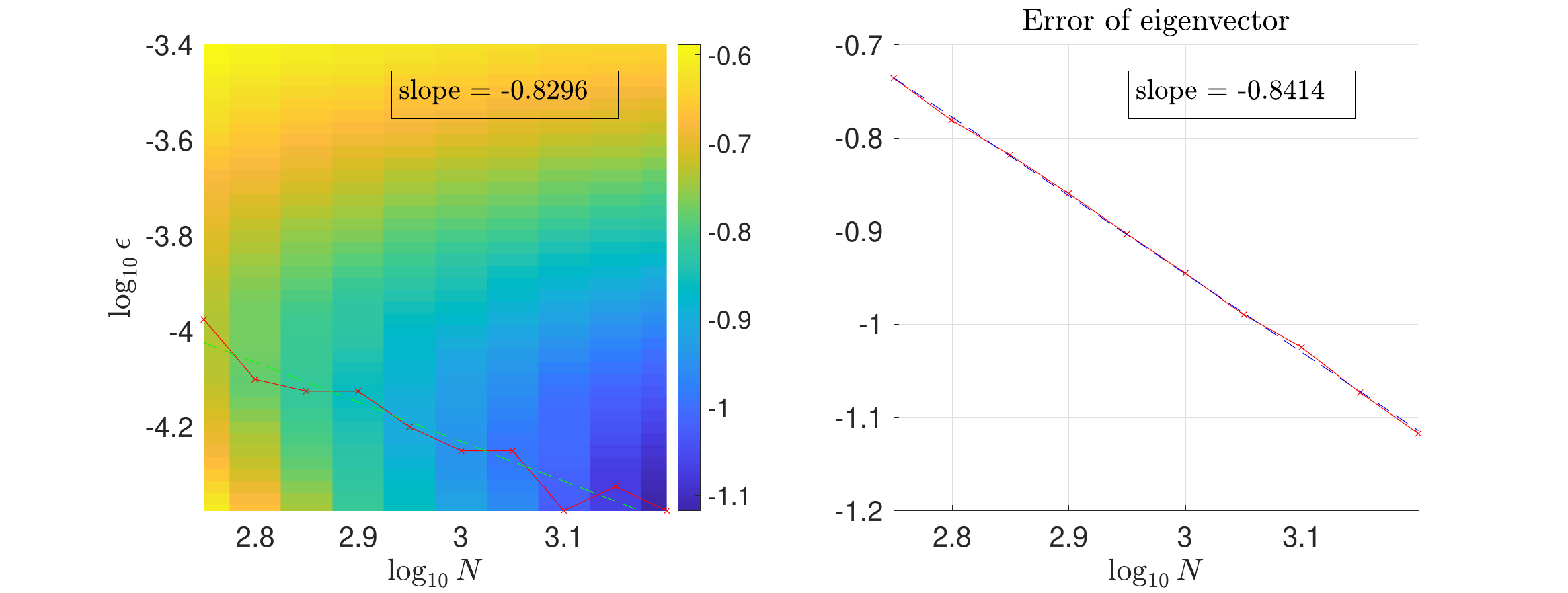} 
\caption{}
\end{subfigure}
\caption{
\scriptsize
Same eigenvalue and eigenvector relative error plots as Fig. \ref{fig:S1-Lrw},
where data are non-uniformly sampled on $S^1$ as in Fig. \ref{fig:S1-tildeLrw-pt}(a).
$k_{max} = 9$,
and the plots of raw values are shown in Fig. \ref{fig:S1-tildeLrw-nosmooth}.
}
\label{fig:S1-tildeLrw}
\end{figure}

\subsection{Density-corrected graph Laplacian}\label{subsec:exp-density-correct}

To examine the density-corrected graph Laplacian,
we switch to non-uniform density on $S^1$, illustrated in Fig. \ref{fig:S1-tildeLrw-pt}(a).
We first investigate the point-wise convergence of $-\tilde{L}_{rw} f$ to $\Delta f$,
on a test function $f : S^1 \to \R$, see more details in Appendix \ref{app:numerics}.
The error is computed as 
\begin{equation}\label{eq:def-err-pt}
\text{RelErr}_{pt} =  \frac{ \| -\tilde{L}_{rw} \rho_X f - \rho_X (\Delta f) \|_1 }{ \| \rho_X (\Delta f) \|_1 },
\end{equation}
and the result is shown in Fig. \ref{fig:S1-tildeLrw-pt}.
Theorem \ref{thm:pointwise-rate-dencity-correct}
predicts the bias error to be $O(\epsilon)$
and the variance error to be $O(\epsilon^{-d/4-1/2}) = O(\epsilon^{-3/4})$ since $N$ is fixed, 
which agrees with Fig. \ref{fig:S1-tildeLrw-pt}(d). 

The results of $\text{RelErr}_{\lambda}$ and $\text{RelErr}_{v}$
are shown in Fig. \ref{fig:S1-tildeLrw}.
The order of convergence with best $\epsilon$ appears to be about $N^{-0.8}$ for both eigenvalue and eigenvector errors,
which is better than those of $L_{rw}$ (when $p$ is uniform) in Fig. \ref{fig:S1-Lrw},
and better than the theoretical prediction in Theorem \ref{thm:refined-rates-rw-density-correct}.

\section{Discussion}

The current result may be extended in several directions.
First, for manifold with smooth boundary, 
the random-walk graph Laplacian recovers the Neumann Laplacian \cite{coifman2006diffusion},
and one can expect to prove the spectral convergence as well, such as in \cite{lu2020graph}.
Second, extension to kernel with variable or adaptive bandwidth \cite{berry2016variable,cheng2020convergence},
and other normalization schemes, e.g., bi-stochastic normalization \cite{marshall2019manifold,landa2021doubly,wormell2021spectral},
would be important to improve the robustness against low sampling density and noise in data,
and even the spectral convergence as well. 
Related is the problem of spectral convergence to other manifold diffusion operators, e.g., the Fokker-Planck operator, on $L^2(\calM, p dV)$.
It would also be interesting to extend to more general types of kernel function $h$ which is not Gaussian,
and even not symmetric \cite{wu2018think}, for the spectral convergence. 
{Relaxing the condition on the kernel bandwidth $\epsilon$ can also be useful:
the optimal transport approach was able to show spectral consistency in the regime just beyond graph connectivity, namely when $\epsilon^{d/2} \gg \log N/N$ \cite{calder2019improved}, which is less restrictive than the condition needed by Gaussian kernel in the current paper.
Being able to extend the analysis to very sparse graph is important for applications.
}
At last, further investigation is needed to explain the good spectral convergence observed in experiments, 
particularly that of the eigenvector convergence and the faster rate with density-corrected graph Laplacian.
For the eigenvector convergence, 
the current work focuses on the 2-norm consistency, 
while  the $\infty$-norm consistency as has been derived in \cite{dunson2021spectral,calder2020lipschitz} is also important to study.

\section*{Acknowledgement}
The authors thank Hau-Tieng Wu for helpful discussion. 
Cheng thanks Yiping Lu for helpful discussion on the eigen-convergence problem. 
The work is supported by NSF DMS-2007040.
XC is also partially supported by NSF (DMS-1818945, DMS-1820827, DMS-2134037), NIH (R01GM131642), and the Alfred P. Sloan Foundation.

\bibliographystyle{plain}
\bibliography{kernel.bib}

\normalsize

\appendix

\setcounter{figure}{0} \renewcommand{\thefigure}{A.\arabic{figure}}
\setcounter{table}{0} \renewcommand{\thetable}{A.\arabic{table}}
\setcounter{equation}{0} \renewcommand{\theequation}{A.\arabic{equation}}
\setcounter{remark}{0} \renewcommand{\theremark}{A.\arabic{remark}}
\section{Details of numerical experiments}\label{app:numerics}

In the example of $S^1$ data, the isometric embedding in $\R^4$ is by
\[
\iota (t) = \frac{1}{2\pi \sqrt{5}} \left( \cos(2\pi t), \sin (2\pi t), \frac{2}{3} \cos(2\pi 3t),    \frac{2}{3} \sin(2\pi 3t)  \right),
\]
where $t \in [0,1)$ is the intrinsic coordinate of $S^1$ (arc-length).  
In the example in Section. \ref{subsec:exp-density-correct} where $p$ is not uniform,
$p(t) =1 + \frac{1}{2} \sin( 2\pi 2 t)   +  \frac{0.6}{2}  \sin( 2\pi 5 t)$,
and the test function
$f(t) = 0.2 \sin(4 \pi t)-0.8 \sin(4 \pi 2 t)$.
In the example of $S^2$ data,
sample are on unit sphere in $\R^3$.

In both plots of the raw error data without smoothing, Figures \ref{fig:S1-Lrw-nosmooth} and \ref{fig:S2-Lrw-nosmooth} 
the slope of error convergence rates (about -0.4 and - 0.33) are about the same.
The slope of post-selected optimal (log) $\epsilon$ as a function of (log) $N$ changes,
due to the closeness of the values over the multiple values of $\epsilon$.

\begin{figure}
\captionsetup{width=0.9\linewidth}
\hspace{-40pt}
\begin{subfigure}{0.5 \textwidth}
\includegraphics[trim =  40 0 40 0, clip, height=.48\textwidth]{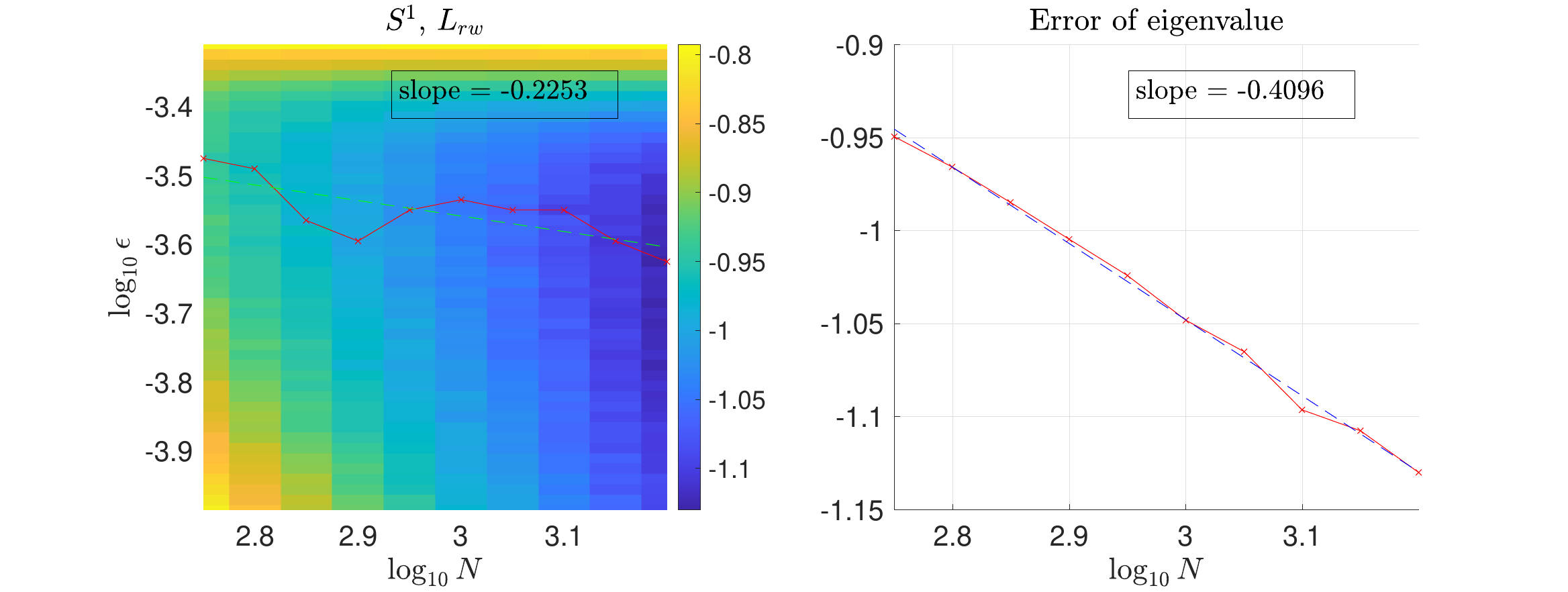} 
\caption{}
\end{subfigure}
\hspace{20pt}
\begin{subfigure}{0.5 \textwidth}
\includegraphics[trim =  40 0 40 0, clip, height=.48\linewidth]{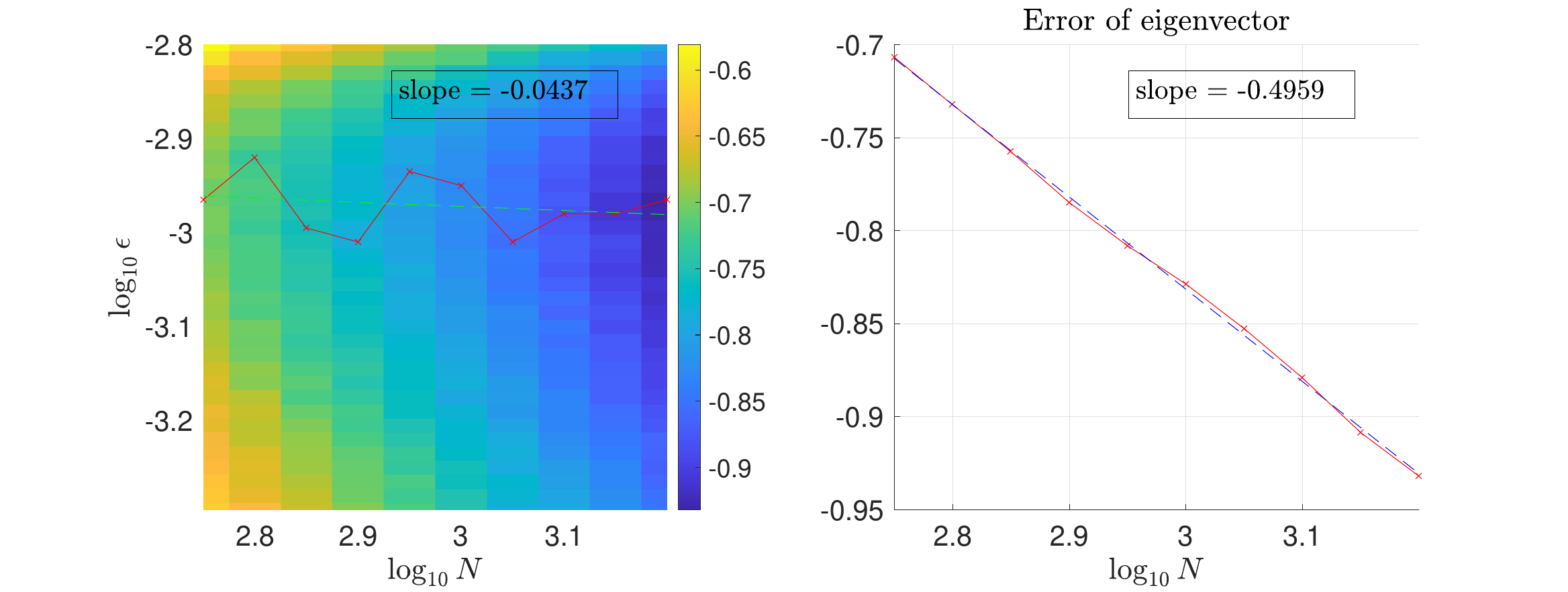} 
\caption{}
\end{subfigure}

\caption{
\scriptsize
Same plots as Fig. \ref{fig:S1-Lrw}
where the log error values on the (log) grid of $N$ and $\epsilon$ are without smoothing. 
}
\label{fig:S1-Lrw-nosmooth}
\end{figure}

\begin{figure}
\captionsetup{width=0.9\linewidth}
\hspace{-40pt}
\begin{subfigure}{0.5 \textwidth}
\includegraphics[trim =  40 0 40 0, clip, height=.48\textwidth]{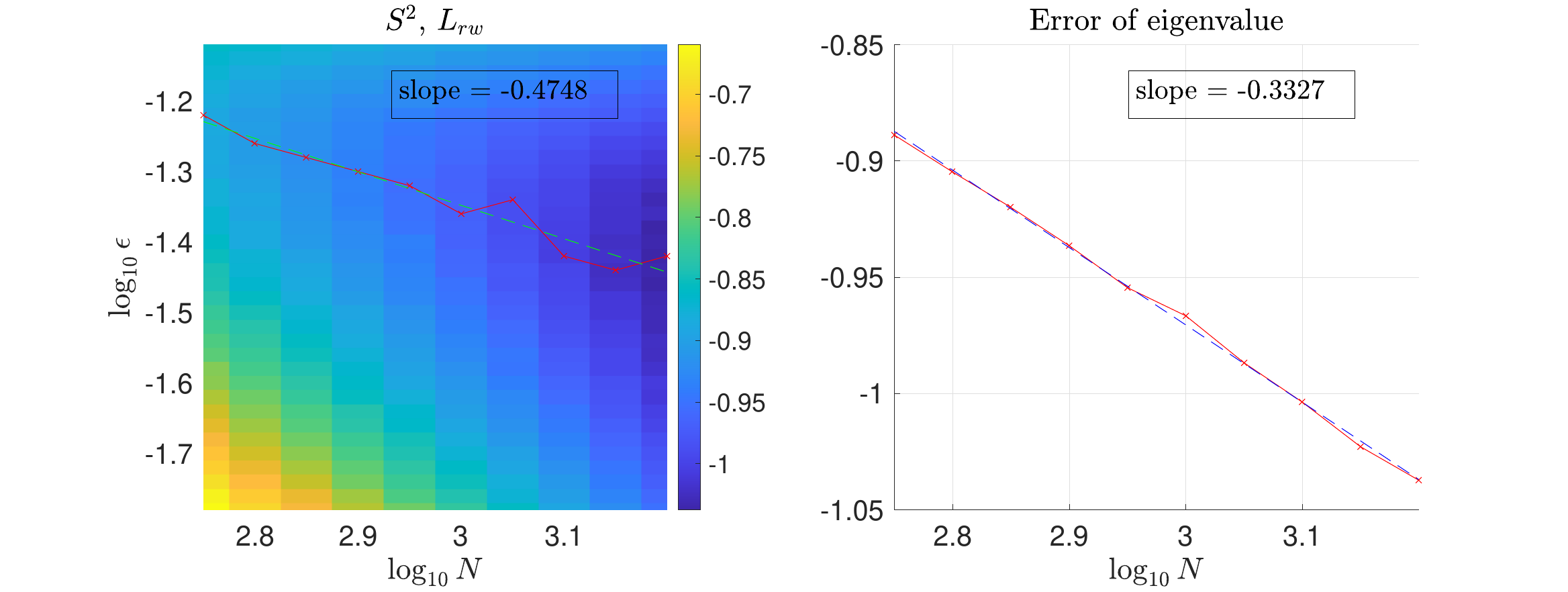} 
\caption{}
\end{subfigure}
\hspace{20pt}
\begin{subfigure}{0.5 \textwidth}
\includegraphics[trim =  40 0 40 0, clip, height=.48\linewidth]{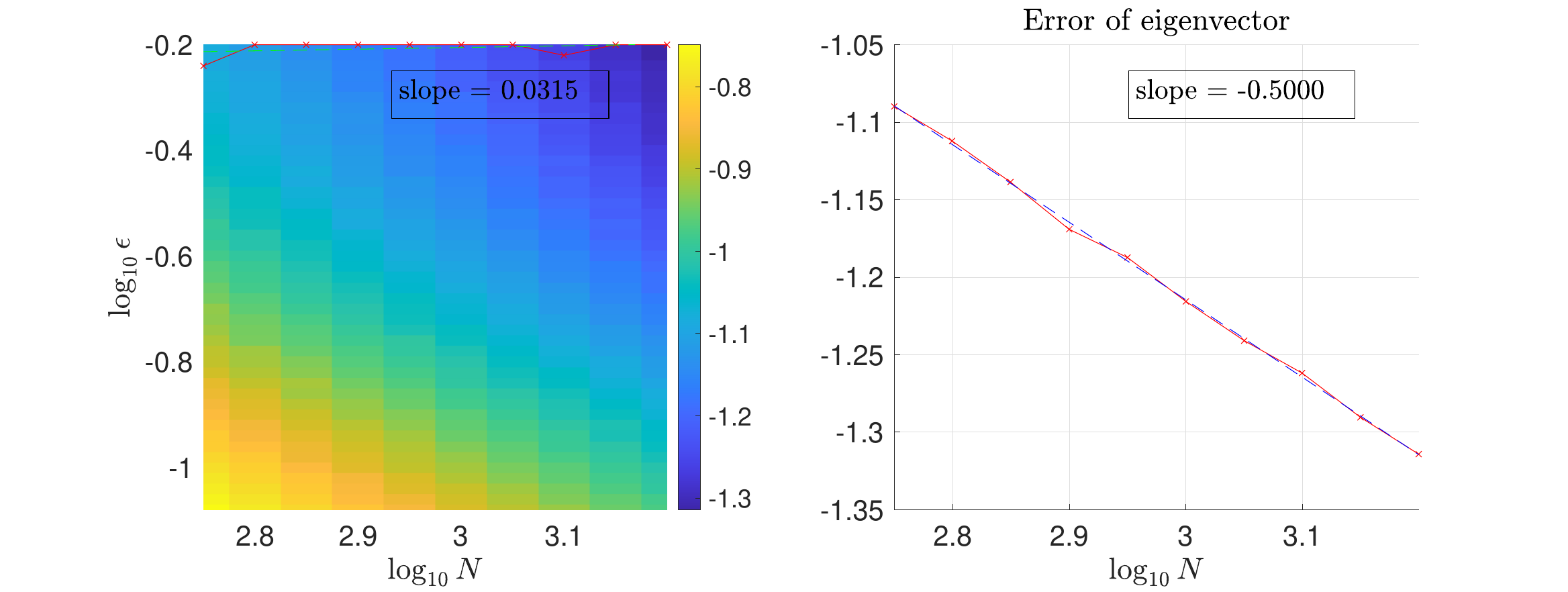} 
\caption{}
\end{subfigure}

\caption{
\scriptsize
Same plots as Fig. \ref{fig:S2-Lrw}
where the log error values on the (log) grid of $N$ and $\epsilon$ are without smoothing.
}
\label{fig:S2-Lrw-nosmooth}
\end{figure}

\begin{figure}
\captionsetup{width=0.9\linewidth}
\hspace{-40pt}
\begin{subfigure}{0.5 \textwidth}
\includegraphics[trim =  40 0 40 0, clip, height=.48\textwidth]{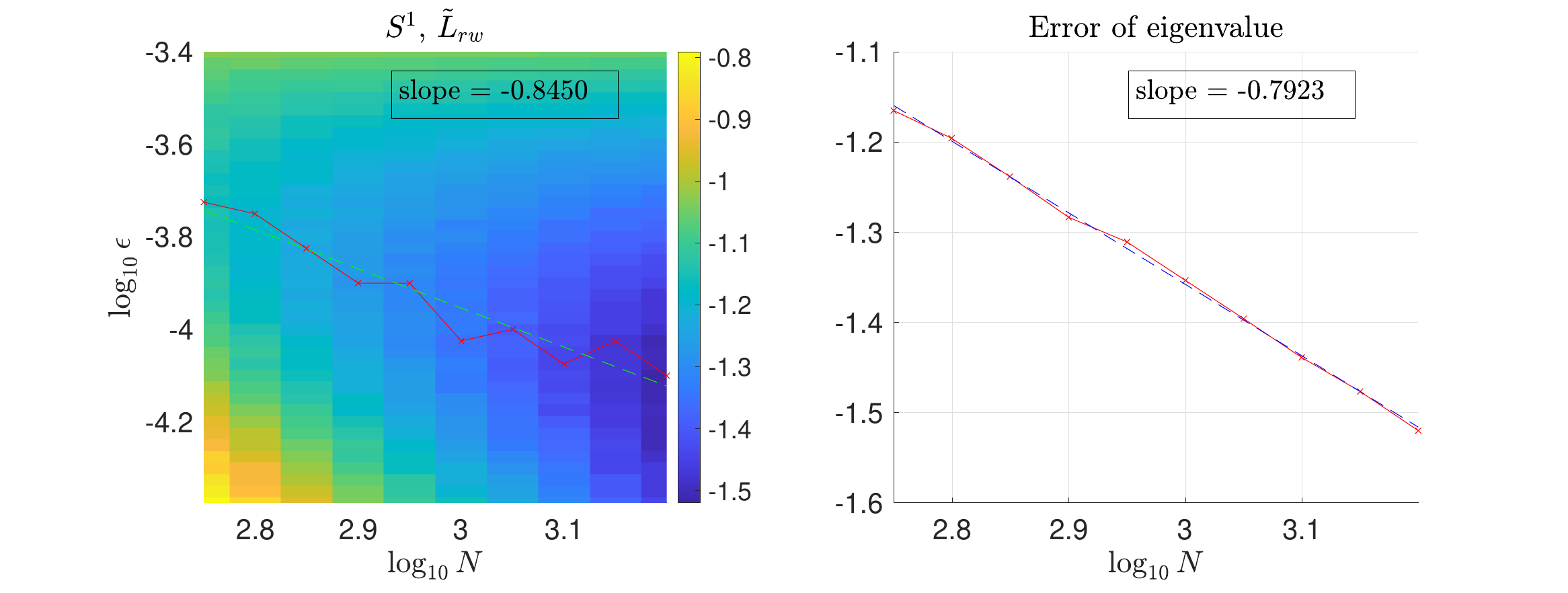} 
\caption{}
\end{subfigure}
\hspace{20pt}
\begin{subfigure}{0.5 \textwidth}
\includegraphics[trim =  40 0 40 0, clip, height=.48\linewidth]{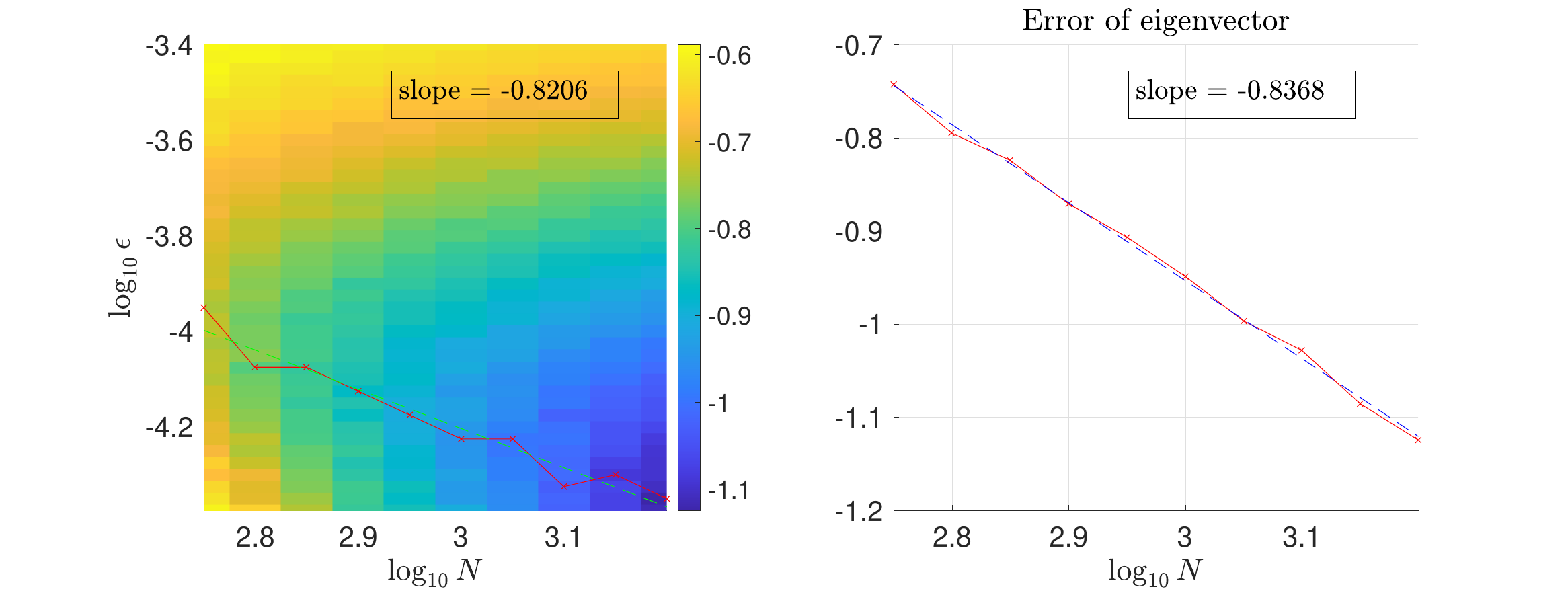} 
\caption{}
\end{subfigure}
\caption{
\scriptsize
Same plots as Fig. \ref{fig:S1-tildeLrw}
where the log error values on the (log) grid of $N$ and $\epsilon$ are without smoothing.
}
\label{fig:S1-tildeLrw-nosmooth}
\end{figure}

\section{More preliminaries}\label{app:proofs-prelim}

Throughout the paper, we use the following version of classical Bernstein inequality,
where the tail probability uses $\nu > 0$ which is an upper bound  of the variance. 
We use the sub-Gaussian near-tail,
which holds  when the tempted deviation threshold $t < \frac{3 \nu }{L}$.

\begin{lemma}[Classical Bernstein]\label{lemma:bern}
Let $\xi_j$ be i.i.d. bounded random variables, $j = 1, \cdots, N$,
$\E \xi_j = 0$.
 If $|\xi_j | \le L$ and $\E \xi_j^2 \le \nu$ for $L, \nu > 0$,
 then
\[
\Pr [ \frac{1}{N}\sum_{j=1}^N \xi_j  > t ], \,
\Pr [ \frac{1}{N}\sum_{j=1}^N \xi_j  < - t ] 
\le \exp \{-\frac{ t^2 N}{ 2 ( \nu + \frac{t L}{3} )} \},
\quad \forall t > 0.
\]
In particular, when $t L < 3 \nu$, both the tail probabilities are bounded by $ \exp \{ - \frac{1}{4} \frac{Nt^2}{\nu}\}$.
\end{lemma}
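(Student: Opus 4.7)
The plan is to apply the standard Chernoff exponential-moment method. Set $S_N = \sum_{j=1}^N \xi_j$. By Markov's inequality applied to $e^{\lambda S_N}$ for $\lambda > 0$,
\[
\Pr[S_N > tN] \le e^{-\lambda t N}\, \E[e^{\lambda S_N}] = e^{-\lambda t N} \prod_{j=1}^N \E[e^{\lambda \xi_j}],
\]
using independence of the $\xi_j$. The heart of the argument is then to obtain a one-variable MGF bound of the form
\[
\E[e^{\lambda \xi}] \le \exp\!\Big(\frac{\lambda^2 \nu/2}{1 - \lambda L/3}\Big), \qquad 0 < \lambda < 3/L.
\]

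To prove this MGF bound, I would expand $e^{\lambda \xi} = 1 + \lambda \xi + \sum_{k \ge 2} \frac{\lambda^k \xi^k}{k!}$, use $\E\xi = 0$, and bound $\E[\xi^k] \le \nu L^{k-2}$ for $k \ge 2$ via $|\xi|\le L$ and $\E\xi^2 \le \nu$. Summing the resulting geometric-like tail $\sum_{k\ge 2}\frac{\lambda^k L^{k-2}}{k!} \le \frac{1}{2}\sum_{k\ge 2}(\lambda L/3)^{k-2} = \frac{1/2}{1-\lambda L/3}$ (using $k! \ge 2\cdot 3^{k-2}$) gives $\E[e^{\lambda \xi}] \le 1 + \frac{\lambda^2 \nu/2}{1-\lambda L/3} \le \exp\!\big(\frac{\lambda^2 \nu/2}{1-\lambda L/3}\big)$. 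Plugging into the product yields
\[
\Pr[S_N > tN] \le \exp\!\Big(-\lambda t N + \frac{N \lambda^2 \nu/2}{1 - \lambda L/3}\Big).
\]

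Next I would optimize in $\lambda$. The essentially tight choice is $\lambda = \frac{t}{\nu + tL/3} \in (0, 3/L)$, which makes the exponent equal to $-\frac{t^2 N}{2(\nu + tL/3)}$ after a short algebraic simplification; this is exactly the claimed Bernstein tail. The lower-tail bound follows by applying the same argument to $-\xi_j$, which has the same variance and boundedness hypotheses. For the stated ``in particular'' refinement, note that when $tL < 3\nu$ one has $\nu + tL/3 < 2\nu$, so the exponent is at most $-\frac{t^2 N}{4\nu}$, giving the simplified sub-Gaussian bound.

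The argument is entirely routine; there is no real obstacle. The only minor care-point is the combinatorial factor justifying $k! \ge 2 \cdot 3^{k-2}$ for $k \ge 2$ (equivalently $(k-2)! \cdot \binom{k}{2} \cdot 2 \ge 2 \cdot 3^{k-2}$), which is a textbook inequality; one could alternatively use the standard comparison $e^{\lambda \xi} - 1 - \lambda \xi \le \lambda^2 \xi^2 \phi(\lambda L)$ with an explicit $\phi$, but the expansion route is shortest. Since the lemma is standard, it would be acceptable simply to cite a reference (e.g., Boucheron--Lugosi--Massart), but for self-containedness the derivation above fits in a few lines.
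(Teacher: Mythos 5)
Your proposal is correct: the Chernoff bound with the MGF estimate $\E[e^{\lambda \xi}] \le \exp\bigl(\frac{\lambda^2\nu/2}{1-\lambda L/3}\bigr)$ and the choice $\lambda = t/(\nu + tL/3)$ gives exactly the stated tail $\exp\{-\frac{Nt^2}{2(\nu + tL/3)}\}$, and the sub-Gaussian simplification for $tL < 3\nu$ follows as you say; the paper itself states this lemma as a classical result without proof, so your standard derivation is exactly what would be cited. One cosmetic slip: in the intermediate display the bound should read $\sum_{k\ge 2}\frac{\lambda^k L^{k-2}}{k!} \le \frac{\lambda^2}{2}\sum_{k\ge 2}(\lambda L/3)^{k-2}$ (the factor $\lambda^2$ is missing there), though your subsequent line $\E[e^{\lambda\xi}] \le 1 + \frac{\lambda^2\nu/2}{1-\lambda L/3}$ has it right, so the argument is unaffected.
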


Additional proofs in Section \ref{sec:prelim}:

\begin{proof}[Proof of Theorem \ref{thm:Heat-short-time}]
Part 1): We provide a direct verification of \eqref{eq:parametrix-m} based on the parametrix construction for completeness,
which is not explicitly included in \cite{rosenberg1997laplacian}.

First note that there is $t_0$,  determined by $\calM$ s.t. when $ t < t_0$,
\[
\int_{\calM} G_t( x,y) dV(y) = \int_{\calM} G_t( y,x) dV(y) \le C_6, \quad \forall x \in \calM,
\]
 for some $C_6 > 0$ depending on $\calM$.
This is because $\int_{\calM} G_t( x,y) dV(y) $ up to an $O(t)$ truncation error equals
the integral on $B_t : = \{y \in \calM, d_{\calM}(x,y) < \delta_t := \sqrt{ (d/2+1)  t \log \frac{1}{t} }\}$.
By change to the projected coordinate $u$ in $T_x (\calM)$, the integral domain of $u$ is contained in $1.1\delta_t$-ball in $\R^d$
for small enough $\delta_t$, then
\begin{align*}
\int_{B_t} G_t( x,y) dV(y) 
& =  \frac{1}{(4\pi t)^{d/2}}  \int_{B_t  }
  e^{ - \frac{d_\calM(x,y)^2}{ 4t}}dV(y) 
   \le \frac{1}{(4\pi t)^{d/2}}  \int_{ u \in \R^d, \, \| u \|< 1.1 \delta_t  }
  e^{ - \frac{0.9 \| u\|^2}{ 4t}} (1+ O(   \delta_t^2 ))du \\
  & 
  \le \Theta(1) (1+ O( t \log{\frac{1}{t}})) = O(1).
\end{align*}

Next, as has been shown in Chapter 3 of \cite{rosenberg1997laplacian},
there exist $u_l \in C^{\infty}(\calM \times \calM)$ for $l=1, \cdots, m$, $u_0$ satisfies the needed property, 
and we define $P_m(t,x,y) = G_t(x,y) \left( \sum_{l=0}^m t^l u_l(x,y) \right) $,
$P_m \in C^\infty( (0,\infty), \calM \times \calM)$.
By Theorem 3.22 of \cite{rosenberg1997laplacian}, 
\[
\calH_t(x,y) - P_m(t, x,y) = \int_0^t  ds \int_{\calM} Q_m(t-s, x,z) P_m(s, z, y) dV(z), 
\]
where by Lemma 3.18 of \cite{rosenberg1997laplacian}, there is $C_7(t_0)$ and thus is determined by $\calM$ s.t.
\[
\sup_{x,y \in \calM} | Q_m( s, x, y) | \le C_7 s^{m - d/2}, \quad \forall  0 \le s \le t_0.
\]
As a result, for $ t < t_0$,
\begin{align*}
&| \calH_t(x,y) - P_m(t, x,y) |
 \le  \int_0^t  ds \int_{\calM} | Q_m(t-s, x,z)|  G_s(z,y) \left| \sum_{l=0}^m t^l u_l(z,y) \right| dV(z) \\
& ~~~
\le  C_7 t^{m - d/2} ( \sum_{l=0}^m \| u_l \|_\infty ) \int_0^t  ds \int_{\calM}  G_s(z,y)  dV(z) \\
& ~~~
\le C_7 t^{m - d/2} ( \sum_{l=0}^m \| u_l \|_\infty )  C_6 t = O(t^{m-d/2+1}).
\end{align*}

Part 2) is a classical result proved in several places, 
see e.g. Theorem 1.1 in \cite{grigor1997gaussian} combined with $\sup_{x \in \calM}\calH_t(x,x) \le C_5 t^{-d/2} $ for some $C_5$ depending on manifold,
which can be deduced from Part 1). 
The constant 5 in $5t$ in the exponential in \eqref{eq:H-decay} can be made any constant greater than 4, and the constant $C_3$ change accordingly.
\end{proof}

\begin{proof}[Proof of Lemma \ref{lemma:heat}]
Let $m = \lceil \frac{d}{2}+3 \rceil$, $m$ is a positive integer $m - \frac{d}{2}\ge 3$.
Since $t  \to 0$, and $\delta_t = o(1)$,
the Euclidean ball of radius $\delta_t$   contains $\delta_t$-geodesic ball
and is contained  ($1.1 \delta_t$)-geodesic ball, for small enough $t$.
Then both claims in Theorem \ref{thm:Heat-short-time} hold when $t < \epsilon_0$ for some $\epsilon_0$ depending on $\calM$,
and in 1) for $y \in B_{\delta_t}(x) \cap \calM$,
$C_2 t^{m-d/2+1} = O(t^3)$.
Here by choosing larger $m$ can make the term of higher order of $t$, 
yet $O(t^3)$ is enough for our later analysis.

Proof of \eqref{eq:H-eps-local}: 
We use the shorthand notation $\tilde{O}(t)$ to denote $O( t \log \frac{1}{t}) $. 
In Theorem \ref{thm:Heat-short-time}, $m$ is fixed, $\| u_l \|_\infty$ for $l\le m$ are finite constants depending on $\calM$,
thus
\[
 \calH_t( x,y) =  G_t(x,y) \left(  u_0(x,y) + O(t) \right) + O( t^3).
\] 
Note that $d_\calM (x,y)^2 = \| x-y\|^2 (1+ O(\| x-y\|^2))$,
and thus when $y \in B_{\delta_t}(x)$,
$d_\calM (x,y)^2 =  O(\| x-y\|^2) = O(\delta_t^2) =\tilde{O}(t)$.
By the property of $u_0$,
\[
u_0(x,y) = 1 + O( d_\calM(x,y)^2) = 1 + \tilde{O}( t  ).
\]
Meanwhile,  by mean value theorem  and that $d_\calM( x,y) \ge \|x -y\|$,
\[
e^{ - \frac{d_\calM(x,y)^2}{t}} 
= e^{ - \frac{ \| x-y\|^2 (1+ O(\| x-y\|^2)) }{t}} 
= e^{ - \frac{ \| x-y\|^2 }{t}}( 1+ O( \frac{\| x-y\|^4}{t})) ,
\]
and then
\[
G_t(x,y) = K_t(x,y)  (1 + O( \frac{\| x-y\|^4}{t} )) = K_t(x,y)  (1 + {O}( t  (\log \frac{1}{t})^2) ).
\]
Thus, for any $y \in B_{\delta_t}(x) \cap \calM$, 
\[
 \calH_t( x,y) =  K_t(x,y)  ( 1 + {O}( t  (\log \frac{1}{t})^2) )
 \left(  1 + \tilde{O}( t  ) + O(t) \right) + O( t^3),
\] 
which proves \eqref{eq:H-eps-local}, and the constants in big-$O$ are all determined by $\calM$.

Proof of \eqref{eq:H-eps-truncate} and \eqref{eq:H-global-boundedness}: 
When $y$ is outside the $\delta_t$-Euclidean ball, it is outside the $\delta_t$-geodesic ball.
Then,
by Theorem \ref{thm:Heat-short-time} 2) and the definition of $\delta_t$,
$\calH_t (x,y) \le C_3 t^{-d/2} e^{ - \frac{ \delta_t^2 }{5 t}}
\le C_3 t^{10}$, which proves \eqref{eq:H-eps-truncate}.
\eqref{eq:H-global-boundedness} directly follows from \eqref{eq:H-decay}.
\end{proof}

\section{Proofs about graph Laplacians with $W$}

\subsection{Proofs in Section \ref{sec:step0}}\label{app:proofs-step0}

\begin{proof}[Proof of \eqref{eq:bias-error-remark-indicator-h} in Remark \ref{rk:indicator-h-form-rate}]
We want to show that 
\[
\frac{1}{\epsilon} \int_{\calM} \int_{\calM} K_\epsilon(x,y) (f(x)-f(y))^2 p(x)p(y) dV(x) dV(y)
= m_2 [h] \langle f, -\Delta_{p^2} f \rangle_{p^2} + O(\epsilon).
\]
First consider when $p$ is uniform.
Denote by $B_r(x)$ the Euclidean ball in $\R^D$ centered at $x$ with radius $r$.
When $y \in B_{\sqrt{\epsilon}}(x) \cap \calM$,
$(f(x)-f(y))^2 = ( \nabla f(x)^T u)^2 + Q_{x,3}(u) + O( \| u\|^4) $, where $u \in \R^d$ is the local projected coordinate,
i.e., let $\phi_x$ be the projection onto $T_x(\calM)$, $u = \phi_x( y-x)$, 
also $\| u \| \le \| y - x\| < \sqrt{\epsilon}$.
$Q_{x,3}( \cdot)$ is a three-order polynomial where the coefficients depend on the derivatives of extrinsic coordinates of $\calM$ and $f$ at $x$. 
Then, 
\begin{align}
& \frac{1}{\epsilon} \int_{ \calM} K_\epsilon( x,y )  (f(x)-f(y))^2   dV(y)
= \int_{ \calM}  \epsilon^{-d/2} h(\frac{ \| x-y\|^2}{\epsilon}) \frac{(f(x)-f(y))^2}{\epsilon}  dV(y) 
\label{eq:EVij-indicator-h}
\\
& = \epsilon^{-d/2} \int_{ \tilde{B}}  \left(  \frac{( \nabla f(x)^T u)^2 }{\epsilon} 
+ \frac{Q_{x,3}(u)}{\epsilon} + O( \epsilon) \right) (1 + O( \epsilon ))du,
\quad 
\tilde{B}: = \phi_x ( B_{\sqrt{\epsilon}}(x) \cap \calM)
\nonumber
\end{align}
and $\tilde{B} \subset B_{\sqrt{\epsilon}}(0; \R^d)$,
where we used the volume comparison relation  $dV(y) = (1+O( \| u\|^2)) du$.
By the metric comparison, $\| y -x \| = \|u\|(1+O(\|u\|^2)) $,
 thus 
 \[
 \Vol( B_{\sqrt{\epsilon}}(0; \R^d) \backslash  \tilde{B}) \le \Vol( B_{\sqrt{\epsilon}}(0; \R^d) \backslash  B_{\sqrt{\epsilon}(1-O(\epsilon))}(0; \R^d))  = \epsilon^{d/2}O(\epsilon).
 \]
Meanwhile, the integration of odd power of $u$ vanishes on $\int_{B_{\sqrt{\epsilon}}(0; \R^d)} du$.
Thus one can verify that 
$\epsilon^{-d/2} \int_{ \tilde{B}}   \frac{( \nabla f(x)^T u)^2 }{\epsilon} du = m_2[h] |\nabla f(x)|^2 + O(\epsilon)$,
$\epsilon^{-d/2} \int_{ \tilde{B}}   \frac{Q_{x,3}(u)}{\epsilon} du =  O(\epsilon^{3/2})$,
and thus
the l.h.s. of \eqref{eq:EVij-indicator-h}
$= m_2[h] |\nabla f(x)|^2 + O(\epsilon)$.
Integrating over $\int_{\calM} dV(x)$ proves that the bias error is $O(\epsilon)$. 
When $p$ is not uniform, 
one can similarly show that 
$ \frac{1}{\epsilon} \int_{ \calM} K_\epsilon( x,y )  (f(x)-f(y))^2 p(y)   dV(y) = m_2[h] |\nabla f(x)|^2 p(x) + O(\epsilon)$
and the proof extends.
\end{proof}

\begin{proof}[Proof of Lemma \ref{lemma:form-rate-psi}]
Since $p$ is a constant, $\Delta_{p^2} = \Delta$.
Apply Theorem \ref{thm:form-rate} to when $f =  \psi_k $, and $(\psi_k \pm \psi_l)$ where $k \neq l$,
which are $K^2$ cases and are all in $C^\infty(\calM)$.
Since the set $\{ \psi_k \}_{k=1}^K$ is orthonormal in $L^2(\calM, dV)$,
\[
p^{-1} \langle \psi_k, -\Delta \psi_k \rangle_{p^2} = p \mu_k ;
\quad
p^{-1} \langle \psi_k \pm  \psi_l, -\Delta (\psi_k \pm \psi_l) \rangle_{p^2} =p(\mu_k + \mu_l), 
\quad k \neq l, 1 \le k,l \le K.
\]
Under the intersection of the $K^2$ good events which happens with the indicated high probability,  \eqref{eq:form-rate-psi} holds.
The needed threshold of $N$ is the max of the $K^2$ many ones.
These thresholds and  the constants in the big-$O$'s depend on $p$ and $\psi_k$ for $k$ up to $K$,
and $K$ is a fixed integer.
This means that these constants are determined by $\calM$, and thus are treated as absolute ones.
\end{proof}

\begin{proof}[Proof of Lemma \ref{lemma:rhoX-isometry-whp}]
First, for any $f \in C(\calM)$, when $N > N_f$ depending on $f$, w.p. $>1-2N^{-10} $,
  \begin{equation}\label{eq:rhoXf-iso}
\frac{1}{N}\|  \rho_X f \|_2^2 =  \langle f, f \rangle_p+ O_f( \sqrt{ \frac{\log N}{N }} ).
  \end{equation}
This is because, by definition,
$\frac{1}{N}\|  \rho_X f \|_2^2  = \frac{1}{N} \sum_{j=1}^N f(x_i)^2$,
which is independent sum of r.v. $Y_j := f(x_i)^2$. 
$\E Y_j = \int_{\calM} f(y)^2 p dV(y)  = \langle f, f \rangle_p$,
and 
boundedness $|Y_j| \le L_Y := \|f \|_{\infty,\calM }^2$ which is $O_f(1)$ constant.
The  variance of $Y_j$ is bounded by
$\E Y_j^2 = \int_{\calM} f(y)^4 p dV(y) := \nu_Y$,
which again is $O_f(1)$ constant. 
Since $\log N/N = o(1)$, \eqref{eq:rhoXf-iso}
follows by  the classical Bernstein.

Now consider the $K$ vectors $u_k = \frac{1}{\sqrt{p}} \rho_X \psi_k$.
Apply \eqref{eq:rhoXf-iso} to when $f = \frac{1}{\sqrt{p}} \psi_k$ and $\frac{1}{\sqrt{p}}(\psi_k \pm \psi_l)$ for $k \neq l$, 
and consider the intersection of the $K^2$ good events,
which happens w.p. $> 1- 2K^2 N^{-10}$, when $N$ exceeds the maximum thresholds of $N$ for the $K^2$ cases.
By $\langle \psi_k, \psi_l \rangle_p = p \delta_{kl}$,
and the polar formula $4 u_k^T u_l = \|u_k + u_l \|^2 - \|u_k - u_l \|^2$,
this gives \eqref{eq:uk-near-orthonormal}.
Both the $K^2$ thresholds and all the constants 
in big-O in \eqref{eq:uk-near-orthonormal} depend on $\{ \psi_k \}_{k=1}^K$.
\end{proof}

\begin{proof}[Proof of Lemma \ref{lemma:Di-concen}]
Suppose Part 1) has been shown with uniform constant in big-$O$ for each $i$,
then under the good event of Part 2), Part 2) holds automatically. 
In particular,
since \eqref{eq:concen-Di} is a property of the random r.v. $W_{ij}$ only,
where $W_{ij}$ are determined by the random points $x_i$ 
and irrelevant to the vector $u$,
the threshold of large $N$ is determined by when Part 1) holds and is uniform for all $u$.

It suffices to prove Part 1) to finish proving the lemma. 
For each $i$,
we construct an event under which the bound in \eqref{eq:concen-Di} holds for $D_i$, and then apply a union bound. 
For $i$ fixed, 
\[
\frac{1}{N}D_i = \frac{1}{N} K_\epsilon(x_i, x_i) 
+ \frac{1}{N} \sum_{j \neq i} K_\epsilon(x_i, x_j) =: \textcircled{1} + \textcircled{2}.
\]
By Assumption \ref{assump:h-C2-nonnegative}(C2), 
$K_\epsilon(x_i,x_i)  = \epsilon^{-d/2} h(0) \le \Theta( \epsilon^{-d/2})$.
and thus $\textcircled{1} = O(N^{-1} \epsilon^{-d/2})$.
Consider 
$ \textcircled{2}' := \frac{1}{N-1} \sum_{j \neq i} K_\epsilon (x_i, x_j)$,
which is an independent sum condition on $x_i$ and over the randomness of $\{ x_j \}_{j \neq i}$.
The $(N-1)$ r.v. 
\[
Y_j := K_\epsilon( x_i, x_j), \quad j \neq i,
\]
satisfies that (Lemma 8 in \cite{coifman2006diffusion}, Lemma {A.3} in \cite{cheng2020convergence}) 
\[
\E Y_j = \int_\calM K_\epsilon( x_i, y) p dV(y) = p m_0 + O(\epsilon).
\]
Boundedness: again by Assumption \ref{assump:h-C2-nonnegative}(C2),
$| Y_j |\le L_Y = \Theta( \epsilon^{-d/2}) $. 
Variance of $Y_j$ is bounded by
\begin{align*}
\E Y_j^2 & = \int_\calM K_\epsilon( x_i, y)^2 pdV(y) 
 = p \int_\calM  \epsilon^{-d} h^2 ( \frac{ \| x_i- y \|^2}{\epsilon} )  dV(y),
\end{align*}
where since $h^2(r)$ as a function on $[0,\infty)$ also satisfies Assumption \ref{assump:h-C2-nonnegative},
\[
\E Y_j^2 
= \epsilon^{-d/2} p(m_0[h^2] + O(\epsilon) )
\le \nu_Y = \Theta( \epsilon^{-d/2}).
\]
The constants in the big-$\Theta$ notation of $L_Y$ and $\nu_Y$ 
are absolute ones depending on $\calM$ and do not depend on $x_i$.
Since $\sqrt{\frac{\log N}{ N \epsilon^{d/2}}} = o(1)$, the classical Bernstein gives that 
when $N$ is sufficiently large
w.p. $> 1 - 2N^{-10}$,
\[
| \textcircled{2}'  -  \E Y_j | = O( \sqrt{ \nu_Y \frac{\log N }{N}}) 
= O(\sqrt{\frac{\log N}{N \epsilon^{d/2}}})
\quad | \text{ condition on } x_i.
\]
Under this event, $\textcircled{2}'  = O(1)$, and 
then $\textcircled{2} = (1-\frac{1}{N}) \textcircled{2}'$ gives that 
\[
\textcircled{2}  = m_0 p + O(\epsilon)
+ O(\sqrt{\frac{\log N}{N \epsilon^{d/2}}})+O(\frac{1}{N}) 
= m_0 p +O(\epsilon, \sqrt{\frac{\log N}{N \epsilon^{d/2}}}),
\]
and then
\[
\frac{1}{N}D_i = O(N^{-1} \epsilon^{-d/2}) + m_0p +O(\epsilon, \sqrt{\frac{\log N}{N \epsilon^{d/2}}}) = m_0p +O( \epsilon, \sqrt{\frac{\log N}{N \epsilon^{d/2}}}).
\]
By that $x_i$ is independent from $\{ x_j\}_{j \neq i}$, and that the bound is uniform for all location of $x_i$, 
we have that w.p. $> 1-2N^{-10}$,  the bound in \eqref{eq:concen-Di} for $i$,
and applying union bound to the $N$ events proves Part 1).
\end{proof}

\begin{proof}[Proof of Proposition \ref{prop:eigvalue-UB-rw}]
Under the condition of the current proposition,
Lemma \ref{lemma:Di-concen} applies.
For fixed $K$,  take the intersection of the good events in 
Lemma \ref{lemma:Di-concen}, \ref{lemma:rhoX-isometry-whp} and \ref{lemma:form-rate-psi},
which happens w.p. $> 1- 4K^2 N^{-10} - 2N^{-9}$ for large enough $N$.
Same as before, let $u_k = \frac{1}{\sqrt{p}} \rho_X \psi_k$,
and by \ref{lemma:rhoX-isometry-whp},
the set $\{ u_1, \cdots, u_K\}$ is linearly independent.
Let $L = \text{Span}\{ u_1, \cdots, u_k\}$, then $dim(L) = k$ for each $k \le K$.
For any $v \in L$,  $ v \neq 0$,
there are $c_j$, $1 \le j \le k $, such that 
$v = \sum_{j=1}^k c_j u_j$.
Again, by \eqref{eq:uk-near-orthonormal}, we have
$
 \frac{1}{N} \|v\|^2 = \| c \|^2 ( 1+ O( \sqrt{ \frac{\log N}{ N}}) )$,
and together with Lemma \ref{lemma:Di-concen} 2),
\begin{align}
\frac{1}{m_0} \frac{1}{N^2} v^T D v 
& =   \frac{1}{N} \|v\|^2 ( p + O( \epsilon, \sqrt{\frac{\log N}{ N \epsilon^{d/2}}})) 
 =   \| c \|^2 ( 1+ O( \sqrt{ \frac{\log N}{ N}}) )( p + O( \epsilon, \sqrt{\frac{\log N}{ N \epsilon^{d/2}}})) \nonumber \\
& =  \| c \|^2   p (1 + O( \epsilon, \sqrt{\frac{\log N}{ N \epsilon^{d/2}}})),
\label{eq:vDv}
\end{align}
and the constant in $O(\cdot)$ is uniform for all $v$. 
For $E_N(v)$, \eqref{eq:UB-ENv} still holds, and by that $K$ is fixed it  gives 
\begin{align*}
E_N( v) 
& \le \| c \|^2  \left(  p \mu_k +  O(\epsilon, \sqrt{  \frac{  \log N    }{ N \epsilon^{d/2  }}   } ) \right).
\end{align*}
Together with \eqref{eq:vDv}, we have that
\[
\frac{ 
E_N(v)}{  \frac{1}{m_0}  \frac{1}{N^2}   v^T D v }
\le 
\frac{    p \mu_k +  O(\epsilon, \sqrt{  \frac{  \log N    }{ N \epsilon^{d/2  }}   } )  }
{   p (1 + O( \epsilon, \sqrt{\frac{\log N}{ N \epsilon^{d/2}}}))}
= \mu_k +  O(\epsilon, \sqrt{  \frac{  \log N    }{ N \epsilon^{d/2  }}   } ),
\]
and the r.h.s.  upper bounds $\lambda_k(L_{rw})$ by \eqref{eq:lambdak-rw-min-max}.
\end{proof}
\subsection{Proofs in Section \ref{sec:step1}}\label{app:proofs-step1}

\begin{proof}[Proof of \eqref{eq:concen-Ds} in Lemma \ref{lemma:qs0-concen}]
Suppose $s$ is small enough such that Lemma \ref{lemma:heat} holds with $\epsilon$ being  $s$ here. 
For each $i$, we construct an event under which the bound in \eqref{eq:concen-Ds} holds for $(D_s)_i$, and then apply a union bound. 
For $i$ fixed, 
\[
(D_s)_i = \frac{1}{N} \calH_s(x_i, x_i) 
+ \frac{1}{N} \sum_{j \neq i} \calH_s(x_i, x_j) =: \textcircled{1} + \textcircled{2}.
\]
By \eqref{eq:H-global-boundedness},
$\calH_s(x_i, x_i) = O(s^{-d/2})$,
and thus $\textcircled{1} = O(N^{-1} s^{-d/2})$.
Consider 
$ \textcircled{2}' := \frac{1}{N-1} \sum_{j \neq i} \calH_s(x_i, x_j)$,
which is an independent sum condition on $x_i$ and over the randomness of $\{ x_j \}_{j \neq i}$.
The $(N-1)$ r.v. 
$Y_j := \calH_s( x_i, x_j)$, $j \neq i$,
satisfies that 
$\E Y_j = \int_\calM \calH_s( x_i, y) pdV(y) = p$,
and  boundedness: again by \eqref{eq:H-global-boundedness}, $| Y_j |\le L_Y = \Theta( s^{-d/2}) $. 
Variance of $Y_j$ is bounded by
$\E Y_j^2 = \int_\calM \calH_s( x_i, y)^2 pdV(y) = p \calH_{2s}( x_i, x_i) \le \nu_Y = \Theta( s^{-d/2})$.
The constants in the big-$\Theta$ notation of $L_Y$ and $\nu_Y$ 
are from \eqref{eq:H-global-boundedness} which only depend on $\calM$ and not on $x_i$.
We use the notation $O_{\calM}(\cdot)$ to stress this.
Since $\sqrt{\frac{\log N}{ N s^{d/2}}} = o(1)$, the classical Bernstein gives that with sufficiently large  $N$, 
w.p. $> 1 - 2N^{-10}$,
\[
| \textcircled{2}'  - p | = O( \sqrt{ \nu_Y \frac{\log N }{N}}) = O_\calM (\sqrt{\frac{\log N}{N s^{d/2}}})
\quad | \text{ condition on } x_i.
\]
The rest of the proof is the same as that of Lemma \ref{lemma:Di-concen} 1),
namely, by that $\textcircled{2} = (1-\frac{1}{N}) \textcircled{2}'$,
one can verify that 
both $\textcircled{2}$ and then $(D_s)_i$ equals $p +O_{\calM}(\sqrt{\frac{\log N}{N s^{d/2}}})$
w.p. $> 1-2N^{-10}$,
 and then  \eqref{eq:concen-Ds} follows from applying union bound to the $N$ events.
\end{proof}

\begin{proof}[Proof of Proposition \ref{prop:eigvalue-LB-crude-rw}]
The proof is by the same method as  that of Proposition \ref{prop:eigvalue-LB-crude},
and the difference is that the eigenvectors are $D$-orthogonal here and normalized differently. 
 Denote $\lambda_k(L_{rw})$ as $\lambda_k$,
and let $L_{rw} v_k = \lambda_k v_k$, normalized s.t.
\[
\frac{1}{N^2}v_k^T D v_l = \delta_{kl},  
\quad
1\le  k,l \le N.
\]
Note that this normalization of $v_k$ differs from what is used in the final eigen-convergence rate result,
Theorem \ref{thm:refined-rates-rw}, because the current proposition concerns eigenvalue only.

Because $\epsilon^{d/2+2} > c_K  \frac{\log N}{N} $,  $\epsilon^{d/2} = \Omega( \frac{\log N}{N} )$, then
the conditions needed in  Proposition \ref{prop:eigvalue-UB-rw}
are satisfied. 
Thus,
 with sufficiently large $N$, 
 there is an event $E_{UB}'$ 
 which happens w.p. $> 1- 2 N^{-9} - 4 K^2 N^{-10}$, 
 under which  $D_i > 0$ for all $i$ s.t. $L_{rw}$ is well-defined, 
 and 
 \eqref{eq:lambdak-UB-hold} holds for $\lambda_k = \lambda_k(L_{rw})$.
Because the good event $E_{UB}'$ in Proposition \ref{prop:eigvalue-UB-rw} 
assumes the good event in Lemma \ref{lemma:Di-concen}, 
then \eqref{eq:concen-uDu} also holds for all the $v_k$ and $v_k \pm v_l$,
which gives that  ($m_0 = 1$ because $h$ is Gaussian)
\begin{equation*}
\begin{split}
1 = \frac{1}{N^2} v_k^T D v_k 
 & =  \frac{1}{N} \|v_k\|^2 ( 
 p + O( \epsilon, \sqrt{\frac{\log N}{ N \epsilon^{d/2}}})),
  \quad  1 \le k  \le K,\\
2 = \frac{1}{N^2} (v_k\pm v_l)^T D (v_k \pm v_l)
 &= \frac{1}{N} \|v_k \pm v_l \|^2 ( 
 p + O( \epsilon, \sqrt{\frac{\log N}{ N \epsilon^{d/2}}}))
 \quad k\neq l, 1 \le k,l \le K,
 \end{split}
\end{equation*}
and, equivalently (because $p > 0$ is a constant)
\begin{equation}
\begin{split}
\frac{1}{N} \|v_k\|^2
& = \frac{1}{ 
p} (1 + O( \epsilon, \sqrt{\frac{\log N}{ N \epsilon^{d/2}}})),
\quad  1 \le k \le K, \\
 \frac{1}{N} \|v_k \pm v_l \|^2
& = \frac{1}{ 
p} (2 + O( \epsilon, \sqrt{\frac{\log N}{ N \epsilon^{d/2}}})),
 \quad k\neq l, 1 \le k,l \le K.
\end{split}
\end{equation}
We set $\delta$, $r$, $t$, in the same way,
and  
 let  $f_k = I_r [ v_k ]$, $f_k \in C^\infty(\calM)$.
Because the good event $E^{(0)}$ only concerns randomness of $H_{\delta \epsilon}(x_i, x_j)$,
under $E^{(0)}$ which happens w.p. $> 1- 2N^{-9}$,
\begin{equation}\label{eq:q0-delta-eps-vkvl-rw}
\begin{split}
q^{(0)}_{\delta \epsilon }( v_k) 
&  =  \frac{1}{N} \| v_k \|^2 ( p + O(\sqrt{\frac{\log N}{ N \epsilon^{d/2}}})) 
 =  
 1 + O( \epsilon, \sqrt{\frac{\log N}{ N \epsilon^{d/2}}}),
 \quad 1 \le k \le K, \\
q^{(0)}_{\delta \epsilon }( v_k \pm v_l) 
&  =  \frac{1}{N} \| v_k \pm v_l \|^2 ( p + O(\sqrt{\frac{\log N}{ N \epsilon^{d/2}}})) 
 =  
2 + O( \epsilon, \sqrt{\frac{\log N}{ N \epsilon^{d/2}}}), 
\quad k \neq l, 1 \le k,l \le K.
\end{split}
\end{equation}
Next, note that  since $(D-W)v_k = \tilde{m} \epsilon \lambda_k D v_k$, and with Gaussian $h$, $\tilde{m}=1$,
and $v_k$ are $D$-orthogonal,
\begin{equation}\label{eq:form-vk-vl-1}
\begin{split}
 & ~~~
 \frac{v_k^T(D-W) v_k}{N^2}  
 =  \epsilon 
 \lambda_k \frac{1}{N^2} v_k^T D v_k 
 = \epsilon 
  \lambda_k,
 \quad 1  \le k \le K, \\
 &  \frac{ (v_k \pm v_l)^T(D-W) (v_k \pm v_l)}{N^2} 
 = \epsilon 
 ( \lambda_k + \lambda_l), 
 \quad k \neq l, 1 \le k,l \le K.
 \end{split}
\end{equation}
Then, \eqref{eq:q2-alphaeps-UB} in Lemma \ref{lemma:qs2-UB} where $\alpha = \delta$ gives that
\[
\begin{split}
q^{(2)}_{\delta \epsilon }( v_k) 
& =
O( \delta^{-d/2} ) \epsilon 
\lambda_k 
 +  O(\epsilon^{3}) , 
 \quad 1 \le k \le K, \\
q^{(2)}_{\delta \epsilon }( v_k \pm v_l) 
&  
= O( \delta^{-d/2})   \epsilon 
( \lambda_k + \lambda_l) 
+ 2  O(\epsilon^{3}) ,
 \quad k \neq l, \, 1 \le k,l \le K,
\end{split}
\]
then same as in \eqref{eq:q2-delta-eps-vkvl}, they are both $O(\epsilon)$.
Together with \eqref{eq:q0-delta-eps-vkvl-rw}, this gives that 
\begin{equation}
\begin{split}
\langle f_k, f_k \rangle 
&= 
 1 + O( \epsilon, \sqrt{\frac{\log N}{ N \epsilon^{d/2}}})
+  O( \epsilon), \quad 1 \le k \le K, \\
\langle f_k, f_l \rangle 
& = \frac{1}{4}( q_{\delta \epsilon}( v_k + v_l)  - q_{\delta \epsilon}( v_k - v_l) )
= O( \epsilon, \sqrt{\frac{\log N}{ N \epsilon^{d/2}}}) 
+ O(\epsilon),
\quad k \neq l, \, 1\le k, l \le K.
\end{split}
\end{equation}
Then due to that $O( \epsilon, \sqrt{\frac{\log N}{ N \epsilon^{d/2}}}) = o(1)$, 
we have linear independence of $\{ f_j\}_{j=1}^K$ with large enough $N$.

Again, we let $L_k = \text{Span}\{ f_1, \cdots, f_k\}$,
and have \eqref{eq:eigen-relation-2}.
For any $f \in L_k$, 
$ f = \sum_{j=1}^k c_j f_j $,
$f = I_r [v]$, $v := \sum_{j = 1}^k c_j v_j$,
 \[
 \frac{1}{N^2}v^T D v 
  = \sum_{j=1}^k c_j^2 \frac{1}{N^2} v_j^T D v_j = \| c \|^2,
\]
and, by that Lemma \ref{lemma:Di-concen} 2) holds, \eqref{eq:concen-uDu} applies to $v$ to give
$\frac{1}{N^2} v^T D v 
=  \frac{1}{N} \|v\|^2 ( 
p + O( \epsilon, \sqrt{\frac{\log N}{ N \epsilon^{d/2}}}))$,
thus 
\begin{equation}\label{eq:v2divN-rw}
\frac{1}{N} \|v\|^2  
= 
\frac{ \| c \|^2 }{ 
p}(1 + O( \epsilon, \sqrt{\frac{\log N}{ N \epsilon^{d/2}}})).
\end{equation}
Meanwhile, by \eqref{eq:form-vk-vl-1},
\begin{equation}\label{eq:form-bound-v-rw}
 \frac{ v^T (D-W) v}{ N^2} 
  =   \sum_{j=1}^k c_j^2 \frac{v_j^T (D-W) v_j }{N^2} 
 = \sum_{j=1}^k c_j^2  \epsilon 
 \lambda_j
\le  \epsilon 
 \lambda_k  \| c \|^2.
\end{equation}
 With the good  event $E^{(1)}$ same as before (Lemma \ref{lemma:qs0-concen} at $s= \epsilon$),
under $E^{(0)} \cap E^{(1)}$,   and the $O_\calM (\cdot)$ notation means that the constant depends on $\calM$ only and not on $K$,
\begin{equation} \label{eq:q0-v-rw-1}
 q^{(0)}_{\epsilon}( v ) =  \frac{1}{N} \|v\|^2 ( p + O_\calM(\sqrt{\frac{\log N}{ N \epsilon^{d/2}}})),
 \quad
  q^{(0)}_{\delta \epsilon}( v ) =  \frac{1}{N} \|v\|^2 ( p + O_\calM(\sqrt{ \delta^{-d/2}\frac{\log N}{ N \epsilon^{d/2}}})),
\end{equation}
and then, again,
\begin{align*}
 q^{(0)}_{\delta \epsilon}( v) - q^{(0)}_{\epsilon}(v) 
 & = \frac{1}{N} \|v\|^2  O_\calM( \delta^{-d/4} \sqrt{ \frac{\log N}{N  \epsilon^{d/2}} })
  =  \frac{ \| c \|^2 }{ 
  p}(1 + O( \epsilon, \sqrt{\frac{\log N}{ N \epsilon^{d/2}}})) O_\calM(  \delta^{-d/4} \sqrt{ \frac{\log N}{N  \epsilon^{d/2}} })  \\
&  = \| c \|^2  O_\calM(  \delta^{-d/4} \sqrt{ \frac{\log N}{N  \epsilon^{d/2}} }),
\end{align*}
where we used \eqref{eq:v2divN-rw} to substitute the $ \frac{1}{N} \|v\|^2$ term 
after the leading $ \frac{1}{N} \|v\|^2 p$ term is canceled in the subtraction. 
The UB of $q^{(2)}_{\epsilon}(v)$ is similar as before, 
namely, by \eqref{eq:q2-eps-UB} in Lemma \ref{lemma:qs2-UB}, inserting \eqref{eq:form-bound-v-rw},
and with the shorthand that $\tilde{O}(\epsilon) $ stands for $ {O}( \epsilon  (\log \frac{1}{ \epsilon})^2) $,
\[
q^{(2)}_{\epsilon}(v) = \frac{ v^T (D-W) v}{ N^2}  (1 + \tilde{O}(\epsilon)) + \| c \|^2 O( \epsilon^{3})
\le \epsilon \| c \|^2 (  \lambda_k 
 (1 + \tilde{O}(\epsilon)) + O( \epsilon^{2}) ).
\]
Thus we have that
\begin{align}
\langle f ,  f  \rangle  -  \langle f ,  Q_t f  \rangle
& \le (  q^{(0)}_{\delta \epsilon}( v) - q^{(0)}_{\epsilon}(v) ) + q^{(2)}_{\epsilon}(v)  \nonumber  \\
& \le  \epsilon \| c \|^2 \left( 
 \lambda_k 
 (1 + \tilde{O}(\epsilon)) + O( \epsilon^{2})  
 +  \delta^{-d/4}  O_\calM(  \frac{1}{\epsilon}\sqrt{ \frac{\log N}{N  \epsilon^{d/2}} })
 \right)  \nonumber  \\
 &=  \epsilon \| c \|^2 \left( 
 \lambda_k 
  +  \tilde{O}(\epsilon) 
 +   \delta^{-d/4}  O_\calM( \frac{1}{\epsilon}\sqrt{ \frac{\log N}{N  \epsilon^{d/2}} })
 \right).
\quad \text{(by $\lambda_k \le 1.1 \mu_K$)}
 \label{eq:numerator-UB-rw}
\end{align}

To lower bound $\langle f, f \rangle$,
again  by \eqref{eq:q2-alphaeps-UB} in Lemma \ref{lemma:qs2-UB}, inserting \eqref{eq:form-bound-v-rw},
\[
0 \le q^{(2)}_{\delta \epsilon}( v)
  \le  \Theta( \delta^{-d/2} )  \frac{ v^T (D-W) v}{ N^2}  + \| c \|^2 O(\epsilon^{3})
\le  \epsilon  \| c \|^2   \left( \lambda_k 
 \Theta(\delta^{-d/2}) + O(\epsilon^{2}) \right),
\]
and then since $ \lambda_k 
 \Theta(\delta^{-d/2}) + O(\epsilon^{2}) = O(1)$,
we again have that 
$q^{(2)}_{\delta \epsilon}( v)  =  \| c \|^2  O( \epsilon )$.
We have derived formula of $q_{\delta\epsilon}^{(0)}(v)$ in \eqref{eq:q0-v-rw-1} under $E^{(0)} \cap E^{(1)}$,   
and inserting \eqref{eq:v2divN-rw}, 
\begin{equation} 
  q^{(0)}_{\delta \epsilon}( v ) =  \frac{1}{N} \|v\|^2 ( p + O(\sqrt{ \frac{\log N}{ N \epsilon^{d/2}}}))
  =  \| c \|^2  (1 + O( \epsilon, \sqrt{\frac{\log N}{ N \epsilon^{d/2}}})).
\end{equation}
Thus,
\[
\langle f, f \rangle 
= q^{(0)}_{\delta \epsilon}( v) - q^{(2)}_{\delta \epsilon}( v) 
= \| c \|^2 \left ( 1 + O( \epsilon, \sqrt{ \frac{\log N}{N  \epsilon^{d/2}} })  - O( \epsilon ) \right) 
\ge \| c \|^2 \left ( 1 - O( \epsilon,  \sqrt{ \frac{\log N}{N  \epsilon^{d/2}} })  \right).
\]
Together with \eqref{eq:numerator-UB-rw}, this gives
\[
\frac{\langle f ,  f  \rangle  -  \langle f ,  Q_t f  \rangle}{ \langle f, f \rangle}
\le 
\frac{ 
  \epsilon  \left( 
 \lambda_k 
  +  \tilde{O}(\epsilon) 
 +   \delta^{-d/4}  O_\calM(  \frac{1}{\epsilon}\sqrt{ \frac{\log N}{N  \epsilon^{d/2}} })
 \right)
}
 {  1 - O( \epsilon,  \sqrt{ \frac{\log N}{N  \epsilon^{d/2}} }) 
 }
 \le 
 \epsilon  \left( 
 \lambda_k    +   \tilde{O}(\epsilon) +   \frac{C}{\epsilon}\sqrt{ \frac{\log N}{N  \epsilon^{d/2}} }
 \right),
\]
where the notation of $C$ is defined in the same way as in the proof of Proposition \ref{prop:eigvalue-LB-crude}.
The rest of the proof is the same,
and the intersection of all the needed good events $E^{(0)}$, $E^{(1)}$, and $E_{UB}'$,
which happens w.p.$> 1- 2N^{-9} - 4K^2 N^{-10} - 4N^{-9}$. 
\end{proof}

\subsection{Proofs in Section \ref{sec:step23}}\label{app:proofs-step23}

\begin{proof}[Proof of Theorem \ref{thm:refined-rates-rw}]
With sufficiently large $N$, 
we restrict to the intersection of 
the good events in Proposition \ref{prop:eigvalue-LB-crude-rw} 
and the $K= k_{max}+1$ good events of applying Theorem \ref{thm:pointwise-rate-C2h} 1) to $\{ \psi_k \}_{k=1}^K$, 
which happens w.p.$> 1- 4K^2 N^{-10} -( 6+4K)N^{-9}$.
The good event in  Proposition \ref{prop:eigvalue-LB-crude-rw} 
 is contained in the good event $E_{UB}'$ of Proposition \ref{prop:eigvalue-UB-rw} of the eigenvalue UB,
which is again contained in the good event of Lemma \ref{lemma:Di-concen}.
As a result, $D_i > 0$ for all $i$,  and thus $L_{rw}$ is well-defined,
and  \eqref{eq:concen-uDu} holds.

Applying \eqref{eq:concen-uDu} to $u = v_k$,  
and because $\| v_k\|_{D/N}^2 = p$,
we have that ($m_0=1$ due to that $h$ is Gaussian)
\begin{equation}\label{eq:vk-D/N-2norm}
p=   \|  v_k \|_{\frac{D}{N}}^2 
 = p  \| v_k \|_2^2 ( 1  + O( \epsilon, \sqrt{\frac{\log N}{ N \epsilon^{d/2}}})),
  \quad 1 \le k \le K.
 \end{equation}
 This verifies that 
$\|  v_k\|_2^2 = 1 + O( \epsilon, \sqrt{\frac{\log N}{ N \epsilon^{d/2}}}) = 1+o(1)$,
for $1 \le k\le K$.

Because  the good event  $E_{UB}'$  is under that in Lemma \ref{lemma:rhoX-isometry-whp},
 $ \| \phi_k\|_2^2  = 1 + O( \sqrt{ \frac{\log N}{ N} })$, $1 \le k \le K$,
and then, applying \eqref{eq:concen-uDu} to $u=\phi_k$, 
 \begin{equation}\label{eq:phik-D/N-2norm}
  \|  \phi_k \|_{\frac{D}{N}}^2 
 = p  \| \phi_k \|^2 ( 1  + O( \epsilon, \sqrt{\frac{\log N}{ N \epsilon^{d/2}}}))
 = p ( 1  + O( \epsilon, \sqrt{\frac{\log N}{ N \epsilon^{d/2}}})),
 \quad 1 \le k \le K.
 \end{equation}
 \vspace{5pt}

\underline{ Step 2. for $L_{rw}$}:
 We follow a similar approach as in Proposition \ref{prop:step2}.
When $k=1$, $\lambda_1 =0$, and  $v_1$ is always the constant vector, thus the discrepancy is zero.
Consider $2 \le k \le K$, by  Theorem \ref{thm:pointwise-rate-C2h} 1),
and that $\| u\|_2 \le \sqrt{N} \| u\|_\infty $ for any $u \in \R^N$,
\begin{equation}\label{eq:pontwise-rate-2norm-bound-rw}
\| L_{rw} \phi_k - \mu_k \phi_k \|_2  =   O(\epsilon, \sqrt{\frac{\log N}{N \epsilon^{d/2+1}}} ),
\quad 2 \le k \le K,
\end{equation}
and then by \eqref{eq:concen-uDu} which holds uniformly for all $u \in \R^N$, 
\[
\| L_{rw} \phi_k - \mu_k \phi_k \|_{\frac{D}{N}}
= \| L_{rw} \phi_k - \mu_k \phi_k \|_2 \sqrt{p} (1   + O( \epsilon, \sqrt{\frac{\log N}{ N \epsilon^{d/2}}}))
= O( \| L_{rw} \phi_k - \mu_k \phi_k \|_2).
\]
Thus, there is $\text{Err}_{pt} > 0$, s.t.
\begin{equation}\label{eq:pontwise-rate-2norm-bound-rw-D/N}
\| L_{rw} \phi_k - \mu_k \phi_k \|_{\frac{D}{N}}  \le  \text{Err}_{pt},
\quad 2 \le k \le K,
\quad \text{Err}_{pt} = O(\epsilon, \sqrt{\frac{\log N}{N \epsilon^{d/2+1}}} ).
\end{equation}
The constant in big-$O$ depends on first $K$ eigenfunctions, and is an absolute one because $K$ is fixed. 
Next, same as in the proof of Proposition \ref{prop:step2}, 
under the good event of Proposition \ref{prop:eigvalue-LB-crude-rw} and by the definition of $\gamma_K$ as the maximum (half) eigen-gap among $\{\mu_k \}_{1 \le k \le K}$,
\eqref{eq:eigen-stay-away} holds for $\lambda_k$.

Let $S_k= \text{Span}\{ (\frac{D}{N})^{1/2} v_k \}$, $S_k$ is a 1-dimensional subspace in $\R^N$.
Because $v_j$'s are $D$-orthogonal, 
$S_k^{\perp} = \text{Span}\{  (\frac{D}{N})^{1/2} v_j, \, j\neq k, 1 \le j \le N \}$. Note that
\begin{equation}\label{eq:projected-eqn-1}
P_{S_k^{\perp}} \left( (\frac{D}{N})^{1/2}  \mu_k \phi_k  \right)  
= (\frac{D}{N})^{1/2} \sum_{j \neq k, j=1}^N \frac{   v_j^T (\frac{D}{N})\phi_k }{ \|  v_j\|_{\frac{D}{N}}^2} \mu_k  v_j,
\end{equation}
and  because 
\begin{equation}\label{eq:rw-D-adjoint-relation}
L_{rw}^T D v_j = \frac{1}{\epsilon}(I - WD^{-1}) D v_j = \frac{1}{\epsilon}(D - W)  v_j = D \lambda_j v_j,
\end{equation}
\begin{align}
P_{S_k^{\perp}} \left( (\frac{D}{N})^{1/2}  L_{rw} \phi_k  \right)  
& =(\frac{D}{N})^{1/2}  \sum_{j\neq k, j=1}^N
\frac{  v_j^T(\frac{D}{N}) L_{rw }\phi_k }{ \| v_j\|_{\frac{D}{N}}^2}  v_j
= (\frac{D}{N})^{1/2}  \sum_{j\neq k, j=1}^N
\frac{  \frac{1}{N}( L_{rw }^T D v_j )^T  \phi_k }{ \| v_j\|_{\frac{D}{N}}^2}  v_j \nonumber \\
& = 
(\frac{D}{N})^{1/2}  \sum_{j\neq k, j=1}^N
\frac{  \frac{1}{N}( D v_j )^T  \phi_k }{ \| v_j\|_{\frac{D}{N}}^2}  \lambda_j  v_j .
\label{eq:projected-eqn-2}
\end{align}
Subtracting \eqref{eq:projected-eqn-1} and \eqref{eq:projected-eqn-2} gives
\[
P_{S_k^{\perp}} \left( (\frac{D}{N})^{1/2}  (  L_{rw} \phi_k -  \mu_k \phi_k ) \right) 
= 
 \sum_{j\neq k, j=1}^N
  ( \lambda_j  - \mu_k)
\frac{   v_j^T  \frac{D}{N} \phi_k }{ \| v_j\|_{\frac{D}{N}}^2}  (\frac{D}{N})^{1/2} v_j,
\]
and by that $v_j$ are $D$-orthogonal, and \eqref{eq:eigen-stay-away},
\[
\| P_{S_k^{\perp}} \left( (\frac{D}{N})^{1/2}  (  L_{rw} \phi_k -  \mu_k \phi_k ) \right) \|_2^2
=  
 \sum_{j\neq k, j=1}^N
  | \lambda_j  - \mu_k |^2
\frac{  | v_j^T  \frac{D}{N} \phi_k|^2 }{ \| v_j\|_{\frac{D}{N}}^2}  
\ge
\gamma_K^2
\sum_{j\neq k, j=1}^N
\frac{  | v_j^T  \frac{D}{N} \phi_k|^2 }{ \| v_j\|_{\frac{D}{N}}^2}.  
\]
The square-root of the l.h.s.
\[
\| P_{S_k^{\perp}} \left( (\frac{D}{N})^{1/2}  (  L_{rw} \phi_k -  \mu_k \phi_k ) \right) \|_2
\le
\|  (\frac{D}{N})^{1/2}  (  L_{rw} \phi_k -  \mu_k \phi_k ) \|_2 
= 
\|      L_{rw} \phi_k -  \mu_k \phi_k  \|_{\frac{D}{N}}
\le  \text{Err}_{pt},
\]
and the last inequality is by \eqref{eq:pontwise-rate-2norm-bound-rw-D/N}.
This gives that
\[
\left(  \sum_{j\neq k, j=1}^N
\frac{  | v_j^T  \frac{D}{N} \phi_k|^2 }{ \| v_j\|_{\frac{D}{N}}^2} \right)^{1/2}
\le \frac{ \text{Err}_{pt} }{\gamma_K}.
\]
Meanwhile,
$P_{S_k^{\perp}} \left( (\frac{D}{N})^{1/2} \phi_k  \right) = 
  \sum_{j\neq k, j=1}^N
\frac{  v_j^T(\frac{D}{N}) \phi_k }{ \| v_j\|_{\frac{D}{N}}^2} (\frac{D}{N})^{1/2} v_j$,
and by $D$-orthogonality of $v_j$ again,
$\sum_{j\neq k, j=1}^N
\frac{  | v_j^T  \frac{D}{N} \phi_k|^2 }{ \| v_j\|_{\frac{D}{N}}^2} 
= \| P_{S_k^{\perp}} \left( (\frac{D}{N})^{1/2} \phi_k  \right) \|_2^2$.
Thus,
\begin{equation}\label{eq:project-D/N-phik-small}
\| P_{S_k^{\perp}} \left( (\frac{D}{N})^{1/2} \phi_k  \right) \|_2
= \left(   \sum_{j\neq k, j=1}^N
\frac{  | v_j^T  \frac{D}{N} \phi_k|^2 }{ \| v_j\|_{\frac{D}{N}}^2}  \right)^{1/2}
\le  \frac{ \text{Err}_{pt} }{\gamma_K}
= O(\epsilon, \sqrt{\frac{\log N}{N \epsilon^{d/2+1}}} ).
\end{equation}
Finally,  define
\[
\beta_k := \frac{   v_k^T(\frac{D}{N})\phi_k }{ \| v_k\|_{\frac{D}{N}}^2},
\quad
\beta_k    (\frac{D}{N})^{1/2} v_k = P_{S_k} (\frac{D}{N})^{1/2}\phi_k,
\]
\[
P_{S_k^{\perp}} \left( (\frac{D}{N})^{1/2} \phi_k  \right)
= (\frac{D}{N})^{1/2}\phi_k - P_{S_k} (\frac{D}{N})^{1/2}\phi_k
= (\frac{D}{N})^{1/2} \left( \phi_k - \beta_k
v_k \right),
\]
and then, together with \eqref{eq:project-D/N-phik-small},
\[
\| \phi_k -  \beta_k  v_k \|_{\frac{D}{N}}
=  \| P_{S_k^{\perp}} \left( (\frac{D}{N})^{1/2} \phi_k  \right) \|_2
= O(\epsilon, \sqrt{\frac{\log N}{N \epsilon^{d/2+1}}} ).
\]
Applying \eqref{eq:concen-uDu} to $u= \phi_k -  \beta_k  v_k $, 
$ \| \phi_k -  \beta_k  v_k  \|_2  = 
(  \frac{1}{p}( 1  + O( \epsilon, \sqrt{\frac{\log N}{ N \epsilon^{d/2}}})) )^{1/2}  \|  \phi_k -  \beta_k  v_k   \|_{\frac{D}{N}}
 = O( \|  \phi_k -  \beta_k  v_k   \|_{\frac{D}{N}})$,
 and we have shown that 
\[
\| \phi_k -  \beta_k  v_k  \|_2 
= O( \|  \phi_k -  \beta_k  v_k   \|_{\frac{D}{N}}) = O(\epsilon, \sqrt{\frac{\log N}{N \epsilon^{d/2+1}}} ).
\]
To finish Step 2, it remains to show that $|\beta_k| = 1 + o(1)$, and then we define $\alpha_k = \frac{1}{\beta_k}$.
By definition of $\beta_k$,
\begin{align*}
\|  \phi_k \|_{\frac{D}{N}}^2 
& =  \|  (\frac{D}{N})^{1/2} \phi_k \|_2^2 =
\| P_{S_k^{\perp}} \left( (\frac{D}{N})^{1/2} \phi_k  \right)  \|_2^2
+ \| \beta_k  (\frac{D}{N})^{1/2} v_k  \|_2^2 
 =  
\| P_{S_k^{\perp}} \left( (\frac{D}{N})^{1/2} \phi_k  \right)  \|_2^2
+ \beta_k^2 \|  v_k \|_{\frac{D}{N}}^2,
\end{align*}
by that $\|  v_k \|_{\frac{D}{N}}^2 = p$, and \eqref{eq:phik-D/N-2norm}, and \eqref{eq:project-D/N-phik-small},
this gives 
$
  p ( 1  + o(1)
  )
  = o(1)   + \beta_k^2 p$,
and thus  $\beta_k^2 = 1+o(1)$.
 \vspace{5pt}

\underline{Step 3. of ${L}_{rw}$}:
For $2 \le k \le k_{max}$,  by the relation \eqref{eq:rw-D-adjoint-relation},
\[
v_k^T D( L_{rw} \phi_k - \mu_k \phi_k) 
= (L_{rw}^T D v_k)^T \phi_k -  \mu_k  v_k^T D\phi_k
= (\lambda_k - \mu_k)  v_k^T D \phi_k,
\]
and we have shown that
\[
v_k = \alpha_k \phi_k + \varepsilon_k, 
\quad
\alpha_k = 1+o(1),
\quad
  \|  \varepsilon_k  \|_{\frac{D}{N}}= O(\epsilon, \sqrt{\frac{\log N}{N \epsilon^{d/2+1}}} ).
\]
Similar as in the proof of Proposition \ref{prop:step3},
\begin{align*}
& | \lambda_k - \mu_k|  |v_k^T \frac{D}{N} \phi_k|
 = 
|v_k^T \frac{D}{N}( L_{rw} \phi_k - \mu_k \phi_k) | 
=| (\alpha_k \phi_k + \varepsilon_k)^T \frac{D}{N}( L_{rw} \phi_k - \mu_k \phi_k) |
\\
& ~~~
\le |\alpha_k| | \phi_k ^T \frac{D}{N} L_{rw} \phi_k - \mu_k \| \phi_k \|^2_{\frac{D}{N}}
| +  | \varepsilon_k^T \frac{D}{N}( L_{rw} \phi_k - \mu_k \phi_k)|
=: \textcircled{1} + \textcircled{2}.
\end{align*}
By \eqref{eq:phik-D/N-2norm},
$\|  \phi_k \|_{\frac{D}{N}}^2 
 = p ( 1  + O( \epsilon, \sqrt{\frac{\log N}{ N \epsilon^{d/2}}}))$,
 and meanwhile, $\phi_k ^T \frac{D}{N} L_{rw} \phi_k = \frac{1}{p} E_N(\rho_X \psi_k)= p \mu_k + O(\epsilon ,  \sqrt{  \frac{  \log N    }{ N \epsilon^{d/2  }}   } )$
by \eqref{eq:form-rate-psi}.
Thus  $\textcircled{1} = O(  | \phi_k ^T \frac{D}{N} L_{rw} \phi_k - \mu_k \| \phi_k \|^2_{\frac{D}{N}}|) = O(\epsilon ,  \sqrt{  \frac{  \log N    }{ N \epsilon^{d/2  }}   } )$.
By \eqref{eq:pontwise-rate-2norm-bound-rw-D/N} and the bound of $\varepsilon_k$,
$|\textcircled{2}| \le \| \varepsilon_k\|_{\frac{D}{N}} \|  L_{rw} \phi_k - \mu_k \phi_k \|_{\frac{D}{N}} =O(\text{Err}_{pt}^2)$ which is $O(\epsilon)$ as shown in the proof of Proposition \ref{prop:step3}.
Finally, by the definition of $\beta_k$, and that $\|  v_k \|_{\frac{D}{N}}^2 = p$,
 \[
| \lambda_k - \mu_k|  |\beta_k|
\le \frac{ | \textcircled{1}| + |\textcircled{2} | }{\| v_k\|_{\frac{D}{N}}^2 }
= \frac{  O(\epsilon ,  \sqrt{  \frac{  \log N    }{ N \epsilon^{d/2  }}   } ) + O(\epsilon)    }{ {p}  } 
= O(\epsilon ,  \sqrt{  \frac{  \log N    }{ N \epsilon^{d/2  }}   } ).
\]
Since $|\beta_k| = 1+o(1)$, this proves the bound of $| \lambda_k - \mu_k|$, and the argument for all $k \le k_{max}$. 
\end{proof}
\section{Proofs about the density-corrected graph Laplacian with $\tilde{W}$}
\label{app:proofs-density-corrected}

\subsection{Proofs of the point-wise convergence of $\tilde{L}_{rw}$}

\begin{proof}[Proof of Lemma \ref{lemma:Di-concen-eps2}]

Part 1): 
By that $\frac{1}{N} D_i  = \frac{1}{N}( Y_i + \sum_{j \neq i}^N Y_j )$,
$Y_j : = K_\epsilon(x_i, x_j)$. 
For $j\neq i$, $Y_j$ has expectation (Lemma 8 in \cite{coifman2006diffusion}, Lemma {A.3} in \cite{cheng2020convergence}) 
\[
\int_{\calM} K_\epsilon(x_i, y) p(y) dV(y) 
= m_0 p(x_i) + \frac{m_2}{2} \epsilon( \omega p (x_i) + \Delta p(x_i)) + O_p(\epsilon^2),
\]
where $\omega \in C^{\infty}(\calM)$ is determined by manifold extrinsic coordinates;
Meanwhile,
$K_\epsilon(x_i,x_i) = \epsilon^{-d/2} h(0) = O(\epsilon^{-d/2})$;
In the independent sum $\frac{1}{N-1} \sum_{j \neq i} Y_j$,
$|Y_j|$ is bounded by $\Theta( \epsilon^{-d/2})$ and has variance bounded by $\Theta(\epsilon^{-d/2})$. 
The rest of the proof is the same as in proving Lemma \ref{lemma:Di-concen} 1).

Part 2):
By part 1), 
under a good event $E_{1}$, which happens w.p. $> 1- 2 N^{-9}$, \eqref{eq:degree-D-concen-eps2}  holds. 
Because $p(x) \ge p_{min} > 0$ for any $x \in \calM$, we then have
\begin{equation}\label{eq:degree-D-concen-rel}
\frac{1}{N} D_i = m_0 p(x_i) ( 1+ \varepsilon^{(D)}_i),
\quad
\sup_{1 \le i \le N} |\varepsilon^{(D)}_i| = O(\epsilon,  \sqrt{ \frac{\log N}{N \epsilon^{d/2}} }).
\end{equation}
Since $O(\epsilon,  \sqrt{ \frac{\log N}{N \epsilon^{d/2}} }) = o(1)$, with large enough $N$ and under $E_1$, $D_i > 0$, 
then $\tilde{W}$ is well-defined.
Furtherly, by \eqref{eq:degree-D-concen-rel},
\begin{align*}
& \frac{1}{N}\sum_{j=1}^N W_{i j} \frac{ 1}{  \frac{1}{N}D_j} 
 = \frac{1}{N}\sum_{j=1}^N  \frac{ W_{i j}}{  m_0 p(x_j) ( 1+ \varepsilon^{(D)}_j)}  \\
& ~~~ 
= \left( \frac{1}{ m_0 } \frac{1}{N}\sum_{j=1}^N  W_{i j} \frac{ 1}{ p(x_j)} \right) 
 \left( 1+  O(\epsilon,  \sqrt{ \frac{\log N}{N \epsilon^{d/2}} }) \right).
 \quad
 \text{(by that $p>0$, $W_{ij} \ge 0$)}
\end{align*}
Consider the r.v. $Y_j = K_\epsilon(x_{i}, x_j) p^{-1}(x_j)$ (condition on $x_{i}$), for $j \neq i$,
\[
\E Y_j = \int_{\calM} K_\epsilon(x_{i}, y) p^{-1}(y) p(y) dV(y)
= \int_{\calM} K_\epsilon(x_{i}, y) dV(y)
= m_0 + O(\epsilon),
\]
$Y_j$ is bounded by $\Theta(\epsilon^{-d/2})$ and so is its variance, where the constants in big-$\Theta$ depend on $p$.
Then, similar as in proving \eqref{eq:degree-D-concen-eps2}, we have a good event $E_2$ which happens w.p. $> 1 - 2 N^{-9}$,
under which

\begin{equation}\label{eq:W-p-inverse-concen}
 \frac{1}{ m_0 } \frac{1}{N}\sum_{j=1}^N  W_{i j} \frac{ 1}{ p(x_j)} 
= 1 +  O(\epsilon,  \sqrt{ \frac{\log N}{N \epsilon^{d/2}} }),
\quad 
1 \le i \le N,
\end{equation}
and the constant in big-$O$ depends on $p$, the function $h$, and is uniform for all $x_{i}$.
Then under $E_1 \cap E_2$, 
\begin{equation*}
\sum_{j=1}^N W_{i j} \frac{ 1}{  D_j} 
=\left( 1 +  O(\epsilon,  \sqrt{ \frac{\log N}{N \epsilon^{d/2}} }) \right)
 \left( 1+  O(\epsilon,  \sqrt{ \frac{\log N}{N \epsilon^{d/2}} }) \right) 
 =  1+  O(\epsilon,  \sqrt{ \frac{\log N}{N \epsilon^{d/2}} }),
\end{equation*}
which proves \eqref{eq:denominator}.
Meanwhile, combining 
\eqref{eq:denominator} and \eqref{eq:degree-D-concen-rel},
\begin{equation}\label{eq:NtildeDi}
N \tilde{D}_i = \frac{N}{D_i }\sum_{j=1}^N \frac{W_{ij}}{D_j} 
=  \frac{1}{ m_0 p(x_i) ( 1+ \varepsilon^{(D)}_i) }( 1 +  O(\epsilon,  \sqrt{ \frac{\log N}{N \epsilon^{d/2}} }))
=  \frac{1}{ m_0 p(x_i)}( 1 +  O(\epsilon,  \sqrt{ \frac{\log N}{N \epsilon^{d/2}} })),
\end{equation}
and thus under $E_1 \cap E_2$,  with large $N$, $\tilde{D}_i  > 0$ and $\tilde{L}_{rw}$ is well-defined. 
\end{proof}

\subsection{Proofs of the Dirichlet form convergence}

\begin{proof}[Proof of Lemma \ref{lemma:tildeENu-V-stat}]
As has been shown in the proof of Lemma \ref{lemma:Di-concen-eps2},
under the good event in  Lemma \ref{lemma:Di-concen-eps2} 1),
\eqref{eq:degree-D-concen-eps2} and then \eqref{eq:degree-D-concen-rel} hold.
Notation of $\varepsilon^{(D)}_i$ as in \eqref{eq:degree-D-concen-rel}, and omitting $h$ in the notations $m_2$, $m_0$,
we have that
\begin{align*}
\tilde{E}_N(  u )
& = \frac{1}{ \frac{m_2}{   m_0^2}\epsilon }  \frac{1}{N^2} \sum_{i,j=1}^N W_{i,j} \frac{ ( u_i - u_j)^2  }{ \frac{D_i}{N} \frac{D_j}{N}} \\
& = \frac{1}{  m_2 \epsilon } 
\frac{1}{N^2} \sum_{i,j=1}^N W_{i,j} \frac{ ( u_i - u_j)^2  }{  p(x_i)p(x_j) ( 1+ \varepsilon^{(D)}_i)   ( 1+ \varepsilon^{(D)}_j) } \\
& = \frac{1}{  m_2 \epsilon } 
\frac{1}{N^2} \sum_{i,j=1}^N W_{i,j} \frac{ ( u_i - u_j )^2  }{  p(x_i)p(x_j) }( 1+ \varepsilon_{ij} ), \quad \varepsilon_{ij} = O( \varepsilon^{(D)}_i, \varepsilon^{(D)}_j ) \\
& = 
\left( \frac{1}{  m_2 \epsilon } 
\frac{1}{N^2} \sum_{i,j=1}^N W_{i,j} \frac{ ( u_i - u_j )^2  }{  p(x_i)p(x_j) } \right)
(1 + O(\epsilon,  \sqrt{ \frac{\log N}{N \epsilon^{d/2}} }) ),
\end{align*}
where the last row uses the non-negativity of $W_{i,j} \frac{ ( u_i - u_j )^2  }{  p(x_i)p(x_j) }$.
\end{proof}

Proof of \eqref{eq:proof-form-density-correct-Vstat} in the proof of  Theorem \ref{thm:form-rate-density-correction}:

\begin{proof}
\underline{Proof of \eqref{eq:proof-form-density-correct-Vstat} }:
By definition, for $i \neq j$,
\begin{align*}
\E V_{i,j}
& = \frac{1}{ m_2 \epsilon } \int_\calM \int_\calM K_\epsilon( x, y) (f(x) - f(y))^2    dV(x) dV(y) \\
& =\frac{2}{ m_2 \epsilon } \int_\calM f(x) \left( \int_\calM  K_\epsilon(x,y) (f(x) - f(y)) dV(y) \right)dV(x) 
\end{align*}
By Lemma {A.3} in \cite{cheng2020convergence},
$
 \int_\calM  K_\epsilon(x,y) (f(x) - f(y)) dV(y)
 = -  \epsilon \frac{m_2}{ 2}  \Delta f(x) + O_f(\epsilon^2)$,
and thus,
\[
\E V_{i,j} =  \langle f, - \Delta f \rangle + O_f(\epsilon).
\]
Meanwhile, by that $p \ge  p_{min} > 0$,
$0 \le V_{ij} \le \Theta_p(1)  \frac{1}{m_2 \epsilon } K_\epsilon( x_i, x_j) (f(x_i) - f(x_j))^2$,  
and then by the boundedness and variance calculation in the proof of Theorem {3.4} of \cite{cheng2020convergence}, 
one can verify that, with constants depending on $(f,p)$,
\[
|V_{ij}| \le L=  \Theta( \epsilon^{-d/2}),
\quad
\E V_{ij}^2 \le \nu = \Theta(\epsilon^{-d/2}).
\]
Then, by the same decoupling argument to derive the concentration of  V-statistics, 
under good event $E_3$ which happens  w.p. $> 1-2N^{-10}$, 
\[
\frac{1}{N(N-1)} \sum_{i\neq j, i,j=1}^N V_{ij} = \E V_{ij} + O_{f,p}( \sqrt{ \frac{\log N }{N \epsilon^{d/2}} }).
\]
As a result,
\[
\text{ \textcircled{3} in \eqref{eq:form-pf-1} }
 = (1-\frac{1}{N})  \frac{1}{N(N-1)} \sum_{i\neq j, i,j=1}^N V_{ij} 
 = (1-\frac{1}{N}) \left(    \langle f, - \Delta f \rangle + O_f(\epsilon) + O_{f,p}( \sqrt{ \frac{\log N }{N \epsilon^{d/2}} }) \right),
 \]
which proves \eqref{eq:proof-form-density-correct-Vstat} because $O(\frac{1}{N})$ is higher order than $O( \sqrt{ \frac{\log N }{N \epsilon^{d/2}} }) $.
\end{proof}

\subsection{Proofs of the eigen-convergence of $\tilde{L}_{rw}$}

\begin{proof}[Proof of Proposition \ref{prop:eigvalue-UB-rw-density-correct}]
The proof is similar to that of Proposition \ref{prop:eigvalue-UB-rw}.
We first restrict to the good events $E_1 \cap E_2$ in Lemma \ref{lemma:Di-concen-eps2},
which happens w.p. $> 1- 4 N^{-9}$,  
under which  $\tilde{W}$ and $\tilde{L}_{rw}$ are well-defined,
and \eqref{eq:degree-D-concen-eps2} and \eqref{eq:denominator} hold.

Let $u_k =  \rho_X \psi_k $. 
The following lemma, proved in below,
shows the near $\tilde{D}$-orthonormal of the vectors $u_k$ and is an analogue of Lemma \ref{lemma:rhoX-isometry-whp}.

\begin{lemma}\label{lemma:uk-isometry-whp}
Under the same assumption of Lemma \ref{lemma:Di-concen-eps2},
when $N$ is sufficiently large, w.p. $> 1- 4 N^{-9} -  2 K^2 N^{-10}$,  
\begin{equation}\label{eq:uk-near-orthonormal-tiildepsik}
\begin{split}
 \| \rho_X {\psi}_k \|_{\tilde{D}}^2 
& = \frac{  1}{  m_0}  ( 1 +  O(\epsilon,  \sqrt{ \frac{\log N}{N \epsilon^{d/2}} })),
\quad 1 \le k \le K; \\
 ( \rho_X {\psi}_k)^T \tilde{D} ( \rho_X {\psi}_l) 
& =   O(\epsilon,  \sqrt{ \frac{\log N}{N \epsilon^{d/2}} }),
\quad  k\neq l, \, 1 \le k,l \le K.
\end{split}
\end{equation}
\end{lemma}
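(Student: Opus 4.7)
The plan is to reduce everything to an independent-sum concentration for the quantities $\frac{1}{N}\sum_i \psi_k(x_i)^2/p(x_i)$ and $\frac{1}{N}\sum_i \psi_k(x_i)\psi_l(x_i)/p(x_i)$, after replacing the random weights $\tilde{D}_i$ by their deterministic leading order $\tfrac{1}{N m_0 p(x_i)}$. Specifically, I would restrict to the good event $E_1 \cap E_2$ from the proof of Lemma \ref{lemma:Di-concen-eps2}, which has probability at least $1 - 4N^{-9}$, and under which equation \eqref{eq:NtildeDi} of that lemma yields the uniform relative expansion
\[
N\tilde{D}_i = \frac{1}{m_0 p(x_i)}(1 + \eta_i),
\qquad
\max_{1 \le i \le N} |\eta_i| = O(\epsilon, \sqrt{\tfrac{\log N}{N\epsilon^{d/2}}}).
\]

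With this in hand, by definition of the $\tilde{D}$-weighted inner product,
\[
\| \rho_X \psi_k \|_{\tilde{D}}^2
= \sum_{i=1}^N \psi_k(x_i)^2 \tilde{D}_i
= \frac{1}{m_0} \cdot \frac{1}{N}\sum_{i=1}^N \frac{\psi_k(x_i)^2}{p(x_i)}(1+\eta_i),
\]
and similarly for the bilinear version with $\psi_k \psi_l$. Because $p \ge p_{\min}>0$, $\psi_k \in C^\infty(\calM)$ and $\calM$ is compact, the functions $\psi_k^2/p$ and $(\psi_k \pm \psi_l)^2/p$ are uniformly bounded, and $|\eta_i|$ is uniformly $o(1)$ under $E_1 \cap E_2$. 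Hence the $\eta_i$ contribution factors out as a multiplicative relative error of order $O(\epsilon, \sqrt{\log N/(N\epsilon^{d/2})})$ on the positive quantity $\frac{1}{N}\sum_i (\psi_k(x_i) \pm \psi_l(x_i))^2/p(x_i)$.

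Next I would bound the independent sums by the classical Bernstein inequality (Lemma \ref{lemma:bern}), exactly as in the proof of Lemma \ref{lemma:rhoX-isometry-whp}. The random variables $Y_j^{(k)} := \psi_k(x_j)^2/p(x_j)$ have expectation $\int_\calM \psi_k^2 p^{-1} p\, dV = \langle \psi_k,\psi_k\rangle = 1$, and are uniformly bounded with bounded variance (constants depending on $\calM$, $p$, and first $K$ eigenfunctions, hence absolute for fixed $K$). Thus for each of the $K$ diagonal and $\binom{K}{2}$ off-diagonal cases (treated via the polar identity applied to $\psi_k \pm \psi_l$ just as in Lemma \ref{lemma:rhoX-isometry-whp}), the sum concentrates at its mean with deviation $O(\sqrt{\log N/N})$ on an event of probability at least $1-2N^{-10}$. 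Since $\langle \psi_k,\psi_l\rangle = \delta_{kl}$, a union bound over the at most $2K^2$ such events yields
\[
\tfrac{1}{N}\sum_i \tfrac{\psi_k(x_i)^2}{p(x_i)} = 1 + O(\sqrt{\tfrac{\log N}{N}}),
\quad
\tfrac{1}{N}\sum_i \tfrac{\psi_k(x_i)\psi_l(x_i)}{p(x_i)} = O(\sqrt{\tfrac{\log N}{N}}), \quad k\neq l,
\]
with probability at least $1 - 2K^2 N^{-10}$.

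Finally, combining the two steps: the diagonal case gives $\| \rho_X\psi_k\|_{\tilde{D}}^2 = \tfrac{1}{m_0}(1 + O(\sqrt{\log N/N}))(1 + O(\epsilon, \sqrt{\log N/(N\epsilon^{d/2})}))$, and since $\sqrt{\log N/N}$ is of smaller order than $\sqrt{\log N/(N\epsilon^{d/2})}$, this collapses to the claimed rate. For the off-diagonal case the absolute (not relative) error $O(\epsilon,\sqrt{\log N/(N\epsilon^{d/2})})$ appears via the polar identity applied to the near-equal quantities $\|\rho_X(\psi_k\pm\psi_l)\|_{\tilde D}^2$. The total good event is $E_1 \cap E_2$ intersected with the concentration event, giving probability at least $1 - 4N^{-9} - 2K^2 N^{-10}$ as claimed. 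No step looks technically delicate; the only care needed is to track that the multiplicative $(1+\eta_i)$ correction produces an additive $O(\epsilon,\sqrt{\log N/(N\epsilon^{d/2})})$ error in the off-diagonal case without blowing up, which is fine because the leading-order $\tfrac{1}{N}\sum_i \psi_k\psi_l/p(x_i)$ is already small.
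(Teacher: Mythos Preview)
Your proposal is correct and follows essentially the same approach as the paper's proof: both restrict to the good event $E_1\cap E_2$ of Lemma~\ref{lemma:Di-concen-eps2} to replace $N\tilde D_i$ by $\tfrac{1}{m_0 p(x_i)}(1+o(1))$ uniformly, reduce to the concentration of $\tfrac1N\sum_i \psi_k(x_i)^2/p(x_i)$ (equivalently, apply \eqref{eq:rhoXf-iso} with $f=p^{-1/2}\psi_k$ and $f=p^{-1/2}(\psi_k\pm\psi_l)$), and conclude via the polar identity and a union bound over the $K^2$ cases.
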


Under the good event of Lemma \ref{lemma:uk-isometry-whp}, called $E_5 \subset E_1 \cap E_2$,
$\tilde{D}_i > 0$ for all $i$,
and with large enough $N$, the set $\{ \tilde{D}^{1/2} u_k \}_{k=1}^K$ is linearly independent,
and then so is the set $\{ u_k \}_{k=1}^K$.
Let $L = \text{Span}\{ u_1, \cdots, u_k\}$, then $dim(L) = k$ for each $k \le K$.
For any $v \in L$,  $ v \neq 0$,
there are $c_j$, $1 \le j \le k $, such that 
$v = \sum_{j=1}^k c_j u_j$.
By \eqref{eq:uk-near-orthonormal-tiildepsik}, we have
\begin{equation}\label{eq:m0-v-tildeD-2} 
m_0 \|v\|_{\tilde{D}}^2 
= \| c \|^2 ( 1+ O( \epsilon,  \sqrt{ \frac{\log N}{ N \epsilon^{d/2}}}) ).
\end{equation}
Meanwhile, 
by defining $\tilde{B}_N(u,v) := \frac{1}{4} ( \tilde{E}_N(u+v) - \tilde{E}_N(u-v))$,
similarly as in Lemma \ref{lemma:form-rate-psi},
applying Theorem \ref{thm:form-rate-density-correction} to the $K^2$ cases where $ f = \psi_k$ and $(\psi_k \pm \psi_l)$ gives that,
under a good event $E_6$ which happens w.p.$>1- 2 K^2 N^{-10}$,
\begin{equation}\label{eq:tildeform-rate-psi}
\begin{split}
 \tilde{E}_N( \rho_X \psi_k) 
 & =  \mu_k + O(\epsilon, \sqrt{  \frac{  \log N    }{ N \epsilon^{d/2  }}   }),
\quad k = 1, \cdots, K, \\
 \tilde{B}_N( \rho_X \psi_k, \rho_X \psi_l) 
 & = O(\epsilon, \sqrt{  \frac{  \log N    }{ N \epsilon^{d/2  }}   }),
 \quad k \neq l, \, 1 \le k,l \le K.
\end{split}
\end{equation}
Then, similar as in \eqref{eq:UB-ENv},
\begin{align}
\tilde{E}_N( v) 
& = \sum_{j,l = 1}^k c_j c_k \tilde{B}_N( u_j, u_k) 
= \sum_{j=1}^k c_j^2 \left(  \mu_j + O(\epsilon ,  \sqrt{  \frac{  \log N    }{ N \epsilon^{d/2  }}   } ) \right)  
+ \sum_{j\neq l, j,l=1}^k |c_j | | c_l |  O(\epsilon ,   \sqrt{  \frac{  \log N    }{ N \epsilon^{d/2  }}   } )  \nonumber \\
& = \sum_{j=1}^k \mu_j c_j^2 + \| c \|^2 K  O(\epsilon ,   \sqrt{  \frac{  \log N    }{ N \epsilon^{d/2  }}   } )
 \le  \| c \|^2  \left(  \mu_k +  O(\epsilon ,   \sqrt{  \frac{  \log N    }{ N \epsilon^{d/2  }}   } ) \right).
\label{eq:UB-tildeENv}
\end{align}
Back to the r.h.s. of \eqref{eq:lambdak-rw-density-correct},
together with \eqref{eq:m0-v-tildeD-2},
we have that
\begin{equation}
 \frac{  \frac{1}{m_0 } \tilde{E}_N(v) }{   v^T \tilde{D} v }
 \le 
 \frac{   \mu_k +  O(\epsilon ,   \sqrt{  \frac{  \log N    }{ N \epsilon^{d/2  }}   } )}{   1+ O( \epsilon,  \sqrt{ \frac{\log N}{ N \epsilon^{d/2}}}) }
 =  \mu_k +  O(\epsilon ,   \sqrt{  \frac{  \log N    }{ N \epsilon^{d/2  }}   } ),
\end{equation}
and thus provides an UB of $\lambda_k$. 
The bound holds for all the $ 1 \le k \le  K$, under good events $E_5 \cap E_6$. 
\end{proof}

\begin{proof}[Proof of Lemma \ref{lemma:uk-isometry-whp}]
Restrict to the good events $E_1 \cap E_2$ in Lemma \ref{lemma:Di-concen-eps2}, which happens w.p. $> 1- 4 N^{-9}$,  
under which  $\tilde{W}$ and $\tilde{L}_{rw}$ are well-defined,
and  \eqref{eq:NtildeDi} holds.  Then,
\[
 \| \rho_X {\psi}_k \|_{\tilde{D}}^2 
 =  \frac{1}{N} \sum_{i=1}^N    \frac{ {\psi}_k (x_i)^2 }{ m_0 p(x_i)}( 1 +  O(\epsilon,  \sqrt{ \frac{\log N}{N \epsilon^{d/2}} })) 
 = \frac{ \| \rho_X ( p^{-1/2} \psi_k) \|^2 }{N  m_0}  ( 1 +  O(\epsilon,  \sqrt{ \frac{\log N}{N \epsilon^{d/2}} })), 
 \quad 1 \le k \le K,
\]
\[
\| \rho_X  ( {\psi}_k  \pm   {\psi}_l )\|_{\tilde{D}}^2  = 
\frac{ \| \rho_X ( p^{-1/2} ({\psi}_k  \pm  {\psi}_l) ) \|^2 }{N  m_0}  ( 1 +  O(\epsilon,  \sqrt{ \frac{\log N}{N \epsilon^{d/2}} })), 
\quad k \neq l, 1 \le k, l \le K.
\]
Apply \eqref{eq:rhoXf-iso} to when $f = p^{-1/2}{\psi}_k$ and $p^{-1/2} ({\psi}_k \pm {\psi}_l)$ for $k \neq l$, 
and recall that $\langle \psi_k, \psi_l \rangle = \delta_{kl}$, we have
\[
\frac{ 1 }{N}\| \rho_X ( p^{-1/2} {\psi}_k ) \|^2 = 1+ O(\sqrt{ \frac{\log N}{ N}}), 
\quad
\frac{ 1 }{N}\| \rho_X ( p^{-1/2} ({\psi}_k \pm \psi_l) ) \|^2 = 2+ O(\sqrt{ \frac{\log N}{ N}}),
\]
under a good event which happens w.p.$>1-2K^2 N^{-10}$ with large enough $N$, and then
\begin{align*}
 \| \rho_X {\psi}_k \|_{\tilde{D}}^2 
 & = \frac{  1}{  m_0}  ( 1 +  O(\epsilon,  \sqrt{ \frac{\log N}{N \epsilon^{d/2}} })), 
 \quad 1 \le k \le K, \\
\| \rho_X  ( {\psi}_k  \pm   {\psi}_l )\|_{\tilde{D}}^2 
&  = 
\frac{ 2 }{  m_0}  ( 1 +  O(\epsilon,  \sqrt{ \frac{\log N}{N \epsilon^{d/2}} })), 
\quad k \neq l, 1 \le k, l \le K,
\end{align*}
which proves \eqref{eq:uk-near-orthonormal-tiildepsik}.
\end{proof}

\begin{proof}[Proof of Proposition \ref{prop:eigvalue-LB-crude-rw-density-correct}]
The proof follows the same strategy of proving Proposition \ref{prop:eigvalue-LB-crude-rw},
where we introduce weights by $p(x_i)$
in the heat kernel interpolation map 
when  constructing candidate 
eigenfunctions from eigenvectors. 

We restrict to the good event $E_{UB}''$ in Proposition \ref{prop:eigvalue-UB-rw-density-correct},
which is contained in $E_1 \cap E_2$ in Lemma \ref{lemma:Di-concen-eps2}.
Under $E_{UB}''$,  $D_i > 0 $, $\tilde{D}_i > 0$, and $\tilde{L}_{rw}$ is well-defined,
and, with sufficiently large $N$, 
$\lambda_k \le  \lambda_K \le 1.1 \mu_K = O(1)$. 
Let $\tilde{L}_{rw} v_k = \lambda_k v_k$,
normalized s.t.
\[
v_k^T \tilde{D} v_l = \delta_{kl},
\quad
1 \le k, l \le N.
\]
Note that always $\lambda_1 = 0$.
Under $E_1 \cap E_2$, \eqref{eq:NtildeDi} holds,
and thus
\begin{equation}\label{eq:m0-u2-tildeD-2}
m_0 \| u \|_{\tilde{D}}^2 
= \frac{m_0 }{N}\sum_{i=1}^N u_i^2 (N \tilde{D}_i)
= \left( \frac{1}{N}\sum_{i=1}^N   \frac{u_i^2}{  p(x_i)} \right)( 1 +  O(\epsilon,  \sqrt{ \frac{\log N}{N \epsilon^{d/2}} })),
\quad 
\forall u \in \R^N,
\end{equation}
and the constant in big-$O$ is determined by $(\calM, p)$ and uniform  for all $u$.
Define the notation
\begin{equation}\label{eq:def-u-pinv-norm}
 \| u \|_{p^{-1}}^2  := \frac{1}{N} \sum_{i=1}^N   \frac{ u_i^2}{p(x_i)},
 \quad \forall u \in \R^N.
\end{equation}
Taking $u $ to be $v_k$ and $(v_k \pm v_l)$ gives that 
\begin{equation}\label{eq:vk-pinv-norm-m0}
\begin{split}
m_0 
& =  \| v_k\|_{p^{-1}}^2
 ( 1 +  O(\epsilon,  \sqrt{ \frac{\log N}{N \epsilon^{d/2}} })),
\quad 1 \le k \le K, \\
2 m_0 
& = \| v_k\pm v_l\|_{p^{-1}}^2
 ( 1 +  O(\epsilon,  \sqrt{ \frac{\log N}{N \epsilon^{d/2}} })),
\quad k \neq l,  1 \le k,l \le K.
\end{split}
\end{equation}
Set $\delta$, $r$, $t$ in the same way as in the proof of Proposition \ref{prop:eigvalue-LB-crude-rw},
and define $\tilde{I}_r[u]$ as in \eqref{eq:def-tilde-Ir}.
We have 
$\langle \tilde{I}_r [u], \tilde{I}_r  [u] \rangle = q_{\delta \epsilon} (\tilde{u})$,
$\langle \tilde{I}_r [u], Q_t \tilde{I}_r  [u] \rangle = q_{ \epsilon} (\tilde{u})$,
and  \eqref{eq:def-tilde-qs} for $s > 0$.
Next, 
similar as in the proof of Lemma \ref{lemma:qs0-concen},
one can show that with large $N$ and w.p.$>1-2N^{-9}$,
\begin{equation}\label{eq:tildeDsi-concen}
\frac{1}{N} \sum_{j=1}^N  \frac{ \calH_s(x_i, x_j) }{p(x_i) p(x_j) } 
= \frac{1}{p(x_i)} (1 + O_{\calM,p}(\sqrt{\frac{\log N}{N s^{d/2}}})),
\quad 
1 \le i \le N,
\end{equation}
where the notation $O_{\calM,p}(\cdot)$ indicates that the constant depends on $(\calM,p)$ and is uniform for all $x_i$.
Applying \eqref{eq:tildeDsi-concen} 
to $s = \delta \epsilon$ gives that, under a good event $E_{(0)}'$, which happens w.p.$>1-2N^{-9}$,
\begin{align}
\tilde{q}^{(0)}_{\delta \epsilon }(u)  
&= \frac{1}{N} \sum_{i=1}^N   \frac{u_i^2}{p(x_i)} (1 + O_{\calM,p}(\delta^{-d/4} \sqrt{\frac{\log N}{N \epsilon^{d/2}}}))  \nonumber \\
& =  \| u \|_{p^{-1}}^2 (1 + O_{\calM,p}(\delta^{-d/4}  \sqrt{\frac{\log N}{N \epsilon^{d/2}}})),
\quad \forall u \in \R^N.
\label{tildeq0-u-E0'}
\end{align}
Applying \eqref{eq:tildeDsi-concen} 
to $s = \epsilon$ gives the good event $E_{(1)}'$, which happens w.p.$>1-2N^{-9}$, under which 
\begin{equation}\label{tildeq0-u-E1'}
\tilde{q}^{(0)}_{\epsilon }(u)  
=  \| u \|_{p^{-1}}^2 (1 + O_{\calM,p}(\sqrt{\frac{\log N}{N \epsilon^{d/2}}})),
\quad \forall u \in \R^N.
\end{equation}
The constants in big-$O$  in 
\eqref{tildeq0-u-E0'} and \eqref{tildeq0-u-E1'}
are determined by $(\calM, p)$ only and uniform for all $u$.

We also need an analogue of Lemma \ref{lemma:qs2-UB} to upper bound $\tilde{q}^{(2)}_s$, proved in below. 
The proof follows same method of Lemma  \ref{lemma:qs2-UB},
and makes use of the uniform boundedness of $p$ from below, and Lemma \ref{lemma:tildeENu-V-stat}.

\begin{lemma}\label{lemma:qs2-UB-density-correct}
Under Assumption \ref{assump:M-p}, $h$ being Gaussian, 
let $0 < \alpha < 1$ be a fixed constant.
Suppose $\epsilon = o(1)$, $\epsilon^{d/2} = \Omega( \frac{\log N}{ N})$,
then with sufficiently large $N$,
and under the good event $E_1$ of Lemma \ref{lemma:Di-concen-eps2} 1),
\begin{equation}\label{eq:q2-eps-UB-density-correct}
0 \le \tilde{q}^{(2)}_{ \epsilon}(u) 
= \left( 1 + O \left(  \epsilon  (\log \frac{1}{ \epsilon})^2,  \sqrt{\frac{\log N }{ N \epsilon^{d/2}}} \right) \right)  
( u^T( \tilde{D}-\tilde{W}) u )
 +  \| u \|_{p^{-1}}^2  O(\epsilon^{3}),
\quad 
\forall u \in \R^N,
\end{equation}
and
\begin{equation}\label{eq:q2-alphaeps-UB-density-correct}
0 \le \tilde{q}^{(2)}_{ \alpha \epsilon}(u) 
\le 1.1 \alpha^{-d/2} ( u^T( \tilde{D}-\tilde{W}) u ) +  \| u \|_{p^{-1}}^2   O(\epsilon^{3}),
\quad 
\forall u \in \R^N.
\end{equation}
The constants in big-$O$ only depend on $(\calM,p)$ and are uniform for all $u$ and $\alpha$.
\end{lemma}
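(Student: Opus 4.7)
The strategy parallels that of Lemma \ref{lemma:qs2-UB}, with two new ingredients: the $1/(p(x_i)p(x_j))$ weights inside $\tilde{q}^{(2)}_s$, and the fact that we must land on $u^T(\tilde{D}-\tilde{W})u$ rather than $u^T(D-W)u$. The bridge between these is the concentration $\tfrac{1}{N}D_i = m_0 p(x_i)(1+\varepsilon_i^{(D)})$ from Lemma \ref{lemma:Di-concen-eps2} (event $E_1$), which also underlies Lemma \ref{lemma:tildeENu-V-stat}. Nonnegativity of all summands (since $h\ge 0$) is used throughout so that relative errors can be pulled outside the sums.

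First, I would apply Lemma \ref{lemma:heat} with $t=\epsilon$ (respectively $t=\alpha\epsilon$) to replace $\calH_\epsilon(x_i,x_j)$ by $K_\epsilon(x_i,x_j)$ on the local truncation ball $B_{\delta_\epsilon}(x_i)\cap\calM$, picking up a multiplicative factor $1+\tilde O(\epsilon)$ (where $\tilde O(\epsilon)=O(\epsilon(\log\epsilon^{-1})^2)$) and an additive $O(\epsilon^3)$. The outside-ball contribution is $O(\epsilon^{10})$ by \eqref{eq:H-eps-truncate}. After multiplying by $(u_i-u_j)^2/(p(x_i)p(x_j))$ and summing, both the additive $O(\epsilon^3)$ residual and the outside-ball term produce a contribution bounded by $O(\epsilon^3)\cdot\frac{1}{N^2}\sum_{i,j}(u_i-u_j)^2/(p(x_i)p(x_j))$, and by the elementary inequality $\tfrac{1}{N^2}\sum_{i,j}(u_i-u_j)^2/(p(x_i)p(x_j))\le 2p_{\min}^{-1}\|u\|_{p^{-1}}^2$ this is $\|u\|_{p^{-1}}^2\,O(\epsilon^3)$, exactly the residual term appearing in both bounds of the lemma. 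The truncation on the $K_\epsilon$ side is absorbed into the $O(\epsilon^{10})$ term in the same fashion as \eqref{eq:W-form-truncate}.

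Next I would match the surviving leading term
\[
\tilde S(u)\;:=\;\frac{1}{2N^2}\sum_{i,j=1}^N \frac{K_\epsilon(x_i,x_j)}{p(x_i)p(x_j)}(u_i-u_j)^2
\]
to $u^T(\tilde D-\tilde W)u$. Working under $E_1$, substitute $D_i=N m_0 p(x_i)(1+\varepsilon_i^{(D)})$ into $\tilde W_{ij}=W_{ij}/(D_iD_j)$, exactly as in the proof of Lemma \ref{lemma:tildeENu-V-stat}; using $W_{ij}\ge 0$ and $\max_i|\varepsilon_i^{(D)}|=O(\epsilon,\sqrt{\log N/(N\epsilon^{d/2})})$, this yields
\[
u^T(\tilde D-\tilde W)u \;=\; \frac{1}{m_0^2}\,\tilde S(u)\,\Bigl(1+O(\epsilon,\sqrt{\tfrac{\log N}{N\epsilon^{d/2}}})\Bigr),
\]
with the constant depending only on $(\calM,p)$ and uniform in $u$. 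Since $h$ is Gaussian, $m_0=1$, and combining the two displays gives \eqref{eq:q2-eps-UB-density-correct} directly.

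For \eqref{eq:q2-alphaeps-UB-density-correct} the only change is that after using Lemma \ref{lemma:heat} at $t=\alpha\epsilon$ one replaces $K_{\alpha\epsilon}(x,y)$ by $\alpha^{-d/2}K_\epsilon(x,y)$ via the pointwise inequality \eqref{eq:K-alphaeps-K-eps}; nonnegativity of each summand preserves the direction of the inequality, and absorbing $1+\tilde O(\epsilon)$ into the factor $1.1$ for $\epsilon$ sufficiently small concludes the argument. The only substantive obstacle is to verify that all relative error factors can indeed be factored out of the doubly-indexed sums---which works precisely because $K_\epsilon\ge 0$, $\calH_s\ge 0$, and $p>0$, so every summand in both $\tilde q^{(2)}_s(u)$ and $u^T(\tilde D-\tilde W)u$ is nonnegative; this is the reason the bounds are clean and uniform in $u$.
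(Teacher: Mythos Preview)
Your proposal is correct and follows essentially the same approach as the paper's proof: truncate via Lemma \ref{lemma:heat}, bound the residual using $\tfrac{1}{N^2}\sum_{i,j}(u_i-u_j)^2/(p(x_i)p(x_j))\le 2p_{\min}^{-1}\|u\|_{p^{-1}}^2$, and then invoke the degree concentration under $E_1$ (equivalently Lemma \ref{lemma:tildeENu-V-stat}) to convert $\tilde S(u)$ into $u^T(\tilde D-\tilde W)u$ up to the form-rate relative error. The $\alpha\epsilon$ case via \eqref{eq:K-alphaeps-K-eps} and absorption of $1+\tilde O(\epsilon)+O(\sqrt{\log N/(N\epsilon^{d/2})})$ into the factor $1.1$ is also exactly how the paper handles it.
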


We proceed to define  $f_k = \tilde{I}_r[v_k]$, $f_k \in C^{\infty}(\calM)$.
Next, note that since $ ( I-\tilde{D}^{-1}\tilde{W}) v_k = \epsilon  \lambda_k v_k $, and $v_k$ are $\tilde{D}$-orthonormal, then 
\begin{equation}\label{eq:form-vk-vl-tildeD-tildeD}
\begin{split}
 v_k^T ( \tilde{D}-\tilde{W}) v_k 
 & = \epsilon  \lambda_k  v_k^T \tilde{D} v_k = \epsilon \lambda_k,
 \quad 1  \le k \le K, \\
(v_k \pm v_l)^T ( \tilde{D}-\tilde{W}) (v_k \pm v_l)
 & = \epsilon   ( \lambda_k + \lambda_l), 
 \quad k \neq l, 1 \le k,l \le K.
 \end{split}
\end{equation}
Taking $\alpha = \delta$ in Lemma \ref{lemma:qs2-UB-density-correct}, \eqref{eq:q2-alphaeps-UB-density-correct} then gives
\[
\begin{split}
\tilde{q}^{(2)}_{\delta \epsilon }( v_k) 
& =
O( \delta^{-d/2} ) \epsilon 
\lambda_k 
 +  O(\epsilon^{3}) , 
 \quad 1 \le k \le K, \\
\tilde{q}^{(2)}_{\delta \epsilon }( v_k \pm v_l) 
&  
= O( \delta^{-d/2})   \epsilon 
( \lambda_k + \lambda_l) 
+ 2  O(\epsilon^{3}) ,
 \quad k \neq l, 1 \le k,l \le K,
\end{split}
\]
and both are $O(\epsilon)$.
Meanwhile, 
\eqref{tildeq0-u-E0'}and \eqref{eq:vk-pinv-norm-m0} give that
(with that $\delta >0$ is a fixed constant determined by $K$ and $-\Delta$)
\begin{equation}
\begin{split}
\tilde{q}^{(0)}_{\delta \epsilon }(v_k)  
&=  \| v_k \|_{p^{-1}}^2 (1 + O(\sqrt{\frac{\log N}{N \epsilon^{d/2}}}))
= m_0(1 + O( \epsilon, \sqrt{\frac{\log N}{N \epsilon^{d/2}}})),
\quad 1 \le k \le K, \\
\tilde{q}^{(0)}_{\delta \epsilon }(v_k \pm v_l)  
&=  \| v_k \pm v_l\|_{p^{-1}}^2 (1 + O(\sqrt{\frac{\log N}{N \epsilon^{d/2}}}))
= 2m_0(1 + O( \epsilon, \sqrt{\frac{\log N}{N \epsilon^{d/2}}})),
\quad k \neq l, \, 1\le k, l \le K.
\end{split}
\end{equation}
Putting together with the bounds of $q_{\delta \epsilon}^{(2)}$,
this gives that 
\begin{equation}
\begin{split}
\langle f_k, f_k \rangle 
&  = \tilde{q}_{\delta \epsilon}^{(0)}(v_k) - \tilde{q}_{\delta \epsilon}^{(2)}(v_k) 
= m_0(1 + O( \epsilon, \sqrt{\frac{\log N}{N \epsilon^{d/2}}})) - O(\epsilon),
 \quad 1 \le k \le K, \\
\langle f_k, f_l \rangle 
& = \frac{1}{4}( \tilde{q}_{\delta \epsilon}( v_k + v_l)  -  \tilde{q}_{\delta \epsilon}( v_k - v_l) )
= O( \epsilon, \sqrt{\frac{\log N}{ N \epsilon^{d/2}}}) 
+ O(\epsilon),
\quad k \neq l, \, 1\le k, l \le K.
\end{split}
\end{equation}
Then due to that $O( \epsilon, \sqrt{\frac{\log N}{ N \epsilon^{d/2}}}) = o(1)$, 
we have linear independence of $\{ f_j\}_{j=1}^K$ with large enough $N$.

Same as before, for any $2 \le k \le K$, we let $L_k = \text{Span}\{ f_1, \cdots, f_k\}$,
and have \eqref{eq:eigen-relation-2}.
For any $f \in L_k$, $ f = \sum_{j=1}^k c_j f_j $,
$f = \tilde{I}_r [v]$ , $v := \sum_{j = 1}^k c_j v_j$, and
 \[
v^T \tilde{D} v 
  = \sum_{j=1}^k c_j^2 v_j^T \tilde{D} v_j = \| c \|^2.
\]
Meanwhile, by \eqref{eq:m0-u2-tildeD-2},  $m_0 =1$,
\begin{equation}\label{eq:m0-c2-v}
 \|c\|^2
=  \| v \|_{\tilde{D}}^2 
= \| v \|_{p^{-1}}^2
( 1 +  O(\epsilon,  \sqrt{ \frac{\log N}{N \epsilon^{d/2}} })),
\end{equation}
and by \eqref{eq:form-vk-vl-tildeD-tildeD},
\begin{equation}\label{eq:v-form-density-correct}
v^T (\tilde{D} - \tilde{W}) v = \epsilon \sum_{j=1}^k \lambda_j c_j^2 
\le \epsilon \| c\|^2  \lambda_k.
\end{equation}
Then, as we work under $E^{(0)} \cap E^{(1)}$,  \eqref{tildeq0-u-E0'} and \eqref{tildeq0-u-E1'} hold.
Applying to $u=v$ and subtracting the two,
\begin{align*}
\tilde{q}^{(0)}_{\delta \epsilon }(v)  
- \tilde{q}^{(0)}_{\epsilon }(v)  
& = \| v \|_{p^{-1}}^2  O_{\calM,p}( \delta^{-d/4}\sqrt{\frac{\log N}{N \epsilon^{d/2}}})
= \|c\|^2 ( 1 +  O(\epsilon,  \sqrt{ \frac{\log N}{N \epsilon^{d/2}} })) O_{\calM,p}( \delta^{-d/4} \sqrt{\frac{\log N}{N \epsilon^{d/2}}}) \\
& = \|c\|^2 O_{\calM,p}( \delta^{-d/4} \sqrt{\frac{\log N}{N \epsilon^{d/2}}}),
\end{align*}
where we used \eqref{eq:m0-c2-v} to obtain the 2nd equality. 
To upper bound $\tilde{q}^{(2)}_{\epsilon}(v) $,
  by \eqref{eq:q2-eps-UB-density-correct},
and with the shorthand that $\tilde{O}(\epsilon) $ stands for $ {O}( \epsilon  (\log \frac{1}{ \epsilon})^2) $,
\begin{align*}
\tilde{q}^{(2)}_{\epsilon}(v) 
& = \left( 1 + \tilde{O}(\epsilon )+ O (  \sqrt{\frac{\log N }{ N \epsilon^{d/2}}} ) \right)  
( v^T( \tilde{D}-\tilde{W}) v )
 +  \| v \|_{p^{-1}}^2  O(\epsilon^{3}) \\
& \le
\left( 1 + \tilde{O}(\epsilon )+ O (  \sqrt{\frac{\log N }{ N \epsilon^{d/2}}} ) \right)  
\epsilon \|c\|^2 \lambda_k
 +  \|c\|^2 ( 1 +  O(\epsilon,  \sqrt{ \frac{\log N}{N \epsilon^{d/2}} })) O(\epsilon^{3}) \\
&
\le \epsilon \| c \|^2 \left\{ 
 \lambda_k  \left( 1 + \tilde{O}(\epsilon )+ O (  \sqrt{\frac{\log N }{ N \epsilon^{d/2}}} ) \right)  + O( \epsilon^{2}) 
 \right\}.
\end{align*}
Thus we have that
\begin{align}
& \langle f ,  f  \rangle  -  \langle f ,  Q_t f  \rangle
 \le (  q^{(0)}_{\delta \epsilon}( v) - q^{(0)}_{\epsilon}(v) ) + q^{(2)}_{\epsilon}(v) \nonumber \\
&~~~
 \le  \epsilon \| c \|^2 \left\{
 \lambda_k  \left( 1 + \tilde{O}(\epsilon )+ O (  \sqrt{\frac{\log N }{ N \epsilon^{d/2}}} ) \right)  + O( \epsilon^{2}) 
 +  O_{\calM,p}( \delta^{-d/4}  \frac{1}{\epsilon}\sqrt{ \frac{\log N}{N  \epsilon^{d/2}} })
 \right\}  \nonumber  \\
 & ~~~
 =  \epsilon \| c \|^2 \left\{ 
 \lambda_k 
  +  \tilde{O}(\epsilon) 
 +  O_{\calM,p}( \delta^{-d/4}  \frac{1}{\epsilon}\sqrt{ \frac{\log N}{N  \epsilon^{d/2}} })
 \right\}.
\quad \text{(by $\lambda_k \le 1.1 \mu_K$)}
 \label{eq:numerator-UB-density-correct}
\end{align}

To lower bound $\langle f, f \rangle$,
again  by \eqref{eq:q2-alphaeps-UB-density-correct},
\eqref{eq:m0-c2-v} and \eqref{eq:v-form-density-correct},
\[
0 \le \tilde{q}^{(2)}_{\delta \epsilon}( v)
  \le  \Theta( \delta^{-d/2} )  ( v^T (\tilde{D}-\tilde{W}) v)  + \| v\|_{p^{-1}}^2 O(\epsilon^{3})
\le  \epsilon  \| c \|^2   \left( \lambda_k 
 \Theta(\delta^{-d/2}) + O(\epsilon^{2}) \right)
 =   \| c \|^2   O(\epsilon).
\]
By \eqref{tildeq0-u-E0'} and \eqref{eq:m0-c2-v},
\begin{equation}
\tilde{q}^{(0)}_{\delta \epsilon }(v)  
=  \| v \|_{p^{-1}}^2 (1 + O(\sqrt{\frac{\log N}{N \epsilon^{d/2}}}))
= \|c\|^2 ( 1 +  O(\epsilon,  \sqrt{ \frac{\log N}{N \epsilon^{d/2}} })),
\end{equation}
Thus,
\[
\langle f, f \rangle 
= \tilde{q}^{(0)}_{\delta \epsilon}( v) - \tilde{q}^{(2)}_{\delta \epsilon}( v) 
= \| c \|^2 \left ( 1 + O( \epsilon, \sqrt{ \frac{\log N}{N  \epsilon^{d/2}} })  - O( \epsilon ) \right) 
\ge \| c \|^2 \left ( 1 - O( \epsilon,  \sqrt{ \frac{\log N}{N  \epsilon^{d/2}} })  \right).
\]
the rest of the proof is the same as that in Proposition \ref{prop:eigvalue-LB-crude-rw},
where the constant $C$ is defined as
$C = c_{\calM,p} \delta^{-d/4}$,
$c_{\calM,p}$ being a constant determined by $(\calM,p)$,
and then the constant $c$ in the definition of $c_K$ also depends on $p$.
The needed good events are  $E_{(0)}'$, $E_{(1)}'$, and $E_{UB}''$,
and the LB holds for $k \le K$.
\end{proof}

\begin{proof}[Proof of Lemma \ref{lemma:qs2-UB-density-correct}]
By definition, for any $u \in \R^N$,
\[
\tilde{q}^{(2)}_{ \epsilon}(u) = \frac{1}{2} \frac{1}{N^2} \sum_{i,j=1}^N  \frac{\calH_\epsilon (x_i,x_j)}{ p(x_i) p (x_j)} (u_i - u_j)^2 \ge 0.
\]
Take $t$ in Lemma \ref{lemma:heat} to be $\epsilon$,
since $\epsilon = o(1)$, the three equations hold when $\epsilon < \epsilon_0$.
By \eqref{eq:H-eps-truncate},
truncate at an $\delta_\epsilon = \sqrt{ 6(10 + \frac{d}{2}) \epsilon \log{\frac{1}{\epsilon}}}$ Euclidean ball, 
there is $C_3$, a positive constant determined by $\calM$, s.t.
\[
 \frac{1}{2} \frac{1}{N^2} \sum_{i,j=1}^N \frac{\calH_\epsilon (x_i,x_j)}{ p(x_i) p (x_j)} {\bf 1}_{\{ x_j \notin B_{\delta_\epsilon}(x_i) \}}
 (u_i - u_j)^2
\le C_3 \epsilon^{10} \frac{1}{N^2} \sum_{i,j=1}^N \frac{ (u_i - u_j)^2}{ p(x_i) p (x_j)} {\bf 1}_{\{ x_j \notin B_{\delta_\epsilon}(x_i) \}}.
\]
Note that 
\begin{align}
& 
 \frac{1}{N^2} \sum_{i,j=1}^N \frac{ (u_i - u_j)^2}{ p(x_i) p (x_j)} 
 =
 \frac{2 }{N} \sum_{i=1}^N \frac{ u_i^2 }{ p(x_i) } 
\left(  \frac{1}{N}\sum_{j=1}^N \frac{1}{p (x_j)} \right)
-  2 \left(\frac{1}{N} \sum_{i=1}^N \frac{ u_i }{ p(x_i) } \right)^2  
\nonumber \\
& \le  \frac{2}{N} \sum_{i=1}^N \frac{ u_i^2 }{ p(x_i) }  \left( \frac{1}{N}\sum_{j=1}^N \frac{1}{p (x_j)} \right)
\le  \frac{2}{N} \sum_{i=1}^N \frac{ u_i^2 }{ p(x_i) }  \frac{1}{p_{min}} 
= \frac{2 }{p_{min}}   \| u \|^2_{p^{-1}},
\label{eq:bound-quadratic-sum-with-p}
\end{align}
thus,
\begin{equation}\label{eq:truncate-quad-Heps-with-p}
\tilde{q}^{(2)}_{ \epsilon}(u) 
= \frac{1}{2} \frac{1}{N^2} \sum_{i,j=1}^N \frac{\calH_\epsilon (x_i,x_j)}{ p(x_i) p (x_j)} {\bf 1}_{\{ x_j \in B_{\delta_\epsilon}(x_i) \}}
 (u_i - u_j)^2 
 +  \| u \|^2_{p^{-1}} O( \epsilon^{10}).
\end{equation}
Apply \eqref{eq:H-eps-local} with the short hand that $\tilde{O}(\epsilon) $ stands for $ {O}( \epsilon  (\log \frac{1}{ \epsilon})^2) $, 
\begin{align*}
&  \tilde{q}^{(2)}_{ \epsilon}(u) 
= \frac{1}{2} \frac{1}{N^2} \sum_{i,j=1}^N 
\frac{   K_\epsilon( x_i,x_j) (1 + \tilde{O}( \epsilon)) + O(\epsilon^3) }{p(x_i)p(x_j)} 
{\bf 1}_{\{ x_j \in B_{\delta_\epsilon}(x_i) \}}
 (u_i - u_j)^2 
+  \| u \|^2_{p^{-1}} O( \epsilon^{10}) \\
 & =
(1 + \tilde{O}( \epsilon))
 \frac{1}{2} \frac{1}{N^2} \sum_{i,j=1}^N 
\frac{   K_\epsilon( x_i,x_j)  }{p(x_i)p(x_j)}  {\bf 1}_{\{ x_j \in B_{\delta_\epsilon}(x_i) \}}  (u_i - u_j)^2 
+
 O(\epsilon^3) \frac{1}{N^2} \sum_{i,j=1}^N 
\frac{  (u_i - u_j)^2    }{p(x_i)p(x_j)} 
+  \| u \|^2_{p^{-1}} O( \epsilon^{10})\\
& =
(1 + \tilde{O}( \epsilon))
 \frac{1}{2} \frac{1}{N^2} \sum_{i,j=1}^N 
\frac{   K_\epsilon( x_i,x_j)  }{p(x_i)p(x_j)}  {\bf 1}_{\{ x_j \in B_{\delta_\epsilon}(x_i) \}}  (u_i - u_j)^2 
+ \| u \|^2_{p^{-1}}   O(\epsilon^3)
\quad \text{(by \eqref{eq:bound-quadratic-sum-with-p})}.
\end{align*}
The truncation for $K_\epsilon (x_i,x_j)$ gives that 
$K_\epsilon( x_i,x_j) {\bf 1}_{\{ x_j \notin B_{\delta_\epsilon}(x_i) \}} = O(\epsilon^{10})$,
and then similarly as in \eqref{eq:truncate-quad-Heps-with-p},
\begin{equation}\label{eq:quad-W-u-with-p-trunc}
\frac{1}{2} \frac{1}{N^2} \sum_{i,j=1}^N 
\frac{   K_\epsilon( x_i,x_j)  }{p(x_i)p(x_j)}  {\bf 1}_{\{ x_j \in B_{\delta_\epsilon}(x_i) \}}  (u_i - u_j)^2 
= \frac{1}{2} \frac{1}{N^2} \sum_{i,j=1}^N 
\frac{   K_\epsilon( x_i,x_j)  }{p(x_i)p(x_j)}  (u_i - u_j)^2 
-  \| u \|^2_{p^{-1}} O( \epsilon^{10}).
\end{equation}
By Lemma \ref{lemma:tildeENu-V-stat}, and $m_2=2$ with Gaussian $h$,
we have that under the  good event $E_1$ of Lemma \ref{lemma:Di-concen-eps2} 1), 
\[
\tilde{E}_N( u )
= 
\left( \frac{1}{  2 \epsilon } 
\frac{1}{N^2} \sum_{i,j=1}^N W_{i,j} \frac{ ( u_i - u_j)^2  }{  p(x_i)p(x_j) } \right)
(1 + O(\epsilon,  \sqrt{ \frac{\log N}{N \epsilon^{d/2}} }) ),
\quad 
\forall u \in \R^N,
\] 
and the constant in big-$O$ is determined by $(\calM,p)$ and uniform for all $u$. 
This gives that
\begin{equation}\label{eq:Keps-quad-with-p-approx-form}
\frac{1}{2} \frac{1}{N^2} \sum_{i,j=1}^N 
\frac{   K_\epsilon( x_i,x_j)  }{p(x_i)p(x_j)}  (u_i - u_j)^2 
=  \epsilon \tilde{E}_N( u )
(1 + O(\epsilon,  \sqrt{ \frac{\log N}{N \epsilon^{d/2}} }) ),
\end{equation}
and as a result, together with \eqref{eq:quad-W-u-with-p-trunc},
\begin{align*}
  \tilde{q}^{(2)}_{ \epsilon}(u) 
  & =
(1 + \tilde{O}( \epsilon))
\left(  \epsilon \tilde{E}_N( u )
(1 + O(\epsilon,  \sqrt{ \frac{\log N}{N \epsilon^{d/2}} }) ) 
-  \| u \|^2_{p^{-1}} O( \epsilon^{10})
\right) 
+ \| u \|^2_{p^{-1}}   O(\epsilon^3) \\
& =
  \epsilon \tilde{E}_N( u )(1 + \tilde{O}( \epsilon) + O(  \sqrt{ \frac{\log N}{N \epsilon^{d/2}} }) )
+ \| u \|^2_{p^{-1}}   O(\epsilon^3).
\end{align*}
Recall that $\tilde{E}_N( u ) = \frac{1}{\epsilon} u^T( \tilde{D}- \tilde{W})u$,
this proves \eqref{eq:q2-eps-UB-density-correct}.

To prove \eqref{eq:q2-alphaeps-UB-density-correct},
since $ 0< \alpha \epsilon < \epsilon$,
apply  Lemma \ref{lemma:heat} with $t = \alpha \epsilon$, and similarly as in \eqref{eq:truncate-quad-Heps-with-p},

\begin{align*}
& \tilde{q}^{(2)}_{ \alpha \epsilon}(u) 
 = \frac{1}{2} \frac{1}{N^2} \sum_{i,j=1}^N \frac{\calH_{\alpha \epsilon } (x_i,x_j)}{ p(x_i) p (x_j)} {\bf 1}_{\{ x_j \in B_{\delta_{\alpha \epsilon} }(x_i) \}}
 (u_i - u_j)^2 
 +  \| u \|^2_{p^{-1}} O( \epsilon^{10})\\
 &= \frac{1}{2} \frac{1}{N^2} \sum_{i,j=1}^N \frac{ K_{\alpha\epsilon}( x_i,x_j) (1 + \tilde{O}( \alpha \epsilon)) + O( \alpha^3 \epsilon^3) 
 }{ p(x_i) p (x_j)} {\bf 1}_{\{ x_j \in B_{\delta_{\alpha \epsilon} }(x_i) \}}
 (u_i - u_j)^2 
 +  \| u \|^2_{p^{-1}} O( \epsilon^{10})
 \quad \text{(by \eqref{eq:H-eps-local})} \\
 &=
 (1 + \tilde{O}( \epsilon))  \frac{1}{2} \frac{1}{N^2} \sum_{i,j=1}^N \frac{ K_{\alpha\epsilon}( x_i,x_j) 
 }{ p(x_i) p (x_j)} {\bf 1}_{\{ x_j \in B_{\delta_{\alpha \epsilon} }(x_i) \}}
 (u_i - u_j)^2 
 +  \| u \|^2_{p^{-1}} O( \epsilon^{3}).
 \quad \text{(by \eqref{eq:bound-quadratic-sum-with-p})}
\end{align*}
Then, using \eqref{eq:K-alphaeps-K-eps},
\eqref{eq:quad-W-u-with-p-trunc} and \eqref{eq:Keps-quad-with-p-approx-form},
\begin{align*}
\tilde{q}^{(2)}_{ \alpha \epsilon}(u) 
 & \le   (1 + \tilde{O}( \epsilon)) \alpha^{-d/2}   \frac{1}{2N^2} \sum_{i,j=1}^N \frac{ K_{\epsilon}( x_i,x_j) 
 }{ p(x_i) p (x_j)} {\bf 1}_{\{ x_j \in B_{\delta_{\alpha  \epsilon} }(x_i) \}}
 (u_i - u_j)^2 
 +  \| u \|^2_{p^{-1}} O( \epsilon^{3}) \\
& =  (1 + \tilde{O}( \epsilon))  \alpha^{-d/2}
\left( \epsilon \tilde{E}_N( u )
(1 + O(\epsilon,  \sqrt{ \frac{\log N}{N \epsilon^{d/2}} }) ) 
-  \| u \|^2_{p^{-1}} O( \epsilon^{10}) \right)
 +  \| u \|^2_{p^{-1}} O( \epsilon^{3}) \\
& = (1 + \tilde{O}( \epsilon) + O(\epsilon,  \sqrt{ \frac{\log N}{N \epsilon^{d/2}} }) ) 
\alpha^{-d/2}
 \epsilon \tilde{E}_N( u )
 +  \| u \|^2_{p^{-1}} O( \epsilon^{3}),
\end{align*}
which proves \eqref{eq:q2-alphaeps-UB-density-correct}
because $\tilde{O}( \epsilon) + O(\epsilon,  \sqrt{ \frac{\log N}{N \epsilon^{d/2}} }) = o(1)$
and thus the constant in front of $\alpha^{-d/2}$ is less than 1.1 for sufficiently small $\epsilon$.
\end{proof}

\begin{proof}[Proof of Theorem \ref{thm:refined-rates-rw-density-correct}]
With sufficiently large $N$, 
we restrict to the intersection of 
the good events in Proposition \ref{prop:eigvalue-LB-crude-rw-density-correct} 
and the $K= k_{max}+1$ good events of applying Theorem \ref{thm:pointwise-rate-dencity-correct}
 to $\{ \psi_k \}_{k=1}^K$.
 Because the good event in Proposition \ref{prop:eigvalue-LB-crude-rw-density-correct} 
 is already under $E_{UB}''$ of Proposition \ref{prop:eigvalue-UB-rw-density-correct},
 and under  $E_1\cap E_2$ of Lemma \ref{lemma:Di-concen-eps2},
 the extra good events in addition to what is needed in Proposition \ref{prop:eigvalue-LB-crude-rw-density-correct} 
 are those corresponding to $E_3 \cap E_4$ in the proof of Theorem \ref{thm:pointwise-rate-dencity-correct}
 where  $f = \psi_k$ for each $1 \le k \le K$,
 and, by a union bound, happens w.p.$>1-K \cdot 4N^{-9}$.
This gives to the final high probability indicated in the theorem. 
 In addition,  $D_i > 0$, $\tilde{D}_i > 0$ for all $i$,
and $\tilde{L}_{rw}$ is well-defined.
 
 The rest of the proof follows similar method as that of Theorem \ref{thm:refined-rates-rw},
 but differs in the normalization of the eigenvectors 
 and that of the eigenfunctions.
 With the definition of $\| u \|_{\tilde{D}}$ and $ \| u\|_{p^{-1}}$ in \eqref{eq:def-u-tildeD-norm} and \eqref{eq:def-u-pinv-norm} respectively, 
As has been shown in \eqref{eq:m0-u2-tildeD-2}, under $E_1 \cap E_2$,
\begin{equation}\label{eq:tildeD-norm-and-pinv-norm}
 \| u \|_{\tilde{D}}^2 
=
 \| u \|_{p^{-1}}^2
( 1 +  O(\epsilon,  \sqrt{ \frac{\log N}{N \epsilon^{d/2}} })),
\quad \forall u \in \R^N,
\end{equation}
and the constant in big-O is determined by $(\calM, p)$ and uniform for all $u$.
This also gives that  with sufficiently large $N$, 
\begin{equation}\label{eq:equivalent-2norm-tildeD-norm}
\frac{0.9}{p_{max}} \frac{ \| u \|_2^2}{ N}  \le
0.9 \| u \|_{p^{-1}}^2 \le 
\| u \|_{\tilde{D}}^2 \le 1.1 \| u \|_{p^{-1}}^2 \le \frac{1.1}{p_{min}} \frac{ \| u \|_2^2}{ N},
\quad \forall u \in \R^N,
\end{equation}
because 
$\| u \|_{p^{-1}}^2 = \frac{1}{N} \sum_{i=1}^N \frac{u_i^2}{p(x_i)}$ is upper bounded by  $\frac{1}{ p_{min} N} \|u\|_2^2$
and lower bounded by $\frac{1.1}{p_{max}} \frac{ \| u \|_2^2}{ N}$.
Apply \eqref{eq:equivalent-2norm-tildeD-norm} to $u =v_k$, this gives that 
$\frac{0.9}{p_{max}} \| v_k \|_2^2
 \le
\| v_k \|_{\tilde{D}}^2 N = 1
 \le \frac{1.1}{p_{min}} \| v_k \|_2^2$,
that is 
\[
  \sqrt{\frac{p_{min}}{1.1}}  \le \| v_k \|_2 \le   \sqrt{\frac{p_{max}}{0.9}},
  \quad 1 \le  k \le K,
  \]
and this verifies that $\| v_k \|_2 = \Theta(1)$ under the high probability event.

Meanwhile, 
because the good event $E_{UB}''$ is under the one needed in 
Lemma \ref{lemma:uk-isometry-whp},
as shown in the proof of Lemma \ref{lemma:uk-isometry-whp}, we have that
\[
    \|  \rho_X {\psi}_k \|_{p^{-1}}^2 
= \frac{1}{N} \sum_{i=1}^N  \frac{\psi_k(x_i)^2}{ p(x_i)} 
= 1  + O( \sqrt{ \frac{\log N}{N}}),
\quad 1 \le k \le K,
\]
where the constant in big-$O$ depends on $(\calM, p)$ and is uniform for all $k \le K$.
By definition, $N   \|  \tilde{\phi}_k \|_{p^{-1}}^2 
=    \|  \rho_X {\psi}_k \|_{p^{-1}}^2$,
and then, apply \eqref{eq:tildeD-norm-and-pinv-norm} to $u = \tilde{\phi}_k$, 
\begin{equation}\label{eq:norm-tilde-tildephik}
 \|  \tilde{\phi}_k \|_{\tilde{D}}^2 
=
 \|  \tilde{\phi}_k \|_{p^{-1}}^2 ( 1 +  O(\epsilon,  \sqrt{ \frac{\log N}{N \epsilon^{d/2}} }))
 = 
 \frac{1}{N } 
  ( 1 +  O(\epsilon,  \sqrt{ \frac{\log N}{N \epsilon^{d/2}} })),
  \quad 
1 \le  k \le K.
\end{equation}
\\

\underline{ Step 2. for $\tilde{L}_{rw}$}: 
When $k=1$, $\lambda_1 =0$, and  $v_1$ is always the constant vector, thus the discrepancy is zero.
Consider $2 \le k \le K$, by Theorem \ref{thm:pointwise-rate-dencity-correct}
and that $\| u\|_2 \le \sqrt{N} \| u\|_\infty $,
\begin{equation}\label{eq:pontwise-rate-2norm-bound-rw-density-correct}
\| \tilde{L}_{rw} \tilde{\phi}_k - \mu_k \tilde{\phi}_k \|_2  
=  O(\epsilon, \sqrt{\frac{\log N}{N \epsilon^{d/2+1}}} ),
\quad 2 \le k \le K.
\end{equation}
Then, by \eqref{eq:equivalent-2norm-tildeD-norm},
$\sqrt{N}  \| \tilde{L}_{rw} \tilde{\phi}_k - \mu_k \tilde{\phi}_k \|_{\tilde{D}}
= 
O( \| \tilde{L}_{rw} \tilde{\phi}_k - \mu_k \tilde{\phi}_k \|_2 )
= O(\epsilon, \sqrt{\frac{\log N}{N \epsilon^{d/2+1}}} )$,
that is,
 there is $\text{Err}_{pt} > 0$, s.t.
\begin{equation}\label{eq:pontwise-rate-2norm-bound-rw-tildeD}
\sqrt{N}\| L_{rw} \tilde{\phi}_k - \mu_k \tilde{\phi}_k \|_{\tilde{D}}  \le  \text{Err}_{pt},
\quad 2 \le k \le K,
\quad \text{Err}_{pt} = O(\epsilon, \sqrt{\frac{\log N}{N \epsilon^{d/2+1}}} ).
\end{equation}
Meanwhile, because we are under $E_{UB}''$, 
\eqref{eq:eigen-stay-away} holds for $\lambda_k$.
The proof then proceeds in the same way as the Step 2. in Theorem \ref{thm:refined-rates-rw}, replacing $\frac{D}{N}$ with $\tilde{D}$. 
Specifically, let $S_k= \text{Span}\{ \tilde{D}^{1/2} v_k \}$, 
$S_k^{\perp} = \text{Span}\{  \tilde{D}^{1/2} v_j, \, j\neq k, 1 \le j \le N \}$. We then have
$P_{S_k^{\perp}} \left( \tilde{D}^{1/2}  \mu_k \tilde{\phi}_k  \right)  
= \tilde{D}^{1/2} \sum_{j \neq k, j=1}^N \frac{   v_j^T \tilde{D} \tilde{\phi}_k }{ \|  v_j\|_{ \tilde{D} }^2} \mu_k  v_j$,
and  because 
\begin{equation}\label{eq:rw-tildeD-adjoint-relation}
\tilde{L}_{rw}^T \tilde{D} v_j = \frac{1}{\epsilon}(I - \tilde{W} \tilde{D}^{-1}) \tilde{D} v_j 
= \frac{1}{\epsilon}( \tilde{D} - \tilde{W})  v_j = \tilde{D} \lambda_j v_j,
\end{equation}
we also have
$P_{S_k^{\perp}} \left( \tilde{D}^{1/2}  \tilde{L}_{rw} \tilde{\phi}_k  \right)  
 = 
 \tilde{D}^{1/2}  \sum_{j\neq k, j=1}^N
\frac{    v_j^T   \tilde{D}  \tilde{\phi}_k }{ \| v_j\|_{ \tilde{D}}^2}  \lambda_j  v_j $.
Take subtraction
$P_{S_k^{\perp}} \left( \tilde{D}^{1/2}  (  \tilde{L}_{rw} \tilde{\phi}_k -  \mu_k \tilde{\phi}_k ) \right) $
and do the same calculation as before,
by \eqref{eq:pontwise-rate-2norm-bound-rw-tildeD}, it gives that 
\begin{equation}\label{eq:project-tildeD-phik-small}
\| P_{S_k^{\perp}} \left(  \tilde{D}^{1/2} \tilde{\phi}_k  \right) \|_2
= \left(   \sum_{j\neq k, j=1}^N
\frac{  | v_j^T  \tilde{D} \tilde{\phi}_k|^2 }{ \| v_j\|_{ \tilde{D}}^2}  \right)^{1/2}
\le  \frac{ \text{Err}_{pt} }{\sqrt{N} \gamma_K}
= \frac{1}{\sqrt{N}}O(\epsilon, \sqrt{\frac{\log N}{N \epsilon^{d/2+1}}} ).
\end{equation}
We similarly define
$\beta_k := \frac{   v_k^T \tilde{D} \tilde{\phi}_k }{ \| v_k\|_{\tilde{D} }^2}$, $\beta_k    \tilde{D}^{1/2} v_k = P_{S_k} \tilde{D}^{1/2}\tilde{\phi}_k$, and
$P_{S_k^{\perp}} \left( \tilde{D}^{1/2} \tilde{\phi}_k  \right)
= \tilde{D}^{1/2}\tilde{\phi}_k - P_{S_k} \tilde{D}^{1/2}\tilde{\phi}_k
= \tilde{D}^{1/2} \left( \tilde{\phi}_k -  
\beta_k 
 v_k \right)$.
Then, by \eqref{eq:project-tildeD-phik-small},  we have
$\| \tilde{\phi}_k -  \beta_k  v_k \|_{\tilde{D} }
= \| P_{S_k^{\perp}} \left( \tilde{D}^{1/2} \tilde{\phi}_k  \right)\|_{2 }
= \frac{1}{\sqrt{N}} O(\epsilon, \sqrt{\frac{\log N}{N \epsilon^{d/2+1}}} )$,
 and by \eqref{eq:equivalent-2norm-tildeD-norm},
\[
\| \tilde{\phi}_k -  \beta_k  v_k \|_2 
= O(\epsilon, \sqrt{\frac{\log N}{N \epsilon^{d/2+1}}} ).
\]
To finish Step 2, it remains to show that $|\beta_k| = 1 + o(1)$, and then we define $\alpha_k = \frac{1}{\beta_k}$. Note that 
\begin{equation}\label{eq:betak2-relation}
\|  \tilde{\phi}_k \|_{ \tilde{D}}^2 
=  \|  \tilde{D}^{1/2} \tilde{\phi}_k \|_2^2 =
\| P_{S_k^{\perp}} \left( \tilde{D}^{1/2} \tilde{\phi}_k  \right)  \|_2^2
+ \|  P_{S_k} \left( \tilde{D}^{1/2} \tilde{\phi}_k  \right) \|_2^2 
 =  
\| P_{S_k^{\perp}} \left( \tilde{D}^{1/2} \tilde{\phi}_k  \right)  \|_2^2
+ \beta_k^2 \|  v_k \|_{\tilde{D}}^2.
\end{equation}
By that $\|  v_k \|_{\tilde{D}}^2 = \frac{1}{N}$,
inserting into \eqref{eq:betak2-relation} together with \eqref{eq:project-tildeD-phik-small}, \eqref{eq:norm-tilde-tildephik}, 
\[
\frac{1}{N } 
  ( 1 +  O(\epsilon,  \sqrt{ \frac{\log N}{N \epsilon^{d/2}} }))
= 
(\frac{1}{\sqrt{N}}O(\epsilon, \sqrt{\frac{\log N}{N \epsilon^{d/2+1}}} ))^2
+ \beta_k^2 \frac{1}{N},
\]
which gives that 
$1 +  o(1) = 
o(1) + \beta_k^2$ by multiplying $N$ to both sides. 
\\

\underline{Step 3. of $\tilde{L}_{rw}$}:
The proof is the same as Step 3. in Theorem \ref{thm:refined-rates-rw}, replacing $\frac{D}{N}$ with $\tilde{D}$.
Specifically, using the relation \eqref{eq:rw-tildeD-adjoint-relation},
and the eigenvector consistency in Step 2, we have
\[
 | \lambda_k - \mu_k|  |v_k^T \tilde{D} \tilde{\phi}_k|
\le 
|\alpha_k || \tilde{\phi}_k^T \tilde{D}  \tilde{L}_{rw} \tilde{\phi}_k - \mu_k \| \tilde{\phi}\|_{\tilde{D}}^2 | 
+ |\varepsilon_k^T \tilde{D} ( \tilde{L}_{rw} \tilde{\phi}_k - \mu_k \tilde{\phi}_k)|
=: \textcircled{1}+ \textcircled{2}.
\]
where $\|  \varepsilon_k  \|_{\tilde{D}}= \frac{1}{\sqrt{N}} O(\epsilon, \sqrt{\frac{\log N}{N \epsilon^{d/2+1}}} )$
and $\alpha_k = 1+o(1)$. 
By \eqref{eq:tildeform-rate-psi},
$\tilde{\phi}_k^T \tilde{D}  \tilde{L}_{rw} \tilde{\phi}_k =   \tilde{E}_N(\tilde{\phi}_k) =  \frac{1}{N} (\mu_k + O(\epsilon, \sqrt{  \frac{  \log N    }{ N \epsilon^{d/2  }}   })$.
Together with  \eqref{eq:norm-tilde-tildephik}, one can show that $N  \textcircled{1} = O(\epsilon,  \sqrt{ \frac{\log N}{N \epsilon^{d/2}} })$.
For $ \textcircled{2}$,  with \eqref{eq:pontwise-rate-2norm-bound-rw-tildeD}, one can verify that 
$
 \textcircled{2} \le  \| \varepsilon_k \|_{\tilde{D}}  \| \tilde{L}_{rw} \tilde{\phi}_k - \mu_k \tilde{\phi}_k \|_{\tilde{D}} 
 = \frac{1}{N}O( \text{Err}_{pt}^2) = \frac{O(\epsilon)}{N}$,
 where used that $O( \text{Err}_{pt}^2)  = O(\epsilon)$ same as before. 
 Putting together, and with the definition of $\beta_k$ above, 
\[
| \lambda_k - \mu_k|  |\beta_k|
\le \frac{  \textcircled{1} + \textcircled{2} }{\| v_k\|_{\tilde{D}}^2 }
= \frac{ (O(\epsilon,  \sqrt{ \frac{\log N}{N \epsilon^{d/2}} }) + O(\epsilon)   ) /N   }{  1/N }
= O(\epsilon,  \sqrt{ \frac{\log N}{N \epsilon^{d/2}} }).
\]
We have shown that $|\beta_k| = 1+o(1)$,
thus the bound of $| \lambda_k - \mu_k|$ is proved, and holds for $k \le k_{max}$.
\end{proof}

\end{document}